\numberwithin{equation}{section}        %tie equation numbering to
\DeclareRobustCommand{\SkipTocEntry}[5]{}
\newtheorem{theorem}{Theorem}[section]
\newtheorem{lemma}[theorem]{Lemma}
\newtheorem{proposition}[theorem]{Proposition}
\newtheorem{corollary}[theorem]{Corollary}
\newtheorem{conjecture}[theorem]{Conjecture}
\newtheorem{remark}[theorem]{Remark}
\newtheorem{definition}[theorem]{Definition}
\newcommand{\be}{\begin{equation}}
\newcommand{\ee}{\end{equation}}
\newcommand{\ba}{\begin{array}}
\newcommand{\ea}{\end{array}}
\newcommand{\bea}{\begin{eqnarray}}
\newcommand{\eea}{\end{eqnarray}}
\newcommand{\bee}{\begin{eqnarray*}}
\newcommand{\eee}{\end{eqnarray*}}
\newcommand{\lab}{\label}
\newcommand{\ds}{\displaystyle}
\newcommand{\nn}{\nonumber}
\renewcommand{\c}{\cdot}
\newcommand{\les}{\lesssim}
\newcommand{\R}{\mathbb{R}}
\newcommand{\N}{\mathbb{N}}
\newcommand{\D}{{\bf D}}
\renewcommand{\gg}{{\bf g}}
\newcommand{\ep}{\varepsilon}
\newcommand{\pr}{\partial}
\renewcommand{\a}{\alpha}
\renewcommand{\b}{\beta}
\newcommand{\la}{\lambda}
\newcommand{\ub}{{\underline{u}}}
\newcommand{\ubb}{\underline{u}_b}
\newtheorem{thm}{Theorem}[section]
\newtheorem{cor}[thm]{Corollary}
\newtheorem{lem}[thm]{Lemma}
\def\eps{\epsilon}%
\def\vrho{\varrho}%
\def\tensor{\,\raise2pt\hbox{${}_{\otimes}$}\,}% Tensor
\def\fdg{\,:\,}% ... fuer die gilt ...
\def\ptl{\partial}% Partial
\def\rest#1{\raise-2pt\hbox{${\lfloor_{#1}}$}}% Restringiert
\def\cal#1{\mathcal{#1}}% Kalligraphisch
\def\mbo#1{\boldsymbol{#1}}% Math-Bold-Symbol
\def\hatt#1{\widehat #1{}}% Hut
\def\ulin#1{\underline{#1}{}}% Unterstreichen
\def\tild#1{\widetilde{#1}{}}% Tilde
\def\grad{{\nabla}}% Gradient
\newcommand{\leftexp}[2]{{\vphantom{#2}}^{#1}{#2}}
\def\halb{\frac{1}{2}}% 1/2
\newcommand{\green}[1]{{\color{green}#1}}
\newcounter{mnotecount}[section]
\renewcommand{\themnotecount}{\thesection.\arabic{mnotecount}}
\newcounter{mymnotecount}[section]
\renewcommand{\themymnotecount}{\thesection.\arabic{mymnotecount}}
\newcommand{\mymnote}[1]{\protect{\stepcounter{mymnotecount}}${\raisebox{0.5\baselineskip}[0pt]{\makebox[0pt][c]{\color{green}{\tiny\em$\bullet$\themnotecount}}}}$\marginpar{\raggedright\tiny\em$\!\bullet$\themymnotecount:

\green{#1}}\ignorespaces}
\renewcommand{\mymnote}[1]{}
\newcommand{\MM}{M}
\renewcommand{\Re}{\mathbb R}
\newcommand{\Orb}{\mathcal Q}
\newcommand{\half}{\frac{1}{2}}
\newcommand{\Om}{\Omega} 
\newcommand{\cp}{\kappa} 	%coupling constant
\newcommand{\RR}{\mathcal R} 
\newcommand{\TT}{\mathcal T}
\renewcommand{\AA}{\mathcal A}
\newcommand{\Xcal}{\mathcal{X}}
\newcommand{\DXp}{D^+(\Xcal)\setminus\{p\}} 
\title[]{Global regularity for the 2+1 dimensional equivariant Einstein-wave map system}
\author[L. Andersson]{Lars Andersson}
\address{Max Planck Institute for Gravitational Physics (Albert Einstein Institute), Germany}
\email{lars.andersson@aei.mpg.de}
\author[N. Gudapati]{Nishanth Gudapati}
\address{Department of Mathematics, Yale University, USA}
\email{nishanth.gudapati@yale.edu}
\author[J. Szeftel]{J\'er\'emie Szeftel}
\address{Laboratoire Jacques-Louis Lions, Universit\'e Pierre et Marie Curie, France}
\email{jeremie.szeftel@upmc.fr}
\begin{document}

\begin{abstract} In this paper we consider  the equivariant 2+1 dimensional  Einstein-wave map system and show that if the target satisfies the so called Grillakis condition, then global existence holds. In view of the fact that the 3+1 vacuum Einstein equations with a spacelike translational Killing field reduce to a 2+1 dimensional Einstein-wave map system with target the hyperbolic plane, which in particular satisfies the Grillakis condition, this work proves global existence for the equivariant class of such spacetimes. 
\end{abstract}

\maketitle

\tableofcontents

\section{Introduction}

In this paper we shall prove that global existence holds for the maximal Cauchy development of  asymptotically flat initial data for the equivariant 2+1 dimensional  Einstein-wave map system assuming that the target $(N, h)$ is a rotationally symmetric 2-manifold with metric satisfying the Grillakis condition, see \eqref{eq:grillakis-cond-first} below. The Grillakis condition holds in particular if $h$ has negative sectional curvature. Therefore, our result applies in the important special case obtained by considering the 3+1 vacuum Einstein equations with a spacelike translational Killing field which reduces to a 2+1 dimensional Einstein-wave map system with target the hyperbolic plane $\mathbb{H}^2$, see \cite{moncrief:MR955865} and also \cite{andersson:1999gr.qc....11032A,moncrief:MR3248565} and references therein. It follows that global existence holds for an equivariant solution of the 3+1 vacuum Einstein equations with a spacelike translational Killing field. \\

Before discussing the equivariant 2+1 dimensional  Einstein-wave map system, we provide some background on the equivariant wave map problem.  

\subsection{Equivariant critical wave maps} \label{sec:equivcrit}

Let $(\MM, \gg_{\mu\nu})$ be a Lorentzian spacetime and $(N, h_{AB})$ a Riemannian manifold. 
The action defined for a map $\Phi: \MM \to N$ by 
\begin{equation}\label{eq:wm-action-general} 
S_{\text{WM}} \fdg = -\halb\int_\MM \gg^{\mu\nu} \partial_\mu \Phi^A \partial_\nu \Phi^B h_{AB}\circ\Phi 
\end{equation} 
has Euler-Lagrange equation 
\begin{equation}\label{eq:wm-eqn-general}
\square_\gg \Phi^A + {}^{(h)} \Gamma_{BC}^A\circ\Phi\,  \partial_\mu \Phi^B \partial_\nu \Phi^C \gg^{\mu\nu} = 0
\end{equation} 
where, denoting by $\nabla$ the Levi-Civita covariant derivative of $\gg$, $\square_{\gg} = \nabla^\alpha \nabla_\alpha$ is the d'Alembertian, and  where ${}^{(h)} \Gamma_{BC}^A$ denote the Christoffel symbols of $h$. The action \eqref{eq:wm-action-general} is the Lorentzian analogue of the Dirichlet integral, or harmonic map energy, and if $\MM$ is static, time independent solutions of \eqref{eq:wm-eqn-general} are simply harmonic maps. 
In the particular case where the target is a compact Lie group, this system is known in the physics literature as a  $\sigma$-model, and in the mathematics literature (with general target), it is known as the wave map equation. 

Next, we restrict ourselves to the equivariant class. We assume $\MM$ is a globally hyperbolic $2+1$-dimensional spacetime with Cauchy surface diffeomorphic to $\Re^2$ and that $N$ is a complete Riemannian manifold of dimension $2$ with metric $h$ of the form 
$$
h = d\rho^2 + g^2(\rho) d\theta^2 
$$
for an odd function $g: \Re \to \Re$ with $g'(0) = 1$.  
Let $e^{i\theta}$, $\theta \in \mathbb{S}^1 = \Re/2\pi\mathbb{Z}$  denote a semifree circle action on $\MM$ and $N$. 
We assume that the $S^1$ action on $\MM$ is generated by a hypersurface orthogonal Killing field $\partial_\theta$, that it has a non-empty fixed point set\footnote{It follows that the fixed point set is a timelike line, see \cite{Carot:2000CQGra..17.2675C} and references therein.}, and that the non-trivial orbits of this action in $\MM$ are spatial.Then we may write $\gg$ in the form\footnote{As an example, consider $\Re^{2+1}$ with the metric 
$$
\gg = -dt^2 + dr^2 + r^2 d\theta^2
$$
In this case, the orbit space is $\Orb = \{(t,r), \ r \geq 0\}$ with metric $\check{\gg} = - dt^2 + dr^2$.} 
\begin{equation}\label{eq:gg-split}
\gg = \check{\gg} + r^2 d\theta^2
\end{equation} 
where $\check{\gg}$ is a metric on the orbit space $\Orb = \MM/\mathbb{S}^1$ and $r$ is the radius function, defined such that $2\pi r(p)$ is the length of  the $\mathbb{S}^1$ orbit through $p$. 
We assume that $\MM$ has Cauchy surface $\Sigma$ diffeomorphic to $\Re^2$, which we may, without loss of generality, assume to be symmetric\footnote{\label{foot:cauchytime}To see this, note that $\Orb$ is globally hyperbolic, and hence by \cite{bernal:sanchez:2006LMaPh..77..183B}, there is a Cauchy time function $\check{t}$ on $\Orb$ which may be lifted to a symmetric Cauchy time function $t$ on $\MM$.}. 

A map $\Phi: \MM \to N$ is equivariant, with rotation number $k \in \mathbb{Z}$ if 
$$
\Phi \circ e^{i\theta} = e^{ik\theta} \circ \Phi.
$$
Let the function $\phi$ be defined by 
\begin{equation}\label{eq:phidef}
\phi = \rho \circ \Phi
\end{equation} 
where $\rho: N \to \Re_+$ is the radial coordinate function on $N$. With the above definitions, the 
wave maps equation takes the form 
\begin{equation} 
\square_\gg \phi - \frac{k^2 g(\phi)g'(\phi)}{r^2} = 0.  \label{eq:wave-wm} 
\end{equation} 

The Cauchy problem for equivariant wave maps with base $M = \Re^{2+1}$ was studied by Shatah and Tahvildar-Zadeh \cite{jal_tah1} who proved that for targets satisfying\footnote{This condition is equivalent to the assumption that the target $(N, h)$ is geodesically convex.} 
$$g'(s)\geq 0,\quad \text{for } s \geq 0$$
global well-posedness holds for the equivariant wave map problem. It was then shown by Grillakis in \cite{Grillakis} that it suffices for the target to satisfy the Grillakis condition\footnote{For example, the Grillakis condition is satisfied in the important particular case $N = \mathbb{H}^2$, with $g(\rho) = \sinh(\rho)$.}
\begin{equation} \label{eq:grillakis-cond-first} 
sg'(s)+g(s) >0, \quad \text{for } s > 0. 
\end{equation}  
Let us also mention subsequent developments by Shatah and Struwe in \cite{shatah_struwe}, Shatah and Tahvildar-Zadeh in \cite{jal_tah}, and by Struwe in \cite{struwe:MR1990477}. Finally, let us mention the work of Christodoulou and Tahvildar-Zadeh for the case of spherically symmetric solutions \cite{chris_tah1}. Note that in these works, the proof consists of two main steps, a proof of energy non-concentration and a proof of global existence for small energy initial data.  

\begin{remark}\label{rem:severalremarks} 
\begin{enumerate} 
\item
In \cite{Berger}, it was established that vacuum Einstein's equations for $G_2$-symmetric 3+1 dimensional spacetimes reduce to spherically symmetric wave maps from $U \fdg  \Re^{2+1} \to \mathbb{H}^2.$ Consequently, the aforementioned work of Christodoulou and Tahvildar-Zadeh \cite{chris_tah1}
was applied in \cite {Berger} to prove global regularity for large data for these spacetimes. 
In the context of our problem, we would like to emphasize that the non-zero rotation number prevents a similar reduction to flat-space wave maps. Thus we are forced to consider the coupling with Einstein's equations. 
\item \label{point:generalwmlargedata}
More recent work shows that large data global existence holds for the wave map problem \eqref{eq:wm-eqn-general} with $\MM= \Re^{2+1}$ and target\footnote{Note that more general targets are considered in \cite{sterbenz:tataru:MR2657818}.} $N = \mathbb{H}^2$ even in the absence of  equivariant symmetry, see \cite{Tao}, \cite{sterbenz:tataru:MR2657818} and \cite{krieger:schlag:MR2895939}.   
\item 
It is known that for targets which are not geodesically convex, 
e.g. $N = \mathbb{S}^2$, singularities may form, see \cite{rodnianski:sterbenz:MR2680419,raphael:rodnianski:MR2929728}.
\end{enumerate}
\end{remark}

\subsection{The equivariant Einstein-wave map problem}

Let $R_\gg$ denote the scalar curvature of the Lorentzian metric $\gg$ on $M$, and let $\kappa>0$ a constant. Let
$$
S_{\text{grav}} \fdg= \frac{1}{2 \kappa} \int_\MM R_\gg 
$$ 
denote the Einstein-Hilbert action, then
the Euler-Lagrange equation for an Einstein-wave map with action 
$$
S_{\text{grav}} + S_{\text{WM}} 
$$
consists of \eqref{eq:wm-eqn-general} coupled to the Einstein equation  
\begin{equation}\label{eq:einstein-general} 
G_{\mu\nu} = \kappa S_{\mu\nu} 
\end{equation} 
where $G_{\mu\nu} = R_{\mu\nu} - \frac{1}{2} R \gg_{\mu\nu}$ is the Einstein tensor for the metric $\gg$ and 
\begin{equation}\label{eq:stress-form} 
S_{\mu\nu} = \partial_\mu \Phi^A \partial_\nu \Phi^B h_{AB} - \frac{1}{2} \partial_\alpha \Phi^C \partial_\beta \Phi^D \gg^{\alpha\beta} h_{CD} \gg_{\mu\nu} 
\end{equation} 
is the energy-momentum tensor for the wave map. 

\begin{remark}\lab{remarkrelevanceU(1)}
As emphasized above, the main motivation for considering the Einstein-wave map problem is that the 3+1 dimensional vacuum Einstein equations with a spacelike translational Killing field reduces to a 2+1 dimensional Einstein-wave map system with target the hyperbolic plane $\mathbb{H}^2$, see \cite{moncrief:MR955865} and also \cite{andersson:1999gr.qc....11032A,moncrief:MR3248565} and references therein. 
\end{remark}

In this paper, we restrict ourselves to the equivariant class and recall some of the notations already introduced in Section \ref{sec:equivcrit}. 

\begin{definition}[Equivariant 2+1 dimensional Einstein-wave map] \label{def:1.3}
Let $(M, \gg)$ be a globally hyperbolic spacetime with an $\mathbb{S}^1$ action by isometries $e^{i\theta}$, with  hypersurface orthogonal generator $\partial_\theta$ which is spacelike away from fixed points. Let the metric $h$ on $N$ be of the form $h = d\rho^2 + g^2(\rho) d\theta^2$.  
Assume that $M$ has Cauchy surface diffeomorphic to $\Re^2$. 
Let $\Phi: M \to N$ be an equivariant map, with rotation number $k \in \mathbb{Z}$, i.e. 
$\Phi \circ e^{i\theta} = e^{ik\theta} \circ \Phi $,
and let $\phi = \rho \circ \Phi$. 

An equivariant critical Einstein-wave map spacetime with target $N$ is a triple $(M, \gg, \Phi)$ 
solving 
\begin{subequations}\label{eq:ein-equiv-wm-first} 
\begin{align} 
G_{\mu\nu} &= \kappa S_{\mu\nu} \label{eq:ein-wm-first} \\ 
\square_\gg \phi - \frac{k^2 f(\phi)}{r^2} &= 0  \label{eq:wave-wm-first} 
\end{align} 
\end{subequations} 
where $G_{\mu\nu} = R_{\mu\nu} - \half R \gg_{\mu\nu}$ is the Einstein tensor for the metric $\gg_{\mu\nu}$, $r$ is the radius function, and $f(\phi) = g(\phi) g'(\phi)$. 
\end{definition}

\begin{remark} \label{rem:1.4}
\begin{enumerate} 
\item 
We shall throughout the paper restrict to the case when the generator $\partial_\theta$ of the $S^1$ action on $\MM$ is hypersurface orthogonal. 
\item  
See Section \ref{sec:equivcrit} for the technical conditions on $g(\rho)$ which will be assumed to hold throughout the paper. 
\item \label{point:k}
In this work we shall assume $k=1$, however the arguments easily extend to the general case $k \in \mathbb{Z}$.
\end{enumerate} 
\end{remark} 

For a Cauchy surface $\Sigma$, let $T^\mu$ be the future directed unit normal. Denote also by $R$ the induced scalar curvature and $K_{ab}$ the second fundamental form of the embedding, defined by $K(X,Y)  = \gg(\nabla_X T, Y)$ for vector fields $X,Y$ tangent to $\Sigma$. 
It is well known that the Cauchy data for the Einstein equations \eqref{eq:ein-wm-first} must satisfy some compatibility conditions known as the constraint equations\footnote{They correspond respectively to $G_{TT}=\kappa S_{TT}$ and $G_{Ta}=\kappa S_{Ta}$.} 
\begin{subequations}\label{eq:sgwm-constraints}  
\begin{align} 
R + (K^c{}_c)^2 - K_{ab} K^{ab} &= 2\cp S_{\mu\nu} T^\mu T^\nu, \\
D^c K_{ac} - D_a K^c{}_c &= \cp S_{a\nu} T^\nu,  
\end{align} 
\end{subequations} 
 where $D_a$ is the intrinsic covariant derivative on $\Sigma$. 
 
A smooth metric $\gg$ satisfying the assumptions in Definition \ref{def:1.3} can be put in the form 
\begin{equation}\label{eq:trmetric} 
\gg = - e^{2\alpha(t,r)} dt^2 + e^{2\beta(t,r)} dr^2 + r^2 d\theta^2, 
\end{equation} 
see Section \ref{sec:trcoord} for further discussion. For a smooth solution of the 2+1 dimensional equivariant Einstein-wave map system, it must hold that $\alpha(t,r), \beta(t,r)$ are even functions of $r$. In order to avoid a conical singularity at the axis, it must hold that $\beta(t,0) = 0$. See eg. \cite{2008GReGr..40..159R} for discussion of these points. Further, $\alpha(t,0)$ is determined only up to a choice of time parametrization. We shall choose a time coordinate such that $\alpha(t,0) = 0$. 

Recall that we are restricting our considerations to the case of rotation number $k=1$, cf. Remark \ref{rem:1.4}. In this case, the map $\Phi$ must be odd under reflection. Hence in view of \eqref{eq:phidef}, $\phi(t,r)$ is odd, as a function of $r$. Thus, smoothness of $\gg, \Phi$,  together with a time parametrization such that $\alpha(t,0) = 0$ gives the leading order asymptotic behavior for $\alpha,\beta,\phi$, 
\begin{subequations}\label{eq:leadingorder}
\begin{align} 
\alpha(t,r) &= 
%\alpha_0(t) + 
\alpha_2(t) r^2 + O(r^4) , \\
\beta(t,r) &= \beta_2(t)r^2 + O(r^4) , \\ 
\phi(t,r) &= \phi_1(t)r + O(r^3) . 
\end{align}
\end{subequations}
A calculation shows that with $\gg$ of the form  \eqref{eq:trmetric}, the second fundamental form $K$ of a Cauchy $t$-level set $\Sigma$ is of the form 
$$
K = K_{rr} dr^2 ,
$$ 
with 
$K_{rr} =  e^{-\alpha+2\beta} \partial_t \beta$. 

\begin{definition}[Cauchy data set for the 2+1 dimensional equivariant Einstein-wave map system]
A Cauchy data set for the 2+1 dimensional equivariant Einstein-wave map system with target $(N,h)$ is a 5-tuple $(\Sigma, q, K, \phi_0, \phi_1)$ consisting of  
\begin{enumerate} 
\item a Riemannian 2-manifold  $(\Sigma,q)$ with an isometric action by $e^{i\theta}$ as above and a 2-tensor $K$ of the form $K_{rr} dr^2$ symmetric under the same action, 
\item rotationally symmetric functions $\phi_0: \Sigma \to \Re_+$, $\phi_1: \Sigma \to \Re$,
\end{enumerate} 
such that the constraint equations \eqref{eq:sgwm-constraints} hold.   
\label{def:CDS-equivariant}
\end{definition} 

The proof by Choquet-Bruhat and Geroch \cite{ChB:Geroch:MR0250640} of existence and uniqueness of maximal solutions to the Cauchy problem for the vacuum Einstein equations, together with the equivariance of the Cauchy data, is readily generalized to give the following result. 
\begin{theorem}[Maximal Cauchy development for the $2+1$ equivariant Einstein-wave map problem] 
Let $(\Sigma, q, K, \phi_0, \phi_1)$ be an equivariant Cauchy data set for the $2+1$ Einstein-wave map system. Then there is a unique, maximal Cauchy development $(M, \gg, \Phi)$ satisfying the equivariant Einstein-wave-map system \eqref{eq:ein-equiv-wm-first}. 
\label{thm:cauchy-sgwm-equiv}
\end{theorem}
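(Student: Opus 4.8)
The plan is to follow the classical argument of Choquet-Bruhat and Geroch \cite{ChB:Geroch:MR0250640} for the vacuum Einstein equations, verifying at each step that the wave-map matter and the equivariant symmetry introduce no new difficulties. The first ingredient is to recast the (unreduced) system \eqref{eq:einstein-general}, \eqref{eq:wm-eqn-general} as a quasilinear hyperbolic system. Imposing a wave (harmonic) gauge on $\gg$ turns the Einstein equations into their trace-reversed, reduced form $\square_\gg \gg_{\mu\nu} = F_{\mu\nu}(\gg,\partial\gg,\Phi,\partial\Phi)$, where $F_{\mu\nu}$ collects terms quadratic in $\partial\gg$ and the matter terms built from \eqref{eq:stress-form}, which involve at most one derivative of $\Phi$; the wave map equation \eqref{eq:wm-eqn-general} is a semilinear wave equation for $\Phi$, and the combined system for $(\gg,\Phi)$ is Leray-hyperbolic, so the standard local existence and uniqueness theory applies. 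The coupling is harmless because the wave-map stress-energy tensor \eqref{eq:stress-form} is divergence free, $\nabla^\mu S_{\mu\nu}=0$, exactly when $\Phi$ solves \eqref{eq:wm-eqn-general}; using this together with the contracted Bianchi identity one shows in the usual way that the wave-gauge condition and the constraint equations \eqref{eq:sgwm-constraints} satisfy a linear homogeneous wave equation, hence propagate from their vanishing on $\Sigma$, so that any solution of the reduced system with gauge- and constraint-compatible data is a genuine solution of \eqref{eq:ein-wm-first}, \eqref{eq:wm-eqn-general}.

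Next I would build the local development from a Cauchy data set $(\Sigma, q, K, \phi_0, \phi_1)$: choose a lapse and shift (consistent with the gauge) to reconstruct $\gg|_\Sigma$ and $\partial_t \gg|_\Sigma$ from $q$ and $K$, take the Cauchy data for $\Phi$ from $\phi_0, \phi_1$, solve the reduced system in a neighbourhood of $\Sigma$, and invoke the previous paragraph to conclude that the resulting spacetime solves \eqref{eq:ein-equiv-wm-first}. Equivariance is inherited by uniqueness: the data set is $\mathbb{S}^1$-invariant, the gauge (the coordinate conditions, or the choice of lapse and shift) can be fixed $\mathbb{S}^1$-equivariantly, and therefore the pullback of the solution by each $e^{i\theta}$ is another development of the same data; the geometric uniqueness theorem then gives an isometric $\mathbb{S}^1$-action on the development whose generator is hypersurface orthogonal and spacelike away from the fixed point set, and setting $\phi = \rho\circ\Phi$ recovers \eqref{eq:wave-wm-first} with $f = gg'$.

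Finally, to pass from local to maximal one runs the Zorn's lemma argument of \cite{ChB:Geroch:MR0250640}: partially order the collection of equivariant Einstein-wave-map developments of the fixed data set by isometric embeddings commuting with the embedding of $\Sigma$, note that every chain has an upper bound obtained by gluing its members along the geometric uniqueness theorem, and extract a maximal element $(M,\gg,\Phi)$. The same uniqueness theorem shows that this maximal development is unique up to isometry, and that the $\mathbb{S}^1$-actions on the members of the chain glue to a single isometric action on $M$, so $(M,\gg,\Phi)$ is again equivariant and is a maximal equivariant Einstein-wave-map spacetime.

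The only points that differ from the vacuum case are the Leray-hyperbolicity of the coupled metric--wave-map system and the identity $\nabla^\mu S_{\mu\nu}=0$, both of which are immediate, together with the propagation of equivariance, which is a soft consequence of uniqueness; in particular nothing special happens at the fixed point set, where the $2+1$ spacetime is smooth and equivariance merely forces $\Phi$ to map the axis into the fixed point set of $\mathbb{S}^1$ on $N$, i.e.\ $\phi=0$ there. The real work is thus the unmodified Choquet-Bruhat--Geroch globalisation, whose one delicate ingredient --- showing that the union constructed in the Zorn argument is Hausdorff --- I expect to be the main obstacle, and it is handled exactly as in the vacuum setting using the geometric uniqueness theorem.
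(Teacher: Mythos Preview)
Your proposal is correct and follows exactly the approach indicated by the paper, which does not actually give a proof of this theorem but simply states that the Choquet-Bruhat--Geroch argument \cite{ChB:Geroch:MR0250640}, together with the equivariance of the Cauchy data, ``is readily generalized'' to yield the result. Your outline supplies precisely the details the paper omits --- the reduction to a Leray-hyperbolic system in wave gauge, the propagation of constraints via $\nabla^\mu S_{\mu\nu}=0$, the propagation of equivariance by uniqueness, and the Zorn-lemma globalisation --- so there is nothing to correct or compare.
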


\subsection{Asymptotic flatness}  \label{sec:asympt-flat} 

Let $H^s_\delta$ be the weighted $L^2$ Sobolev spaces\footnote{We here use the same  conventions as Huneau \cite{huneau:2013arXiv1302.1473H}. In particular on $\Re^n$, $u = o(r^{-\delta-1})$ if $u \in H^s_\delta$ for $s > n/2$.}  on $\Re^2$. 
A 2-dimensional rotationally symmetric Cauchy data set $(\Sigma, q, K)$ is asymptotically flat if 
$$
q = e^{2\beta} dr^2 + r^2 d\theta^2 
$$
with $\beta = \beta_\infty + \tilde \beta$ and $(\tilde\beta, K) \in H^{s+1}_{\delta} \times H^{s}_{\delta+1}$ for some $\delta \in (-1,0)$. This is compatible with the setup in \cite{huneau:2013arXiv1302.1473H}, specialized to the rotationally symmetric case. Note that the existence of such asymptotically flat solutions to the constraint equations without rotational symmetry is proved in \cite{huneau:2013arXiv1302.1473H} \cite{Huneauconstatint2} (and used in \cite{huneau:2014arXiv1410.6068H} to prove stability in exponential time of the Minkowski space-time in this setting).

\subsection{Global existence conjecture}

A major open problem in the field of general relativity is given by the Cosmic Censorship conjectures formulated for large data solutions of the Einstein equations by Penrose in 1969 \cite{penrose:1969NCimR...1..252P} (republished as \cite{penrose:2002:golden-oldie}, see also the discussion in \cite{krolak:penrose-2002-note}), see for example \cite{LA:MR2098914} for a precise statement. 

We recall the formulation of one version of these conjectures. The Strong Cosmic Censorship conjecture, which is most relevant for our purposes states that the maximal Cauchy development of generic vacuum Cauchy data is inextendible, while the Weak Cosmic Censorship conjecture states that any singularity in a generic, asymptotically flat, vacuum spacetime is hidden from observers at future null infinity by an event horizon. 

These fundamental conjectures are still widely open in general, but have been proved in some cases when assuming certain symmetries, see in particular the seminal proof of Christodoulou of the Cosmic Censorship conjectures for the Einstein equations  coupled to a scalar field in spherical symmetry (see \cite{christodoulou:MR1680551} and references therein). An intermediate goal toward the general case would be to assume the presence of only one Killing field, and prove global regularity for the 3+1 vacuum Einstein equations with a spacelike translational Killing field.

\begin{conjecture}[Global existence for the 3+1 vacuum Einstein equations with a spacelike translational Killing field]
Maximal Cauchy developments of `asymptotically flat' solutions to the 3+1 vacuum Einstein equations with a spacelike translational Killing field are regular and geodesically complete. 
\label{conj:WCC}
\end{conjecture}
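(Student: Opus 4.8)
\emph{Proof proposal.} The final statement is Conjecture \ref{conj:WCC}, which we do not resolve in full; what this paper establishes is its equivariant reduction, and the plan below describes how the conjecture would be approached in the class of spacetimes to which our methods apply. The starting point is the dimensional reduction of Moncrief \cite{moncrief:MR955865}: a 3+1 vacuum spacetime with a spacelike translational Killing field, after quotienting by the $\mathbb{R}$-orbits and a conformal rescaling of the quotient metric, is equivalent to a 2+1 Einstein-wave map system with target $\mathbb{H}^2$, where the wave map is built from the norm of the Killing field and the twist potential. Under the additional assumption of equivariance — i.e. an extra $\mathbb{S}^1$ symmetry acting on the 2+1 quotient — this reduces to the system \eqref{eq:ein-equiv-wm-first} with $g(\rho)=\sinh\rho$, $k=1$, which satisfies the Grillakis condition \eqref{eq:grillakis-cond-first} since $s\cosh s + \sinh s>0$ for $s>0$. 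The plan is therefore: first set up the maximal Cauchy development via Theorem \ref{thm:cauchy-sgwm-equiv}; second, prove a global-in-time a priori bound showing the development is future geodesically complete; third, translate completeness and regularity of the 2+1 development back up to the 3+1 spacetime through the reduction.

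For the second step I would follow the two-part strategy that is standard for critical equivariant wave maps (Shatah--Tahvildar-Zadeh \cite{jal_tah1}, Grillakis \cite{Grillakis}, Struwe \cite{struwe:MR1990477}), adapted to the dynamical geometry. One works in the translational (polar-type) gauge \eqref{eq:trmetric}, in which the Einstein constraints and evolution equations for $\alpha,\beta$ become first-order ODEs in $r$ driven by the wave-map energy density, so the metric is reconstructed from $\phi$ at each time. The energy of the wave map relative to the quotient metric is monotone (it is controlled by the Hawking-type mass/the ADM mass, which is non-increasing), giving a uniform energy bound. The first main step is then an energy non-concentration statement: at any putative first singular point one rescales near the causal past of that point, extracts a nontrivial finite-energy self-similar or harmonic-map-type bubble, and derives a contradiction — here the Grillakis condition \eqref{eq:grillakis-cond-first} is exactly what is needed to rule out such bubbles, as in \cite{Grillakis}, while the coupling to Einstein's equations must be handled by showing the geometry converges to a flat (Minkowskian) model under the rescaling, so that the Einstein terms do not obstruct the compactness argument. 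The second step is small-energy global well-posedness: once non-concentration guarantees that the energy in any small causal diamond is below the threshold, local existence iterates to a global solution, and one checks that the reconstructed metric remains smooth and the $t$-foliation remains Cauchy (using footnote \ref{foot:cauchytime} and the symmetric Cauchy time function) for all time.

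The third step is essentially bookkeeping but must be done carefully: one needs that a future-complete, smooth 2+1 Einstein-wave map spacetime lifts to a future-complete 3+1 vacuum spacetime with the translational Killing field. This requires controlling the conformal factor in Moncrief's reduction (it is a positive solution of an elliptic equation with source the wave-map energy, and asymptotic flatness of the 2+1 data — in the sense of section \ref{sec:asympt-flat}, cf. Huneau \cite{huneau:2013arXiv1302.1473H} — keeps it bounded away from zero and infinity at spatial infinity), and checking that causal geodesics of the 3+1 metric of infinite affine length correspond to causal curves of infinite length in the quotient, which follows once the twist potential and Killing norm stay bounded along the flow.

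The main obstacle is twofold and lies in the second step. First, the non-concentration argument is genuinely harder than in the flat-target case of \cite{Grillakis} because the ambient metric is itself an unknown: one must prove that under the blow-up rescaling the geometry \eqref{eq:trmetric} degenerates to the flat model uniformly, i.e. that concentrating wave-map energy does not concentrate enough curvature to destroy the compactness, which is where the smallness of the 2+1 "mass" and the structure of the constraints in the translational gauge are essential. Second — and this is why the \emph{full} Conjecture \ref{conj:WCC} remains open — without the equivariant $\mathbb{S}^1$ symmetry the reduced system is a genuine 2+1 wave map into $\mathbb{H}^2$ coupled to 2+1 gravity with \emph{no} symmetry on the quotient, for which even the (matter-only) large-data theory \cite{Tao,sterbenz:tataru:MR2657818,krieger:schlag:MR2895939} does not transfer directly because of the gravitational coupling and the slow (logarithmic) decay of 2+1 gravitational fields; handling that coupling in the non-symmetric setting is the crux that this paper does not attempt.
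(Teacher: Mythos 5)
You correctly identify that the statement is a conjecture which the paper does not prove: only the equivariant case is established (Theorem \ref{thm:main-first}), exactly as stated in Remark \ref{remark:WCCequivariant}, and your closing discussion of why the non-symmetric case is out of reach matches the paper's own framing. Your honest scoping is therefore appropriate, and your overall two-step skeleton for the equivariant case (non-concentration plus small-energy regularity) is the paper's. Where your sketch diverges from what is actually done is in the mechanism of both steps. For non-concentration the paper does not rescale, extract a bubble, or invoke any compactness argument in the style of Struwe \cite{struwe:MR1990477}; it runs the vectorfield/multiplier method of \cite{jal_tah1,Grillakis,chris_tah1} directly on the dynamical background (Theorem \ref{theorem_ener_nonconc}), with the currents $P_T$, $P_{\mathcal{R}_a}$, $P_\zeta$ carefully constructed so that, e.g., $P_T$ is exactly divergence free even though $T$ is not Killing, and the Grillakis condition \eqref{eq:grillakis-cond-first} enters as a pointwise sign condition on the divergence \eqref{eq:Ptotdiv}, not as an obstruction to bubbling. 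This in turn rests on prerequisite geometric facts you do not mention: absence of trapped surfaces via monotonicity of the mass $m$ (Theorem \ref{thm:regular}), localization of any first singularity to the axis (Theorem \ref{thm:singax}), and uniform control of the Jacobian between $(t,r)$ and null coordinates. For the second step, ``local existence iterates to a global solution'' understates the content of Theorem \ref{th:smallenergyglobalex}: the issue is uniform regularity up to the tip of the backward light cone, obtained via representation formulas for the flat radial wave operator, a bootstrap on weighted bounds for $\phi$ and $\partial\phi$, and integrations by parts in $u$ exploiting the null structure — not by a continuation argument. These are genuine methodological differences, though since neither you nor the paper proves Conjecture \ref{conj:WCC} itself, they bear only on the equivariant reduction.
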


Recall from point \ref{point:generalwmlargedata} of remark \ref{rem:severalremarks}that large data global existence holds for the corresponding semilinear analog, namely the wave map problem with $\MM= \Re^{2+1}$ and target $N = \mathbb{H}^2$. A proof of Conjecture \ref{conj:WCC} would likely require a local existence result at the critical level which seems currently out of reach\footnote{Note for instance that in the absence of symmetry, the  resolution of the bounded $L^2$-curvature conjecture in \cite{klainerman:etal:2012arXiv1204.1772K} for the 3+1 Einstein vacuum equations provides a local existence result which is 1/2 derivative above the scaling.} for quasilinear wave equations in dimensions higher than 1+1. As a first step towards Conjecture \ref{conj:WCC}, we prove in this paper the special case of equivariant symmetry (see Remark \ref{remark:WCCequivariant} below).

\subsection{Large data global regularity for the equivariant Einstein-wave map problem}

We are now ready to state our main result. 
\begin{theorem}[Global regularity of equivariant Einstein-wave maps]\label{thm:main-first}
Let $(M, \gg, \Phi)$ be the maximal Cauchy development of a smooth, asymptotically flat, finite energy Cauchy data set with finite initial energy, for the $2+1$ equivariant Einstein-wave map problem \eqref{eq:ein-equiv-wm-first} with target $(N, h)$. Assume that the metric $h$ has the form 
$$
h = d\rho^2 + g^2(\rho) d\theta^2 
$$
for an odd function $g: \Re \to \Re$ with $g'(0) = 1$. Assume that $g$ satisfies 
\begin{equation} \label{eq:nospherecond-first} 
\int_0^s g(s') ds' \to \infty \quad \text{when }  s \to \infty
\end{equation} 
and the Grillakis condition \eqref{eq:grillakis-cond-first}. Then, $(M,\gg, \Phi)$ is smooth, and $(M, \gg)$ is asymptotically flat and geodesically complete. \end{theorem}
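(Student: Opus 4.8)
The plan is to follow the two–step strategy of Christodoulou--Tahvildar-Zadeh, Shatah--Struwe and Grillakis — a local well-posedness theory together with an energy-concentration continuation criterion, followed by a proof of energy non-concentration, with the Grillakis condition entering only in the second step — but now every estimate must be run on a geometry that is itself part of the unknown. First I would record that in $2+1$ dimensions the Einstein equation \eqref{eq:ein-wm-first} is not evolutionary: in the coordinates \eqref{eq:trmetric} the constraints and the evolution equation for $\gg$ reduce to a first–order system giving $\partial_r\alpha$, $\partial_r\beta$ and $\partial_t\beta$ algebraically in terms of the (non-negative) wave-map energy density of $\phi$. Hence $\beta$ is monotone in $r$ with $\beta(t,0)=0$, and $|\alpha|+|\beta|$ on a slice is bounded by $\kappa$ times the total wave-map energy $E$, which is conserved by \eqref{eq:wave-wm-first}; asymptotic flatness makes $E$ finite and, with the axis normalisation, keeps $e^{\pm\alpha},e^{\pm\beta}$ bounded above and below on every compact time interval. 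Writing $G(s)=\int_0^s g$, Cauchy--Schwarz against the potential part of the energy gives $|G(\phi(t,r))-G(\phi(t,0))|\lesssim E$, and since $\phi(t,0)=0$ by equivariance, hypothesis \eqref{eq:nospherecond-first} — which is exactly properness of $G$ — yields $\|\phi\|_{L^\infty}\le M(E)$, hence a bound on $f(\phi)=g(\phi)g'(\phi)$; this is the step that excludes the $\mathbb{S}^2$–type obstruction. Standard energy estimates then give local well-posedness for the coupled system and the continuation criterion: the solution extends smoothly unless the wave-map energy concentrates at a point of the axis $\{r=0\}$, since off the axis \eqref{eq:wave-wm-first} is a regular semilinear wave equation with smooth, $E$-controlled coefficients and a bounded nonlinearity, so finite propagation speed and the energy bound give regularity there directly.

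Suppose then that a singularity forms at a point $p=(t_\ast,0)$ on the axis. Passing to null/isothermal coordinates on the orbit space $\Orb$ — in which $\check\gg$ is conformal to the $1+1$ Minkowski metric — and using the conformal covariance in two base dimensions of the principal operator $\check\nabla^a(r\,\check\nabla_a\,\cdot)$, the equation for $\phi$ becomes a controlled lower-order perturbation of the flat equivariant wave map. For the truncated backward light cone from $p$, the local wave-map energy $\cE(t)$ is monotone non-increasing with non-negative null flux, so $\cE(t)\downarrow\cE_\ast\ge0$ and the flux is integrable. I would then run the Morawetz/virial identity with the scaling multiplier $(t_\ast-t)\partial_t+r\partial_r$ of the flat model pulled back to $\Orb$: integrating over the cone and substituting the wave equation, the bulk term reduces — modulo geometric error terms that are quadratically small in the flux and controlled by $E$ — to a multiple of $\iint_{\rm cone}\big(2F(\phi)+\phi F'(\phi)\big)\,r^{-2}\,d\mu$, where $F=\tfrac12 g^2$ and $d\mu$ has a definite sign. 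Since $2F(s)+sF'(s)=g(s)\big(g(s)+sg'(s)\big)$, the Grillakis condition \eqref{eq:grillakis-cond-first} makes this term non-negative — equivalently, $s\mapsto s\,g(s)$, and hence $s\mapsto(s\,g(s))^2$, is increasing on $(0,\infty)$ — and together with the integrability of the flux this forces $\cE_\ast=0$. Equivalently, one rules out directly a non-trivial self-similar blow-up profile: such a profile is a finite-energy static solution in the self-similar frame, which the Pohozaev identity built on the same combination $2F+sF'$ annihilates under \eqref{eq:grillakis-cond-first}.

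By the previous paragraph the wave-map energy in a small backward cone about any candidate singular point can be made arbitrarily small; feeding this into the small-energy part of the local theory — where the coupling to gravity is genuinely lower order, the metric being an $E$-controlled functional of $\phi$ — extends the solution smoothly across $p$, contradicting maximality of the Cauchy development. Hence $\phi$ is globally regular; the a priori bounds above give that $(M,\gg)$ is regular, and geodesic completeness follows because $e^{\alpha}$ is bounded below on compact regions while $\alpha,\beta$ have the correct asymptotically flat (conical) behaviour at spatial infinity, so no causal geodesic terminates at finite affine parameter.

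The main obstacle is the Morawetz/virial step in the \emph{coupled} setting: it has to be carried out on a dynamical metric whose smoothness is controlled only by $E$, and one must show that the errors generated by the curvature of $\Orb$ and by the $r$-dependence of $\alpha,\beta$ are dominated by the integrable null flux and by $E$, so that the single good sign supplied by Grillakis's inequality still closes the estimate. Setting up a clean local well-posedness and continuation theory at the energy level for the coupled quasilinear system, and dealing carefully with the axis $\{r=0\}$ and the conical ends, are the secondary technical points.
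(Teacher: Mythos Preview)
Your overall architecture matches the paper: first singularity must be on the axis, non-concentration of energy via multiplier identities with the Grillakis condition supplying the decisive sign, then small energy implies regularity to close the contradiction. Your identification of where \eqref{eq:nospherecond-first} enters (the $L^\infty$ bound on $\phi$ via properness of $G(s)=\int_0^s g$) and where Grillakis enters (the virial term $g(\phi)(g(\phi)+\phi g'(\phi))$) is correct. The paper's non-concentration argument is implemented somewhat differently from the single scaling-multiplier estimate you sketch --- it proceeds in stages (non-concentration in an annular region, then integrated kinetic energy via $\mathcal{R}_1=r\partial_r$, then rotational potential via $\mathcal{R}_a=r^a\partial_r$ with $a<1$ plus a correction $P_\zeta$, which is where Grillakis is used, then the radial part) --- but this is a difference of packaging rather than substance.

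The genuine gap is your treatment of the small-energy step. You write that once non-concentration is established, ``feeding this into the small-energy part of the local theory --- where the coupling to gravity is genuinely lower order'' finishes the proof. In the paper this step is \emph{not} routine: it occupies Sections~\ref{sec:improved-uniform-phi}--\ref{sec:small-energy-global}, the bulk of the technical work, and is considerably harder than the non-concentration argument. Even with small energy one has no direct control on $\partial_u\phi$, and the coupling terms $(\Omega^2-1)w/\varrho^2$, $(\partial_u r+\tfrac12)w/\varrho^2$, $(\partial_{\bar u}r-\tfrac12)w/\varrho^2$ in the equation for $w=\phi/\varrho$ are not perturbative without substantial preliminary work; the paper notes explicitly (Remark~\ref{rem:strichnotpossible}) that a Strichartz-based approach fails precisely because of these terms. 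Instead the paper runs a bootstrap to get $|\phi|\lesssim r^\delta$ and then $|\phi|\lesssim r^{1/2}$ via the explicit fundamental-solution representation for the flat radial wave operator, \emph{integrating by parts in $u$} to trade the uncontrolled $\partial_u\phi$ terms for $\partial_{\bar u}$ terms and boundary contributions --- this works only because of the null structure of the Einstein equations in $(u,\bar u)$ coordinates and a nontrivial kernel estimate (Lemma~\ref{lemma:upperboundprumu}). A second pass with the same machinery applied to $v=(\partial_\varrho+\varrho^{-1})\phi$ then yields uniform bounds on $\partial\phi$, after which the higher-order energies close. Your proposal flags the Morawetz step on a dynamical metric as the main obstacle, but in fact the small-energy regularity is the harder and more novel half of the argument.
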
 

\begin{remark}\lab{remark:WCCequivariant}
As mentioned in Remark \ref{remarkrelevanceU(1)}, an important motivation for studying the Einstein-wave map system arises from the fact that this system with target $N = \mathbb{H}^2$ arises naturally as the reduction of the 3+1 vacuum Einstein equations with a spacelike translational Killing field. In particular, Theorem \ref{thm:main-first} proves Conjecture \ref{conj:WCC} in the special case of equivariant symmetry and should be seen as the analog of the proof of the Cosmic Censorship conjectures in this setting.
\end{remark}

The proof of Theorem \ref{thm:main-first} follows, as in the semilinear analog of free wave maps on Minkowski space, from non-concentration of energy and small energy global existence. The proof of
non-concentration of energy and the initial framework of the global existence problem is contained 
in the PhD Thesis of the second author \cite{gudapati:2013arXiv1311.4495G}. Let us emphasize in particular the following
\begin{itemize}
\item The proof of non-concentration of energy relies on the vectorfield method. Unlike the semilinear case where one relies on vectorfields of Minkowski, the vectorfields we use here have to be carefully constructed and controlled. In particular, we exhibit a vectorfield\footnote{the analog of $\frac{\pr}{\pr t}$ in Minkowski.} $T$, which is not Killing but leads nevertheless to a conserved current.

\item The proof of small energy global existence relies in a fundamental way on the null structure of the equations written in null coordinates. Indeed, derivatives along outgoing null cones of $\phi$ as well as the metric coefficients behave better, while the null structure allows to integrate by parts derivatives along ingoing null cones such that the new terms generated behave better. 
\end{itemize}

\vspace{0.3cm}

The structure of the paper is as follows. In Section \ref{sec:prel}, we introduce null coordinates $(u,\ub)$ and a notion of mass. In Section \ref{sec:firstsing}, we prove the absence of trapped surfaces and that the first singularity, if it exists, must lie on the axis of symmetry. In Section \ref{sec:tr-coord}, we introduce $(t,r)$ coordinates. In Section \ref{sec:non-conc}, we prove the non-concentration of the energy. ln Section \ref{sec:proofmaintheorem}, we state a result on small energy global existence and use it in conjunction with non-concentration of energy to prove Theorem \ref{thm:main-first}. The rest of the paper is then devoted to the proof of small energy global existence. In Section \ref{sec:improved-uniform-phi} we derive a uniform weighted upper bound for $\phi$. In Section \ref{sec:improved-uniform-dphi}, we rely on the upper bound of Section \ref{sec:improved-uniform-phi} to derive a uniform upper bound for $\pr\phi$. Finally, we rely on the upper bound of Section \ref{sec:improved-uniform-dphi} to prove small energy global existence in Section \ref{sec:small-energy-global}.

\section{Null coordinates} \label{sec:prel}

We assume that all objects are smooth\footnote{Throughout the paper, we use the terms smooth and regular interchangeably.}, unless otherwise stated. In this section we introduce a null coordinate system and a notion of mass in 2+1 dimension. This setup will be used in the next section to prove that the first singularity for the critical, equivariant Einstein-wave map system occurs on the axis of rotation, i.e. the set $\{r=0\}$ which we denote by $\Gamma$.

\subsection{Existence of null coordinates}

Recall from the discussion in Section \ref{sec:equivcrit} that the orbit space $(\Orb, \check{\gg})$ is a 2-dimensional globally hyperbolic Lorentzian space and in particular conformally flat. Hence we may introduce a global null coordinate system with respect to which $\check{\gg}$ takes the form 
$$
\check{\gg} = - \Omega^2(u, \ub) du d\ub,
$$
which shows that $(M, \gg)$ admits a coordinate system $(u, \ub, \theta)$ such that $\gg$ takes the form 
$$
\gg = - \Omega^2 du d\ub + r^2 (u, \ub) d\theta^2 ,
$$
where now $d\theta^2$ is the line element on the $\mathbb{S}^1$ symmetry orbit. In view of \eqref{eq:leadingorder} we may, by redefining the coordinates $u, \ub$, without loss of generality assume that the conditions 
\begin{equation}\label{eq:axis-cond-uub} 
r=0,\,\, \pr_{\ub} r=\frac{1}{2},\,\, \pr_u r=-\frac{1}{2},\,\, u=\ub,\textrm{ and }\Omega=1,
\end{equation} 
%\mnote{LA: removed mention of $\Gamma$ in display, to avoid repetition} 
are valid on the axis $\Gamma$. Also, the volume element is $\mu_{\gg} = \Om^2 r/2$ and the wave operator on symmetric functions (i.e. $\ptl_\theta \phi = 0$) is
\begin{equation}\label{eq:wave-null} 
\square_{\gg} \phi = - \frac{2}{\Om^2 r} \Big(\ptl_u (r \ptl_\ub \phi) + \ptl_\ub (r
\ptl_u \phi)\Big).
\end{equation}

\subsection{The energy-momentum tensor in null coordinates} \label{sec:stress-null}

The non-vanishing components of $S_{\alpha\beta}$ in the coordinate system $(u, \ub, \theta)$ are 
\begin{subequations} 
\begin{align} 
S_{uu} &= \ptl_u \phi \ptl_u \phi,  \label{eq:Suu} \\
S_{\ub\ub} &= \ptl_\ub \phi \ptl_\ub \phi,  \label{eq:Subub} \\
S_{u\ub} &= \frac{\Om^2}{4} \frac{g^2(\phi)}{r^2}, \label{eq:Suub} \\ 
S_{\theta\theta} &= \frac{r^2}{2} \left ( \frac{4}{\Om^2} \ptl_u \phi \ptl_\ub
\phi + \frac{g^2(\phi)}{r^2} \right ). \label{eq:Sthetatheta} 
\end{align} 
\end{subequations} 
The components $S_{u\theta}$, $S_{\ub\theta}$ vanish identically. 
The energy-momentum tensor satisfies the dominant energy condition since 
\begin{equation}\label{eq:DEC} 
S_{uu} \geq 0, \quad S_{u\ub} \geq 0, \quad S_{\ub\ub} \geq 0.
\end{equation}

\subsection{The Einstein equation}\label{sec:Einsteineqinnullcoordinates}

The non-vanishing components in the coordinate system $(u, \ub, \theta)$ of the Einstein tensor $G_{\mu \nu} = R_{\mu\nu} - \half R \gg_{\mu\nu}$ are 
\begin{subequations}\label{eq:Gcompos}
\begin{align} 
G_{uu} &= - \Om^2 r^{-1} \ptl_u (\Om^{-2} \ptl_u r), \label{eq:Guu} \\
G_{\ub\ub} &= - \Om^2 r^{-1} \ptl_\ub (\Om^{-2} \ptl_\ub r), \label{eq:Gvv} \\	
G_{u\ub} &=  r^{-1} \ptl_u \ptl_\ub r, \label{eq:Guv} \\
G_{\theta\theta} &= 4 r^2 \Om^{-4} ( \ptl_u \Om \ptl_\ub \Om 
- \Om \ptl_u \ptl_\ub \Om ). \label{eq:Gthth} 
\end{align} 
\end{subequations} 
The components $G_{u\theta}$, $G_{\ub\theta}$ vanish identically. 

We can now write the $u,\ub$ components of the 
Einstein equations $G_{\a\b} = \cp S_{\a\b}$ in the form 
\begin{subequations} \label{eq:ein-uv} 
\begin{align} 
\ptl_u (\Om^{-2} \ptl_u r) &= - \Om^{-2} r \cp S_{uu}, \label{eq:Tuu}\\
\ptl_\ub (\Om^{-2} \ptl_\ub r) &= - \Om^{-2} r \cp S_{\ub\ub}, \label{eq:Tvv} \\
\ptl_u \ptl_\ub r &= r \cp S_{u\ub}, \label{eq:Tuv} \\
\Om^{-2} (\ptl_u \Om \ptl_\ub \Om - \Om \ptl_u \ptl_\ub \Om ) &= \frac{1}{4} r^{-2}
\Om^2 \cp S_{\theta\theta}. \label{eq:Tthth} 
\end{align} 
\end{subequations} 
Here, the equation (\ref{eq:Tuv}) is special to $2+1$ dimensions.

\subsection{The mass} \label{sec:mass} 

Define the quantity 
\begin{equation}\label{eq:mdef} 
m = 1 + 4\Om^{-2} \ptl_u r \ptl_\ub r.
\end{equation} 
It follows from \eqref{eq:axis-cond-uub} that $m=0$ on $\Gamma$. 
We can write $m$ in the form $m = 1-\gg^{\alpha\beta} \partial_\alpha r \partial_\beta r$.  In view of the form  \eqref{eq:trmetric} for $\gg$, we have 
\begin{equation}\label{eq:m-beta}
m = 1-e^{-2\beta}. 
\end{equation}
%Using this form of $m$, we see that $m=0$ on $\Gamma$ follows directly from \eqref{eq:leadingorder}. 

\begin{remark} The quantity $m$ defined by \eqref{eq:mdef} 
has a form related to the Hawking mass in 3+1 dimensional
spherically symmetric gravity. In 3+1 dimensions and spherical symmetry, the
Hawking mass $m_H$ is given by  
$$
m_H = \frac{r}{2}(1 + 4\Om^{-2} \ptl_u r \ptl_\ub r) .
$$ 
In $2+1$ dimensions, the quantity $m$ defined in (\ref{eq:mdef}), when
evaluated at infinity, is a
function of the mass defined by Ashetkar and Varadarajan \cite{ashtekar:varadarajan}.
\end{remark}
 
\begin{lemma} \label{lem:2.2}
The quantity $m$ admits a limit along any a space like asymptotically flat curve, which does not depend on the particular curve. We denote this limit by $m_\infty$. We have furthermore
$$m_\infty\in [0,1).$$  
\end{lemma}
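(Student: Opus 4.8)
The plan is to compute $m$ explicitly on a spacelike slice, read off the limit from the asymptotically flat fall-off, and then establish the two bounds on $m_\infty$ separately. The starting point is the invariant rewriting $m = 1 - \check{\gg}^{ab}\,\pr_a r\,\pr_b r$, which is equivalent to \eqref{eq:mdef} since $\check{\gg}^{u\ub} = -2\Om^{-2}$. On a symmetric Cauchy surface $\Sigma$ as in footnote \ref{foot:cauchytime}, with induced metric $q = e^{2\beta(r)}dr^2 + r^2 d\theta^2$, the gradient $\grad r$ is tangent to $\Sigma$ (because $dr$ annihilates the unit normal), so $m|_\Sigma = 1 - |dr|_q^2 = 1 - e^{-2\beta(r)}$; equivalently one checks $\Om = e^\alpha$, $\pr_u r = -\tfrac12 e^{\alpha-\beta}$, $\pr_\ub r = \tfrac12 e^{\alpha-\beta}$ on $\Sigma$ and substitutes into \eqref{eq:mdef}. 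Since the data is asymptotically flat, $\beta = \beta_\infty + \tilde\beta$ with $\tilde\beta \to 0$ as $r\to\infty$ by the Sobolev embedding for the weighted spaces, so $m|_\Sigma \to 1 - e^{-2\beta_\infty}$; this limit is $<1$ because $\beta_\infty$ is finite.

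The lower bound $m_\infty\ge 0$ is the positive-mass statement in this setting, and I would obtain it from the Hamiltonian constraint \eqref{eq:sgwm-constraints} on $\Sigma$ together with the dominant energy condition \eqref{eq:DEC}. The key simplification in $2+1$ dimensions is that $K = K_{rr}dr^2$ has rank at most one on the two-dimensional slice, whence $(K^c{}_c)^2 - K_{ab}K^{ab} = 2\det(K^a{}_b) = 0$ and the constraint collapses to $R = 2\cp\, S_{\mu\nu}T^\mu T^\nu \ge 0$ (a future unit normal being a positive combination of $\pr_u$ and $\pr_\ub$). On the other hand a direct computation for $q = e^{2\beta}dr^2 + r^2 d\theta^2$ gives $R = 2\beta' e^{-2\beta}/r = r^{-1}\pr_r m$, hence $\pr_r m = rR \ge 0$ on $\Sigma$. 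Since $m = 0$ on the axis $\Gamma$ (by \eqref{eq:axis-cond-uub}, or regularity $\beta(0)=0$), it follows that $m \ge 0$ on all of $\Sigma$, and in particular $m_\infty = \lim_{r\to\infty}m|_\Sigma \in [0,1)$.

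It remains to see that the same limit is obtained along an arbitrary asymptotically flat spacelike curve $\gamma$. For this I would establish the monotonicity of $m$ in the null directions: differentiating \eqref{eq:mdef} and using \eqref{eq:Tvv} and \eqref{eq:Tuv} gives $\pr_\ub m = 4\cp\,\Om^{-2} r\,(S_{u\ub}\,\pr_\ub r - S_{\ub\ub}\,\pr_u r)$, and symmetrically $\pr_u m = 4\cp\,\Om^{-2} r\,(S_{u\ub}\,\pr_u r - S_{uu}\,\pr_\ub r)$. In the asymptotic region one has $\pr_u r < 0 < \pr_\ub r$ — this holds on $\Sigma$ and is propagated by \eqref{eq:Tuu}--\eqref{eq:Tvv}, which force $\Om^{-2}\pr_u r$ and $\Om^{-2}\pr_\ub r$ to be monotone along $u$ and $\ub$ — so with \eqref{eq:DEC} we conclude that $m$ is non-decreasing along outgoing null rays and non-increasing along ingoing ones. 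Given a far-out point $p = (u_p,\ub_p)\in\gamma$ approaching spatial infinity, the two points of $\Sigma$ lying on the null rays $\{u = u_p\}$ and $\{\ub = \ub_p\}$ then bracket the value $m(p)$, and both of these points tend to spatial infinity along $\Sigma$ as $p$ does; hence $m(p)\to m_\infty$ by the squeeze.

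The main obstacle is this last step. One must make sure the relevant null rays lie inside the maximal Cauchy development and that $\pr_\ub r$ keeps its sign in the asymptotic region, i.e.\ that no trapped surface is encountered near spatial infinity — a fact which should follow directly from asymptotic flatness and the monotonicity equations \eqref{eq:Tuu}--\eqref{eq:Tvv}, rather than from section \ref{sec:firstsing} (which comes later). Alternatively, curve-independence can be recast as the conservation of the asymptotic quantity $\beta_\infty$, which is essentially the ADM mass of the rotationally symmetric slice.
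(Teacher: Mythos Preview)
The paper does not actually prove this lemma; it simply refers to Huneau \cite{huneau:2013arXiv1302.1473H}, where $m_\infty$ is identified with the deficit angle. Your proposal therefore supplies much more than the paper does, and the argument is essentially correct.

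The key identity $m = 1 - e^{-2\beta}$ on the initial slice follows cleanly from the invariant form $m = 1 - \check\gg^{ab}\pr_a r\,\pr_b r$ together with $T(r)=0$; the latter is precisely $K_{\theta\theta}=0$, which is part of the data specification $K=K_{rr}\,dr^2$ in Definition~\ref{def:CDS-equivariant}. (Your parenthetical formulas $\Omega=e^\alpha$, $\pr_{u}r=-\tfrac12 e^{\alpha-\beta}$ are not literally correct on a general slice --- in the paper's notation these carry factors $e^{\mathcal G}$, $e^{\mathcal F}$ --- but the conclusion via the invariant rewriting stands.) The bound $m_\infty<1$ then follows from $\beta_\infty<\infty$, and the positive-mass argument $\pr_r m = rR = 2\kappa r\,S_{TT}\ge 0$ with $m|_\Gamma=0$ is exactly right.

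On curve-independence you correctly flag the danger of circularity with Theorem~\ref{thm:regular}, which itself uses $m_\infty<1$. Two clean ways out: (i) your slice computation already gives $m_\infty = 1-e^{-2\beta_\infty}\in[0,1)$ as a statement about the \emph{data}; one may now invoke Theorem~\ref{thm:regular} to obtain $\pr_u r<0<\pr_\ub r$ globally, and then run your squeeze argument via \eqref{eq:meqs}; or (ii) argue directly, as you sketch, that in any asymptotic neighbourhood the flux integrals on the right of \eqref{eq:Tuu}--\eqref{eq:Tvv} are small by the decay of the data, so $\Omega^{-2}\pr_u r$ and $\Omega^{-2}\pr_\ub r$ remain close to their initial values and keep their signs there. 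Either closes the loop.
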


\begin{proof}
See \cite{huneau:2013arXiv1302.1473H} for the proof of this lemma, where $m_\infty$ is called the deficit angle.
\end{proof}

The mass $m$ satisfies the following equations which are analogous to the
ones satisfied by the Hawking mass in the $3+1$ dimensional case,
\begin{subequations}\label{eq:meqs} 
\begin{align}
\ptl_u m &= 4 \cp \Om^{-2} r ( S_{u\ub} \ptl_u r - S_{uu} \ptl_\ub r) \label{eq:mu} \\
\ptl_\ub m &= 4 \cp \Om^{-2} r (S_{u\ub} \ptl_\ub r - S_{\ub\ub} \ptl_u r) \label{eq:mv} 
\end{align} 
\end{subequations}

\section{The first singularity occurs on the axis}  \label{sec:firstsing}

\subsection{Absence of trapped surfaces}

Following Dafermos \cite{dafermos:trap}, 
we define the regions 
\begin{align*} 
\RR &= \{ p \in \Orb \text{ such that } \ptl_\ub r > 0, \quad \ptl_u r < 0 \}, \\
\TT &= \{ p \in \Orb \text{ such that } \ptl_\ub r < 0, \quad \ptl_u r < 0 \}, \\
\AA &= \{ p \in \Orb \text{ such that } \ptl_\ub r = 0, \quad \ptl_u r < 0 \}. 
\end{align*} 
Then $\RR, \TT, \AA$ are the non-trapped (or regular), 
trapped and marginally trapped
regions, respectively. Due to work of Ida \cite{ida:2000PhRvL..85.3758I}, one expects that in a 2+1
dimensional spacetime satisfying the dominant energy condition, trapped
or marginally trapped surfaces occur only in exceptional cases. In fact, as shown by Galloway et al. 
\cite{galloway:etal:2012CMaPh.310..285G} a 2+1 dimensional spacetime satisfying the dominant energy condition and a mild asymptotic condition, weaker than asymptotic flatness, cannot contain any marginally trapped surfaces. 
We give below a direct proof that in the case under consideration, there are no trapped or marginally trapped
surfaces. 

\begin{theorem}[Absence of trapped surfaces]\label{thm:regular}
We have 
\begin{enumerate} 
\item $\Orb = \RR$
\item For $q \in \Orb$, 
\begin{equation}\label{eq:mineq}
0 \leq m(q) <m_\infty<1 .
\end{equation}  
\end{enumerate} 
In particular, the spacetimes under consideration contain no
trapped or marginally trapped surfaces, i.e. $\TT = \emptyset$, $\AA =
\emptyset$. 
\end{theorem}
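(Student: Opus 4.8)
The plan is to establish the two claims by a continuity/bootstrap argument along the ingoing null direction, exploiting the monotonicity of $r$ and the sign of the mass flux in the equations \eqref{eq:meqs}. First I would note that the hypotheses already pin down the situation at the axis: by \eqref{eq:axis-cond-uub} we have $\pr_{\ub} r = \tfrac12 > 0$ and $\pr_u r = -\tfrac12 < 0$ on $\Gamma$, so a neighborhood of the axis lies in $\RR$, and $m = 1 + 4\Om^{-2}\pr_u r\,\pr_{\ub}r = 0$ there. Thus both statements hold near $\Gamma$, and it suffices to propagate them. The natural device is the pair of mass equations: rewriting \eqref{eq:Tuu} and \eqref{eq:Tvv} shows that $\Om^{-2}\pr_u r$ is monotone along ingoing null rays and $\Om^{-2}\pr_{\ub} r$ is monotone along outgoing null rays, since $S_{uu}, S_{\ub\ub} \geq 0$ by \eqref{eq:DEC}. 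Combined with the fact that $\pr_u\pr_{\ub} r = r\cp S_{u\ub} \geq 0$ from \eqref{eq:Tuv}, one controls the signs of $\pr_u r$ and $\pr_{\ub} r$ throughout $\Orb$.

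Concretely, I would argue as follows. Fix a point $q \in \Orb$ and consider the past-directed null segments from $q$ back to the axis $\Gamma$. Along the outgoing null ray through $q$, integrate \eqref{eq:Tvv}: since $\pr_{\ub}(\Om^{-2}\pr_{\ub}r) = -\Om^{-2} r\cp S_{\ub\ub} \leq 0$ and the ray starts on $\Gamma$ where $\Om^{-2}\pr_{\ub}r = \tfrac12 > 0$, one must rule out $\Om^{-2}\pr_{\ub}r$ reaching zero; here the key point is that $r$ itself stays positive off the axis (which follows because $\pr_u\pr_{\ub}r \geq 0$ prevents $r$ from decreasing to zero away from $\Gamma$ — this is exactly the "no trapped surfaces" content) so the integrand is genuinely controlled. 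A symmetric argument along the ingoing direction using \eqref{eq:Tuu} keeps $\pr_u r < 0$. Hence $\pr_{\ub}r > 0$ and $\pr_u r < 0$ everywhere, i.e. $\Orb = \RR$ and $\TT = \AA = \emptyset$. Once $\Orb = \RR$ is known, the mass estimate follows: from \eqref{eq:mu}--\eqref{eq:mv}, in the region $\RR$ we have $\pr_u r < 0 \leq \pr_{\ub} r$, so $S_{u\ub}\pr_u r - S_{uu}\pr_{\ub}r \leq 0$ and $S_{u\ub}\pr_{\ub}r - S_{\ub\ub}\pr_u r \geq 0$; integrating along a past-directed ingoing ray from $q$ to the axis, where $m = 0$, gives $m(q) \geq 0$, and integrating toward spatial infinity along an asymptotically flat curve, using the monotonicity in the outgoing direction, gives $m(q) < m_\infty$. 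The bound $m_\infty < 1$ is the preceding lemma.

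The circular-looking dependence — needing $r > 0$ off the axis to control the mass flux, and needing flux control to keep $r$ monotone — is the main obstacle, and the way around it is to run everything as a single connectedness argument: let $U \subset \Orb$ be the set of points that can be joined to $\Gamma$ by a causal curve staying in $\RR$ with $m \in [0,1)$ along it; show $U$ is open (from the strict inequalities $\pr_{\ub}r > 0$, $\pr_u r < 0$, $m < 1$ being open conditions) and closed (the monotonicity integrals extend the inequalities to limit points, and $m < m_\infty < 1$ stays strict), hence $U = \Orb$ by global hyperbolicity and connectedness of $\Orb$. A secondary technical point to handle carefully is the behavior of the integrals of $\Om^{-2} r\cp S$ near the axis, where $r \to 0$; this is where the normalizations \eqref{eq:axis-cond-uub} and the explicit forms \eqref{eq:Suu}--\eqref{eq:Suub} of the stress tensor components enter, showing the flux terms are integrable and the limiting value $m = 0$ on $\Gamma$ is attained. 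Everything else is a routine sign chase through \eqref{eq:ein-uv} and \eqref{eq:meqs}.
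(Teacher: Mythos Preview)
Your bootstrap has a genuine gap at the closedness step. You propose to propagate from the axis outward, defining $U$ as the set of points joined to $\Gamma$ by a causal curve staying in $\RR$ with $m\in[0,1)$, and you claim $U$ is closed because ``$m<m_\infty<1$ stays strict.'' But the bound $m<m_\infty$ is obtained by integrating the mass flux \emph{from the point in question out to spatial infinity through $\RR$}; that integration path lies entirely outside the set $U$ you are trying to close, and its membership in $\RR$ is exactly what is at stake. Propagation from the axis alone gives only the lower bound $m\geq 0$ via \eqref{eq:mv}, never an upper bound; and the null-direction monotonicities you cite from \eqref{eq:Tuu}--\eqref{eq:Tvv} go the wrong way (e.g.\ $\Om^{-2}\partial_{\ub}r$ is \emph{decreasing} in $\ub$ from its axis value $\tfrac12$), so they do not by themselves prevent $\partial_{\ub}r$ from reaching zero. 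At a putative limit point $q\in\partial\RR$ one has $m(q)=1$, and nothing in your scheme excludes this.

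The paper's argument resolves the circularity by anchoring at \emph{both} ends of a spacelike Cauchy curve $\check\Sigma$ running from the axis to spatial infinity. Asymptotic flatness guarantees the far end lies in $\RR$ with $m\to m_\infty<1$; the dominant energy condition makes $m$ nondecreasing along $\check\Sigma$ in the outward direction within $\RR$. If $\check\Sigma$ met $\partial\RR$ at some point $q$, then $m(q)=1$; but along the portion of $\check\Sigma$ from $q$ to infinity (which lies in $\RR$, taking $q$ outermost) monotonicity forces $m\geq 1$ all the way out, contradicting $m\to m_\infty<1$. This is the missing idea: you must use the asymptotic end, not just the axis, as an anchor for the contradiction. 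Once $\check\Sigma\subset\RR$ is known for every Cauchy curve, both conclusions follow immediately.
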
 

\begin{proof}
Let $\check{\Sigma}$ be a Cauchy curve in $\Orb$. Note that each $p \in \Orb$ is on such a
Cauchy curve. Let $s$ be a coordinate on $\check{\Sigma}$ and let $x(s)$ be the point in $\check{\Sigma}$ with
coordinate value $s$.
We may without loss of generality assume $\check{\Sigma}$ has one endpoint $x(0)$ 
on $\Gamma$ corresponding to $s=0$ and an ``asymptotically flat'' end
corresponding to $s \to \infty$ so that the coordinate $s$ takes values in
$[0,\infty)$. As discussed in Section \ref{sec:mass}, we have $m(x(0)) = 0$. 

Now $V = \ptl_s$ is a vectorfield tangent to $\check{\Sigma}$ and in 
particular is spatial. Therefore, since $V$ points towards increasing values of $s$, $V = a\ptl_\ub - b\ptl_u$ for positive 
functions $a, b$. 
It follows from the assumption of asymptotic flatness that $x(s)$ is contained in $\RR$ for $s$ large enough. Due to the
dominant energy condition, see (\ref{eq:DEC}), and equations (\ref{eq:meqs}),
we have 
\begin{equation}\label{eq:Vm} 
V m \geq 0
\end{equation} 
in the regular region $\RR$. Now consider a point $q \in
\check{\Sigma} \cap \partial \RR$, where $\partial \RR$ denotes the boundary of $\RR$. 
At such a point, one of the equations $\ptl_u r = 0$ or
$\ptl_\ub r = 0$ holds, and hence $m(q) = 1$.  Due to asymptotic flatness, $\lim_{s \to \infty} m(x(s))
= m_\infty \in [0,1)$. Hence due to the monotonicity of $m$, see
(\ref{eq:Vm}), we get a contradiction from $m(q) = 1$. Therefore $\check{\Sigma} \cap \partial
\RR = \emptyset$. This argument also shows that $\check{\Sigma} \subset \RR$. Also, since $m(x(0)) = 0$ with $x(0) = \check{\Sigma} \cap \Gamma$, then we have $0\leq m<1$ on $\check{\Sigma}$.

The properties of the mass discussed above, together with the 
fact that each point of $\Orb$ is on a Cauchy curve, and the maximality of $\Orb$, allow us to conclude the proof of the theorem.
\end{proof}

\subsection{First singularities}  \label{sec:first-sing-subsection}

We now restrict our consideration to the future $\Orb^+$ of $\check{\Sigma}$. Due to
Theorem \ref{thm:regular}, $\Orb^+ = \Orb \cap J^+(\check{\Sigma})$. We now introduce some
notions following Dafermos \cite{dafermos:trap,dafermos:naked}. Recall that $\Orb^+$ is conformally isometric to a domain in the 2-dimensional Minkowski space $\Re^{1,1}$. As in \cite[section 1.1]{dafermos:naked}, we shall consider the topological closure $\overline{\Orb^+}$ of $\Orb^+$ in the topology of $\Re^{1,1}$ and define its boundary $\overline{\Orb^+} \setminus \Orb^+$ accordingly. 

\begin{definition}
Let $p \in \overline{\Orb^+}$. The indecomposable past subset $J^-(p) \cap
\Orb^+$  is said to be {\bf eventually compactly
generated} if there is a compact subset $\Xcal \subset \Orb^+$ such that 
\begin{equation}\label{eq:ecg}
J^-(p) \cap \Orb^+ \subset D^+(\Xcal) \cup J^-(\Xcal) .
\end{equation} 
We will say that in this situation $\Xcal$ generates $J^-(p) \cap \Orb^+$. 
\end{definition}
Here, for convenience, we have followed the usage of \cite{dafermos:naked} rather than the standard definition, cf. \cite[Section 6.4]{MR1384756}. In particular, in the following, we take an indecomposable past subset to be a set of the form $J^{-}(p) \cap \Orb^+$ for some $p \in \overline{\Orb^+}$. 

\begin{definition}
A point $p \in \overline{\Orb^+} \setminus \Orb^+$ is said to be a {\bf first
singularity} if $J^-(p) \cap \Orb^+$ is eventually compactly generated and if
any eventually compactly generated indecomposable past subset of $J^-(p) \cap
\Orb^+$ is of the form $J^-(q) \cap \Orb^+$ for some $q \in \Orb^+$. 
\end{definition}

We will now state an extension criterion, which is a direct consequence of
the well posedness of the characteristic initial value problem, see
\cite[Prop. 1.1]{dafermos:naked}. To state this we need to introduce for a
subset $Y \subset \Orb^+\setminus \Gamma$, the quantity $N(Y)$,
\begin{equation} \label{eq:NYdef} 
N(Y) = \sup_Y \{ |\Omega|_1, |\Omega^{-1}|_0, |r|_2, |r^{-1}|_0, |\phi|_1\}
\end{equation}
where $|f|_k = \max(|f|_{C^k(u)},|f|_{C^k(v)})$. 

We can now state the extension criterion
\begin{proposition}[\protect{\cite[Property 1.1]{dafermos:naked}}]\label{prop:cont} 
Let $p \in \overline{\Orb^+} \setminus \overline{\Gamma}$ be a first
singularity. Then, for any compact $\Xcal \subset \Orb^+ \setminus \Gamma$,
generating $J^-(p)$, i.e. which satisfies \eqref{eq:ecg}, we have 
$$
N(\DXp) = \infty.
$$
\end{proposition}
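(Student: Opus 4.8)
The plan is to prove Proposition~\ref{prop:cont} by contradiction: assume that $p\in\overline{\Orb^+}\setminus\overline\Gamma$ is a first singularity and that there is a compact $\Xcal\subset\Orb^+\setminus\Gamma$ generating $J^-(p)$ with $N(\DXp)<\infty$. The strategy is then to show that under this boundedness hypothesis one can extend the solution past $p$ in a neighbourhood of $p$, so that a suitably chosen $J^-(q)\cap\Orb^+$ with $q\in\Orb^+$ is itself eventually compactly generated and \emph{properly} contains the data we started from --- contradicting the minimality built into the definition of a first singularity (or, depending on the exact formulation, contradicting that $p\notin\Orb^+$). Since the metric has the reduced form $\gg=-\Om^2\,du\,d\ub+r^2\,d\theta^2$ and the matter is the single scalar $\phi$, the entire problem in the orbit space reduces to the system \eqref{eq:ein-uv}, \eqref{eq:wave-null} for the unknowns $(\Om,r,\phi)$, and this is a first-order-in-$\Om,r$ / second-order-in-$\phi$ system with a semilinear (indeed, in these coordinates essentially null) structure.

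Concretely, I would proceed as follows. First, translate the quantity $N(\DXp)$ into genuine a priori bounds: $|\Om|_1,|\Om^{-1}|_0,|r|_2,|r^{-1}|_0,|\phi|_1$ bounded along the outgoing and ingoing null segments foliating $\DXp$ means $\Om$ and $\Om^{-1}$ are bounded (so the conformal factor is uniformly non-degenerate), $r$ is bounded away from $0$ (we are off the axis, consistent with $\Xcal\subset\Orb^+\setminus\Gamma$) and bounded above with one derivative of $\phi$ and two derivatives of $r$ controlled. Second, use the characteristic initial value problem: on the two null segments bounding $\Xcal$ (or more precisely on $\partial^+\Xcal$) the data $(\Om,r,\phi,\ptl_u\phi$ or $\ptl_\ub\phi)$ are smooth, and by the local well-posedness of the characteristic problem for \eqref{eq:ein-uv}--\eqref{eq:wave-null} --- which is Dafermos' Prop.~1.1 that we are permitted to invoke --- the solution exists on a full characteristic rectangle whose size is controlled \emph{from below} purely in terms of $N$. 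Third, do a continuity/bootstrap argument: cover a neighbourhood of $p$ in $\overline{\Orb^+}$ by such rectangles; the lower bound on rectangle size, uniform because $N(\DXp)<\infty$, shows the solution extends to an open neighbourhood of $p$, so $p$ is in fact an interior point of (an extension of) $\Orb^+$, contradicting $p\in\overline{\Orb^+}\setminus\Orb^+$.

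A few technical points need care. One must make sure the characteristic data inherited on $\partial^+\Xcal$ are themselves controlled in the right norms by $N(\DXp)$ (this is where the specific definition $|f|_k=\max(|f|_{C^k(u)},|f|_{C^k(\ub)})$ does its job, giving exactly enough regularity: $C^2$ on $r$, $C^1$ on $\Om$ and $\phi$, matching the derivative count in the evolution equations, with the wave equation \eqref{eq:wave-null} for $\phi$ being second order). One must also propagate these bounds \emph{into} the new rectangle rather than merely having them on the initial segments --- but this is standard energy-estimate / Gr\"onwall work for a semilinear system once $\Om,\Om^{-1},r,r^{-1}$ are controlled, and the null structure emphasised in the introduction ensures the nonlinear terms (notably the $g^2(\phi)/r^2$ source and the Christoffel-type term $k^2g(\phi)g'(\phi)/r^2$) are harmless away from the axis. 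Finally, one must organise the geometry so that ``extending past $p$'' genuinely yields an eventually compactly generated $J^-(q)$, $q\in\Orb^+$, strictly larger than anything generated by a compact subset of the original $\Orb^+$, i.e. one has to verify the domain-of-dependence inclusions \eqref{eq:ecg} are preserved under the extension.

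The main obstacle, in my view, is not any single estimate but the careful bookkeeping of the domains: showing that the local existence time (rectangle size) furnished by the characteristic problem is bounded below \emph{uniformly over all of $\DXp$} by a function of $N(\DXp)$ alone, and then patching these local extensions into a genuine open neighbourhood of $p$ in a way that respects causal structure and contradicts the defining property of a first singularity. Since the excerpt explicitly attributes the extension criterion to \cite[Prop.~1.1]{dafermos:naked} and \cite[Property~1.1]{dafermos:naked}, the cleanest route is to cite that well-posedness statement for the characteristic problem and spend the proof verifying that our reduced system \eqref{eq:ein-uv}--\eqref{eq:wave-null} falls within its scope with the norms packaged in $N$.
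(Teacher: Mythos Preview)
The paper does not give its own proof of this proposition: it is stated without proof, introduced in the text immediately preceding it as ``a direct consequence of the well posedness of the characteristic initial value problem, see \cite[Prop.~1.1]{dafermos:naked}.'' There is therefore nothing in the paper to compare your argument against.

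That said, your sketch correctly captures the mechanism behind such extension criteria and is essentially what one finds in the cited reference: uniform control of $N$ on $\DXp$ yields a uniform lower bound on the size of the characteristic rectangle on which local existence for the system \eqref{eq:ein-uv}, \eqref{eq:wave-null} holds, and tiling a neighbourhood of $p$ with such rectangles extends the solution past $p$. One clarification: the contradiction is simply with $p\in\overline{\Orb^+}\setminus\Orb^+$ (i.e.\ with maximality of the Cauchy development), not with the minimality clause in the definition of first singularity; your final paragraph hedges between the two, but only the former is used. Also, you do not need to separately ``propagate bounds into the new rectangle'' --- the local existence statement already furnishes a smooth solution on the rectangle, and the point is only that its size depends on $N$ and not on the location within $\DXp$.
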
  

The following theorem states that the first singularity occurs on the axis. 
\begin{theorem}[The first singularity occurs on the axis]\label{thm:singax} 
Let $p \in \overline{\Orb^+} \setminus \Orb^+$ be a first singularity. Then 
$p \in \overline{\Gamma}\setminus \Gamma$. 
\end{theorem}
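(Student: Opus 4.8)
The plan is to argue by contradiction: suppose $p \in \overline{\Orb^+}\setminus\Orb^+$ is a first singularity with $p \notin \overline{\Gamma}\setminus\Gamma$, i.e. $p$ lies at positive distance from the axis. Then one can choose a compact set $\Xcal \subset \Orb^+\setminus\Gamma$ generating $J^-(p)\cap\Orb^+$, and by Proposition~\ref{prop:cont} we must have $N(\DXp) = \infty$. The strategy is to contradict this by showing that $\Omega$, $\Omega^{-1}$, $r$, $r^{-1}$, $\phi$ and their relevant derivatives are all uniformly bounded on $D^+(\Xcal)\setminus\{p\}$ once $p$ is away from the axis. The geometric point is that away from $\Gamma$ the radius function $r$ is bounded below by a positive constant on $\overline{D^+(\Xcal)}$, so the singular factor $r^{-2}$ appearing in the wave equation \eqref{eq:wave-wm-first} and in the stress-energy components \eqref{eq:Suub}, \eqref{eq:Sthetatheta} is harmless; all the difficulty of the problem is concentrated at the axis.

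First I would establish the bounds on $r$ and its first derivatives. Since $\Orb = \RR$ by Theorem~\ref{thm:regular}, we have $\ptl_{\ub} r > 0$ and $\ptl_u r < 0$ everywhere on $D^+(\Xcal)$, and the mass bound $0 \le m < m_\infty < 1$ from \eqref{eq:mineq}, rewritten via \eqref{eq:mdef} as $4\Om^{-2}\ptl_u r\,\ptl_\ub r = m - 1 \in (-1, m_\infty - 1)$, gives two-sided control: $\ptl_{\ub} r$ and $-\ptl_u r$ are each bounded above by the finite data on the past boundary (monotonicity in the appropriate null direction from \eqref{eq:Tuu}, \eqref{eq:Tvv}, whose right-hand sides have a sign) and bounded below in product, hence individually bounded below away from zero using the upper bound on the other. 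Integrating these derivative bounds along characteristics from the past edge $\Xcal$ keeps $r$ between two positive constants on $D^+(\Xcal)$ away from the axis. This simultaneously bounds $r$, $r^{-1}$, and $|r|_1$; the second derivatives of $r$ along $u$ and $\ub$ come from \eqref{eq:Tuu}--\eqref{eq:Tvv}, which reduce, after the $\Om$ bounds below, to controlling $S_{uu}$ and $S_{\ub\ub}$, i.e. $(\ptl_u\phi)^2$ and $(\ptl_\ub\phi)^2$, while the mixed second derivative $\ptl_u\ptl_\ub r$ from \eqref{eq:Tuv} equals $r\cp S_{u\ub} = \frac{\cp}{4}\Om^2 g^2(\phi)/r^2$, controlled once $\Om$ and $\phi$ are.

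Next I would bound $\Om$ and $\Om^{-1}$. The natural approach is to combine the mass definition \eqref{eq:mdef} — which reads $\Om^2 = 4\ptl_u r\,\ptl_\ub r/(m-1)$, expressing $\Om^2$ as a bounded-above, bounded-below-away-from-zero quantity in terms of the now-controlled $r$-derivatives and the mass $m \in [0, m_\infty)$ — so that $\Om$, $\Om^{-1}$ are immediately controlled, and moreover $|\Om|_1$ follows by differentiating this relation (using the already-obtained second-derivative bounds on $r$ and the mass transport equations \eqref{eq:mu}--\eqref{eq:mv}). Then I would bound $\phi$ and $\ptl\phi$: since the energy is finite and non-concentrating (this is exactly the hard analytic input proved in Section~\ref{sec:non-conc}, which we may invoke), and since on the region away from the axis the wave equation \eqref{eq:wave-wm-first} is a regular semilinear equation with smooth coefficients (because $r^{-1}$ and $\Om$ are bounded), a standard characteristic energy estimate / integration of \eqref{eq:wave-null} along null cones propagates $C^1$ bounds on $\phi$ from $\Xcal$ throughout $D^+(\Xcal)\setminus\{p\}$: one integrates $\ptl_u(r\ptl_\ub\phi)$ and $\ptl_\ub(r\ptl_u\phi)$, whose right-hand sides are $\sim \Om^2 g(\phi)g'(\phi)/r$, a bounded quantity, so $r\ptl_\ub\phi$ and $r\ptl_u\phi$ stay bounded, hence $\ptl\phi$ does. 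Combining everything yields $N(\DXp) < \infty$, contradicting Proposition~\ref{prop:cont}, so $p \in \overline{\Gamma}\setminus\Gamma$.

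The main obstacle, I expect, is the propagation of the $C^1$ bound on $\phi$ up to the singular point $p$: one needs the estimates to be uniform on $D^+(\Xcal)\setminus\{p\}$ with no loss as one approaches $p$, which requires the null-cone energy flux controlled by the non-concentration result to dominate uniformly, and requires care that the Gronwall-type argument closing the $r\ptl\phi$ bounds does not degenerate (the length of the characteristics inside the compact generating region must be bounded, which is where eventual compact generation and $p \notin \overline\Gamma$ are used). A secondary subtlety is making the lower bound $\ptl_{\ub} r \ge c > 0$ genuinely uniform: this uses both the mass inequality and the fact that $-\ptl_u r$ is bounded above, and one must check the constants depend only on $\Xcal$ and the data, not on how close one is to $p$.
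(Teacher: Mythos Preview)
Your overall architecture is right --- contradict Proposition~\ref{prop:cont} by showing $N(\DXp) < \infty$ when $p$ is away from the axis --- and your handling of $r$, $\Omega$ and their first derivatives via the mass identity and the sign of the right-hand sides in \eqref{eq:Tuu}--\eqref{eq:Tuv} is essentially sound (the monotone quantities are $\Omega^{-2}\ptl_u r$ and $\Omega^{-2}\ptl_\ub r$, not the bare derivatives, so the bookkeeping needs a bit more care; the paper packages this via $\varkappa = -\tfrac14\Omega^2(\ptl_u r)^{-1}$, monotone in $u$ by \eqref{eq:Tuu}, from which $\lambda = \varkappa(1-m)$ and $\Omega^2 = -4\nu\varkappa$ are read off).

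The genuine gap is in the control of $\phi$. You invoke the non-concentration of energy from Section~\ref{sec:non-conc}, but that section is logically downstream of Theorem~\ref{thm:singax}: the $(t,r)$ coordinate system of Section~\ref{sec:tr-coord} is built around the first singularity $p_\Gamma$ already lying on the axis, and the entire non-concentration argument runs in that framework. The appeal is therefore circular. Setting that aside, your direct argument --- ``the right-hand side $\Omega^2 g(\phi)g'(\phi)/r$ is a bounded quantity, so integrate'' --- presupposes exactly what is to be shown: $f(\phi) = g(\phi)g'(\phi)$ is bounded only once $\phi$ is (for the hyperbolic target $f(\phi) = \tfrac12\sinh 2\phi$ grows exponentially), and a Gronwall on the coupled $(\phi, r\ptl_u\phi, r\ptl_\ub\phi)$ system does not close without an a priori $C^0$ bound on $\phi$.

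The paper supplies this missing a priori bound directly from the mass equation \eqref{eq:mv}: both terms on its right-hand side are nonnegative, so integrating in $\ub$ and using $0 \le m < 1$ from \eqref{eq:mineq} gives the uniform flux bound $\int (r\ptl_\ub\phi)^2/(r\varkappa)\, d\ub \le 1/\kappa$. Cauchy--Schwarz against $\int \varkappa/r\,d\ub$ (bounded since $\varkappa \le K$ and $r \ge r_0$) then bounds $\phi$ pointwise from data on $\Xcal$. Only after that does the integration of the transport equations for $r\ptl_\ub\phi$ and $r\ptl_u\phi$ close --- now $f(\phi)$ really is bounded --- and the bounds on $\ptl\Omega$ and the second derivatives of $r$ follow in the order you describe. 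In short: the flux bound coming from mass monotonicity, not energy non-concentration, is the key analytic input here.
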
 

\begin{remark} 
Theorem \ref{thm:singax} 
should be compared to \cite[Theorem 3.1]{dafermos:naked}, which
states that a first singularity occurs either on the axis or has a trapped
surface in its past. 
\end{remark} 

\begin{proof}%[Proof of theorem \ref{thm:singax}]
Assume for a contradiction that $p \in \overline{\Orb^+} \setminus \overline \Gamma$. Let us introduce the notations 
\begin{align} 
\nu &\fdg= \ptl_u r, \\
\lambda &\fdg= \ptl_\ub r, \\
\zeta &\fdg= r \ptl_u \phi, \\ 
\vartheta &\fdg= r \ptl_\ub \phi, \label{eq:thetadef} \\
\varkappa &\fdg= - \frac{1}{4} \Om^2 \nu^{-1}. \label{eq:varkappadef} 
\end{align} 
In the present, 2+1 dimensional case, $m$ is given by \eqref{eq:mdef}, which using the above notation takes the form 
$$
m = 1 + 4 \Omega^{-2} \nu\lambda.
$$
Note that we have by Theorem \ref{thm:regular} $m < 1$ and also, 
since $\Orb^+ \subset \RR$, it holds that $\nu < 0$, $\lambda > 0$, $\varkappa > 0$  
everywhere in $\Orb^+$. Further,
note that from the definitions $r > 0$ in $\Orb^+ \setminus \Gamma$. We may
assume without loss of generality that $\Xcal \subset \Orb^+ \setminus \Gamma$. 
If $p = (u_s, \ub_s)$ denotes first singularity, we may further assume that $\Xcal$ is given by
$$
\Xcal = \Big(\{u_0\} \times [\ub_0 , \ub_s]\Big) \cup \Big([u_0, u_s] \times \{ \ub_0\}\Big) 
$$
where $u_0<u_s$, $\ub_0<\ub_s$ and $u_s<\ub_0$ to ensure $\Xcal \subset \Orb^+ \setminus \Gamma$. Note that we have 
$$
[u_0, u_s] \times [\ub_0, \ub_s] = D^+(\Xcal) = J^-(p) \cap D^+ (\Xcal).
$$
 
In view of the compactness of $\Xcal$ the following bounds hold on $\Xcal$,
\begin{subequations} \label{eq:Xbounds} 
\begin{align} 
0 &< r_0 \leq r \leq R , \label{eq:rineq} \\
0 &< 
\lambda \leq \Lambda, 
\qquad  %\\ 
0 > 
\nu \geq -N , \label{eq:lambdanuineq} \\ 
|\phi| &\leq P, \nonumber \qquad %\\
|\vartheta| \leq \Theta, \nonumber \qquad %\\ 
|\zeta| \leq Z ,  \nonumber \\
0 & < 
\varkappa \leq K , \label{eq:kappaineq} \\
|\ptl_u \Omega| &\leq H,  \nonumber \qquad %\\
|\ptl_\ub \Omega| \leq H,  \nonumber \\
|\ptl_u \nu| &\leq H,  \nonumber \qquad %\\
|\ptl_\ub \lambda| \leq H, \nonumber 
\end{align} 
\end{subequations} 
for some positive real numbers $r_0, R, \Lambda, N, P, \Theta, Z, K, H$. 
Equation (\ref{eq:Tuu}) yields 
\begin{equation}\label{eq:kappaeq}
\ptl_u \varkappa = \kappa \frac{1}{r} \left ( \frac{\zeta}{\nu} \right)^2 \nu\varkappa .
\end{equation} 
Due to (\ref{eq:kappaeq}) and $\nu < 0$, it follows that 
inequality
(\ref{eq:kappaineq}) holds in all of  
$\DXp$. Since $\nu < 0$,
$\lambda > 0$, it follows that inequality (\ref{eq:rineq}) holds throughout
$\DXp$. 

Now consider $p^* = (u^*, \ub^*) \in \DXp$. The past null curves
starting at $p^*$ intersect $\Xcal$ at $(u^*,\ub_0)$ and $(u_0,\ub^*)$, respectively, see figure \ref{fig:Xset}. 
\begin{figure}
\centering
\psfrag{MApoint}{$p$}
\psfrag{MApstar}{$p^*$}
\psfrag{MAusube}{$(u^*, \ub_0)$}
\psfrag{MAueubs}{$(u_0,\ub^*)$}
\psfrag{MAXset}{$\Xcal$}
\includegraphics[width=3in]{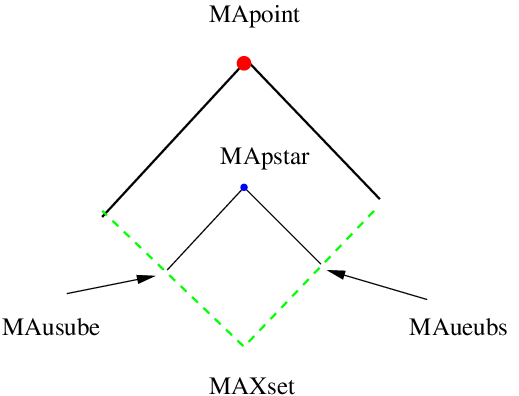}
\caption{$D^+(\Xcal)$}
\label{fig:Xset}
\end{figure}
Integrating (\ref{eq:thetadef}) yields 
\begin{align} 
|(\phi(u^*,\ub^*)| &\leq |\phi(u^*,\ub_0)| + \left| \int_{\ub_0}^{\ub^*}
\frac{\vartheta}{r}(u^*,\ub) d\ub  \right| \nonumber \\
&\leq P + \sqrt{\int_{\ub_0}^{\ub^*} \frac{\vartheta^2}{r\varkappa}  d\ub }\,\,
\sqrt{\int_{\ub_0}^{\ub^*} \frac{\varkappa}{r} d\ub }. \label{eq:phi-bound-firstsing} 
\end{align}
Equation \eqref{eq:mv} gives, using the present notation 
$$
\partial_\ub m = \kappa \left ( \frac{g^2(\phi)}{r} \lambda + \frac{\vartheta^2}{r\varkappa}  \right ). 
$$
In view of $\lambda > 0$, this gives, integrating along the same null curve as above,  
\begin{align} 
\int_{\ub_0}^{\ub^*}  \frac{\vartheta^2}{r\varkappa} d\ub &\leq 
\frac{1}{\kappa} \left ( m(u^*, \ub^*) - m(u^*, \ub_0) \right ) \nonumber \\
&\leq \frac{1}{\kappa} \label{eq:vartheta-bound} 
\end{align} 
where we used \eqref{eq:mineq}. We can now use the inequality \eqref{eq:vartheta-bound} together with the previous estimates of $\varkappa$ and $r$ and \eqref{eq:phi-bound-firstsing} to show that $\phi$ is uniformly bounded in $D^+(\Xcal) \setminus \{p\}$.

We next estimate $\lambda$ and $\nu$. First, use the relation 
$$
\varkappa ( 1 - m) = \lambda
$$
and the previous estimates for $m, \varkappa$, to get the inequality $0 < \lambda < K$ on $\DXp$. In order to estimate $\nu$, recall that $\nu < 0$ on $\Orb$ by theorem \ref{thm:regular}. Next, note that in view of \eqref{eq:Tuv} and \eqref{eq:DEC} we have 
$\partial_\ub \nu > 0$ and hence integrating as above gives  
$$
\nu(u^*, \ub_0) < \nu(u^*, \ub^*) < 0 .
$$
This means that the inequalities \eqref{eq:lambdanuineq} hold on $\DXp$. 

From the definition of $\varkappa$, cf. \eqref{eq:varkappadef}, we have 
$$
\Omega^2 = - 4 \nu \varkappa
$$
which in view of the above estimates gives  
\begin{equation}\label{eq:OmNK} 
\Omega^2 \leq 4 NK \quad \text{on $\DXp$.}
\end{equation}

To estimate the first derivative of $\phi$, we write 
\eqref{eq:ein-equiv-wm-first}
in
the form 
\begin{align} 
\ptl_u \vartheta &= \half r^{-1} \nu \vartheta  
- \half r^{-1} \lambda \zeta + \varkappa \nu \frac{f(\phi)}{r}   \\
\ptl_\ub \zeta &= \half r^{-1} \lambda \zeta - \half r^{-1} \nu \vartheta 
+ \varkappa \nu \frac{f(\phi)}{r} 
\end{align} 
Integrating these relations as above yields uniform bounds for $\vartheta,\zeta$ in
$\DXp$.

Next, observe that \eqref{eq:Tthth} takes the form 
\begin{equation}\label{eq:dudublnOm}
- \partial_u \partial_\ub \log(\Omega) = \frac{1}{8} r^{-2} \kappa \left ( 4 \vartheta \zeta + \Omega^2 g^2(\phi) \right ) 
\end{equation}
in the current notation. 
The right hand side of \eqref{eq:dudublnOm} is uniformly bounded on $\DXp$ by the above estimates. 
Integrating as above along the null curves $\{ (u, \ub^*), u_0 \leq u \leq u^*\}$ and 
$\{(u^*, \ub), \ub_0 < \ub < \ub^* \}$ yields uniform bounds on $\partial_u \log(\Omega)$ and $\partial_\ub \Om$ on $\DXp$, and hence in view of \eqref{eq:OmNK} also on $\partial_u \Om$ and $\partial_\ub \Om$. A second integration of $\partial_u \log(\Omega)$ or $\partial_\ub \log(\Omega)$ allows us to give a uniform bound on $|\log(\Omega)|$, and hence also on $|\Omega^{-1}|$, in $\DXp$. 

Now we have uniform bounds in $\DXp$ for the quantities 
$|r^{-1}|, |\Omega^{-1}|$, $|\ptl_u r|$, $|\ptl_\ub r|$, $|\phi|$, $|\ptl_u
\phi|$, $|\ptl_\ub \phi|$, $|\partial_u \Omega|$, $|\partial_\ub \Omega|$. A bound on 
$|\ptl_u \ptl_\ub r|$ 
follows in view of these estimates directly from (\ref{eq:Tuv}). It remains
only to estimate $\ptl_u \ptl_u r = \ptl_u \nu$ and $\ptl_\ub \ptl_\ub r = \ptl_\ub
\lambda$.  In order to do this, we can use equations (\ref{eq:Tuu}) and (\ref{eq:Tvv}) 
since all occuring
terms are bounded by our previous estimates. 

This completes the proof that if $p$ is a first singularity in
$\overline{\Orb^+}\setminus \overline{\Gamma}$, we have 
$N(\DXp) < \infty$ which by Proposition \ref{prop:cont} gives a contradiction. This shows that every first
singularity occurs in $\overline{\Gamma} \setminus \Gamma$, i.e. on the
axis, and hence concludes the proof of Theorem \ref{thm:singax}. 
\end{proof} 

\section{$(t,r)$ coordinates} \label{sec:tr-coord}

\subsection{Construction of $(t,r)$ coordinates} \label{sec:trcoord}

Let $(M, \gg, \phi)$ be the maximal Cauchy development of an asymptotically flat Cauchy data set for the equivariant Einstein-wave map problem. 
Let $\Gamma = \{r=0\}$ be the axis of rotation in $M$. If $\Gamma$ is incomplete to the future, we let $p_\Gamma$ be the first singularity. 

\newcommand{\cSigma}{\check{\Sigma}}
\newcommand{\cgg}{\check{\gg}}
\newcommand{\cq}{\check{q}}

\begin{lemma}
Let $t$ be the parameter  on $\Gamma$ such that $\dot \Gamma = d\Gamma/dt$ satisfies 
$$\gg_{\alpha\beta}\dot \Gamma^\alpha \dot \Gamma^\beta = -1\textrm{ for }t<0\textrm{ and }\lim_{t\nearrow 0} \Gamma(t) = p_\Gamma.$$
Extend $t$ to be constant on the maximal orbit $\cSigma_t$ of $\nabla r$ starting at $\Gamma(t) \in \Gamma \cap \RR$. Then,  $(t,r)$ is a regular coordinate system on $\cup_{t<0}\cSigma_t$ and 
$$
\cgg = -e^{2\alpha} dt^2 + e^{2\beta} dr^2 
$$
for some functions $\alpha = \alpha(t,r)$, $\beta = \beta(t,r)$. Furthermore, we have  
$$\a=\b=0\textrm{ on }\Gamma.$$ 
\end{lemma}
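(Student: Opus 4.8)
The plan is to work throughout in the orbit space $(\Orb,\cgg)$ and to build the $(t,r)$ chart by taking the $r$-coordinate lines to be the integral curves of $\nabla r$ issuing from the axis. First I would record the structural facts about $\nabla r$. By Theorem~\ref{thm:regular} we have $\Orb=\RR$, so $\pr_u r<0$ and $\pr_{\ub} r>0$ everywhere; a one-line computation in the null frame then gives
\[
\cgg(\nabla r,\nabla r)=-4\,\Om^{-2}\,\pr_u r\,\pr_{\ub} r=1-m,
\]
which by \eqref{eq:mineq} lies in $(0,1]$, and equals $1$ on $\Gamma$ since $m\equiv0$ there by \eqref{eq:axis-cond-uub}. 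Hence $\nabla r$ is smooth, nowhere vanishing and spacelike on all of $\Orb$, and in particular transverse to the timelike axis $\Gamma$. Moreover, along any integral curve of $\nabla r$ one has $dr/ds=\cgg(\nabla r,\nabla r)=1-m>0$, so $r$ is strictly increasing along it; thus no orbit is periodic, and each orbit meets $\{r=0\}=\Gamma$ in at most one point, necessarily its inner endpoint. For $t<0$ the point $\Gamma(t)\in\Gamma\cap\RR$ is therefore the base point of a unique maximal integral curve $\cSigma_t$, along which $r$ ranges over an interval $[0,R_t)$ with $R_t\in(0,\infty]$.

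Next I would set up the chart. Since distinct integral curves of $\nabla r$ are disjoint and each meets $\Gamma$ only at its own base point, the prescription ``$t$ is constant, equal to $t$, on $\cSigma_t$'' does define a function $t$ on $U\fdg=\bigcup_{t<0}\cSigma_t$, and writing $F(t,r)$ for the point of $\cSigma_t$ at radius $r$ we have $F(t,0)=\Gamma(t)$. To see that $(t,r)$ is a regular coordinate system on $U$ I would invoke the flowout (collar) construction: $\Gamma$ is a smooth curve transverse to the nowhere-vanishing field $\nabla r$, so the flow of $\nabla r$ out of $\Gamma$ is a diffeomorphism onto a neighbourhood of $\Gamma$; reparametrizing the flow parameter by $r$ (legitimate because $dr/ds=1-m>0$) and using the disjointness of the $\cSigma_t$ together with the fact that each meets $\Gamma$ exactly once promotes this to a diffeomorphism of $\{t<0,\ 0\le r<R_t\}$ onto $U$. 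Equivalently: $dr\neq0$ by the first step; $dt$ annihilates $\nabla r$ while $dr(\nabla r)=1-m\neq0$; and $dt$ does not vanish transversally to $\nabla r$ because distinct $\nabla r$-orbits carry distinct values of $t$.

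With the chart in hand the metric form is essentially forced. The coordinate field $\pr_r$ is tangent to the orbits of $\nabla r$, so $\pr_r=\mu\,\nabla r$ for some positive $\mu$, and $1=dr(\pr_r)=\mu\,\cgg(\nabla r,\nabla r)=\mu(1-m)$ gives $\mu=(1-m)^{-1}$; hence $\cgg_{tr}=\mu\,dr(\pr_t)=\mu\,\pr_t r=0$ (as $r$ is the other coordinate) and $\cgg_{rr}=\mu^2(1-m)=(1-m)^{-1}>0$, while since $\cgg$ is Lorentzian with $\pr_r$ spacelike we must have $\cgg_{tt}<0$. Writing $\cgg_{rr}=e^{2\b}$ and $\cgg_{tt}=-e^{2\a}$ yields $\cgg=-e^{2\a}\,dt^2+e^{2\b}\,dr^2$ with $e^{-2\b}=1-m$ and $\a,\b$ smooth on $U$. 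On $\Gamma$ we get $\b=0$ from $m\equiv0$; and since $F(t,0)=\Gamma(t)$, the coordinate curve $r=0$ is $\Gamma$ parametrized by $t$, so $\pr_t|_\Gamma=\dot\Gamma$ and $\cgg_{tt}|_\Gamma=\cgg(\dot\Gamma,\dot\Gamma)=-1$ by the normalization of $t$, i.e.\ $\a=0$ on $\Gamma$. Finally, lifting back to $M$ via $\gg=\cgg+r^2\,d\theta^2$ reproduces the form \eqref{eq:trmetric}.

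The one step that needs genuine care is the construction of the chart: the main obstacle is to verify that the $\nabla r$-orbits through the axis foliate $U$ without focal points or self-intersections, so that $F$ is globally injective with everywhere invertible differential. This is precisely where the strict monotonicity of $r$ along each orbit (which forbids an orbit from ever returning to the axis) and the transversality of $\nabla r$ to $\Gamma$ near the axis — both consequences of $\Orb=\RR$ and the normalization \eqref{eq:axis-cond-uub} — enter, through the flowout theorem.
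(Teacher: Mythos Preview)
Your proof is correct and follows essentially the same route as the paper: both compute $\cgg(\nabla r,\nabla r)=1-m\in(0,1]$ from Theorem~\ref{thm:regular} and \eqref{eq:mineq}, use the integral curves of $\nabla r$ emanating from $\Gamma$ to build the foliation $\cSigma_t$ (you via the flowout theorem, the paper via Cauchy stability for the ODE $dx/dr=\nabla r$), deduce orthogonality of $(t,r)$ from the fact that $\pr_r\parallel\nabla r$, and read off $\a=\b=0$ on $\Gamma$ from $m|_\Gamma=0$ and the unit-speed normalization of $\dot\Gamma$. Your presentation is slightly more explicit in places (e.g.\ the formula $e^{-2\b}=1-m$ and the monotonicity argument ruling out self-intersections of orbits), but the substance is the same.
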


\begin{proof}
Recall that the radius function $r$ is well-defined and smooth on the regular part of $M$ and hence also on $\Orb$. Let $\nabla r$ be the gradient field of $r$ on $(\Orb, \check{\gg})$. We have 
$$
\nabla r = -2\Omega^{-2} (\partial_u r \partial_\ub + \partial_\ub r \partial_u) 
$$
and 
$$
\cgg(\nabla r , \nabla r) = -4 \Omega^{-2} \partial_u r \partial_\ub r.
$$
This means that 
$$
\cgg(\nabla r , \nabla r) = 1 - m
$$
where $m$ is the mass as defined in Section \ref{sec:mass}.  In view of \eqref{eq:mineq}, we have that $m \in [0,1)$ in $\Orb$. Thus we have $\check{\gg}(\nabla r, \nabla r ) > 0$ on $\Orb$.  

Consider a maximal orbit $\cSigma_t$ of $\nabla r$ starting at some point $\Gamma(t) \in \Gamma \cap \RR$. Since $\check{\gg}(\nabla r, \nabla r ) > 0$ on $\Orb$, the radius function $r$ is a parametrization of $\cSigma$. By Cauchy stability for the ODE 
\begin{equation}\label{eq:r-ODE}
\frac{dx}{dr} = \nabla r, 
\end{equation} 
we have that $\cSigma_t$ defines a foliation in $\Orb$. This foliation does not cover all of $\Orb$, but the domain of the foliation includes the past domain of influence of the first singularity, cf. figure \ref{fig:trfoli}. Let 
\newcommand{\tOrb}{\tilde \Orb} 
$\tOrb$ denote the domain of the foliation. 

\begin{figure}
\centering
\psfrag{MAfinalpt}{$p_\Gamma$} 
\psfrag{MAcSt}{$\cSigma_t$}
\includegraphics[width=3in]{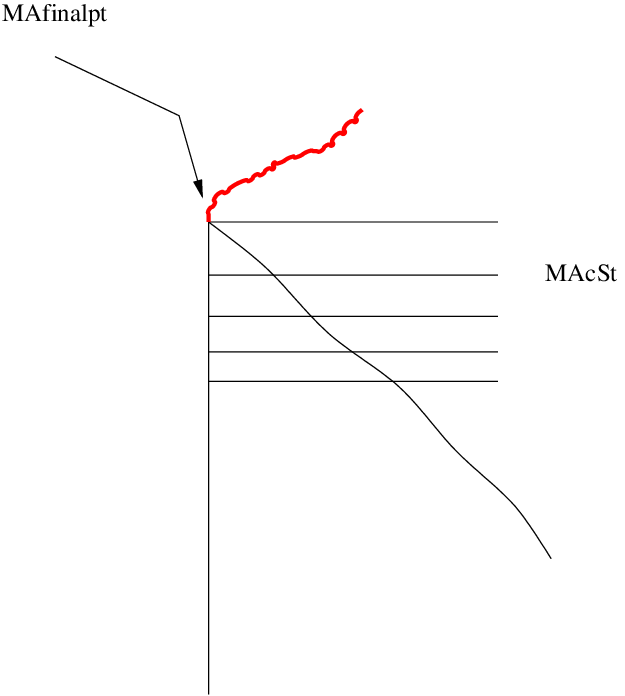}
\caption{The $\cSigma_t$ foliation}  
\label{fig:trfoli} 
\end{figure}

We can now extend the coordinate $t$ from the axis $\Gamma$ to the domain of the $\cSigma_t$ foliation. This defines a function $t$ on $\tOrb$. Recall that the $\cSigma_t$ are orbits of a vector field $\nabla r$ on $\Orb$. By uniqueness for \eqref{eq:r-ODE} we have that the function $t$ has non-vanishing gradient. Furthermore, we have by construction that $\cgg(\nabla t, \nabla r)=0$ on the domain of the time foliation. Together with the fact that $t$ has non-vanishing gradient and $\check{\gg}(\nabla r, \nabla r ) > 0$ on $\Orb$, we infer
$$\cgg(\nabla t, \nabla r)=0,\,\, \cgg(\nabla t, \nabla t)<0,\,\, \cgg(\nabla r, \nabla r)>0\textrm{ on }\cup_{t<0}\cSigma_t.$$
It follows that $(t,r)$ as coordinate functions on the domain of the foliation, and in this coordinate system, we have
$$
\cgg = -e^{2\alpha} dt^2 + e^{2\beta} dr^2 
$$
where $\alpha = -\log(-\cgg(\nabla t, \nabla t))/2$ and $\beta(t,r) = -\log(\cgg(\nabla r, \nabla r))/2$.  Furthermore, in view of our choice for $t$ in $\Gamma$ and the fact that $\cgg(\nabla t, \nabla r)=0$, we have $\cgg(\nabla t, \nabla t)=-1$ and hence also  $\alpha = 0$ on $\Gamma$. Further, as discussed in connection with \eqref{eq:leadingorder}, smoothness of $\gg$ implies $\beta = 0$ on $\Gamma$, in particular it holds that 
$\cgg(\nabla r, \nabla r)=1$ on $\Gamma$.
This concludes the proof of the lemma.  
\end{proof}

\newcommand{\tM}{\tilde M}
The above construction lifts to $(M, \gg)$ to give a foliation $\Sigma_t$. We denote the domain of this foliation $\tM$. On $\tM$ we have coordinates $(x^\alpha) = (t,r,\theta)$, and the metric $\gg$ takes the form 
\begin{equation}\label{eq:gg-tr-form} 
\gg = - e^{2\alpha} dt^2 + e^{2\beta} dr^2 + r^2 d\theta^2. 
\end{equation} 

\begin{remark} By \eqref{eq:m-beta} and the fact that $m$ is monotone increasing as a function of $r$ and satisfies $m < 1$ by Lemma \ref{lem:2.2}, we have that $\beta$ is monotone increasing
and bounded. See section \ref{sec:non-conc} below. 
\end{remark}

\subsection{Einstein Tensor}

The components  in the polar coordinates $(t,r,\theta)$ of the Einstein tensor  $G_{\mu \nu }= R_{\mu \nu} -\halb R \gg_{\mu \nu}$ are
\begin{align*}
 G_{tt} &= e^{2(\alpha -\beta)} \beta_r r^{-1}, \\
G_{tr} &= \beta_t r^{-1}, \\
G_{rr} &= \alpha_r r^{-1}, \\
G_{\theta \theta} &= r^2 \bigl(e^{-2\beta} (-\beta_r\alpha_r + \alpha^2_r + \alpha_{rr}) -e^{-2\alpha} (\beta^2_t -\beta_t\alpha_t + \beta_{tt}) \bigr), \\
G_{t \theta} & = 0, \\
G_{r \theta}&=0.
\end{align*}

\subsection{Energy-momentum Tensor}

Recall that the energy-momentum tensor $ S (\Phi)$ for a wave map $\Phi : (M,\gg) \to (N,h)$ is as follows
\begin{align}
 S_{\mu \nu} (\Phi)\fdg =  \langle \ptl_\mu \Phi , \ptl_\nu \Phi \rangle_{h(\Phi)} -\halb g_{\mu \nu} \langle \ptl^\sigma \Phi ,\ptl_\sigma \Phi \rangle_{h(\Phi)},
\end{align}
where $\mu$,$\nu$,$\sigma = 0,1,2$. 
In the following we will calculate each of the components of the energy-momentum tensor in $(t,r,\theta)$ coordinates.
Note,
\begin{align}
 \langle \ptl^\sigma \Phi ,\ptl_\sigma \Phi \rangle_{h(\Phi)} = -e^{-2\alpha}\phi_t^2 + e^{-2\beta}\phi_r^2 + \frac{g^2(\phi)}{r^2}.
\end{align}
Now we proceed to calculate $ S_{\mu \nu}$ 
\begin{align*}
 S_{tt} 
&= \halb e^{2\alpha}\left(e^{-2\alpha}\phi_t^2 +e^{-2\beta}\phi_r^2 + \frac{g^2(\phi)}{r^2} \right) , \\
S_{tr} 
&= \phi_t \phi_r, \\
S_{rr} 
&= \halb e^{2\beta}\left(e^{-2\alpha}\phi_t^2 + e^{-2\beta}\phi_r^2 - \frac{g^2(\phi)}{r^2}\right), \\
S_{\theta \theta} 
&=\halb r^2 \left(e^{-2\alpha} \phi_t^2 - e^{-2\beta}\phi_r^2 + \frac{g^2(\phi)}{r^2}\right), \\
S_{t \theta} &= 0,\\
S_{r \theta}&=0.
\end{align*}
 
Let  $T$ and $R$ be the normalization of $\pr_t$ and $\pr_r$ 
$$T \fdg = e^{-\alpha}\ptl_t\textrm{ and }R \fdg = e^{-\beta}\ptl_r .$$
We define the energy density $\mathbf{e} \fdg = S (T,T) $ and momentum density 
$\mathbf{m} \fdg = S(T, R)$
\begin{align*}
\mathbf{e}  & = \halb \left( e^{-2\alpha} \, \phi_t^2 + e^{-2\beta} \, \phi_r^2 + \frac{g^2(\phi)}{r^2} \right) \\
&= \halb \left( (T(\phi))^2 + (R(\phi))^2 + \frac{g^2(\phi)}{r^2} \right) \\
\mathbf{m} & = e^{-(\alpha + \beta)} \phi_t \, \phi_r  \\
 &= T(\phi) \, R(\phi).
\end{align*}
We further define 
$$\mathbf{e_0} \fdg = \left( T(\phi) \right)^2 + \left(R(\phi)\right)^2,\,\,\,\, \mathbf{f} \fdg= \frac{g^2(\phi)}{r^2}.$$

\subsection{Einstein equivariant wave map system of equations}
Using the above expressions for $G_{\mu \nu}$ and $S_{\mu \nu}$ we have the system of equations
\begin{subequations}\label{ewmequations}
\begin{align}
\beta_r & = \halb \, r \, \kappa \, e^{2\beta} \left( e^{-2\alpha} \, \phi_t^2 + e^{-2\beta}\, \phi_r^2 + \frac{g^2(\phi)}{r^2} \right)\label{gamma_r}, \\
\beta_t & = r \, \kappa \, \phi_t \,  \phi_r \label{gamma_t}, \\
\alpha_r & = \halb \, r \, \kappa \, e^{2\beta}   \left( e^{-2\alpha} \, \phi_t^2 + e^{-2\beta} \, \phi_r^2 - \frac{g^2(\phi)}{r^2} \right) \label{omega_r}, \\
\square_\gg \phi & = \frac{g'(\phi)g(\phi)}{r^2}, \label{wmequi}
 \end{align}
\end{subequations}
where
 \[\square_\gg \phi = -e^{-2\alpha}(\phi_{tt} + (\beta_t-\alpha_t)\phi_t) + e^{-2 \beta}\left(\phi_{rr} + \frac{\phi_r}{r} + (\alpha_r - \beta_r)\phi_r\right). \]

We remark that the full system \eqref{eq:ein-equiv-wm-first} yields some redundant equations. The system \eqref{ewmequations} is a subset containing the equations which are relevant for our purposes. 

\section{Non-concentration of energy} \label{sec:non-conc}

Let us define the energy on a Cauchy surface $\Sigma_t$
\begin{align*}
E(\Phi)(t)  \fdg =& \int_{\Sigma_t} \mathbf{e}\, \bar{\mu}_q  \\
=& 2\pi \int^{\infty}_{0} \mathbf{e}(t,r) re^{\beta(t,r)} \, d\, r\,, 
\intertext{the energy in a coordinate ball $B_r$} 
 E(\Phi)(t,r) \fdg =& \int_{B_r} \mathbf{e}\, \bar{\mu}_q\, , \\
 =& 2\pi \int^r_{0} \mathbf{e}(t,r') r'e^{\beta(t,r')} \, d\, r' 
\intertext{ the energy inside the causal past $J^-(O)$ of $O$} 
E^O(t) \fdg =& \int_{\Sigma_t \cap J^-(O)} \mathbf{e}\, \bar{\mu}_q \,.
\end{align*}
The goal of this section is to prove the following result. 
\begin{thm}[Non-concentration of energy] \label{theorem_ener_nonconc}
Let $(M,\gg,\Phi)$ be a smooth, globally hyperbolic, equivariant maximal development of smooth, asymptotically flat, equivariant 
initial data set $(\Sigma,q,K, \phi_0, \phi_1)$, satisfying the constraint equations \eqref{eq:sgwm-constraints}, with finite initial energy,
and let $(N,h)$ be a rotationally symmetric, complete, connected Riemannian manifold satisfying the Grillakis condition \eqref{eq:grillakis-cond-first} as well as \eqref{eq:nospherecond-first}. Then, the energy of the Einstein-wave map system \eqref{eq:ein-equiv-wm-first}  cannot concentrate,
i.e., $E^O (t) \to 0$, as $t \nearrow 0$, where $O$ is the first (hypothetical) singularity of $M$ and $t$ is as in Section \ref{sec:tr-coord}.
\end{thm}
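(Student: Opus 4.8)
The plan is to run the vectorfield/Morawetz non-concentration argument familiar from the theory of critical equivariant wave maps (Shatah and Tahvildar-Zadeh, Grillakis, Struwe), but inside the dynamical metric $\gg$ rather than on Minkowski space. By Theorem \ref{thm:singax} the first (hypothetical) singularity $O$ lies on the axis $\overline{\Gamma}\setminus\Gamma$, and by Theorem \ref{thm:regular} its entire causal past is contained in the non-trapped region $\RR$ with $0\le m< m_\infty< 1$. Normalise the time function of section \ref{sec:tr-coord} so that $O$ corresponds to $t=0$, and for $t_1<t_2<0$ set $D(t_1,t_2):=J^-(O)\cap\{t_1\le t\le t_2\}$, a truncated backward light cone with lateral null boundary $\mathcal{C}(t_1,t_2)\subset\partial J^-(O)$. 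The structural identity used throughout is that \eqref{eq:wave-wm-first} is equivalent to the conservation law $\nabla^\mu S_{\mu\nu}=0$.

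The first step is a near-monotonicity of the energy in the backward light cone. Contracting the conserved stress-energy with a suitable timelike multiplier $T$ --- the non-Killing vectorfield carrying a conserved current highlighted in the introduction --- and applying the divergence theorem on $D(t_1,t_2)$, the flux of the energy density $\mathbf{e}$ through $\mathcal{C}(t_1,t_2)$ is nonnegative by the dominant energy condition \eqref{eq:DEC}, while the error generated by the failure of $\partial_t$ to be Killing is controlled via the Einstein system \eqref{ewmequations} and the mass bounds \eqref{eq:mineq}. Consequently $E^O(t)$ is, up to controlled corrections, non-increasing as $t\nearrow 0$, so it has a limit $\ell\ge 0$ (bounded by the finite initial energy), and the flux through the annular piece $\mathcal{C}(t_1,t_2)$ tends to $0$ as $t_1,t_2\nearrow 0$. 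The content of the theorem is exactly that $\ell=0$.

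The heart of the matter is a Morawetz/virial estimate showing $\ell=0$. I would introduce a second, Morawetz-type multiplier $X$ adapted to the light-cone geometry at $O$, regular at $O$ and tangent to $\partial J^-(O)$; contracting $\nabla^\mu(S_{\mu\nu}X^\nu)$ and integrating over $D(t_1,t_2)$ produces a spacetime integral of a nonnegative quadratic density in the ``good'' quantities (the null derivatives $T(\phi)\mp R(\phi)$ and the potential term $g(\phi)/r$) controlled by $E^O(t_1)-E^O(t_2)$ plus the null flux plus error terms, all of which tend to $0$ as $t_1,t_2\nearrow 0$. Two ingredients make this bulk term favourable: the deformation tensor of $X$ is controlled through \eqref{ewmequations} and $0\le m<1$, so the geometric errors are genuinely lower order near $O$; and the potential contribution, which couples $g^2(\phi)/r^2$ to $f(\phi)\phi_r/r$, has the correct sign precisely because of the Grillakis condition \eqref{eq:grillakis-cond-first} --- this is exactly the coercivity that Grillakis isolated in the flat case. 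To render the error terms integrable one also needs a uniform pointwise bound on $\phi$ in $J^-(O)$; this follows from the finiteness of the energy together with \eqref{eq:nospherecond-first}, which prevents $\phi$ from escaping to infinity (the analogue of excluding a sphere bubble).

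Finally I would close the argument: assuming $\ell>0$, the vanishing of $E^O(t_1)-E^O(t_2)$ and of the null flux as $t_1,t_2\nearrow 0$ forces the positive virial integral over the shrinking diamonds $D(t_1,t_2)$ to be arbitrarily small, and a Hardy/rescaling argument then forces the energy at the tip of the cone to vanish, contradicting $\ell>0$; equivalently, a blow-up limit around $O$ would produce a nontrivial finite-energy equivariant harmonic map (or self-similar solution) into $(N,h)$, which the Grillakis condition together with \eqref{eq:nospherecond-first} exclude. Hence $\ell=0$, i.e. $E^O(t)\to 0$. The principal obstacle, as stressed in the introduction, is the construction and quantitative control of the multipliers $T$ and $X$ in the a priori unknown metric $\gg$: unlike the semilinear problem on Minkowski space there are no Killing or conformal Killing fields at hand, so these vectorfields and the signs of their deformation tensors must be manufactured from the Einstein system \eqref{ewmequations} and the mass bounds, while still leaving the Grillakis condition enough room to supply the decisive positivity in the virial identity.
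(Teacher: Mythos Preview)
Your broad strategy matches the paper's: a divergence-free current built from $T=e^{-\alpha}\partial_t$ gives exact monotonicity of $E^O(t)$ and decay of the null flux (Lemma~\ref{mon}, Corollary~\ref{flux_px1_zero}); the $L^\infty$ bound on $\phi$ comes from \eqref{eq:nospherecond-first} (Lemma~\ref{lem:phiLinfty}); and a Morawetz-type identity with the Grillakis condition supplies the coercive bulk term (Lemma~\ref{fterm}). But your outline has a circularity that the paper resolves with an extra step you do not mention. In the Morawetz identity for a multiplier like $r^a\partial_r$, the boundary term on $\Sigma^O_\tau$ is $\int_{\Sigma^O_\tau}\mathbf{e}\,r^a\,\bar\mu_q$, and after the natural normalisation by $r_2(\tau)^{-a}$ this is of size $E^O(\tau)$, not $E^O(t_1)-E^O(t_2)$; so your claim that ``all boundary terms tend to $0$'' presupposes the conclusion.

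The paper breaks this loop with a separate preliminary result, Lemma~\ref{ann} (non-concentration away from the axis), proved by an independent characteristic/Gr\"onwall argument for the quantities $\mathcal A^2=r(\mathbf e-\mathbf m)$ and $\mathcal B^2=r(\mathbf e+\mathbf m)$ using the flux decay. This gives $E^O_{\text{ext}}(\tau)\to 0$ on the annular region $\{r_1(\tau)\le r\le r_2(\tau)\}$ with $r_1/r_2\sim\lambda$. Only then is the Morawetz boundary term split as interior $+$ exterior: the interior is $\le \lambda^a E_0$ (small by choosing $\lambda$ small), the exterior is $\le E^O_{\text{ext}}(\tau)$ (small by Lemma~\ref{ann}). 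With this in hand the paper uses the family $\mathcal R_a=r^a\partial_r$ together with a lower-order correction $P_\zeta$ (so that the Grillakis combination $g g'\phi+g^2$ appears explicitly) to show $(1/r_2^a)\int_{K(\tau)}\mathbf e\,r^{a-1}\to 0$; the conclusion $E^O(\tau)\to 0$ then follows by extracting a sequence and invoking monotonicity, rather than via a blow-up/harmonic-map dichotomy.
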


\subsection{Energy conservation}

We start by proving the energy is conserved.
\begin{lem} \label{energy_cons}
The energy $E(\phi)(t)$ is conserved.
\end{lem}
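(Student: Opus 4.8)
The plan is to differentiate the energy $E(\phi)(t) = 2\pi\int_0^\infty \mathbf{e}(t,r) r e^{\beta(t,r)}\,dr$ in $t$ and show the result vanishes, using the Einstein--wave map equations \eqref{ewmequations}. First I would record the divergence identity for the stress-energy tensor: since $\nabla^\mu S_{\mu\nu}=0$ (a consequence of the wave map equation \eqref{wmequi} and the equivariance), contracting with the vectorfield $T = e^{-\alpha}\partial_t$ produces a current $P_\mu = S_{\mu\nu}T^\nu$ whose divergence is $\nabla^\mu P_\mu = S_{\mu\nu}\nabla^\mu T^\nu = \half S^{\mu\nu}(\mathcal{L}_T\gg)_{\mu\nu}$, i.e. the deformation tensor of $T$ contracts against $S$. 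The key computational step is thus to evaluate $\pi^{\mu\nu} := \half(\mathcal{L}_T\gg)_{\mu\nu}$ in the coordinates $(t,r,\theta)$ with $\gg = -e^{2\alpha}dt^2 + e^{2\beta}dr^2 + r^2 d\theta^2$, and to check that $S^{\mu\nu}\pi_{\mu\nu}$ integrates to zero over $\Sigma_t$ after using the constraint/evolution equations. Concretely, $T$ is not Killing, so $\pi$ does not vanish; the relevant nonzero components involve $\partial_r\alpha$, $\partial_t\alpha$, $\partial_t\beta$, and these are precisely the quantities controlled by \eqref{gamma_r}--\eqref{omega_r}.

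The cleaner route, which I would actually carry out, is a direct computation: write $\frac{d}{dt}E = 2\pi\int_0^\infty \partial_t\!\big(\mathbf{e}\,r e^{\beta}\big)\,dr$, expand $\partial_t\mathbf{e}$ using $\mathbf{e} = \half(e^{-2\alpha}\phi_t^2 + e^{-2\beta}\phi_r^2 + g^2(\phi)/r^2)$, substitute $\phi_{tt}$ from the wave equation \eqref{wmequi} (solving for $e^{-2\alpha}\phi_{tt}$), and substitute $\partial_t\beta$ from \eqref{gamma_t}. After these substitutions, the integrand should organize into a total $r$-derivative: one expects
$$\partial_t\big(\mathbf{e}\, r e^{\beta}\big) = \partial_r\big(-e^{\alpha-\beta+ \beta}\,? \big)$$
— more precisely, a term of the form $\partial_r\big(r e^{\beta-2\alpha+ \cdots}\,\phi_t\phi_r\big)$ or similar, i.e. $\partial_r(\text{momentum flux})$. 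I would then integrate in $r$ from $0$ to $\infty$: the boundary term at $r=0$ vanishes because $\phi$ is regular on the axis (equivariant, so $\phi(t,0)=0$, and the flux carries a factor of $r$ or $g(\phi)\sim\phi$), and the boundary term at $r=\infty$ vanishes by the asymptotic flatness / finite-energy decay of $\phi_r,\phi_t$ together with the boundedness of $e^\alpha, e^\beta$ (note $e^\beta\to e^{\beta_\infty}$ with $\beta_\infty$ finite by the mass bound $m_\infty<1$).

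The main obstacle I anticipate is the algebra of verifying that the bulk integrand is an exact $r$-derivative: one must juggle $\alpha_r$, $\beta_r$, $\beta_t$, and $\phi_{tt}$ simultaneously, and the cancellation is somewhat delicate because $\alpha_r - \beta_r = -r\kappa g^2(\phi)/r^2 \cdot(\text{stuff})$ enters the wave operator. A secondary subtlety is the behavior at the axis $r=0$: although $\alpha=\beta=0$ on $\Gamma$, one needs enough regularity (from the equivariance, $\phi\sim c(t)r$ near $r=0$) to ensure the flux term genuinely vanishes there rather than producing a finite contribution; this is where the conditions $g'(0)=1$, $g$ odd, enter. I would handle the axis term by expanding $g(\phi)^2/r^2 \approx \phi_r^2|_{r=0}$ and checking the flux is $O(r)$. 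Once the flux identity and the two boundary vanishings are in hand, $\frac{d}{dt}E(\phi)(t) = 0$ follows immediately, which is the assertion of the lemma.
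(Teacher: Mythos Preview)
Your approach is correct but more laborious than the paper's. Both of your routes would succeed: the deformation-tensor computation for $T = e^{-\alpha}\partial_t$ is in fact carried out verbatim later in the paper at \eqref{div_pxi}, where $\nabla_\nu P_T^\nu = e^{-\alpha}(\beta_t(\mathbf{e}-\mathbf{f}) - \alpha_r e^{-\beta}\mathbf{m})$ is shown to vanish via \eqref{gamma_t} and \eqref{omega_r}; and your direct $\frac{d}{dt}E$ computation (substituting the wave equation for $\phi_{tt}$) would also eventually produce the flux identity $\partial_t(re^\beta\mathbf{e}) = \partial_r(re^\alpha\mathbf{m})$ after some algebra.

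The paper's actual proof of the lemma, however, bypasses both the wave equation and the deformation tensor entirely. It simply rewrites \eqref{gamma_r} and \eqref{gamma_t} as $-\partial_r(e^{-\beta}) = r\kappa e^\beta\mathbf{e}$ and $-\partial_t(e^{-\beta}) = r\kappa e^\alpha\mathbf{m}$, and then the equality of mixed partials $\partial_{tr}^2(e^{-\beta}) = \partial_{rt}^2(e^{-\beta})$ immediately gives the flux identity. This two-line trick uses only the two Einstein equations for $\beta$ and requires no information about $\phi$ beyond what those constraints already encode; it makes the divergence-free nature of $P_T$ manifest without any cancellation to chase. Your concern about the axis and spatial-infinity boundary terms is legitimate and is handled exactly as you describe (regularity at $r=0$, decay from asymptotic flatness), though the paper leaves this implicit in its appeal to Stokes' theorem.
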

\begin{proof}
 Consider two Cauchy surfaces $\Sigma_{s}$ and $\Sigma_{\uptau}$ at $t=s$ and $t=\uptau$ respectively, with $-1\leq \uptau \leq s < 0$. 
We shall now construct a divergence free vector field $P_{T}$ as follows.
Consider the Einstein's equations \eqref{gamma_r} and \eqref{gamma_t}. They can be rewritten as follows
\begin{align*}
 -\ptl_r\left(e^{-\beta} \right) &=  r\,\kappa  e^{\beta} \mathbf{e}, \\
 -\ptl_t \left(e^{-\beta} \right) &=  r\,\kappa  e^{\alpha} \mathbf{m}.
\end{align*}
From the smoothness of $\beta$ we have $ -\ptl^2_{rt}\left(e^{-\beta} \right)= -\ptl^2_{tr}\left(e^{-\beta} \right)$, which implies
\begin{align}\label{commute_partials}
 - \ptl_t\left(re^{\beta} \mathbf{e}\right) + \ptl_r(re^{\alpha}\mathbf{m}) =0.
\end{align}
Now define a vectorfield 
\[ P_{T} \fdg= -e^{-\alpha}\,\mathbf{e}\, \ptl_t + e^{-\beta}\, \mathbf{m}\, \ptl_r, \]
then the divergence of $P_{T}$ is given by
\begin{eqnarray}\label{eq:computationdivergencePT}
\nabla_\nu  P_T ^\nu &=& \frac{1}{\sqrt{|\gg|}}\, \ptl_\nu \left(\sqrt{|\gg|}\,P_T ^\nu \right) \\
\nn&=& \frac{1}{re^{\beta + \alpha}} \left(-\ptl_t \left( re^{\beta}\,\mathbf{e}\right)+ \ptl_r \left( re^{\alpha}\,\mathbf{m}\,\right) \right) \\
\nn&=& 0
\end{eqnarray}
from \eqref{commute_partials}.
Let us now apply Stokes' theorem in the space-time region whose boundary is $\Sigma_{s} \cup \Sigma_{\uptau}$. Due to asymptotic flatness, the boundary terms at $r=\infty$ do not contribute. We have
\begin{align}
0= \int_{\Sigma_s} e^{\alpha} P^t_T \bar{\mu}_q-
\int_{\Sigma_{\uptau}} e^{\alpha} P^t_T \bar{\mu}_q.
\end{align}
Therefore, it follows that 
\begin{align}
 E(\phi)(\uptau)= E (\phi)(s)
\end{align}
for any $\uptau$, $s$ such that  $-1\leq \uptau \leq s < 0$.
\end{proof}
In the following lemma we shall prove that the metric functions $\beta(t,r)$ and $\alpha(t,r)$ are uniformly bounded during the evolution of 
the Einstein-wave map system.
\begin {lem} \label{metric_uniform}
There exist constants $c^-_\beta,c^+_\beta,c^-_\alpha,c^+_\alpha$  depending only on the
initial data and the universal constants such that the following uniform bounds on the metric functions $\beta(t,r)$ and $\alpha(t,r)$ hold
\[c^-_\beta \leq \beta(t,r) \leq c^+_\beta,\]
\[c^-_\alpha \leq \alpha(t,r) \leq c^+_\alpha.\]
 \end {lem}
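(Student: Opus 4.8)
The plan is to exploit the structure of the Einstein equations \eqref{gamma_r}--\eqref{omega_r} together with the conservation of energy established in Lemma \ref{energy_cons}. First I would obtain the upper bound on $\beta$. Equation \eqref{gamma_r} reads, after dividing by $e^{2\beta}$,
$$
-\ptl_r(e^{-\beta}) = r\,\kappa\, e^{\beta}\,\mathbf{e} = \kappa\, e^{2\beta} \cdot r e^{-\beta}\mathbf{e},
$$
but more usefully, recalling the calculation in the proof of Lemma \ref{energy_cons}, $-\ptl_r(e^{-\beta}) = r\kappa e^{\beta}\mathbf{e} \ge 0$, so $e^{-\beta}$ is nonincreasing in $r$, and since $\beta = 0$ on $\Gamma$ (i.e. $e^{-\beta}=1$ at $r=0$) we get $e^{-\beta} \le 1$, hence $\beta \ge 0$; this already gives $c^-_\beta = 0$. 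For the upper bound on $\beta$, integrate $-\ptl_r(e^{-\beta}) = r\kappa e^{\beta}\mathbf{e}$ from $0$ to $r$:
$$
1 - e^{-\beta(t,r)} = \kappa \int_0^r r' e^{\beta(t,r')}\mathbf{e}(t,r')\, dr' = \frac{\kappa}{2\pi} E(\phi)(t,r) \le \frac{\kappa}{2\pi} E(\phi)(t) = \frac{\kappa}{2\pi} E_0,
$$
using that the energy in a ball is bounded by the total (conserved) energy $E_0$ fixed by the initial data. Provided $\frac{\kappa}{2\pi}E_0 < 1$ — which is exactly the statement $m_\infty \in [0,1)$ from the mass lemma and Theorem \ref{thm:regular}, since $m_\infty$ is (a multiple of) the total energy / deficit angle — we obtain $e^{-\beta(t,r)} \ge 1 - \frac{\kappa}{2\pi}E_0 > 0$, hence $\beta(t,r) \le -\log(1 - \frac{\kappa}{2\pi}E_0) =: c^+_\beta$. (If one prefers to avoid invoking $m_\infty$, the same conclusion follows directly from \eqref{eq:mineq} applied along $\Sigma_t$, since $1 - e^{-\beta}$ here is precisely the renormalized mass.)

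Next I would bound $\alpha$. Subtracting \eqref{omega_r} from \eqref{gamma_r} gives the clean identity
$$
\beta_r - \alpha_r = \kappa\, r\, e^{2\beta}\,\frac{g^2(\phi)}{r^2} = \kappa\, \frac{e^{2\beta} g^2(\phi)}{r} \ge 0,
$$
so $\alpha_r \le \beta_r$. Adding them gives $\alpha_r + \beta_r = \kappa r e^{2\beta}(e^{-2\alpha}\phi_t^2 + e^{-2\beta}\phi_r^2) \ge 0$, hence $\alpha_r \ge -\beta_r$. Integrating from $r=0$, where $\alpha = \beta = 0$, and using $0 \le \beta(t,r) \le c^+_\beta$:
$$
-c^+_\beta \le -\beta(t,r) \le \alpha(t,r) \le \beta(t,r) \le c^+_\beta,
$$
so we may take $c^-_\alpha = -c^+_\beta$ and $c^+_\alpha = c^+_\beta$. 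All constants depend only on $\kappa$ and the initial energy $E_0$, as required.

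The main obstacle is making the smallness condition $\frac{\kappa}{2\pi}E_0 < 1$ rigorous and uniform in $t$. This requires (i) knowing the total energy is conserved for all $t<0$ up to the first singularity — which is Lemma \ref{energy_cons}, valid on the region $\tilde M$ covered by the $\Sigma_t$ foliation — and (ii) identifying $1 - e^{-\beta(t,r)}$ evaluated as $r\to\infty$ with the mass $m_\infty$ controlled by Theorem \ref{thm:regular}, so that the deficit-angle bound $m_\infty < 1$ is exactly what prevents $e^{-\beta}$ from reaching zero. One technical point to handle carefully is that the energy in a ball $E(\phi)(t,r)$ is defined with the weight $r'e^{\beta(t,r')}$, which is the same weight appearing in the integrated form of \eqref{gamma_r}, so the identification $1 - e^{-\beta(t,r)} = \frac{\kappa}{2\pi}E(\phi)(t,r)$ is exact and no additional estimate on $\beta$ is needed to close the argument — the bound is genuinely a priori. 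Beyond that, the argument is just integration of first-order ODEs in $r$ with the boundary condition $\alpha = \beta = 0$ on the axis, and no further subtlety arises.
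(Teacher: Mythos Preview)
Your proof is correct. The treatment of $\beta$ is essentially identical to the paper's: both rewrite \eqref{gamma_r} as $-\ptl_r(e^{-\beta}) = \kappa\, r e^{\beta}\mathbf{e}$, integrate from the axis, and identify the right-hand side with $\frac{\kappa}{2\pi}E(\phi)(t,r)$, then use energy conservation to conclude $0 \le \beta(t,r) \le \beta_\infty(-1)$. Your explicit remark that the finiteness of $c^+_\beta$ amounts to $m_\infty < 1$ (Theorem \ref{thm:regular}) is a welcome clarification that the paper leaves implicit.

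For $\alpha$ you take a slightly different and cleaner route. The paper estimates $\alpha_r = r\kappa e^{2\beta}(\mathbf{e}-\mathbf{f})$ directly, using $-\mathbf{e} \le \mathbf{e}-\mathbf{f} \le \mathbf{e}$ together with the already-established bound on $e^{\beta}$ to absorb one factor of $e^{\beta}$ and then control the remaining integral by the total energy. You instead add and subtract \eqref{gamma_r} and \eqref{omega_r} to obtain the pointwise inequality $-\beta_r \le \alpha_r \le \beta_r$, and integrate to get $|\alpha| \le \beta \le c^+_\beta$. This is more structural --- it avoids re-invoking the energy and gives the sharp constants $c^\pm_\alpha = \mp c^+_\beta$ --- while the paper's version makes the dependence on $E_0$ more visible. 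Both arguments are equivalent in strength and rely on the same ingredients.
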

\begin{proof}
For simplicity of notation, we use a generic constant $c$ for the estimates on $\beta(t,r)$ and $\alpha(t,r)$.
The Einstein equation \eqref{gamma_r} for $\beta_r$ can be rewritten as 
\[ - (e^{-\beta} )_r  = \kappa\, r \,e^\beta \mathbf{e}.  \]
Integrating with respect to $r$ and recalling that $\beta_{|_\Gamma}=0$, we get
\[
 1-e^{-\beta} =  \kappa \int^r_0 \mathbf{e}\, r'\, e^{\beta} d\,r' 
 = \frac{\kappa}{2\pi}  E(\phi)(t,r)
\]
so,
\[e^{\beta} = \left( 1- \frac{\kappa}{2\pi}  E(\phi)(t,r)\right)^{-1}. \]
Let us introduce the notation $\beta_{\infty}(t)  = \lim_{r \to \infty } \beta (r,t)$. Then we have
\[ e^{\beta_\infty (t)} = \left( 1- \frac{\kappa}{2\pi}  E(\phi)(t)\right)^{-1}. \]
Since $E(\phi)(t,r)$ is a nondecreasing function of $r$, then so is $\beta(t,r)$
\[1 = e^{\beta(t,0)} \leq e^{\beta(t,r)} \leq e^{\beta_\infty(t)}. \]
Furthermore, since the energy is conserved $ E(\phi)(t) = E(\phi)(-1)$, 
$\beta_\infty (t) = \beta_\infty (-1)$ is also conserved during the evolution of the Einstein
wave map system and hence
\[0\leq \beta(t,r) \leq \beta_\infty(-1).\]
Similarly let us consider the Einstein's equation \eqref{omega_r} for $\alpha_r$
\[ \alpha_r  = r \, \kappa \, e^{2\beta} (\mathbf{e}-\mathbf{f}).\]
Integrating with respect to $r$ and recalling that $\alpha_{|_\Gamma}=0$, we get
\begin{align*}
 \alpha(t,r) & \leq c \int^r_0 (\mathbf{e}-\mathbf{f}) re^{\beta} d\, r \\
& \leq c \int^r_0 \mathbf{e}\,r\,e^{\beta} d\, r \\
& \leq c
\end{align*}
and
\begin{align*}
\alpha(t,r) & \geq - c \int^r_0 \frac{\mathbf{f}}{2} r e^{\beta} \, d\, r \\
& \geq - c \int^r_0 \mathbf{e}\, r\, e^{\beta} \, d\,r \\
& \geq - c.
\end{align*}
This concludes the proof of the lemma.
\end{proof}

\begin{remark} By Lemma \ref{lem:2.2} and \eqref{eq:m-beta} we have   
$$
0 \leq \frac{\kappa}{2\pi} E(\phi)(t) = 1-e^{-\beta_\infty(t)} < 1 .
$$
\end{remark}

\begin{lem} \label{lem:phiLinfty}
Assume that the target manifold $(N,h)$ satisfies
\begin{align}\label{nosphereinfinity}
\wp(\phi) := \int^\phi_0 g(s)\,ds \to \infty \,\,\, \text{as}\,\,\, \phi \to \infty,
\end{align}
 then there exists a constant $c$ dependent only on the initial
data and the universal constants such that
\[ \phi \in L^{\infty} \text{ with } \Vert \phi \Vert_{\infty} \leq c \] for every solution
$\phi$ of the equivariant wave map equation.
 \end{lem}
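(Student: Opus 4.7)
The plan is to exploit the conservation of energy from Lemma \ref{energy_cons} together with the uniform metric bounds from Lemma \ref{metric_uniform} to control the potential $\wp(\phi)=\int_0^\phi g(s)\,ds$, and then invert $\wp$ using hypothesis \eqref{nosphereinfinity}.

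First I would observe that $\phi\geq 0$ (since $\phi=\rho\circ\Phi$ and $\rho$ is a radial coordinate), that $\phi=0$ on the axis $\Gamma$ by equivariance and regularity at $r=0$, and that $g(s)>0$ for $s>0$ (which follows from $g(0)=0$, $g'(0)=1$ together with the Grillakis condition, since it forces $(sg(s))'>0$). Consequently $\wp$ is strictly increasing on $[0,\infty)$. Using the fundamental theorem of calculus in $r$ on a fixed Cauchy slice $\Sigma_t$, and the chain rule $\partial_r\wp(\phi)=g(\phi)\phi_r$, I would write
\begin{equation*}
\wp(\phi(t,r))=\int_0^r g(\phi)\phi_r\,dr'
\leq \frac{1}{2}\int_0^\infty\!\left(r'\phi_r^2+\frac{g^2(\phi)}{r'}\right)dr'
\end{equation*}
where the inequality comes from the AM-GM bound $|g(\phi)\phi_r|\leq\frac{1}{2}(r'\phi_r^2+g^2(\phi)/r')$.

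Next I would relate the right-hand side to the conserved energy. Since $\mathbf{e}\geq \frac{1}{2}\bigl(e^{-2\beta}\phi_r^2+g^2(\phi)/r^2\bigr)$, and Lemma \ref{metric_uniform} gives uniform two-sided bounds on $\beta$, there is a constant $c>0$ with
\begin{equation*}
E(\phi)(t)=2\pi\int_0^\infty \mathbf{e}(t,r)\,r\,e^{\beta(t,r)}\,dr
\geq c\int_0^\infty\!\left(r\phi_r^2+\frac{g^2(\phi)}{r}\right)dr.
\end{equation*}
Combining with the previous display and using energy conservation $E(\phi)(t)=E(\phi)(-1)$ yields the time-independent bound $\wp(\phi(t,r))\leq c^{-1}E(\phi)(-1)$.

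Finally I would invert $\wp$: since $\wp$ is strictly increasing on $[0,\infty)$ and $\wp(\phi)\to\infty$ as $\phi\to\infty$ by \eqref{nosphereinfinity}, the inverse $\wp^{-1}$ is well defined on $[0,\infty)$, giving the uniform bound $\phi(t,r)\leq \wp^{-1}(c^{-1}E(\phi)(-1))$, which is the claimed $L^\infty$ estimate. I don't expect any real obstacle here—the argument is essentially a one-line Hardy-type manipulation followed by inversion of $\wp$—the only subtle point is checking that $g$ is positive on $(0,\infty)$ so that $\wp$ is truly monotone and its inverse makes sense.
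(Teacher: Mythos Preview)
Your proof is correct and follows essentially the same strategy as the paper: write $\wp(\phi)=\int_0^r g(\phi)\phi_r\,dr'$, split with a weight involving $r$, and bound by the conserved energy. The only cosmetic differences are that the paper uses Cauchy--Schwarz with the weight $(re^{-\beta})^{1/2}$ already built in (so the two factors are each directly bounded by pieces of $E(\phi)(t)$ without a separate appeal to Lemma~\ref{metric_uniform}), whereas you use AM--GM with weight $r$ and then invoke the metric bounds; and the paper phrases the final step as a contradiction argument rather than explicitly inverting $\wp$.
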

 
\begin{proof}
Extending the technique used in Lemma 8.1 in \cite{shatah_struwe} and noting that $\phi \big\vert_{\Gamma} =0$, we consider
\begin{align*}
\wp(\phi(t,r)) & = \int_0^r \ptl_r (\wp(\phi(t,r))) \, dr  \\
& = \int_0 ^r g(\phi) \ptl_r \phi \,dr \\
& = \int_0 ^r \left( g(\phi) (re^{-\beta})^{-1/2} \right) \left( \ptl_r \phi (re^{-\beta})^{1/2} \right) \, dr. 
\intertext{Consequently,}
|\wp(\phi(t,r))| & \leq \left( \int^r_0 (g(\phi))^2(re^{-\beta})^{-1}\, dr\right)^{1/2} \left(\int^r_0 (\ptl_r \phi)^2 re^{-\beta}\, dr \right)^{1/2} \\
& \leq \left( \int^\infty_0 \frac{g(\phi)^2}{r^2}re^{\beta}\, dr\right)^{1/2} \left(\int^\infty_0 e^{-2\beta}(\ptl_r \phi)^2 re^{\beta} \, dr\right)^{1/2} \\
& \leq c(E_0). 
 \end{align*}
Arguing via contradiction, the result follows.
\end{proof}

\subsection{The vectorfield method}

Let $X$ be a space-time vectorfield. The corresponding momentum $P_X$ is
given by the contraction of $S$ with $X$ i.e.,
\begin{align}\label{mom_def}
P_X^\mu = S^\mu_{\,\,\,\nu} X^{\nu}.
\end{align}
We have,
\begin{align}
\grad_\nu P_X^\nu =& \,X^\mu\, \grad_\nu S^\nu_{\,\,\mu} + S^\nu_{\,\,\mu}\,\grad_\nu X^\mu. \label{se_divfree}
\intertext{Since the energy-momentum tensor $S$ is divergence free, the first term in the right hand side of $\eqref{se_divfree}$ drops out, therefore} \notag 
\grad_\nu P_X^\nu =& S^{\mu\nu} \, \grad_{\mu}X_\nu \notag \\
=& \halb \, \leftexp{(X)} {\mbo{\pi}}_{\mu \nu} S^{\mu \nu},  \notag
\end{align}
where the deformation tensor $\leftexp{(X)} {\mbo{\pi}}_{\mu \nu}$ is given by
\begin{align*}
\leftexp{(X)} {\mbo{\pi}}_{\mu \nu} \fdg=& \nabla_\mu X_\nu + \nabla_\nu X_\mu \\
=& \gg_{\sigma \nu}\ptl_\mu X^\sigma + \gg_{\sigma \mu}\ptl_\nu X^\sigma + X^\sigma \ptl_\sigma \gg_{\mu \nu}.
\end{align*}
Construction of useful identities using suitably chosen multipliers $X$ and
Stokes' theorem is central to our method to prove non-concentration of energy of equivariant Einstein-wave maps.
In the following let us calculate the divergence of $P_X$ for various choices of $X$. \\
Consider  $T = e^{-\alpha} \ptl_t$. The corresponding momentum $P_T$ is
\begin{align}
 P_T =& - e^{-\alpha}\mathbf{e}\, \ptl_t + e^{-\beta}\mathbf{m}\,\ptl_r. 
\end{align}
Then, we have,
\begin{align}\label{div_pxi}
 \nabla_\nu  P_T ^\nu & = \halb e^{-2\alpha} \left( e^{\alpha}\beta_t \right) \phi_t^2  + 
 \halb e^{-2\beta} \left( e^{\alpha}\beta_t \right) \phi_r^2  \notag \\ 
& \quad - \halb \left( e^{\alpha}\beta_t \right) \frac{g^2(\phi)}{r^2} - \alpha_r e^{-\alpha-2\beta} \phi_t \phi_r \notag \\
&= e^{-\alpha} \left(\beta_t (\mathbf{e}-\mathbf{f})-\alpha_r e^{-\beta} \mathbf{m} \right) \notag \\
& = 0
\end{align}
after using Einstein's equations \eqref{gamma_t} and \eqref {omega_r}. Also, recall from \eqref{eq:computationdivergencePT} that 
\begin{align}\label{equiv_div_px1}
0=\grad_\nu P_T^\nu &=  \frac{1}{re^{\beta + \alpha}} \left(-\ptl_t (re^{\beta}\mathbf{e}) 
+ \ptl_r (re^{\alpha}\mathbf{m}) \right).
\end{align}
For $R = e^{-\beta}\ptl_r$ and 
\begin{align}\label{intro_px2}
P_R  =&- e^{-\alpha} \mathbf{m} \, \ptl_t + e^{-\beta} (\mathbf{e}-\mathbf{f}) \,\ptl_r,
\end{align}
the divergence $\grad_\nu P_R^\nu $ is
\begin{align}\label{div_px2}
\grad_\nu P_R^\nu & =  \halb \, \leftexp{(R)} {\mbo{\pi}}_{\mu \nu} S^{\mu \nu} \notag\\
& = - e^{-\beta} \alpha_r \,\mathbf{e} + \frac{1}{2r} e^{-\beta} (e^{-2\alpha} \phi_t^2 - e^{-2\beta}\phi_r^2 + \mathbf{f}) 
+ e^{-\alpha} \beta_t \mathbf{m}.
\end{align}
Equivalently,
\begin{align}\label{equiv_div_px2}
 \grad_\nu P_R^\nu &= \frac{1}{\sqrt {-\gg}} \ptl_\nu (\sqrt{-\gg}\, P_R^\nu) \notag\\
 & = \frac{1}{re^{\beta + \alpha}} \left(-\ptl_t (re^{\beta}\mathbf{m}) 
+ \ptl_r ((\mathbf{e}-\mathbf{f}) re^{\alpha}) \right) .
\end{align}
Similarly for the choice ${\mathcal{R}_a} \fdg=r^a\,\ptl_r$, we have
\begin{align}\label{intro_px4}
 P_{\mathcal{R}_a}  =& -e^{\beta -\alpha} r^a \, \mathbf{m} \ptl_t + r^a \,(\mathbf{e}-\mathbf{f}) \ptl_r, \notag\\
\nabla_\nu  P^\nu_{\mathcal{R}_a} =&\, \halb \left ( r^a (-\alpha_r + \beta_r) + (1+a) r^{a-1}  \right) e^{-2\alpha} \phi^2_t \notag\\
&\quad+ \halb \left ( r^a (-\alpha_r + \beta_r) + (a-1) r^{a-1}  \right) e^{-2\beta} \phi^2_r \notag\\
& \quad + \halb \left ( - r^a (\alpha_r + \beta_r) + (1-a) r^{a-1}  \right) \frac{g^2(\phi)}{r^2}\notag \\
=&\, \halb \left (  (1+a) r^{a-1}  \right) e^{-2\alpha} \phi^2_t
+ \halb \left ( (a-1) r^{a-1}  \right) e^{-2\beta} \phi^2_r \notag\\
& \quad+ \halb \left ( (1-a) r^{a-1}  \right) \frac{g^2(\phi)}{r^2}
 \end{align}
where we used Einstein equations \eqref{omega_r} and \eqref{gamma_r} for $ \alpha_r$ and $\beta_r$ respectively. In particular, we have $\mathcal{R}_1 \fdg = r\,\ptl_r$ and
\begin{align}\label{intro_px3}
P_{\mathcal{R}_1} =& -re^{\beta - \alpha} \,\mathbf{m}\, \ptl_t + r(\mathbf{e} -\mathbf{f})\ptl_r, \notag\\
\nabla_\nu  P^\nu_{\mathcal{R}_1} = & \,e^{- 2 \alpha} \phi_t^2.
\end{align}

Let $J^-(O)$ be the causal past of the the point $O$ and $ I^- (O)$ the chronological past of $O$.
We will need the following definitions 
\begin{align*}
 \Sigma^O_t \fdg &= \Sigma_t \cap J^- (O) \\
K(t) \fdg &= \cup_{ t \leq t' < 0}\, \Sigma_{t'} \cap J^-(O) \\
C(t) \fdg &= \cup_{t \leq t' < 0} \, \Sigma_{t'} \cap (J^-(O) \setminus I^-(O)) \\
K(t,s) \fdg &= \cup_{ t \leq t' < s}\, \Sigma_{t'} \cap J^-(O) \\
C(t,s) \fdg &= \cup_{t \leq t' < s} \, \Sigma_{t'} \cap (J^-(O) \setminus I^-(O))
\end{align*}
for $-1\leq t < s <0.$
In the following we will try to understand the behaviour of various quantities of the wave map as one approaches $O$ in a limiting sense.
For this we will use Stokes' theorem in the region $K(\uptau,s),-1\leq \uptau \leq s <0$ (as shown in figure \ref{fig:Stokes_picture}) for divergence of 
vector fields $P_X$ with suitable choices of the vector field $X$. 
\begin{figure}[!hbt]
\psfrag{O}{$O$}
\psfrag{Ktaus}{$K(\uptau,s)$}
\psfrag{teq0}{$t=0$}
\psfrag{teqs}{$t=s$}
\psfrag{teqtau}{$t=\uptau$}
\psfrag{teq-1}{$t=-1$}
\psfrag{Fluxpx}{$\text{Flux}(P_X)(\uptau,s)$}
\psfrag{E(s)}{$E^O(s)$}
\psfrag{E(tau)}{$E^O(\uptau)$}
\centerline{\includegraphics[height=2.5in]{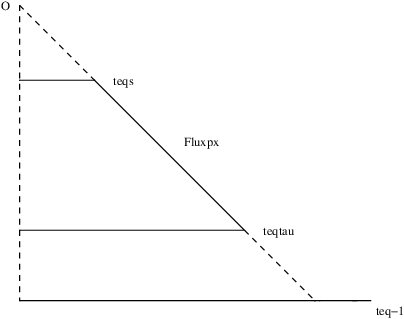}}
\caption{Application of the Stokes' theorem for the divergence of $P_X$ } 
\label{fig:Stokes_picture}
\end{figure}
The volume 3-form of $(M,g)$ is given by 
\begin{align*}
\bar{\mu}_g \fdg = re^{\beta + \alpha} d\,t \wedge d\,r \wedge d\,\theta
\end {align*}
and the area 2-form of $(\Sigma,q)$ by
\begin{align*}
 \bar{\mu}_q = r e^{\beta} d\,r \wedge d\,\theta.
\end{align*}
Let us define 1-forms $\tild{\ell}$, $\tild{n}$ and $\tild{m}$ as follows
\begin{align*}
 \tild{\ell} \fdg =& -e^\alpha d\,t + e^{\beta} d\,r, \\
 \tild{n}\fdg =& -e^\alpha d\,t - e^{\beta} d\,r, \\
 \tild{m} \fdg =&\,rd\,\theta,
\end{align*}
so we have,
\begin{align*}
 \bar{\mu}_g= \halb \left( \tild{\ell} \wedge \tild{n} \wedge \tild{m} \right).
\end{align*}
Let us also define the 2-forms $ \bar{\mu}_{\tild{\ell}}$ and $ \bar{\mu}_{\tild{n}}$ such that
\begin{align*}
 \bar{\mu}_{\tild{\ell}} \fdg=&\,-\halb \tild{n} \wedge \tild{m}, \\
\bar{\mu}_{\tild{n}} \fdg =&\, \halb \tild{\ell} \wedge \tild{m},
\end{align*}
so that
\begin{align*}
 \bar{\mu}_g =\, & -\tild{\ell} \wedge \bar{\mu}_{\tild{\ell}},\\
\bar{\mu}_g =\, & -\tild{n} \wedge \bar{\mu}_{\tild{n}}\,.
\end{align*}
We now apply the Stokes' theorem for the $\bar{\mu}_g$-divergence of $P_X$ in the region $K(\uptau,s)$ to get 

\begin{align}\label{stokes_gen} 
\int_{K(\uptau,s)} \nabla_\nu  P^\nu_{X} \, \bar{\mu}_g  = \int_{\Sigma^O_{s}} e^{\alpha}\, P^t_{X}\, \bar{\mu}_q
- \int_{\Sigma^O_{\uptau}}e^{\alpha}\,P^t_{X}\, \bar{\mu}_q
+ \text{Flux}(P_X) (\uptau,s) 
\end{align}
where
\begin{align*}
 \text{Flux} (P_X)(\uptau,s) = - \int_{C(\uptau,s)} \tild{n}(P_X)\, \bar{\mu}_{\tild{n}} .
 \end{align*}

\subsection{Monotonicity of energy} \label{sec:mon-energ}

\begin{lem}\label{mon}
We have  $ E^O(\uptau) \geq E^O(s)$ for $-1 \leq \uptau < s <0$.
\end{lem}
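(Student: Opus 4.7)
The plan is to apply the general Stokes identity \eqref{stokes_gen} with the multiplier $X=T=e^{-\alpha}\partial_t$ on the truncated past cone $K(\uptau,s)$, using the fact, established in \eqref{div_pxi}, that $P_T$ is divergence-free. The resulting identity reduces to a balance between the energies on the two spacelike caps $\Sigma^O_\uptau$ and $\Sigma^O_s$ and a boundary flux on the null lateral piece $C(\uptau,s)$, and monotonicity will follow from the pointwise positivity of this flux integrand, which is a manifestation of the dominant energy condition.

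First I would record the boundary values. Since $P_T^t=-e^{-\alpha}\mathbf{e}$, we have $e^\alpha P_T^t=-\mathbf{e}$, so
\begin{equation*}
\int_{\Sigma^O_s} e^{\alpha}P^t_T\,\bar\mu_q-\int_{\Sigma^O_\uptau}e^{\alpha}P^t_T\,\bar\mu_q=-E^O(s)+E^O(\uptau),
\end{equation*}
exactly as in Lemma~\ref{energy_cons}, but with the integration restricted to $J^-(O)$. Combined with $\nabla_\nu P_T^\nu=0$, identity \eqref{stokes_gen} then becomes
\begin{equation*}
0=-E^O(s)+E^O(\uptau)+\mathrm{Flux}(P_T)(\uptau,s),
\end{equation*}
so it suffices to show $\mathrm{Flux}(P_T)(\uptau,s)\le 0$.

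Next I would compute $\tilde n(P_T)$. From $\tilde n=-e^{\alpha}dt-e^{\beta}dr$ and the expression for $P_T$,
\begin{equation*}
\tilde n(P_T)=(-e^{\alpha})(-e^{-\alpha}\mathbf{e})+(-e^{\beta})(e^{-\beta}\mathbf{m})=\mathbf{e}-\mathbf{m}.
\end{equation*}
Using the formulas for $\mathbf{e}$ and $\mathbf{m}$ from Section~\ref{sec:tr-coord}, this rearranges into a sum of squares,
\begin{equation*}
\mathbf{e}-\mathbf{m}=\tfrac{1}{2}\bigl(T(\phi)-R(\phi)\bigr)^2+\tfrac{1}{2}\frac{g^2(\phi)}{r^2}\ge 0,
\end{equation*}
which is the standard null component of the wave map stress-energy and is the $(t,r)$-analogue of the dominant energy condition \eqref{eq:DEC} already used in the proof of Theorem~\ref{thm:regular}. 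Since $C(\uptau,s)$ is an ingoing null piece on which $\bar\mu_{\tilde n}=\tfrac{1}{2}\tilde\ell\wedge\tilde m$ is a positive measure (by the orientation conventions fixed just before \eqref{stokes_gen}), we conclude
\begin{equation*}
\mathrm{Flux}(P_T)(\uptau,s)=-\int_{C(\uptau,s)}(\mathbf{e}-\mathbf{m})\,\bar\mu_{\tilde n}\le 0,
\end{equation*}
and therefore $E^O(\uptau)=E^O(s)-\mathrm{Flux}(P_T)(\uptau,s)\ge E^O(s)$.

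The only mild subtlety I anticipate is a careful justification of the applicability of Stokes' theorem on $K(\uptau,s)$: although the spacelike caps are compact and the metric and wave map are smooth on $\Orb^+\setminus\{O\}$ by Theorem~\ref{thm:regular} and Theorem~\ref{thm:singax}, the tip $O$ is only a boundary point in the closure, so one should work on $K(\uptau,s')$ for $s'<s<0$ and pass to the limit $s'\uparrow s$, using the conservation of the total energy from Lemma~\ref{energy_cons} and the uniform metric bounds of Lemma~\ref{metric_uniform} to guarantee that the cap integrals converge to $E^O(s)$. No serious estimate is required beyond this; the result is essentially algebraic once $\mathbf{e}-\mathbf{m}\ge 0$ is observed.
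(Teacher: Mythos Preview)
Your proof is correct and follows essentially the same approach as the paper: apply Stokes' theorem \eqref{stokes_gen} with $X=T$, use $\nabla_\nu P_T^\nu=0$, compute $\tilde n(P_T)=\mathbf{e}-\mathbf{m}$, and conclude from the nonnegativity of $\mathbf{e}-\mathbf{m}$ (the paper simply invokes $\mathbf{e}\ge|\mathbf{m}|$, while you write it out as a sum of squares). Your final paragraph is unnecessary: since $s<0$, the region $K(\uptau,s)$ is compact and bounded away from $O$, so Stokes' theorem applies directly without any limiting argument.
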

\begin{proof}
 
 Let us apply Stokes' theorem \eqref{stokes_gen} to the vector field $P_T$.  We have
\begin{align} \label{stokes_mono}
 0  = -\int_{\Sigma^O_{s}} \mathbf{e}\,\,\bar{\mu}_q
+ \int_{\Sigma^O_{\uptau}} \mathbf{e} \,\, \bar{\mu}_q
+ \text{Flux} (P_T)(\uptau,s) 
\end{align}
and
\begin{align*}
\text{Flux} (P_T)(\uptau,s) =&- \int_{C(\uptau,s)} \tild{n}\,(P_T)\,  \bar{\mu}_{\tild{n}}  \\
=&- \int_{C(\uptau,s)} (\mathbf{e-m})\,  \bar{\mu}_{\tild{n}}\,.  
\end{align*}
\begin{figure}[!hbt]
\psfrag{O}{$O$}
\psfrag{Ktaus}{$K(\uptau,s)$}
\psfrag{teq0}{$t=0$}
\psfrag{teqs}{$t=s$}
\psfrag{teqtau}{$t=\uptau$}
\psfrag{teq-1}{$t=-1$}
\psfrag{Fluxpx}{$\text{Flux}(P_T)(t,s)$}
\psfrag{E(s)}{$E^O(s)$}
\psfrag{E(tau)}{$E^O(\uptau)$}
\centerline{\includegraphics[height=2.5in]{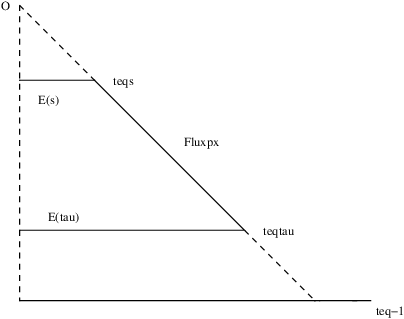}}
\caption{Monotonicity of Energy inside the past null cone of $O$ } 
\label{fig:Stokes_picture1} 
\end{figure}
Note that we have $\mathbf{e} \geq |\mathbf{m}|$. 
Hence, we have $\text{Flux} (P_T)(\uptau,s) \leq 0$ which implies  
\[ E^O(\uptau) - E^O(s) \geq 0 \,\,\forall \,\, -1\leq \uptau \leq s <0.\]
This concludes the proof of the lemma.
\end{proof}

Let us define
\begin{align} \label{E_conc}
 E^O_{\text{conc}} \fdg =  \inf_{\uptau \in [-1,0)} E^O (\uptau).
\end{align}
As a consequence of Lemma \ref{mon}, \eqref{E_conc} is equivalent to 
\begin{align}
 E^O_{\text{conc}} = \lim_{\uptau \to 0} E^O(\uptau).
\end{align}
We say that the energy of equivariant Cauchy problem 
concentrates  if  $ E^O_{\text{conc}} \neq 0$ and does not concentrate if
 $E^O_{\text{conc}} = 0.$
\begin{cor} \label{flux_px1_zero}
For a vectorfield $X$, let 
\begin{align*}
 \text{\textnormal{Flux}}(P_X)(\uptau) \fdg = \lim_{s\to 0} \text{\textnormal{Flux}}(P_X)(\uptau,s).
\end{align*}
Then, we have
\begin{align*}
  \text{\textnormal{Flux}}(P_T)(\uptau) \to 0 \,\,\text{as}\,\, \uptau \to 0.
\end{align*}
\end{cor}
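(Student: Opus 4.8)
The plan is to read the claim off directly from the Stokes identity \eqref{stokes_mono} that was already established in the proof of Lemma \ref{mon}. Applied to $P_T$, that identity says precisely
$$\text{Flux}(P_T)(\uptau,s) = E^O(s) - E^O(\uptau), \qquad -1 \le \uptau \le s < 0,$$
so the null flux through the truncated cone $C(\uptau,s)$ is exactly the energy lost between the slices $\Sigma^O_\uptau$ and $\Sigma^O_s$. Everything then reduces to passing to the limit in this formula.

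First I would fix $\uptau$ and let $s \to 0$. By Lemma \ref{mon} the map $s \mapsto E^O(s)$ is non-increasing, and it is bounded below by $0$ because $\mathbf{e} \ge 0$; hence $\lim_{s\to 0} E^O(s)$ exists, and by the remark following \eqref{E_conc} it equals $E^O_{\text{conc}}$. Consequently the limit defining $\text{Flux}(P_T)(\uptau)$ exists and
$$\text{Flux}(P_T)(\uptau) = E^O_{\text{conc}} - E^O(\uptau) \le 0,$$
the sign being consistent with the bound $\text{Flux}(P_T)(\uptau,s)\le 0$ obtained in the proof of Lemma \ref{mon}. Next I would let $\uptau \to 0$: again by Lemma \ref{mon} together with the definition \eqref{E_conc}, we have $E^O(\uptau) \searrow E^O_{\text{conc}}$ as $\uptau \to 0$, so the right-hand side above tends to $0$, which gives $\text{Flux}(P_T)(\uptau) \to 0$ and proves the corollary.

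There is essentially no analytic obstacle here; the only points that need care are the existence of the limits in $s$ and in $\uptau$, and both follow from the monotonicity of $E^O$ (Lemma \ref{mon}) and its nonnegativity. The real content of the statement lies in Lemma \ref{mon} and the conservation identity \eqref{stokes_mono}: the corollary merely records the quantitative fact that the null flux of $P_T$ through $C(\uptau,0)$ equals the energy defect $E^O(\uptau) - E^O_{\text{conc}}$, which is the bookkeeping identity underlying the non-concentration argument to follow.
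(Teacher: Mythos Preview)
Your proof is correct and follows essentially the same route as the paper: both read off the identity $\text{Flux}(P_T)(\uptau,s)=E^O(s)-E^O(\uptau)$ from \eqref{stokes_mono}, pass to the limit $s\to 0$ to obtain $\text{Flux}(P_T)(\uptau)=E^O_{\text{conc}}-E^O(\uptau)$, and then let $\uptau\to 0$ using $E^O(\uptau)\to E^O_{\text{conc}}$. Your version adds the explicit justification for the existence of the limits via the monotonicity and nonnegativity of $E^O$, which is a harmless elaboration.
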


\begin{proof}
Recall the equation \eqref{stokes_mono}. For $s \to 0$, we have
\begin{align} \label{flux_to_zero}
 0  = -E^O_{\text{conc}}
+ \int_{\Sigma^O_{\uptau}} \mathbf{e} \,\,\bar{\mu}_q
+ \text{Flux} (P_T)(\uptau).
\end{align}
Now by the definition \eqref{E_conc}, as $\uptau \to 0$ we get
\begin{align}
\lim_{\uptau \to 0} \int_{\Sigma^O_{\uptau}} \mathbf{e} \,\,\bar{\mu}_q \to E^O_{\text{conc}}.
\end{align}
Therefore, it follows from \eqref{flux_to_zero} that $\text{\textnormal{Flux}}(P_T)(\uptau)\to 0$ as $\uptau \to 0$.
\end{proof}

\subsection{$L^{\infty}$ estimate for the Jacobian}\label{sec:null-coord-construct}\label{sec:J-est} 

The goal of this section is to derive uniform bounds for the Jacobian transformation between $(t,r,\theta)$ and $(u,\ub,\theta)$ coordinates. 
Recall that we defined the 1-forms $\tild{\ell}$ and $\tild{n}$. Their corresponding vectors are null, given by
\begin{align*}
 \tild{\ell}   =&\, e^{-\alpha}\ptl_t + e^{-\beta}\ptl_r \\
\tild{n} =&\, e^{-\alpha} \ptl_t - e^{-\beta}\ptl_r \,.
\end{align*}

\begin{lem}
There exists two scalar functions $\mathcal{F}$ and $\mathcal{G}$ such that
\begin{align}\label{coonulltriad}
\pr_{\ub} = \frac{1}{2}e^\mathcal{F} \tild{\ell},\,\,\,\,\pr_u = \frac{1}{2}e^\mathcal{G} \tild{n}, 
\end{align}
with the normalization on $\Gamma$
$$\mathcal{F}=\mathcal{G}=0\textrm{ on }\Gamma.$$
Furthermore, $\cal{F}$ and $\cal{G}$ satisfy  
\begin{subequations} \label{nullode}
\begin{align}
 2\pr_{\ub}(\cal{G}) &= e^{\cal{F}}r \kappa e^{\beta} (\mathbf{e} + \mathbf{m} -\mathbf{f}), \\
2\pr_u(\cal{F}) &= -e^{\cal{G}}r \kappa e^{\beta} (\mathbf{e} - \mathbf{m} -\mathbf{f}). 
\end{align}
\end{subequations}
\end{lem}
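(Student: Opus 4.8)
The plan is to identify, on the common domain of the $(t,r,\theta)$ and $(u,\ub,\theta)$ coordinate systems, the null coordinate fields $\pr_u$ and $\pr_{\ub}$ with positive rescalings of the null fields $\tild{n}$ and $\tild{\ell}$, and then to read off the transport equations \eqref{nullode} from the integrability relation $[\pr_u,\pr_{\ub}]=0$ combined with the Einstein equations \eqref{gamma_t} and \eqref{omega_r}.

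First I would fix the geometric picture. From \eqref{eq:gg-tr-form} one checks $\gg(\tild{\ell},\tild{\ell})=\gg(\tild{n},\tild{n})=0$ and $\gg(\tild{\ell},\tild{n})=-2\neq 0$, so at each point $\tild{\ell}$ and $\tild{n}$ span the two distinct null directions in the plane orthogonal to $\pr_\theta$. Since $\pr_u$ and $\pr_{\ub}$ are future directed null fields with no $\pr_\theta$-component, each of them is a multiple of $\tild{\ell}$ or of $\tild{n}$, and that multiple is \emph{positive} because $\tild{\ell}$ and $\tild{n}$ are future directed as well. To decide which is which, I would use that $\tild{\ell}(r)=e^{-\beta}>0$ while $\tild{n}(r)=-e^{-\beta}<0$, whereas $\pr_{\ub}r>0$ and $\pr_u r<0$ throughout $\Orb$ by Theorem \ref{thm:regular}: hence $\pr_{\ub}$ is a positive multiple of $\tild{\ell}$ and $\pr_u$ a positive multiple of $\tild{n}$. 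Writing these multipliers as $\frac{1}{2}e^{\mathcal{F}}$ and $\frac{1}{2}e^{\mathcal{G}}$ with smooth functions $\mathcal{F},\mathcal{G}$ yields \eqref{coonulltriad}. For the normalization on $\Gamma$, recall that $\alpha=\beta=0$ there, so $\tild{\ell}(r)=1$, while $\pr_{\ub}r=\frac{1}{2}$ on $\Gamma$ by \eqref{eq:axis-cond-uub}; comparing with $\pr_{\ub}r=\frac{1}{2}e^{\mathcal{F}}\tild{\ell}(r)$ forces $\mathcal{F}=0$ on $\Gamma$, and symmetrically $\mathcal{G}=0$ on $\Gamma$.

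To obtain \eqref{nullode} I would exploit $[\pr_u,\pr_{\ub}]=0$, valid since $u,\ub$ are coordinates. Substituting $\pr_u=\frac{1}{2}e^{\mathcal{G}}\tild{n}$ and $\pr_{\ub}=\frac{1}{2}e^{\mathcal{F}}\tild{\ell}$ and expanding with the Leibniz rule for the Lie bracket, this is equivalent to
$$[\tild{n},\tild{\ell}]+\tild{n}(\mathcal{F})\,\tild{\ell}-\tild{\ell}(\mathcal{G})\,\tild{n}=0.$$
A short computation from $\tild{\ell}=e^{-\alpha}\pr_t+e^{-\beta}\pr_r$ and $\tild{n}=e^{-\alpha}\pr_t-e^{-\beta}\pr_r$ expresses $[\tild{n},\tild{\ell}]$ as a combination of $\tild{\ell}$ and $\tild{n}$ whose coefficients are linear in $\alpha_r$ and $\beta_t$. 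Since $\tild{n}(\mathcal{F})=2e^{-\mathcal{G}}\pr_u\mathcal{F}$, $\tild{\ell}(\mathcal{G})=2e^{-\mathcal{F}}\pr_{\ub}\mathcal{G}$, and $\tild{\ell},\tild{n}$ are linearly independent, equating the two components isolates $\pr_u\mathcal{F}$ and $\pr_{\ub}\mathcal{G}$ in terms of $\alpha_r$ and $\beta_t$. It then remains to substitute the Einstein equations \eqref{gamma_t} for $\beta_t$ and \eqref{omega_r} for $\alpha_r$ and to rewrite $\phi_t\phi_r$ and $e^{-2\alpha}\phi_t^2+e^{-2\beta}\phi_r^2-g^2(\phi)/r^2$ in terms of $\mathbf{e},\mathbf{m},\mathbf{f}$, which gives \eqref{nullode}.

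I do not anticipate a genuine obstacle: once the picture is in place, the remainder is bookkeeping with the Lie bracket and substitution of the constraint and evolution equations. The point that requires care is the identification in the second paragraph — choosing the correct null field ($\tild{\ell}$ rather than $\tild{n}$ for $\pr_{\ub}$, and the reverse for $\pr_u$) and, above all, verifying that the proportionality factor is positive, which is what makes the parametrization $\frac{1}{2}e^{\mathcal{F}},\frac{1}{2}e^{\mathcal{G}}$ legitimate. This is exactly where the one-sided signs $\pr_{\ub}r>0$, $\pr_u r<0$ from Theorem \ref{thm:regular}, together with the time orientation built into the null coordinates, come in; on the axis they are moreover consistent with the normalization \eqref{eq:axis-cond-uub}.
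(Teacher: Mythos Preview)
Your proposal is correct and follows essentially the same route as the paper: identify $\pr_{\ub},\pr_u$ as positive rescalings of $\tild{\ell},\tild{n}$ via the future-directed null structure and the signs of $\pr_{\ub}r,\pr_u r$, fix the normalization on $\Gamma$, and then read off \eqref{nullode} from $[\pr_u,\pr_{\ub}]=0$ combined with the explicit bracket $[\tild{\ell},\tild{n}]$ and the Einstein equations \eqref{gamma_t}, \eqref{omega_r}. The only cosmetic difference is that the paper carries out the bracket computation in the $(\pr_t,\pr_r)$ frame rather than decomposing $[\tild{n},\tild{\ell}]$ along $\tild{\ell},\tild{n}$, but this is the same bookkeeping.
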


\begin{proof}
In view of various normalizations on $\Gamma$, note that we have
$$\pr_{\ub}r=\frac{1}{2},\,\, \pr_ur=-\frac{1}{2},\,\, \tild{\ell}(r)=1,\,\, \tild{n}(r)=-1\textrm{ on }\Gamma.$$
Furthermore, $\pr_u$ and $\pr_{\ub}$ are also null, and $\pr_u$, $\pr_{\ub}$, $\tild{\ell}$ and $\tild{n}$ are all future directed. We infer that there exists two scalar functions $\mathcal{F}$ and $\mathcal{G}$ such that
\begin{align*}
\pr_{\ub} = \frac{1}{2}e^\mathcal{F} \tild{\ell},\,\,\,\,\pr_u = \frac{1}{2}e^\mathcal{G} \tild{n}, 
\end{align*}
with the normalization on $\Gamma$
$$\mathcal{F}=\mathcal{G}=0\textrm{ on }\Gamma.$$
 
Next, we derive equations for $\mathcal{F}$ and $\mathcal{G}$. We have 
\begin{align*}
[ \tild{\ell}, \tild{n}] &=  2e^{- (\beta + \alpha)} (-\alpha_r \ptl_t +\beta_t \ptl_r).
\end{align*}
We infer
\begin{eqnarray*}
[ \pr_{\ub} , \pr_u ] & =& \frac{e^{(\cal{F} + \cal {G})}}{4} \left( [\tild{\ell},\tild{n}] + \tild{\ell}(\cal{G}) \tild{n} - \tild{n}(\cal{F})\tild{\ell} \right) \\
& =& \frac{e^{(\cal{F} + \cal {G})}}{2}e^{- (\beta + \alpha)} (-\alpha_r \ptl_t +\beta_t \ptl_r)+\frac{e^{\cal{G}}}{2}\pr_{\ub}(\cal{G})(e^{-\alpha} \ptl_t - e^{-\beta}\ptl_r)\\
&&-\frac{e^{\cal{F}}}{2}\pr_u(\cal{F})(e^{-\alpha} \ptl_t + e^{-\beta}\ptl_r).
\end{eqnarray*}
Since $[ \pr_{\ub},\pr_u]=0$, $\cal{F}$ and $\cal{G}$ are such that 
\begin{align*}
 e^{-\cal{F}}\pr_{\ub}(\cal{G}) - e^{-\cal{G}}\pr_u(\cal{F}) &=  r \kappa e^{\beta} (\mathbf{e} -\mathbf{f}), \\
e^{-\cal{F}}\pr_{\ub}(\cal{G}) + e^{-\cal{G}}\pr_u(\cal{F}) &= r \kappa e^{\beta} \mathbf{m},
\end{align*}
and hence
\begin{subequations}
\begin{align*}
 2\pr_{\ub}(\cal{G}) &= e^{\cal{F}}r \kappa e^{\beta} (\mathbf{e} + \mathbf{m} -\mathbf{f}), \\
2\pr_u(\cal{F}) &= -e^{\cal{G}}r \kappa e^{\beta} (\mathbf{e} - \mathbf{m} -\mathbf{f}). 
\end{align*}
\end{subequations}
This concludes the proof of the lemma.
\end{proof}

Let us revisit Stokes' theorem for $\bar{\mu}_g$-divergence of $P_{X}$ in $K(\uptau,s)$. We have
$$d\,{\ulin{u}} = -e^{-\cal{F}} \tild{n},\,\,  d\,u = -e^{-\cal{G}} \tild{\ell}.$$
The volume 3-form of $(M,g)$ is
\begin{align*}
 \bar{\mu}_g  =& \, \halb r\,\Omega^2 d\,u \wedge d\,{\ulin{u}} \wedge d\,\theta.
\end{align*}
Let us introduce the 2-forms $\bar{\mu}_{\ulin{u}}$ and $\bar{\mu}_u$ as follows
\begin{align*}
 \bar{\mu}_g=  d\,{\ulin{u}} \wedge \bar{\mu}_{\ulin{u}},\,\,\,\,\bar{\mu}_g= d\,u \wedge \bar{\mu}_u
\end{align*}
so that
\begin{align*}
 \bar{\mu}_{\ulin{u}}  = - \halb r \,\Omega ^2 \left( d\,u \wedge d\,\theta  \right),\,\,\,\,\bar{\mu}_u = \, \halb r\, \Omega^2 \left( d\,{\ulin{u}} \wedge d\,\theta  \right).
\end{align*}
Now,
\begin{align*}
 \text{Flux}(P_{X}) (\uptau,s) =& \int_{C(\uptau,s)} d\,{\ulin{u}}(P_X)\, \bar{\mu}_{\ulin{u}}, \\
 \intertext{for instance,}
 \text{Flux}(P_{T}) (\uptau,s) =& \int_{C(\uptau,s)} d\,{\ulin{u}}(P_T) \,\bar{\mu}_{\ulin{u}},\\
 =& - \int_{C(\uptau,s)}e^{-\cal{F}}(\mathbf{e-m})  \bar{\mu}_{\ulin{u}}.
\end{align*}

\begin{lem} \label{null_uniform}
 There exist constants $c^{\,-}_{\,\cal{G}},\,c^{\,+}_{\,\cal{G}},\,c^{\,-}_{\,\cal{F}}\,\text{and}\,\,c^{\,+}_{\,\cal{F}} $  depending only on the initial data  and the universal constants such that
 the following uniform bounds hold
\begin{align*}
 c^{\,-}_{\,\cal{G}} \,\leq&\, \cal{G}\, \leq\, c^{\,+}_{\,\cal{G}} \\
 c^{\,-}_{\,\cal{F}}\, \leq&\, \cal{F}\, \leq \,c ^{\,+}_{\,\cal{F}}. 
 \end{align*}
\end{lem}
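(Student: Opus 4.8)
The plan is to derive the uniform bounds on $\cal{F}$ and $\cal{G}$ from the already-established bounds $c^-_\beta\le\beta\le c^+_\beta$ of Lemma~\ref{metric_uniform} and $0\le m<1$ of Theorem~\ref{thm:regular}, by integrating the transport equations \eqref{nullode} along the ingoing null rays reaching the axis $\Gamma$, on which $\cal{F}=\cal{G}=0$ and $m=0$. The key point is that, once the right-hand sides of \eqref{nullode} are rewritten in the variables of Section~\ref{sec:firstsing}, they are pointwise dominated by $e^{2c^+_\beta}\,\pr_{\ub}m$, resp.\ $-e^{2c^+_\beta}\,\pr_u m$, whose increments along such rays are at most $e^{2c^+_\beta}$.

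First I would set up the dictionary between the $(u,\ub,\theta)$ and $(t,r,\theta)$ frames. Applying $\pr_{\ub}=\tfrac12 e^{\cal{F}}\tild{\ell}$ and $\pr_u=\tfrac12 e^{\cal{G}}\tild{n}$ to the coordinate function $r$, using $\tild{\ell}(r)=e^{-\beta}$, $\tild{n}(r)=-e^{-\beta}$ and the notation $\lambda=\pr_{\ub}r>0$, $\nu=\pr_u r<0$, $\varkappa=-\tfrac14\Omega^2\nu^{-1}$, a short computation gives
\[ e^{\cal{F}}=2\lambda e^{\beta},\qquad e^{\cal{G}}=-2\nu e^{\beta},\qquad e^{\cal{F}+\cal{G}}=\Omega^2,\qquad \varkappa=\tfrac12 e^{\cal{F}+\beta}. \]
Next, using $\mathbf{e}\pm\mathbf{m}-\mathbf{f}=\tfrac12\big((\tild{\ell}\phi)^2-\mathbf{f}\big)$, resp.\ $\tfrac12\big((\tild{n}\phi)^2-\mathbf{f}\big)$, with $\mathbf{f}=g^2(\phi)/r^2$, and $\tild{\ell}\phi=2e^{-\cal{F}}\pr_{\ub}\phi$, $\tild{n}\phi=2e^{-\cal{G}}\pr_u\phi$, the equations \eqref{nullode} become
\[ \pr_{\ub}\cal{G}=2\kappa r e^{\beta}e^{-\cal{F}}(\pr_{\ub}\phi)^2-\tfrac12\kappa e^{\beta}e^{\cal{F}}\tfrac{g^2(\phi)}{r},\qquad \pr_u\cal{F}=-2\kappa r e^{\beta}e^{-\cal{G}}(\pr_u\phi)^2+\tfrac12\kappa e^{\beta}e^{\cal{G}}\tfrac{g^2(\phi)}{r}. \]

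I would then compare with the mass equations \eqref{eq:meqs}, which in the present notation (see the proof of Theorem~\ref{thm:singax}, and \eqref{eq:mu}) read
\[ \pr_{\ub}m=\kappa\Big(\tfrac{g^2(\phi)}{r}\lambda+\tfrac{\vartheta^2}{r\varkappa}\Big)\ge 0,\qquad -\pr_u m=\kappa\Big(\tfrac{g^2(\phi)}{r}|\nu|+4\Omega^{-2}r(\pr_u\phi)^2\lambda\Big)\ge 0, \]
with $\vartheta=r\pr_{\ub}\phi$ — each a sum of two non-negative pieces. The dictionary gives $\varkappa e^{\beta}e^{-\cal{F}}=\tfrac12 e^{2\beta}$, $e^{\beta}e^{\cal{F}}=2\lambda e^{2\beta}$ and the analogous identities with $u\leftrightarrow\ub$, so that each of the two terms of $\pr_{\ub}\cal{G}$, resp.\ $\pr_u\cal{F}$, is in absolute value at most $e^{2\beta}$ times the corresponding non-negative piece of $\pr_{\ub}m$, resp.\ $-\pr_u m$; since $e^{2\beta}\le e^{2c^+_\beta}$ this yields
\[ |\pr_{\ub}\cal{G}|\le e^{2c^+_\beta}\pr_{\ub}m,\qquad |\pr_u\cal{F}|\le -e^{2c^+_\beta}\pr_u m. \]
Now every point $q$ on which $(t,r)$-coordinates are defined is joined to $\Gamma$ by an ingoing null segment with $\ub$ (resp.\ $u$) as parameter, along which $r$ decreases monotonically to $0$ (as $\lambda>0$, $\nu<0$ by Theorem~\ref{thm:regular}) and which stays in the domain of the foliation constructed in Section~\ref{sec:tr-coord}. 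Integrating the last inequalities over this segment, and using $\cal{F}=\cal{G}=m=0$ on $\Gamma$ together with $0\le m<1$, gives $|\cal{F}(q)|\le e^{2c^+_\beta}m(q)\le e^{2c^+_\beta}$ and $|\cal{G}(q)|\le e^{2c^+_\beta}$; thus the lemma holds with $c^{\pm}_{\cal{F}}=c^{\pm}_{\cal{G}}=\pm e^{2c^+_\beta}$.

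The step I expect to be the main obstacle is the sign-and-normalization bookkeeping in the comparison: one must verify that the signed term $-\tfrac12\kappa e^{\beta}e^{\cal{F}}g^2(\phi)/r$ of $\pr_{\ub}\cal{G}$ and the quadratic term $2\kappa r e^{\beta}e^{-\cal{F}}(\pr_{\ub}\phi)^2$ are each absorbed \emph{separately} into $\pr_{\ub}m$, with no cancellation used — which is exactly what the change-of-normalization identities $e^{\cal{F}}=2\lambda e^{\beta}$, $\varkappa=\tfrac12 e^{\cal{F}+\beta}$, $\Omega^2=e^{\cal{F}+\cal{G}}$ make possible. A secondary, routine point is to check that the ingoing null segments used above remain inside the domain of the $(t,r)$-foliation; for any point not so joined to $\Gamma$ one integrates instead from the Cauchy hypersurface, on which $\cal{F}$ and $\cal{G}$ are controlled by the initial data.
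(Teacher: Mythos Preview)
Your proposal is correct and follows essentially the same strategy as the paper: integrate the transport equations \eqref{nullode} from the axis and dominate the right-hand sides by an energy-type quantity whose total increment is a priori bounded. The only difference is packaging. The paper rewrites the integrands using $\Omega^2=e^{\cal F+\cal G}$ and identifies them (after integration in $\theta$) with null fluxes of the divergence-free current $P_T$, which are bounded by the conserved energy; you instead compare the integrands pointwise to $e^{2\beta}\,\partial_{\ub}m$ and $-e^{2\beta}\,\partial_u m$ and use $0\le m<1$ from Theorem~\ref{thm:regular}. Since $e^{-2\beta}=1-m$ (as follows from $\check g(\nabla r,\nabla r)=1-m$), these two devices are equivalent; your route is marginally more self-contained because it avoids invoking the flux formalism, and it also sidesteps your ``secondary point'' about the foliation domain, since the comparison factor $e^{2\beta}$ can simply be read as $(1-m)^{-1}\le(1-m_\infty)^{-1}$, which is globally defined. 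Your term-by-term bookkeeping (the identities $e^{\cal F}=2\lambda e^\beta$, $\varkappa=\tfrac12 e^{\cal F+\beta}$, etc.) checks out exactly and gives the sharp constant $e^{2c_\beta^+}=(1-m_\infty)^{-1}$.
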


\begin {proof}
We integrate \eqref{nullode} using the fact that $\mathcal{F}=\mathcal{G}=0$ on $\Gamma$. We infer
\begin{subequations}
\begin{align*}
 \cal{G}(u,\ub) &= \int_u^{\ub}e^{\cal{F}}r \kappa e^{\beta} (\mathbf{e} + \mathbf{m} -\mathbf{f})(u,\ub')d\ub', \\
\cal{F}(u,\ub) &= \int_u^{\ub}e^{\cal{G}}r \kappa e^{\beta} (\mathbf{e} - \mathbf{m} -\mathbf{f})(u',\ub)du'. 
\end{align*}
\end{subequations}
\iffalse
\begin{figure}[!hbt]
\psfrag{teqtau}{$t=\uptau$}
\psfrag{teq-1}{$t=-1$}
\psfrag{O}{$O$}
\psfrag{xitild}{$\tild{{\ulin{u}}}$}
\psfrag{etatild}{$\tild{u}$}
\centerline{\includegraphics[height=2.5in]{null_coordinates_estimates.eps}}

\caption{Application of the fundamental theorem of calculus along the integral curves of $\tild{\ell}$\,\,(parameterized by 
$\tild{{\ulin{u}}}$\,) and $\tild{n}$\,\,(parameterized by $\tild{u}$\,), for the estimates on $\cal{G}$ and $\cal{F}$ respectively.} 
\label{fig:null_coordinates_estimates}
\end{figure} 
\fi
Next, note that
$$-\frac{\Omega^2}{2}=\gg(\pr_u, \pr_{\ub})=\frac{e^{\cal{F}+\cal{G}}}{4}\gg(\tild{n}, \tild{\ell})=-\frac{e^{\cal{F}+\cal{G}}}{2}$$
and hence
\begin{equation}\label{relationOmFG}
 \Omega^2= e^{\cal{F}+ \cal{G}}.
\end{equation}
We infer
\begin{subequations}
\begin{align*}
 \cal{G}(u,\ub) &= \kappa\int_u^{\ub}e^{\beta} e^{-\cal{G}}(\mathbf{e} + \mathbf{m} -\mathbf{f})r\Omega^2d\ub', \\
\cal{F}(u,\ub) &= \kappa\int_u^{\ub}e^{\beta}e^{-\cal{F}}  (\mathbf{e} - \mathbf{m} -\mathbf{f})r\Omega^2du'. 
\end{align*}
\end{subequations}
Using the fact that $|\mathbf{e}\pm \mathbf{m}-\mathbf{f}|\leq \mathbf{e}\pm \mathbf{m}$ and $u\leq \ub$, we infer 
\begin{subequations}
\begin{align*}
|\cal{G}(u,\ub)| &\leq c\int_u^{\ub}e^{-\cal{G}}(\mathbf{e} + \mathbf{m})r\Omega^2d\ub', \\
|\cal{F}(u,\ub)| &\leq c\int_u^{\ub}e^{-\cal{F}}  (\mathbf{e} - \mathbf{m})r\Omega^2du'. 
\end{align*}
\end{subequations}
Since $d\,{\ulin{u}} = -e^{-\cal{F}} \tild{n}$ and $d\,u = -e^{-\cal{G}} \tild{\ell}$, we infer
\begin{subequations}
\begin{align*}
 |\cal{G}(u,\ub)| &\leq c\int_u^{\ub}d\,u(P_T)  r\Omega^2d\ub', \\
|\cal{F}(u,\ub)| &\leq c\int_u^{\ub}d\,{\ulin{u}}(P_T) r\Omega^2du'. 
\end{align*}
\end{subequations}
After integration in $\theta$, the right-hand sides are bounded by fluxes which in turn are bounded by the energy, and hence
$$|\cal{G}|\, \leq c,\,\, |\cal{F}|\, \leq \,c.$$ 
This concludes the proof of the lemma.
\end {proof}

Let us consider the Jacobian $\mathbf{J}$ of the transition functions between $(t,r,\theta)$ and $({\ulin{u}},u,\theta)$
\begin{align*}
 \mathbf{J} \fdg= &  \left( \begin{array}{ccc}
\ptl_{\ulin{u}} t & \ptl_u t & \ptl_\theta t \\
\ptl_{\ulin{u}} r& \ptl_u r & \ptl_\theta r\\
\ptl_{\ulin{u}} \theta & \ptl_u \theta & \ptl_\theta \theta \end{array} \right) \\[2mm] 
=& \frac{1}{2}\left( \begin{array}{ccc}
e^{\mathcal{F}-\alpha} & e^{\mathcal{G}-\alpha} &0  \\
e^{\mathcal{F}-\beta} & -e^{\mathcal{G}-\beta} &0 \\
 0 & 0 & 2 \end{array} \right)
\end{align*}
then the inverse Jacobian $\mathbf{J}^{-1}$ is given by
\begin{align*}
 \mathbf{J}^{-1} =& \left( \begin{array}{ccc}
e^{-\mathcal{F}+\alpha} & e^{-\mathcal{F}+ \beta} &0  \\
e^{-\mathcal{G}+\alpha} & -e^{-\mathcal{G}+\beta} &0 \\
 0 & 0 & 1 \end{array} \right).
\end{align*}
Therefore,
\begin{align}
 \ptl_t {\ulin{u}} =  e^{-\cal{F} + \alpha},\,\, \ptl_r {\ulin{u}} =  e^{-\cal{F} + \beta},\,\, \ptl_t u =  e^{-\cal{G} + \alpha},\,\, \ptl_r u = - e^{-\cal{G} + \beta},
\end{align}
and
\begin{align*}
 d\, {\ulin{u}} = e^{(\mathcal{-F} + \alpha)} d\,t + e^ {(\mathcal{-F} + \beta)} d\,r, \,\,\,\,d\, u =  e^{(\mathcal{-G} + \alpha)} d\,t - e^ {(\mathcal{-G} + \beta)}d \, r.
\end{align*}

\begin{cor}\label{Jac-uniform}
 There exist constants $c^{\,-}_{\,\mu\nu},c^{\,+}_{\,\mu \nu}$ and $C^{\,-}_{\,\mu \nu}, C^{\,+}_{\,\mu \nu}$ depending only on the inital data and the universal constants 
such that all the entries of the Jacobian $\mathbf{J}$ and its inverse 
$\mathbf{J}^{-1}$ are uniformly bounded 
\begin{align*}
 c^{\,-}_{\,\mu \nu} \,\leq&\, \mathbf{J}_{\,\mu \nu}\, \leq\, c^{\,+}_{\,\mu \nu} \\
C^{\,-}_{\,\mu \nu} \,\leq&\, \mathbf{J}^{-1}_{\,\mu \nu}\, \leq\, C^{\,+}_{\,\mu \nu}
\end{align*}
for $\mu,\nu = 0,1,2.$
\end{cor}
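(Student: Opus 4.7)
The plan is to read off the bounds directly from the explicit formulas for the entries of $\mathbf{J}$ and $\mathbf{J}^{-1}$ given just above the statement, combining the uniform bounds on the metric coefficients from Lemma \ref{metric_uniform} with the uniform bounds on the null-coordinate potentials from Lemma \ref{null_uniform}. Indeed, every non-constant entry of $\mathbf{J}$ has the form $\pm \tfrac{1}{2} e^{\pm \mathcal{F}+(\cdot)}$ or $\pm \tfrac{1}{2} e^{\pm \mathcal{G}+(\cdot)}$, where $(\cdot)$ stands for $-\alpha$ or $-\beta$; similarly every non-constant entry of $\mathbf{J}^{-1}$ is of the form $\pm e^{\mp \mathcal{F}+(\cdot)}$ or $\pm e^{\mp \mathcal{G}+(\cdot)}$ with $(\cdot)\in\{\alpha,\beta\}$. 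The remaining entries are either $0$, $1$ or $2$.

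Hence the argument is as follows. First I would invoke Lemma \ref{metric_uniform} to obtain the two-sided bounds $c^-_\alpha\le\alpha\le c^+_\alpha$ and $c^-_\beta\le\beta\le c^+_\beta$ on $\tilde M$. Second, I would invoke Lemma \ref{null_uniform} to obtain $c^-_{\mathcal{G}}\le\mathcal{G}\le c^+_{\mathcal{G}}$ and $c^-_{\mathcal{F}}\le\mathcal{F}\le c^+_{\mathcal{F}}$. For each non-constant entry, say $\frac{1}{2}e^{\mathcal{F}-\alpha}$ of $\mathbf{J}$, the exponent $\mathcal{F}-\alpha$ lies in the bounded interval $[c^-_{\mathcal{F}}-c^+_\alpha,\,c^+_{\mathcal{F}}-c^-_\alpha]$, so the entry is bounded above and below by explicit positive constants; define $c^-_{\mu\nu}$ and $c^+_{\mu\nu}$ accordingly, keeping the sign for the entry $-\frac{1}{2}e^{\mathcal{G}-\beta}$. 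One proceeds identically for the remaining five nontrivial entries of $\mathbf{J}$ and for the four nontrivial entries of $\mathbf{J}^{-1}$, producing the constants $C^\pm_{\mu\nu}$; the zero and constant ($1$ or $2$) entries are trivially bounded. There is no real obstacle here: once Lemmas \ref{metric_uniform} and \ref{null_uniform} are in hand, the corollary is a purely algebraic combination of those estimates, and the compatibility $\Omega^2=e^{\mathcal{F}+\mathcal{G}}$ (already recorded in \eqref{relationOmFG}) serves only as a consistency check, not as an additional ingredient.
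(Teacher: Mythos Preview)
Your proposal is correct and follows essentially the same approach as the paper: the paper's proof simply states that the result follows from Lemmas \ref{metric_uniform} and \ref{null_uniform}, and you have merely spelled out the straightforward algebra of combining those uniform bounds on $\alpha,\beta,\mathcal{F},\mathcal{G}$ with the explicit exponential form of the Jacobian entries.
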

\begin{proof}
The proof follows from Lemmas \ref{metric_uniform} and \ref{null_uniform}.
\end{proof}

\begin{cor}
 There exist constants $c^-_{\Omega}$ and $c^+_{\Omega}$  depending only on the initial energy and the universal constants such that the following uniform bounds hold
 on the metric function $\Omega$ in null coordinates.

\begin{align}
 c^-_{\Omega} \leq \Omega \leq c^+_{\Omega}.
\end{align}
\end{cor}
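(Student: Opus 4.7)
The plan is to combine the two pieces of information already established: the algebraic identity \eqref{relationOmFG}, namely $\Omega^2 = e^{\cal{F}+\cal{G}}$, with the uniform two-sided bounds on $\cal{F}$ and $\cal{G}$ provided by Lemma \ref{null_uniform}. This should give the result essentially for free, without any new computation.

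Concretely, I would proceed as follows. First, recall from \eqref{relationOmFG} that
\[
\Omega = e^{(\cal{F}+\cal{G})/2},
\]
which is well-defined and positive since $\Omega > 0$ by the Lorentzian signature of the null frame. Second, apply Lemma \ref{null_uniform} to bound the exponent:
\[
\tfrac{1}{2}(c^-_{\cal{F}} + c^-_{\cal{G}}) \;\leq\; \tfrac{1}{2}(\cal{F} + \cal{G}) \;\leq\; \tfrac{1}{2}(c^+_{\cal{F}} + c^+_{\cal{G}}).
\]
Exponentiating and setting
\[
c^-_{\Omega} := \exp\!\bigl(\tfrac{1}{2}(c^-_{\cal{F}} + c^-_{\cal{G}})\bigr), \qquad c^+_{\Omega} := \exp\!\bigl(\tfrac{1}{2}(c^+_{\cal{F}} + c^+_{\cal{G}})\bigr)
\]
yields the required uniform bounds $c^-_{\Omega} \leq \Omega \leq c^+_{\Omega}$, with constants depending only on the initial data and on the universal constants through the dependence of $c^\pm_{\cal{F}}, c^\pm_{\cal{G}}$ on those quantities.

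Since both steps are immediate, there is no real obstacle here; the corollary is purely a bookkeeping consequence of Lemma \ref{null_uniform} and the identity \eqref{relationOmFG}. The substantive work has already been carried out in establishing Lemma \ref{null_uniform}, whose proof in turn rested on controlling the transport equations \eqref{nullode} along null generators using the uniform bound on $\beta$ from Lemma \ref{metric_uniform}, the energy bound from Lemma \ref{energy_cons}, and the inequality $\mathbf{e} \geq |\mathbf{m}|$ together with $|\mathbf{e} \pm \mathbf{m} - \mathbf{f}| \leq \mathbf{e} \pm \mathbf{m}$ to reduce the right-hand sides of \eqref{nullode} to fluxes of $P_T$.
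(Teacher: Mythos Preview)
Your proof is correct and follows exactly the same approach as the paper: both simply combine the identity \eqref{relationOmFG}, $\Omega^2 = e^{\cal{F}+\cal{G}}$, with the uniform bounds on $\cal{F}$ and $\cal{G}$ from Lemma \ref{null_uniform}. The paper's proof is just the one-line remark that the result follows immediately from these two ingredients, and your explicit choice of constants $c^\pm_\Omega$ merely spells this out.
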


\begin{proof}
This follows immediately from \eqref{relationOmFG} and Lemma \ref{null_uniform}.
\end{proof}

\begin{cor}\label{cor:comprttauvarho}
Let us introduce the notation
$$\tau=\frac{u+\ub}{2},\,\,\vrho=\frac{\ub-u}{2}.$$
Then, there exist constants $c_1,c_2,c_3,c_4$ such that the pointwise bounds 
\begin{align*}
 r \geq & \, c_1\, \vrho,\,\,\,\,\,\,\,\,\,\,\,\,\,\,\,\,\,  t \geq \,c_3\, \tau, \\
r \leq &\,c_2\, \vrho \,\,\,\,\,\text{and}\,\,\,\,\,  t \leq  \, c_4\, \tau 
\end{align*}
hold in $J^{-}(O)$ for the scalar functions $r,t,\vrho$ and $\tau$.
\end{cor}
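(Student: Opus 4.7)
The plan is to control the Jacobian of the coordinate change $(t,r) \leftrightarrow (u,\ub)$, using the pointwise bounds on $\alpha,\beta$ from Lemma \ref{metric_uniform} and on $\mathcal{F},\mathcal{G}$ from Lemma \ref{null_uniform}, and then integrate from the axis where the relevant quantities vanish simultaneously.

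First, I would compute the derivatives of $r$ and $t$ with respect to the new coordinates using $\pr_\tau = \pr_u + \pr_{\ub}$ and $\pr_\vrho = \pr_{\ub} - \pr_u$ together with the Jacobian entries already derived before the corollary. This yields
\begin{align*}
\pr_\vrho r &= \tfrac{1}{2} e^{-\beta}\bigl(e^{\mathcal{F}} + e^{\mathcal{G}}\bigr), & \pr_\tau t &= \tfrac{1}{2} e^{-\alpha}\bigl(e^{\mathcal{F}} + e^{\mathcal{G}}\bigr), \\
\pr_\tau r &= \tfrac{1}{2} e^{-\beta}\bigl(e^{\mathcal{F}} - e^{\mathcal{G}}\bigr), & \pr_\vrho t &= \tfrac{1}{2} e^{-\alpha}\bigl(e^{\mathcal{F}} - e^{\mathcal{G}}\bigr).
\end{align*}
By the uniform two-sided bounds on $\alpha,\beta,\mathcal{F},\mathcal{G}$, the diagonal quantities $\pr_\vrho r$ and $\pr_\tau t$ are bounded above and below by strictly positive constants, while the off-diagonal quantities $\pr_\tau r$ and $\pr_\vrho t$ are merely uniformly bounded in absolute value.

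Next, I would integrate from the axis. On $\Gamma = \{\vrho = 0\}$ we have $r = 0$, and the normalizations $\alpha = \beta = \mathcal{F} = \mathcal{G} = 0$ on $\Gamma$ combined with the fact that $t$ is the proper time along $\Gamma$ force $\pr_\tau t|_\Gamma = 1$, so after a harmless choice of the additive constants in $u,\ub$ we may arrange $t = \tau$ on $\Gamma$. Integrating $\pr_\vrho r$ along a curve of constant $\tau$ from $\vrho = 0$ to an arbitrary $\vrho$ and using the uniform lower and upper bound on the integrand yields directly
$$c_1 \vrho \leq r(\tau,\vrho) \leq c_2 \vrho.$$
For the comparison of $t$ and $\tau$, integrating $\pr_\vrho t$ along the same curve gives $|t(\tau,\vrho) - \tau| \leq C_0 \vrho$. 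Combining this with the geometric fact that within the past of the first singularity (the domain of the foliation constructed in section \ref{sec:trcoord}, which in the null coordinates normalized so that $p_\Gamma$ lies at $u = \ub = 0$ corresponds to $u,\ub \leq 0$ and hence $\vrho \leq |\tau|$) we obtain $|t - \tau| \leq C_0 |\tau|$. Recalling that both $t$ and $\tau$ are negative in this region and dividing by $\tau < 0$ produces positive constants $c_3,c_4$ with $c_4 \leq t/\tau \leq c_3$, i.e. exactly the bounds $c_3\tau \leq t \leq c_4 \tau$ claimed.

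The main obstacle is bookkeeping rather than analytic depth: one must verify that the additive constants in $u,\ub$ can be chosen so that $t = \tau$ on $\Gamma$ is compatible with the normalization $t \to 0^-$ at the first singularity, and keep track of signs since $t,\tau < 0$ in the region under consideration. Once this is arranged and the estimate $\vrho \leq |\tau|$ inside the past null cone of $p_\Gamma$ is invoked, the comparability of $(t,r)$ and $(\tau,\vrho)$ is an immediate consequence of the uniform control on the Jacobian.
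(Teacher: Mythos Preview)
Your treatment of the $r \sim \vrho$ comparison is correct and essentially matches the paper: integrating the strictly positive, two-sided bounded derivative $\pr_\vrho r$ from the axis yields both inequalities at once. The paper instead bounds $\pr_\vrho r$ and $\pr_r \vrho$ separately, but the effect is the same.

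The $t \sim \tau$ argument, however, has a genuine gap. From $|t-\tau| \leq C_0\vrho \leq C_0|\tau|$ you obtain $(1+C_0)\tau \leq t \leq (1-C_0)\tau$, so your proposed constants are $c_3 = 1+C_0$ and $c_4 = 1-C_0$. But $C_0 = \sup|\pr_\vrho t| = \sup\tfrac{1}{2}|e^{\cal{F}-\alpha} - e^{\cal{G}-\alpha}|$ is controlled only by the uniform bounds of Lemmas~\ref{metric_uniform} and~\ref{null_uniform}, which depend on the initial energy and may well exceed $1$. In that case $c_4 \leq 0$ and you have only established the one-sided bound $|t| \leq (1+C_0)|\tau|$; the inequality $t \leq c_4\tau$ with $c_4 > 0$ does not follow. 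Knowing a priori that $t < 0$ in $J^-(O)$ gives $t/\tau > 0$, but it does not by itself provide a uniform positive lower bound on this ratio.

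The paper closes this by also controlling the inverse-Jacobian entries $\pr_t\tau$ and $\pr_r\vrho$. With $|\pr_r\tau|$ bounded one can integrate at fixed $t$ from the axis (where $\tau = t$) to get $|\tau - t| \leq C'r$; one then needs $r \leq C''|t|$ in $J^-(O)$, which follows by integrating the strictly positive $\pr_u t$ and the bounded $\pr_u r$ along the null boundary $\ub = 0$ from $O$. This yields $|\tau| \leq (1+C'C'')|t|$ with no smallness assumption on the constants. Your forward-Jacobian-only route cannot produce this reverse inequality; you need either the inverse Jacobian or the boundary argument just described.
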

\begin{proof}
 From Corollary \ref{Jac-uniform}, we have uniform bounds on the Jacobian and its inverse of the transition functions between 
 $(t, r, \theta)$ and $(\tau, \varrho, \theta)$. As a consequence, 
\begin{subequations} \label{pointwise_scalar}
\begin{align}
|\ptl_\vrho r| =& |\ptl_{\ulin{u}} r -\ptl_u r | = \frac{1}{2}| e^{\cal{F}-\beta} + e^{\cal{G}-\beta}| \leq c_1,  \\ \notag\\
|\ptl_r \vrho| =& \halb |\ptl_r {\ulin{u}} -\ptl_r u| = \frac{1}{2} |e^{-\cal{G}+ \beta} -e^{-\cal{F} + \beta}| \leq c_2,    \\ \notag \\
|\ptl_\tau t| =& | \ptl_{\ub} t + \ptl_u t|= \frac{1}{2}|e^{\cal{F}-\alpha}+ e^{\cal{G} -\alpha} | \leq c'_3,  \\ \notag \\
|\ptl_\varrho t| =& |\ptl_\ub t - \ptl_u t| = \frac{1}{2} |e^{\cal{F}- \alpha} + e^{\cal{G}- \alpha}| \leq c'_4, \\ \notag \\
|\ptl_t \tau| =& \halb |\ptl_t {\ulin{u}} + \ptl_t u| = \frac{1}{2} |e^{-\cal{F}+ \alpha} + e^{-\cal{G}+ \alpha}| \leq c_5, \\ \notag\\
|\ptl_r \tau| =& \halb |\ptl_r {\ulin{u}} + \ptl_r u| = \frac{1}{2} |e^{-\cal{F}+ \b} - e^{-\cal{G}+ \b}| \leq c_6.
\end{align}
\end{subequations}
The proof follows by applying the fundamental theorem of calculus in the region $J^-(O)$ and noting
that at $O$, $t=\tau=0$ and $r=\vrho=0$ on the axis. 
\end{proof}

\subsection{Non-concentration away from the axis} \label{sec:non-conc-away-axis}

In this section we shall use the vector fields method introduced previously to prove that
energy does not concentrate. We start with proving that
the energy does not concentrate away from the axis using the divergence free vector $P_{T}$.

\begin{lem}[Non-concentration away from the axis]\label{ann}
We have
\begin{align*}
 E^{\,O}_{\text{ext}} (\uptau) \fdg = \int_{B_{r_2(\uptau)}\setminus B_{r_1(\uptau)}} \mathbf{e} \,\,\bar{\mu}_q \to 0 \,\, \text{as} \,\, \uptau \to 0,
\end{align*} 
where $r = r_2(\uptau)$ is the radius where the $t = \uptau$ slice intersects the $\vrho = |\tau|$ curve i.e the mantel of the null cone $ J^-(O)$
 and  $r = r_1 (\uptau)$ is the radius where the $ \vrho = \lambda |\tau|$ curve intersects the $t = \uptau$ slice, for any real
$\lambda \in (0,1)$.
 Observe that both $r_1(\uptau)$
and $r_2(\uptau) \to 0$ as $\uptau \to 0$.
\end{lem}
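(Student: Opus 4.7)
The plan is to split $E^O(\uptau)=E^O_\text{int}(\uptau)+E^O_\text{ext}(\uptau)$, where $E^O_\text{int}(\uptau) \fdg =\int_{B_{r_1(\uptau)}}\mathbf{e}\,\bar{\mu}_q$, and to use two applications of Stokes' theorem. The first uses the divergence-free current $P_T$ (cf.\ \eqref{div_pxi}) on the exterior slab
$$K_\text{ext}(\uptau,s)\fdg=\{(t,r)\fdg \uptau\leq t\leq s,\ \lambda|\tau|\leq\vrho\leq|\tau|\},\qquad \uptau<s<0,$$
whose boundary consists of the annuli $A_\uptau$, $A_s$, the mantel portion $C(\uptau,s)$, and the timelike piece $\gamma(\uptau,s)$ of the inner cone $\vrho=\lambda|\tau|$. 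Stokes' theorem then gives an identity of the form
$$E^O_\text{ext}(\uptau)=E^O_\text{ext}(s)+\text{Flux}_C(P_T)(\uptau,s)+\text{Flux}_\gamma(P_T)(\uptau,s).$$
Letting $s\to 0$ then $\uptau\to 0$, the mantel piece $\text{Flux}_C(P_T)(\uptau)\to 0$ by Corollary \ref{flux_px1_zero}, so the problem reduces to showing that the limits $E^O_\text{ext}(s)$ (as $s\to 0$) and $\text{Flux}_\gamma(P_T)(\uptau)$ (as $\uptau\to 0$) both vanish.

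The second application of Stokes' theorem is designed to produce a Morawetz-type spacetime bound on the backwards cone. Using $\mathcal{R}_1=r\pr_r$, the identity $\nabla_\nu P^\nu_{\mathcal{R}_1}=(T\phi)^2$ from \eqref{intro_px3} combined with $e^\alpha P^t_{\mathcal{R}_1}=-re^\beta\mathbf{m}$ yields
$$\int_{K(\uptau,s)}(T\phi)^2\,\bar{\mu}_g=-\!\int_{\Sigma^O_s}\!re^\beta\mathbf{m}\,\bar{\mu}_q+\!\int_{\Sigma^O_\uptau}\!re^\beta\mathbf{m}\,\bar{\mu}_q+\mathrm{Flux}(P_{\mathcal{R}_1})(\uptau,s).$$
On the spatial slices, $|re^\beta\mathbf{m}|\leq c\,r_2(t)\mathbf{e}$ with $r_2(t)\to 0$ and $E^O(t)$ uniformly bounded (Lemma \ref{metric_uniform}), so those terms vanish in the limit. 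The mantel flux of $P_{\mathcal{R}_1}$ is estimated pointwise by a constant times $r\,\tilde{n}(P_T)$ (bounding $|\mathbf{e}-\mathbf{m}-\mathbf{f}|$ by $\mathbf{e}+|\mathbf{m}|+\mathbf{f}\lesssim\mathbf{e}-\mathbf{m}+\mathbf{f}$ since $\mathbf{e}-\mathbf{m}\geq\halb\mathbf{f}$), and then $r\leq r_2(\uptau)\to 0$ combined with Corollary \ref{flux_px1_zero} yields $\mathrm{Flux}(P_{\mathcal{R}_1})(\uptau)\to 0$. Consequently
$$\int_{J^-(O)\cap\{t\geq\uptau\}}(T\phi)^2\,\bar{\mu}_g\longrightarrow 0\qquad\text{as }\uptau\to 0.$$

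With the Morawetz estimate in hand, I average in time: by Fubini and Corollary \ref{cor:comprttauvarho}, one can select $\uptau_\star\in[\uptau,\uptau/2]$ (noting $\uptau<0$) with $\int_{\Sigma^O_{\uptau_\star}}(T\phi)^2\,\bar{\mu}_q\lesssim |\uptau|^{-1}\int(T\phi)^2\,\bar{\mu}_g\to 0$. On the exterior region the lower bound $r\geq c_1\lambda|\tau|$ from Corollary \ref{cor:comprttauvarho} combined with Lemma \ref{lem:phiLinfty} converts this slice integral into a bound on the full energy density $\mathbf{e}=\halb((T\phi)^2+(R\phi)^2+\mathbf{f})$ on the exterior annulus at $t=\uptau_\star$, using the wave map equation \eqref{wmequi} integrated radially against a suitable multiplier; the boundary contributions at $\vrho=|\tau|$ and $\vrho=\lambda|\tau|$ are absorbed by the mantel flux smallness and a bootstrap. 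A final application of Stokes for $P_T$ in the thin slab $K_\text{ext}(\uptau,\uptau_\star)$ transports the bound back to $t=\uptau$, again with small mantel and $\gamma$ fluxes.

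The main obstacle is the third step, namely converting the spatially-integrated $(T\phi)^2$-smallness into control of the full energy density $\mathbf{e}$ on the exterior slice at $t=\uptau_\star$. In flat Minkowski this would rest on a Hardy-type inequality exploiting a genuine conformal Killing field; here $T=e^{-\alpha}\pr_t$ is only ``quasi-Killing'' and the conformal factors $\alpha,\beta$ are themselves unknowns tied to $\phi$ through \eqref{gamma_r}--\eqref{omega_r}. The propagation of information between $(T\phi)$, $(R\phi)$, and $\mathbf{f}$ must therefore be carried out carefully using the constraint equations together with the uniform coordinate bounds of Lemma \ref{null_uniform} and Corollary \ref{cor:comprttauvarho}, and the Grillakis condition \eqref{eq:grillakis-cond-first} is expected to provide the positivity needed when absorbing the potential term $\mathbf{f}$.
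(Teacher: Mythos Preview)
Your proposal has two genuine gaps, and the overall architecture differs substantially from the paper's argument.

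\textbf{Circularity in the averaging step.} You claim that after the Morawetz identity one can pick $\uptau_\star\in[\uptau,\uptau/2]$ with $\int_{\Sigma^O_{\uptau_\star}}(T\phi)^2\,\bar\mu_q\lesssim|\uptau|^{-1}\int_{K(\uptau)}(T\phi)^2\,\bar\mu_g\to 0$. But your own Morawetz bound only gives
\[
\int_{K(\uptau)}(T\phi)^2\,\bar\mu_g \;\lesssim\; r_2(\uptau)\,E^O(\uptau)\;+\;r_2(\uptau)\,\bigl|\mathrm{Flux}(P_T)(\uptau)\bigr|,
\]
so after dividing by $|\uptau|\sim r_2(\uptau)$ you obtain a quantity bounded by $cE^O(\uptau)+o(1)$, which need not tend to zero --- that is precisely what the whole non-concentration theorem asserts. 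In the paper this division is made to work \emph{only after} Lemma~\ref{ann} is already proved (see the proof of Lemma~\ref{kin}, where the $\Sigma^O_\uptau$ integral is split into the inner ball and the exterior annulus and the latter is killed using Lemma~\ref{ann}). So your argument is circular at this point.

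\textbf{The conversion step is not carried out.} Even granting a slice on which $\int(T\phi)^2\,\bar\mu_q$ is small, you still need to recover smallness of $(R\phi)^2+g^2(\phi)/r^2$ on the exterior annulus. You describe this as ``integrating the wave map equation radially against a suitable multiplier'' with boundary terms ``absorbed by a bootstrap,'' but no multiplier or inequality is given, and you yourself identify this as the main obstacle. There is no Hardy-type inequality available here that converts $(T\phi)^2$ into $(R\phi)^2$ on a fixed slice; the Grillakis condition helps with the potential term but not with the exchange between kinetic pieces. In addition, the flux of $P_T$ through the timelike inner surface $\vrho=\lambda|\tau|$ is neither sign-definite nor covered by Corollary~\ref{flux_px1_zero}, so the first Stokes identity also has an uncontrolled term.

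\textbf{What the paper does instead.} The paper's proof does not use the Morawetz multiplier $\mathcal R_1$ at all for this lemma. It works pointwise in null coordinates with the quantities $\widehat{\cal A}^2=e^{2\beta}r(\mathbf e-\mathbf m)$ and $\widehat{\cal B}^2=e^{2\beta}r(\mathbf e+\mathbf m)$, derives the coupled one-sided inequalities $\partial_{\ub}\widehat{\cal A}\ge -c\,\widehat{\cal B}/r$ and $\partial_u\widehat{\cal B}\le c\,\widehat{\cal A}/r$, and runs a Gr\"onwall argument along characteristics in the exterior region. The smallness input is exactly the null-cone flux of $P_T$ (Corollary~\ref{flux_px1_zero}), which bounds $\int\widehat{\cal A}^2\,du$ along the mantel; no slice-level conversion between $(T\phi)$ and $(R\phi)$ is needed.
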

\begin{proof}
Consider a tubular region $\cal{S}$ with triangular cross section (as shown in the figure \ref{fig:annular_disc_1} ) in  $\vrho > \lambda \tau, \, \lambda \in (0,1)$ 
of the spacetime i.e., the ``exterior'' part of the interior of the past null cone of $O$.
\begin{figure}[!hbt]
\psfrag{O}{$O$}
\psfrag{O}{$O$}
\psfrag{teqtau}{$t=\uptau$}
\psfrag{teq-1}{$t=-1$}
\psfrag{S}{$\cal{S}$}
\psfrag{1}{$\ptl \cal{S}_1$}
\psfrag{2}{$\ptl \cal{S}_2$}
\psfrag{3}{$\ptl \cal{S}_3$}
\psfrag{Req0}{$\varrho=0$}
\psfrag{ReqlT}{$\varrho=\lambda \vert\tau\vert$}
\psfrag{ReqT}{$\varrho=\vert \tau\vert$}
\centerline{\includegraphics[height=2.5in]{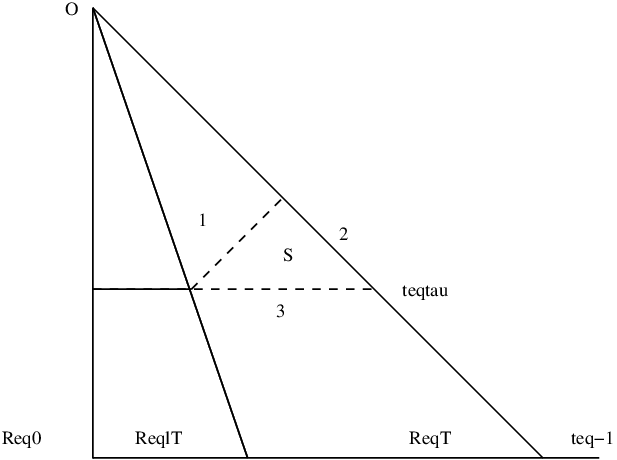}}
\caption{ Application of Stokes' theorem on the $\bar{\mu}_g$-divergence free $P_{T}$ to relate the fluxes through 
surfaces $\ptl \cal{S}_1$,\,$\ptl \cal{S}_2$\, and $\ptl \cal{S}_3$} 
\label{fig:annular_disc_1}
\end{figure} 
As shown in the figure \ref{fig:annular_disc_1}, let us use the divergence-free vector field 
$P_{T}$  and Stokes' theorem in the region $\cal{S}$ to relate the fluxes through the three boundary segments
$\ptl \cal{S}_1$,\,$\ptl \cal{S}_2$ and $\ptl \cal{S}_3$. We have
\begin{align}\label{div_tube}
0 =&  \int_{\ptl \cal{S}_1} d\,u(P_T) \bar{\mu}_u 
 + \int_{\ptl \cal{S}_2} d\,{\ulin{u}}(P_T)\bar{\mu}_{\ulin{u}}
 - \int_{\ptl \cal{S}_3} e^{\alpha} P^t_T \, \bar{\mu}_q \notag \\
 =& - \int_{\ptl \cal{S}_1} e^{-\mathcal{G}} (\mathbf{e+m})\,\bar{\mu}_u  
-\int_{\ptl \cal{S}_2} e^{-\mathcal{F}} (\mathbf{e-m})\,\bar{\mu}_{\ulin{u}} + \int_{\ptl \cal{S}_3} \mathbf{e} \, \bar{\mu}_q .
\end{align}
To analyze the behaviour of the flux terms $\int_{\ptl \cal{S}_1}$ and $\int_{\ptl \cal{S}_2}$ in \eqref{div_tube} close to $O$, let us define
\begin{align*}
 \hatt{l} &\fdg= e^{\beta + \alpha}\tild{\ell} = e^{\beta}\ptl_t + e^{\alpha}\ptl_r, \\
\hatt{n} &\fdg= e^{\beta + \alpha}\tild{n} = e^{\beta}\ptl_t - e^{\alpha}\ptl_r, \\
\mathcal{A}^2 &\fdg= r(\mathbf{e}-\mathbf{m}), \\
\mathcal{B}^2 &\fdg = r(\mathbf{e}+ \mathbf{m}). 
\end{align*}

From  \eqref{div_px2} and \eqref{equiv_div_px2}, we have
\begin{align}\label{equal_div_px2}
& \frac{1}{re^{\beta + \alpha}} \left(-\ptl_t (re^{\beta}\mathbf{m}) 
+ \ptl_r ((\mathbf{e}-\mathbf{f}) re^{\alpha}) \right) \notag\\
 =& - e^{-\beta} \alpha_r \,\mathbf{e} + \frac{1}{2r} e^{-\beta} (e^{-2\alpha} \phi_t^2 - e^{-2\beta}\phi_r^2 + \mathbf{f}) + e^{-\alpha} \beta_t \mathbf{m}.
\end{align}
We have the following identities from \eqref{equiv_div_px1} and \eqref{equal_div_px2}
\begin{subequations} \label{px1px2_identities}
\begin{align}
 \ptl_t(re^{\beta}\mathbf{e})- \ptl_r(re^\alpha \mathbf{m}) & = 0, \\
\ptl_t(r e^{\beta}\mathbf{m}) - \ptl_r({re^{\alpha} \mathbf{e}}) &= L, 
\end{align}
\end{subequations}
where 
\[L \fdg = \frac{re^{\alpha}\alpha_r}{2} \left((T\phi)^2 + (R\phi)^2 -\mathbf{f} \right) + e^{\alpha} L_0 - r\beta_t e^{\beta} \mathbf{m}  \]
for
\[ L_0 \fdg = \halb \left(-(Tu)^2 + (Ru)^2 + \mathbf{f} \right) -\frac{2 g(\phi)g'(\phi) \phi_r}{r} .\]
Furthermore, we can construct the following using the identities in \eqref{px1px2_identities}
\begin{subequations}\label{px1px2_newidentities}
\begin{align}
 \ptl_{\nu}\left(re^{\beta + \alpha}(\mathbf{e}-\mathbf{m})\tild{\ell} ^\nu \right) &= \ptl_\nu(\mathcal{A}^2 \hatt{\ell}^{\nu}) = -L, \\
\ptl_{\nu}\left(re^{\beta + \alpha}(\mathbf{e}+\mathbf{m})\tild{n}^\nu \right) &= \ptl_\nu(\mathcal{B}^2 \hatt{n}^{\nu}) = L. 
\end{align}
\end{subequations}
Let us try to express $L$ in terms of $\cal{A}^2$ $\cal{B}^2$ after using the Einstein equations
\begin{align}\label{simplifyL}
 L &= e^{\alpha}L_0 + \kappa r^2 e^{2\beta + \alpha} \left( \mathbf{e}-\mathbf{f} \right)^2 - \kappa r^2 e^{2\beta + \alpha} \mathbf{m}^2 \notag\\
&= e^{\alpha}L_0 + \kappa r^2 e^{2\beta + \alpha} \left( \mathbf{e}^2 -2 \,\mathbf{e} \,\mathbf{f} + \mathbf{f}^2 -\mathbf{m}^2 \right) \notag\\
&= e^{\alpha}L_0 + \kappa e^{2\beta + \alpha} \left( \mathcal{A}^2 \, \mathcal{B}^2 -2\, r^2 \,\mathbf{e}\,\mathbf{f} + r^2 \mathbf{f}^2 \right). 
\end{align}
We would like to set up a Gr\"onwall estimate for $\cal{A}$ and $\cal{B}$ using the identities in \eqref{px1px2_newidentities}. However, the quantity $L$ as shown
in \eqref{simplifyL} has nonlinear terms involving $\mathbf{e}$ and $\mathbf{f}$. Therefore, in what follows we use Einstein equations to estimate these terms. 

Firstly note that
\begin{align*}
  \hatt{\ell}^\mu \ptl_\mu e^{2 \beta} =& 2 e^{2 \beta} (e^{\beta}\beta_t + e^{\alpha}\beta_r)  &
\hatt{n}^\mu \ptl_\mu e^{2 \beta} =& 2 e^{2 \beta} (e^{\beta}\beta_t - e^{\alpha}\beta_r) \\ 
=& 2 e^{2 \beta} \kappa r e^{2\beta + \alpha}(\mathbf{m} + \mathbf{e})  &
 =& 2 e^{2 \beta} \kappa r e^{2\beta + \alpha}(\mathbf{m} - \mathbf{e})  \\
=& 2 \kappa e^{2 \beta}e^{2\beta + \alpha} \mathcal{B}^2, &
=& -2 \kappa e^{2 \beta}e^{2\beta + \alpha} \mathcal{A}^2, 
\end{align*}
and
\begin{align*}
\ptl_\mu \hatt{\ell}^\mu =& e^{\beta} \beta_t + e^{\alpha} \alpha_r & 
\ptl_\mu \hatt{n}^\mu =& e^{\beta} \beta_t - e^{\alpha} \alpha_r \\
=& r \kappa e^{2\beta + \alpha} \left(\mathbf{e} + \mathbf{m} - \mathbf{f} \right) &
=& r \kappa e^{2\beta + \alpha} \left(-\mathbf{e} + \mathbf{m} + \mathbf{f} \right)\\
=& \kappa e^{2\beta + \alpha} \left(\mathcal{B}^2 -r \,\mathbf{f} \right), &
=&  \kappa e^{2\beta + \alpha} \left(-\mathcal{A}^2 + r \, \mathbf{f} \right). 
\end{align*}
Now consider the quantities $ \ptl_{\mu} (e^{2\beta} \mathcal{A}^2 \hatt{\ell}^\mu)$ and $ \ptl_{\mu} (e^{2\beta} \mathcal{B}^2 \hatt{n}^\mu)$,
\begin{align*}
\hatt{\ell}^\mu \ptl_\mu (e^{2\beta}\mathcal{A}^2) &= \ptl_{\mu} (e^{2\beta} \mathcal{A}^2 \hatt{\ell}^\mu) - e^{2\beta}\mathcal{A}^2 \ptl_\mu \hatt{\ell}^\mu \\
& = e^{2 \beta} \ptl_\mu (\mathcal{A}^2 \hatt{\ell}^\mu) + \mathcal{A}^2 \hatt{\ell}^\mu \ptl_\mu e^{2\beta} -\kappa e^{2\beta}e^{2\beta + \alpha}\mathcal{A}^2 \mathcal{B}^2  
+ r \kappa e^{2\beta}e^{2\beta + \alpha}\mathcal{A}^2  \, \mathbf{f} \\
& = - e^{2 \beta} L +  \kappa e^{2\beta}e^{2\beta + \alpha}\mathcal{A}^2 \mathcal{B}^2 
+ r \kappa e^{2\beta}e^{2\beta + \alpha}\mathcal{A}^2 \, \mathbf{f} \\
&= e^{2 \beta}e^{\alpha}\left( -L_0  + 2\,r^2\, \mathbf{e}\,\mathbf{f}- r^2\mathbf{f}^2 
+ r \mathcal{A}^2 \mathbf{f}  \right) \\
& = e^{2 \beta}e^{\alpha}\left( -L_0 + \kappa r^2 e^{2\beta}\left( 3 \mathbf{e}\,\mathbf{f}  - \mathbf{f}^2
 - \mathbf{m}\, \mathbf{f} \right) \right), 
\end{align*}
\begin{align*}
\hatt{n}^\mu \ptl_\mu (e^{2\beta}\mathcal{B}^2) &= \ptl_{\mu} (e^{2\beta} \mathcal{B}^2 \hatt{n}^\mu) - e^{2\beta}\mathcal{B}^2 \ptl_\mu \hatt{n}^\mu \\
& = e^{2 \beta} \ptl_\mu (\mathcal{B}^2 \hat{n}^\mu) + \mathcal{B}^2 \hat{n}^\mu \ptl_\mu e^{2\beta} + \kappa e^{2\beta}e^{2\beta + \alpha}\mathcal{A}^2 \mathcal{B}^2  
- r \kappa e^{2\beta}e^{2\beta + \alpha}\mathcal{B}^2 \mathbf{f} \\
& = e^{2 \beta} L  - \kappa e^{2\beta}e^{2\beta + \alpha}\mathcal{A}^2 \mathcal{B}^2 
- r \kappa e^{2\beta}e^{2\beta + \alpha}\mathcal{B}^2 \, \mathbf{f} \\
&= e^{2 \beta}e^{\alpha}\left( L_0 + \kappa e^{2\beta}\left( - 2r^2 \mathbf{f}\mathbf{e} + r^2\mathbf{f}^2 
- r \mathcal{B}^2 \mathbf{f} \right) \right) \\ 
& = e^{2 \beta} e^{\alpha} \left(L_0 + \kappa r^2 e^{2\beta}\left(  - 3 \mathbf{e}\,\mathbf{f} + \mathbf{f}^2 
- \mathbf{m}\,\mathbf{f}  \right) \right). 
\end{align*}
Let us define
\begin{align*}
 S_1 \fdg &=  3 \mathbf{e} \, \mathbf{f} - \mathbf{f}^2 - \mathbf{m}\, \mathbf{f}  \\
& =   (\mathbf{e} - \mathbf{m}) \, \mathbf{f} + \mathbf{e_0} \, \mathbf{f}   \\
& \geq 0.
\end{align*}
Note that we have $\mathbf{e} \geq |\mathbf{m}|$. Similarly define
\begin{align*}
 S_{2} \fdg &=  - 3 \mathbf{e} \, \mathbf{f} + \mathbf{f}^2 - \mathbf{m}\, \mathbf{f}  \\
& =  - (\mathbf{e} + \mathbf{m}) \, \mathbf{f} - \mathbf{e_0} \, \mathbf{f} \\
& \leq 0.
\end{align*}
Let us now introduce the quantities $\widehat{\cal{A}}$ and $\widehat{\cal{B}}$
such that 
\[  \widehat{\cal{A}} \fdg = e^{\beta} \cal{A},\,\,\,\,\widehat{\cal{B}} \fdg = e^{\beta} \cal{B}. \]
In the following we will try to estimate $L_0^2$ by $\mathbf{e}^2 - \mathbf{m}^2$.
We will use the following identities which are valid for any real numbers $a,b,c$
\begin{align*}
  (a + b + c) ^2   \leq 3(a^2 + b^2 + c^2),\,\,\,\,\frac{1}{4}(-a^2 + b^2)^2 = \frac{1}{4}(a^2 + b^2)^2 -a^2 b^2 .
\end{align*}
Consider,
\begin{align*}
 L_0^2 & \leq 3\left( \frac{1}{4}\left(-(T\phi)^2 + (R\phi)^2\right)^2 +  4  g'(\phi)^2\phi^2_r\, \mathbf{f} + \frac{1}{4}\,\mathbf{f}^2 \right) \\
&=  3 \left( \frac{1}{4} \mathbf{e_0}^2 + 4  g'(\phi)^2\phi^2_r\, \mathbf{f} + \frac{1}{4}\,\mathbf{f}^2 -\mathbf{m}^2 \right) \\
& \leq 3\left(\frac{1}{4} \mathbf{e_0}^2 + \frac{c}{2} (R\phi)^2 \, \mathbf{f} + \frac{1}{4}\,\mathbf{f}^2 -\mathbf{m}^2  \right) \\
& \leq c\left(\frac{1}{4} \mathbf{e_0}^2 + \frac{1}{2} \, \mathbf{e_0}\, \mathbf{f} + \frac{1}{4}\,\mathbf{f}^2 -\mathbf{m}^2  \right)  \\
& = c(\mathbf{e}^2 -\mathbf{m}^2 ) \\
& \leq  c \frac{\widehat{\cal{A}}^2\,\widehat{\cal{B}}^2} {r^2}
\end{align*}
where we have used the fact that both $\Vert \phi \Vert_{L^\infty}$ and $\Vert \beta \Vert_{L^\infty} \leq c$. Consequently, 
\begin{align*}
 \ptl_{{\ulin{u}}} \widehat{\cal{A}}^2 =& \frac{1}{2}e^{\beta + \cal{F}} \left(-L_0 + \kappa\, r^2\, e^{2\beta}S_1\right)  \\
\geq  & -\frac{1}{2}e^{\beta + \cal{F}}L_0.
\end{align*}
So,
\begin{align*}
\widehat{\cal{A}}\, \ptl_{\ulin{u}} \widehat{\cal{A}} \geq -c|L_0| \geq -c \frac{\widehat{\cal{A}}\,\widehat{\cal{B}}}{r}
\end{align*}
that gives us
\[  \ptl_{\ulin{u}} \widehat{\cal{A}} \geq -c\frac{\widehat{\cal{B}}}{ r}  \]
and similarly,
\[  \ptl_u \widehat{\cal{B}} \leq c \frac{\widehat{\cal{A}}} { r}. \]
The rest of the proof is comparable to the case of wave maps on the Minkowski background as in \cite{jal_tah1} and \cite{chris_tah1}.
Consider the region of spacetime $[{\ulin{u}} , 0] \times [u_0, u]$ where ${\ulin{u}},  u \leq 0$ and $u_0<0$ will be chosen later.
\begin{figure}[!hbt]
\psfrag{O}{$O$}
\psfrag{etaeqeta0}{$u$}
\psfrag{etaeqeta}{$u_0$}
\psfrag{xieqxi}{${\ulin{u}}= {\ulin{u}}$}
\psfrag{Req0}{$\varrho=0$}
\psfrag{ReqlT}{$\varrho=\lambda \vert \tau\vert$}
\psfrag{ReqT}{$\varrho=\vert \tau\vert$}
\psfrag{reqReq0}{$r=\varrho=0$}
\psfrag{teq-1}{$t=-1$}
\centerline{\includegraphics[height=2.5in]{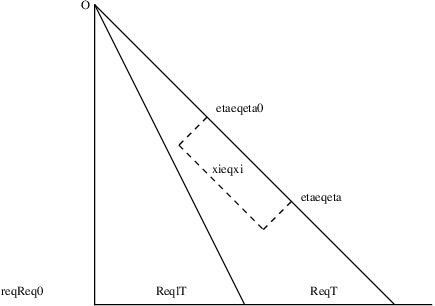}}
\caption{Application of the fundamental theorem of calculus for $\widehat{A}$ and $\widehat{B}$ in the region
$[{\ulin{u}} , 0] \times [u_0, u]$ } 
\label{fig:annular_disc_2}
\end{figure} 
Using the fundamental theorem of calculus,
\begin{align}
 \widehat{\cal{B}}({\ulin{u}},u) &= \widehat{\cal{B}}({\ulin{u}},u_0) + \int^u_{u_0} \ptl_{u'} \widehat{\cal{B}}({\ulin{u}},u')\,\, d\, u' \notag \\
&\leq \widehat{\cal{B}}({\ulin{u}},u_0) + c\int^u_{u_0} \frac{\widehat{\cal{A}}({\ulin{u}},u')}{r({\ulin{u}},u')}\,\, d\, u', \label{ftc_B} \\
\widehat{\cal{A}}({\ulin{u}},u') &= \widehat{\cal{A}}( 0,u') - \int^0 _{{\ulin{u}}} \ptl_{{\ulin{u}}'} \widehat{\cal{A}}({\ulin{u}}',u')\,\, d\, {\ulin{u}}' \notag  \\
&\leq \widehat{\cal{A}}(0,u') + c\int^0_{{\ulin{u}}} \frac{\widehat{\cal{B}}({\ulin{u}}',u')}{r({\ulin{u}}',u')}\,\, d\, {\ulin{u}}' \label{ftc_A}.
\end{align}
After plugging \eqref{ftc_A} in \eqref{ftc_B} we get
\begin{align} \label{original_B}
  \widehat{\cal{B}}({\ulin{u}},u) & \leq \widehat{\cal{B}}({\ulin{u}},u_0) + c \left(\int^u_{u_0} \frac{\widehat{\cal{A}}(0,u')}{r({\ulin{u}}, u')}\,\, d\, u'
 + \int^u_{u_0}\frac{1}{r({\ulin{u}},u')} \left(\int^0_{\ulin{u}} \frac{\widehat{\cal{B}}({\ulin{u}}',u')}{r({\ulin{u}}',u')}\,\, d\, {\ulin{u}}'\right) \,\,d\,u' \right) \notag\\
&= \widehat{\cal{B}}({\ulin{u}},u_0) + c \left(\int^u_{u_0} \frac{\widehat{\cal{A}}(0,u')}{r({\ulin{u}}, u')}\,\, d\, u' \right) 
 + c \left( \int^u_{u_0}\int^0_{\ulin{u}} \frac{\widehat{\cal{B}}({\ulin{u}}',u')}{ r({\ulin{u}}, u') r({\ulin{u}}',u')}\,\, d\, {\ulin{u}}' \,\,d\,u' \right).
\end{align}
Now consider the second term in the right hand side of \eqref{original_B}, firstly recall
\[ r({\ulin{u}},u')  \geq c \vrho ({\ulin{u}},u') = c \, \halb ({\ulin{u}}-u'), \]
\begin{align} \label{eq_before_pxi}
\int^u_{u_0} \frac{\widehat{\cal{A}}(0,u')}{r({\ulin{u}}, u')}\,\, d\, u' & \leq  \left(\int^u_{u_0} \widehat{\cal{A}}^2(0,u')\,\,d\,u' \right)^{\halb} 
\left(\int^u_{u_0} \frac{1}{({\ulin{u}}-u')^2}\,\,d\,u' \right)^{\halb} \notag \\
& \leq c \, \vert \text{Flux} (P_{T}) (u_0,u) \vert^{\halb} \left(\frac{1}{{\ulin{u}}-u} -  \frac{1}{{\ulin{u}}-u_0}\right)^{\halb} \notag \\
&\leq c\, \vert \text{Flux}(P_{T}) (u_0) \vert^{\halb}\left(\frac{1}{{\ulin{u}}-u}\right)^{\halb}.
\end{align}
We infer
\begin{align}
  \widehat{\cal{B}}({\ulin{u}},u) &\leq \widehat{\cal{B}}({\ulin{u}},u_0) + c\,\frac{\vert\text{Flux}(u_0) \vert^{\halb}}{({\ulin{u}}-u)^\halb} 
+ c  \left( \int^u_{u_0}\int^0_{\ulin{u}} \frac{\widehat{\cal{B}}({\ulin{u}}',u')}{ r({\ulin{u}}, u') r({\ulin{u}}',u')}\,\, d\, {\ulin{u}}' \,\,d\,u' \right)  .
\end{align}
Let us define the function 
$$\widehat{\cal{H}}({\ulin{u}},u) \fdg= \sup_{{\ulin{u}}\leq {\ulin{u}}' \leq 0}\sqrt{{\ulin{u}}'-u}\,\, \widehat{\cal{B}}({\ulin{u}}',u).$$ 
We have,
\[ \sqrt {{\ulin{u}}' -u'}\,\widehat{\cal{B}}({\ulin{u}}',u') \leq \sup_{{\ulin{u}} \leq {\ulin{u}}' \leq 0} \sqrt{{\ulin{u}}'-u'}\,\widehat{\cal{B}}({\ulin{u}}',u')  = \widehat{\cal{H}}({\ulin{u}},u'). \]
So,
\begin{align} \label{Hestimate}
 ({\ulin{u}}-u)^\halb \widehat{\cal{B}}({\ulin{u}},u) &\leq \left(\frac{{\ulin{u}}-u}{{\ulin{u}}-u_0}\right) ^\halb ({\ulin{u}}-u_0)^\halb \widehat{\cal{B}}({\ulin{u}},u_0) + c \vert \text{Flux}(u_0)\vert^{\halb} \notag \\
&\quad + c \left( \int^u_{u_0}\int^0_{\ulin{u}}  \widehat{\cal{H}}({\ulin{u}},u') \, \frac{({\ulin{u}}-u)^\halb}{ ({\ulin{u}}- u') ({\ulin{u}}' -u')^{3/2}}\,\, d\, {\ulin{u}}' \,\,d\,u' \right). 
\end{align}
Now consider the function $p({\ulin{u}})$ defined as follows
\[ p(\ub) \fdg = \frac{{\ulin{u}}-u}{{\ulin{u}}-u_0}.\] 
We have ${\ulin{u}}-u \leq {\ulin{u}}-u_0$ so $ p \leq 1$. Also, $p$ is increasing and hence
\begin{align} \label{pxi}
p({\ulin{u}})\leq p(0).
\end{align}
Let us go back to the inequality \eqref{Hestimate} and use \eqref{pxi}. We infer
\begin{align}
({\ulin{u}}-u)^\halb \widehat{\cal{B}}({\ulin{u}},u) & \leq \left(\frac{-u}{-u_0}\right) ^\halb ({\ulin{u}}-u_0)^\halb \widehat{\cal{B}}({\ulin{u}},u_0)+ c\vert \text{Flux}(u_0) \vert^{\halb}  \notag \\
& \quad + c \left( \int^u_{u_0}\widehat{\cal{H}}({\ulin{u}},u') \, \frac{({\ulin{u}}-u)^\halb}{ ({\ulin{u}}- u')}\,\left( \frac{1}{\sqrt{{\ulin{u}}-u'}}-\frac{1}{\sqrt{-u'}} \right)\,d\,u' \right) .
\end{align}
Consequently,
\begin{align}
\widehat{\cal{H}}({\ulin{u}},u) &\leq \left(\frac{-u}{-u_0}\right)^\halb \widehat{\cal{H}}({\ulin{u}},u_0) + c \vert \text{Flux}(u_0)\vert^{\halb} 
+ c \int^{u}_{u_0} \widehat{\cal{H}}({\ulin{u}},u') \frac{{\ulin{u}}}{u' ({\ulin{u}}-u')} \, d\, u'.
\end{align}
Also, we have
\begin{align}
 \widehat{\cal{H}}({\ulin{u}},u_0) & = \sup_{{\ulin{u}}\leq {\ulin{u}}' \leq 0}\sqrt{{\ulin{u}}'-u_0}\,\, \widehat{\cal{B}}({\ulin{u}}',u_0) \notag \\
& \leq \sup_{{\ulin{u}}\leq {\ulin{u}}' \leq 0}\sqrt{{\ulin{u}}'-u_0} \,\, \sup_{{\ulin{u}}\leq {\ulin{u}}' \leq 0} \widehat{\cal{B}}({\ulin{u}}',u_0) \notag \\
& \leq c(u_0) \sqrt{-u_0}, 
\end{align}
where we have used the fact that $u$ is regular away from the axis so that $\widehat{\cal{B}}({\ulin{u}},u_0)$ is finite. So,
\begin{align} \label{before_gronwall}
\widehat{\cal{H}}({\ulin{u}},u) &\leq c(u_0) \sqrt{-u} + c \vert\text{Flux}(u_0) \vert^{\halb}
+ c \int^{u}_{u_0} \widehat{\cal{H}}({\ulin{u}},u') \frac{{\ulin{u}}}{u' ({\ulin{u}}-u')} \, d\, u'.
\end{align}
Using Gronwall's lemma to obtain an estimate for $ \widehat{\cal{H}}({\ulin{u}},u)$, we infer 
\begin{align}
 \widehat{\cal{H}}({\ulin{u}},u) & \leq  \sqrt{-u} c(u_0) + c \, \vert\text{Flux}(u_0) \vert^{\halb} \notag \\
& \quad + c \int^{u}_{u_0} \left(  \sqrt{-u} c(u_0) + c \, \vert 
\text{Flux}(u_0) \vert^\halb \right) \left( \frac{{\ulin{u}}}{u'({\ulin{u}}-u')} \right) e^{\int^u_{u'}\frac{{\ulin{u}}}{u'' ({\ulin{u}} -u'')} \, d\, u''} \, d\,u'.
\end{align}
We have for $u_0 \leq u' \leq u $ and setting ${\ulin{u}} = \lambda' u$ where $ \lambda' \fdg = \frac{1-\lambda}{1+ \lambda} < 1$
\begin{align*}
 \int^u_{u'} \frac{{\ulin{u}}}{u'' ({\ulin{u}} -u'')} d\, u'' = & \log \frac{u(\lambda' u -u')}{u' (\lambda' u -u)} \\
\leq & \log \frac{1}{1-\lambda'}.
\end{align*}
For any $\epsilon > 0$ we can choose an $u_0$ small enough such that 
$$c\vert \text{Flux}(u_0) \vert^{\halb}< \frac{\epsilon}{2}.$$ 
Furthermore, one can choose $u \in (u_0, 0)$ small enough such that   
$$c(u_0)\sqrt{-u} < \frac{\epsilon}{2}.$$
So we have $\widehat{\cal{H}}({\ulin{u}},u) < c \epsilon$ for $u_0 <u<0$ small enough.
Then, 
$$\widehat{\cal{B}}({\ulin{u}},u) \leq \frac{\widehat{\cal{H}}({\ulin{u}},u)}{\sqrt{{\ulin{u}}-u}} \leq \frac{c\epsilon}{\sqrt{{\ulin{u}}-u}}.$$
Now going back to the flux integrals in \eqref{div_tube}, we have
\begin{align}
\int_{\ptl \cal{S}_1} e^{-\cal{G}}(\mathbf{e+m}) \bar{\mu}_u &\leq c \int^0_{\ulin{u}} \widehat{\cal{B}}({\ulin{u}}',u)^2 \,d\,{\ulin{u}}' \notag \\
  &\leq \epsilon \int^0_{\ulin{u}} \frac{1}{({\ulin{u}}'-u)}\,\, d\, {\ulin{u}}' \notag \\
 & =  \eps  \int^0_{\lambda' u} \frac{1}{({\ulin{u}}'-u)}\,\, d\, {\ulin{u}}' \notag \\
& = \eps \log \frac{1}{1-\lambda'} \notag \\
& < c \eps \label{eq:estimateonS1}
\end{align}
and
\begin{align}\label{eq:estimateonS2}
 \halb \int_{\ptl \cal{S}_2} r\, \Omega^2\,e^{-\mathcal{F}} (\mathbf{e-m})\,\,  d\, u \wedge d\, \theta &= \text{Flux} (P_T)(u_0,u)  < \eps
\end{align}
for $u_0, u$ small enough.
Finally, in view of \eqref{div_tube}, \eqref{eq:estimateonS1} and \eqref{eq:estimateonS2}, we conclude that $E^O_{\text{ext}} (\uptau) \to 0$ as $\uptau \to 0$. This concludes the proof of Lemma \ref{ann}.
\end{proof}

\subsection{Local spacetime integral estimates}

\begin{lem}[Non-concentration of integrated kinetic energy]\label{kin}
Let the kinetic energy be defined as 
\[\mathbf{e}_{\text{kin}} \fdg = \halb e^{-2\alpha} \phi_t^2 \]
then the spacetime integral of $\mathbf{e}_{\text{kin}}$ does not concentrate in the past null cone
of $O$, i.e.,
\begin{align*}
\frac{1}{r_2(\uptau)}\int_{K_\uptau}  \mathbf{e}_{\text{kin}} \,\, \bar{\mu}_g  \to 0 \,\, \text{as} \,\, \uptau \to 0
\end{align*}
where $r_2(\uptau)$ is the radial function defined as in Lemma \ref{ann}.
\end{lem}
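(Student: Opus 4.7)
The plan is to apply Stokes' theorem to the multiplier $\mathcal{R}_1 = r\pr_r$, whose divergence identity was computed in \eqref{intro_px3} to give exactly $\nabla_\nu P^\nu_{\mathcal{R}_1} = e^{-2\alpha}\phi_t^2 = 2\,\mathbf{e}_{\text{kin}}$. I will integrate over $K(\uptau,s)$ and send $s \to 0^-$: the intermediate boundary term on $\Sigma^O_s$ drops out because $e^\alpha P^t_{\mathcal{R}_1} = -re^\beta \mathbf{m}$ carries an extra factor of $r$, so that piece is bounded by $C\, r_2(s)\, E^O(s)$, which vanishes as $r_2(s) \to 0$. The outcome will be the identity
$$2\int_{K_\uptau} \mathbf{e}_{\text{kin}}\,\bar{\mu}_g = -\int_{\Sigma^O_\uptau} e^\alpha P^t_{\mathcal{R}_1}\,\bar{\mu}_q + \text{Flux}(P_{\mathcal{R}_1})(\uptau),$$
and the remaining task is to show that each term on the right is $o(r_2(\uptau))$ as $\uptau \to 0$.

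For the flux on the mantel $C(\uptau)$, I will compute $\tilde{n}(P_{\mathcal{R}_1}) = -re^\beta(\mathbf{e}-\mathbf{m}-\mathbf{f})$ and exploit the algebraic identity $\mathbf{e}-\mathbf{m} = \tfrac{1}{2}(T\phi - R\phi)^2 + \tfrac{1}{2}\mathbf{f}$, which forces $\mathbf{f} \leq 2(\mathbf{e}-\mathbf{m})$ and hence $|\mathbf{e}-\mathbf{m}-\mathbf{f}| \leq 3(\mathbf{e}-\mathbf{m})$. Combined with the pointwise bound $r \leq r_2(\uptau)$ throughout $C(\uptau)$ and the uniform bound on $e^\beta$ from Lemma \ref{metric_uniform}, this yields
$$|\text{Flux}(P_{\mathcal{R}_1})(\uptau)| \leq C\, r_2(\uptau)\, |\text{Flux}(P_T)(\uptau)|,$$
so this contribution, divided by $r_2(\uptau)$, tends to zero by Corollary \ref{flux_px1_zero}.

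The main obstacle is the $\Sigma^O_\uptau$ boundary term, which naively obeys only $\leq C\, r_2(\uptau)\, E^O(\uptau)$ --- bounded but not small after dividing by $r_2(\uptau)$, since $E^O(\uptau)$ need not converge to zero at this stage. I will handle this by splitting $\Sigma^O_\uptau = B_{r_1(\uptau)} \cup (B_{r_2(\uptau)} \setminus B_{r_1(\uptau)})$ with $r_1(\uptau) = \lambda\, r_2(\uptau)$ for arbitrary $\lambda \in (0,1)$: on the inner disk the estimate gains an extra factor of $\lambda$ from the shortened radial range, while on the outer annulus Lemma \ref{ann} provides $E^O_{\text{ext}}(\uptau) \to 0$ at fixed $\lambda$. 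This delivers
$$\frac{1}{r_2(\uptau)}\left|\int_{\Sigma^O_\uptau} e^\alpha P^t_{\mathcal{R}_1}\,\bar{\mu}_q\right| \leq C\lambda\, E^O(-1) + C\, E^O_{\text{ext}}(\uptau),$$
and taking first $\uptau \to 0$ and then $\lambda \to 0$ closes the argument. The key technical points are the algebraic bound $\mathbf{f} \leq 2(\mathbf{e}-\mathbf{m})$, without which the $\mathbf{f}$ contribution to the flux would not be absorbed by $\text{Flux}(P_T)$, and the two-parameter limit $\uptau \to 0$, $\lambda \to 0$ exploiting Lemma \ref{ann}.
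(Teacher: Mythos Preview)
Your proof is correct and follows essentially the same route as the paper: apply Stokes' theorem with the multiplier $\mathcal{R}_1 = r\partial_r$, bound the mantel flux by $r_2(\uptau)$ times $|\text{Flux}(P_T)(\uptau)|$, and handle the $\Sigma^O_\uptau$ boundary term by the inner/outer split with parameter $\lambda$. The only cosmetic differences are that the paper bounds the flux one-sidedly via $\mathbf{e}-\mathbf{m}-\mathbf{f} \leq \mathbf{e}-\mathbf{m}$ (sufficient since the left side is nonnegative) rather than in absolute value, and phrases the final step as ``given $\eps$, first fix $\lambda$ small, then take $\uptau$ small'' rather than as iterated limits; your version is equally valid.
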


\begin{proof}
Recall from \eqref{intro_px3} the computation of the vectorfield $P_{\mathcal{R}_1}$ and its divergence
\begin{align*}
P_{\mathcal{R}_1} =& -re^{(1-k)\beta - \alpha} \,\mathbf{m}\, \ptl_t + re^{-k\beta}(\mathbf{e} -\mathbf{f})\ptl_r,\\
\nabla_\nu  P^\nu_{\mathcal{R}_1} = & \,e^{- 2 \alpha} \phi_t^2.
\end{align*}
for the choice of $k=0.$
Using  Stokes' theorem as in \eqref{stokes_gen} for $P_{\mathcal{R}_1}$
\begin{align}\label{stokes_px3}
 \int_{K(\uptau,s)} \nabla_\nu  P^\nu_{\mathcal{R}_1}\, \bar{\mu}_g  & = \int_{\Sigma^O_{s}} e^{\alpha}  P^t_{\mathcal{R}_1} 
\, \bar{\mu}_q
- \int_{\Sigma^O_{\uptau}}e^{\alpha} P^t_{\mathcal{R}_1}  \, \bar{\mu}_q
+ \text{Flux}(P_{\mathcal{R}_1}) (\uptau,s) \notag 
\intertext{that is}
 \int_{K(\uptau,s)} e^{- 2 \alpha} \phi_t^2 \,\bar{\mu}_g  & = - \int_{\Sigma^O_{s}} r  e^{\beta} \mathbf{m} \, \bar{\mu}_q
+ \int_{\Sigma^O_{\uptau}} r  e^{\beta} \mathbf{m} \, \bar{\mu}_q
+ \text{Flux}(P_{\mathcal{R}_1}) (\uptau,s) 
\end{align}
where,
\begin{align*}
 \text{Flux}(P_{\mathcal{R}_1}) (\uptau,s) = & \int_{C(\uptau,s)} d\,{\ulin{u}}(P_{\mathcal{R}_1})\, \bar{\mu}_{\ulin{u}} \\
 =& \,\int_{C(\uptau,s)}r e^{\beta-\cal{F}} \mathbf{(e-m-f)}  \, \bar{\mu}_{\ulin{u}} \\
 \leq &\, c \, r_2(\uptau) \int_{C(\uptau,s)} (\mathbf{e}-\mathbf{m}) \, \bar{\mu}_{\ulin{u}} \\
= &\, -c \, r_2(\uptau) \text{Flux}(P_{T}) (\uptau,s). 
\end{align*}
We infer
\begin{align*}
 \int_{K(\uptau,s)} e^{- 2 \alpha} \phi_t^2 \,  \bar{\mu}_g  & \leq  \int_{\Sigma^O_{s}} r  e^{\beta} \mathbf{e} \,   \bar{\mu}_q
+ \int_{\Sigma^O_{\uptau}} r  e^{\beta} \mathbf{e} \,\, \bar{\mu}_q
- c \, r_2(\uptau) \text{Flux}(P_{T}) (\uptau,s) \\
& \leq c r_2(s)\int_{\Sigma^O_s} \mathbf{e} \, \bar{\mu}_q
+ \int_{\Sigma^O_{\uptau}} r  e^{\beta} \mathbf{e} \, \bar{\mu}_q
- c \, r_2(\uptau) \text{Flux}(P_{T}) (\uptau,s).
\end{align*}
Now let $s \to 0$. We get
\begin{align*}
\frac{1}{ r_2(\uptau) } \,\int_{K(\uptau)} e^{-2 \alpha} \phi_t^2 \,  \bar{\mu}_g  &\leq \, \frac{1}{ r_2(\uptau) }\, \int_{\Sigma^O_{\uptau}} r  e^{ \beta} \mathbf{e} \, \bar{\mu}_q
- c \,\text{Flux}(P_{T}) (\uptau).
\intertext{Therefore,}
\frac{1}{ r_2(\uptau) } \int_{K(\uptau)} e^{- 2 \alpha} \phi_t^2 \, \bar{\mu}_g & \leq  \frac{1}{ r_2(\uptau) } \int_{B_{r_2}(\uptau)} r  e^{ \beta} \mathbf{e} \,\bar{\mu}_q
- c \,\,  \text{Flux}(P_{T}) (\uptau) \\
& = \, \frac{1}{ r_2(\uptau) } \left( \int_{B_{r_1(\uptau)}} r  e^{ \beta} \mathbf{e} \, \bar{\mu}_q + 
\int_{B_{r_2(\uptau)}\setminus B_{r_1(\uptau)}} r  e^{ \beta} \mathbf{e} \, \bar{\mu}_{q} \right) \\
& \quad - c\,  \text{Flux}(P_{T}) (\uptau) \\
& \leq c \,\lambda\, E_0 +  c\,  E^O_{\text{ext}} (\uptau) - c\,  \text{Flux}(P_{T}) (\uptau).
\end{align*}
For any $\eps > 0$ we can choose $\lambda$ small enough so that the first term $ < \frac{\eps}{3}$, then we can make $\uptau$ small enough
so that $E^{O}_{\text{ext}} (\uptau) < \frac{\eps}{3} $ and $ |\text{Flux}(P_{T}) (\uptau)| < \frac{\eps}{3} $
as discussed previously. This concludes the proof of Lemma \ref{kin}.
\end{proof}

In Lemma \ref{kin} we proved that the spacetime integral of $e^{-2\alpha}\phi_t^2$ does not concentrate in the past null cone of $O$.
In the following lemma we shall prove that the spacetime integral of rotational potential energy i.e.,
\[  \int_{K_\uptau} \frac{g^2(\phi)}{r^2} \bar{\mu}_g = \int_{K_\uptau} \mathbf{f} \bar{\mu}_g \]
does not concentrate.

\begin{lem}[Non-concentration of integrated rotational potential energy]\label{fterm}
Let $(N,h)$ be the target manifold satisfying the Grillakis condition \eqref{eq:grillakis-cond-first}. 
Then the spacetime integral of rotational potential energy does not concentrate i.e.,
 \begin{align}
 \int_{K_\uptau} \mathbf{f} \,\bar{\mu}_g \to 0 \,\,\,\text{as}\,\,\, \uptau \to 0.
 \end{align}
\end{lem}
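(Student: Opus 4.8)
The plan is to exploit that the rotational potential density $\mathbf{f}=g^2(\phi)/r^2$ is pointwise dominated by the full energy density $\mathbf{e}$, whose slice integral is the conserved, finite total energy, so that $\int_{K_\uptau}\mathbf{f}\,\bar{\mu}_g$ is bounded by a fixed multiple of the length of the time interval $[\uptau,0)$, which shrinks to zero.

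Concretely: first write the spacetime integral as an iterated one. Since $\bar{\mu}_g=re^{\alpha+\beta}\,dt\wedge dr\wedge d\theta=e^{\alpha}\,dt\wedge\bar{\mu}_q$, Fubini's theorem (the integrand is nonnegative) gives $\int_{K_\uptau}\mathbf{f}\,\bar{\mu}_g=\int_{\uptau}^{0}\left(\int_{\Sigma^O_{t}}\mathbf{f}\,e^{\alpha}\,\bar{\mu}_q\right)dt$. Second, $\mathbf{f}\le 2\,\mathbf{e}$ pointwise, directly from $\mathbf{e}=\tfrac12\left((T(\phi))^2+(R(\phi))^2+\mathbf{f}\right)$ and the nonnegativity of the first two terms. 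Third, use $e^{\alpha}\le e^{c^{+}_{\alpha}}$ (Lemma \ref{metric_uniform}), energy conservation (Lemma \ref{energy_cons}), and $\Sigma^O_t\subset\Sigma_t$ to get, for every $t\in[-1,0)$, $\int_{\Sigma^O_t}\mathbf{e}\,\bar{\mu}_q=E^O(t)\le E(\phi)(t)=E(\phi)(-1)=:E_0$ (one may also quote the monotonicity of Lemma \ref{mon}). Combining the three steps,
$$\int_{K_\uptau}\mathbf{f}\,\bar{\mu}_g\ \le\ 2e^{c^{+}_{\alpha}}\int_{\uptau}^{0}E^O(t)\,dt\ \le\ 2e^{c^{+}_{\alpha}}E_0\,|\uptau|\ \longrightarrow\ 0\qquad\text{as }\uptau\to0,$$
which is the assertion. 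This argument uses neither $r_2(\uptau)\to0$ nor the Grillakis condition; only that $[\uptau,0)$ has vanishing length.

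Because the statement is this soft, the only real obstacle is bookkeeping: the identity $\bar{\mu}_g=e^{\alpha}\,dt\wedge\bar{\mu}_q$ and the uniform boundedness of $e^{\alpha}$ on $K_\uptau$. If instead the sharper, $r_2(\uptau)$-normalized version $\tfrac{1}{r_2(\uptau)}\int_{K_\uptau}\mathbf{f}\,\bar{\mu}_g\to0$ is required (mirroring Lemma \ref{kin}), I would apply Stokes' theorem \eqref{stokes_gen} in $K(\uptau,s)$ to the momentum of $\mathcal{R}_a=r^a\partial_r$ (cf.\ \eqref{intro_px4}) modified by $\tfrac{1-a}{2}r^{a-1}\phi$ times the wave-map equation \eqref{wmequi}; using \eqref{gamma_r}--\eqref{omega_r} the $\partial_r\phi$-quadratic terms cancel and the potential contribution to the divergence becomes $\tfrac{1-a}{2}r^{a-1}g(\phi)\left(g(\phi)+\phi g'(\phi)\right)/r^2$, which is nonnegative exactly by the Grillakis condition \eqref{eq:grillakis-cond-first} (which also forces $g>0$ on $(0,\infty)$). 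The residual $e^{-2\alpha}\phi_t^2$ term is then absorbed via the normalized Lemma \ref{kin}, the boundary and flux contributions via Lemmas \ref{ann}, \ref{kin} and Corollary \ref{flux_px1_zero}, and a leftover $\partial_r(\phi^2)$ term, after integration by parts in $r$, produces $\int\phi^2 r^{a-2}$, which near the axis is controlled by a weighted Hardy inequality together with $\|\phi\|_{L^\infty}\le c$ (Lemma \ref{lem:phiLinfty}); this last point, which forces $a\in(0,1)$ and uses $r_2(t)\sim|t|$ (Corollary \ref{cor:comprttauvarho}), is the only genuinely delicate step.
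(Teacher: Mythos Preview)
Your elementary argument is correct for the statement as literally written: since $\mathbf{f}\le 2\mathbf{e}$ pointwise and $e^{\alpha}$ is uniformly bounded, the spacetime integral over $K_\uptau$ is at most $2e^{c^+_\alpha}E_0|\uptau|\to 0$. This is a genuinely simpler route than the paper's, and it uses neither the Grillakis condition nor any of the earlier non-concentration lemmas.

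However, you correctly anticipated the catch: the paper's proof actually establishes the \emph{normalized and weighted} version
\[
\frac{1}{r_2^a(\uptau)}\int_{K_\uptau}\mathbf{f}\,r^{a-1}\,\bar{\mu}_g\to 0,
\]
and it is this stronger conclusion (not the stated one) that is invoked in Corollary~\ref{pot} and in the final assembly of Theorem~\ref{theorem_ener_nonconc}. Your soft argument cannot yield this, since the normalization by $r_2^a(\uptau)\sim|\uptau|^a$ exactly cancels the gain from the shrinking time interval. Your sketch of the normalized argument is close to what the paper does: the paper also couples $P_{\mathcal{R}_a}$ with the auxiliary momentum $P_\zeta^\nu=\zeta\phi^\nu\phi-\zeta^\nu\phi^2/2$ with $\zeta=\tfrac{1-a}{2}r^{a-1}$, computes the combined divergence, and uses the Grillakis condition together with $c^{-1}\phi^2\le g^2(\phi)\le c\phi^2$ to obtain $\zeta(B_1/r^2+B_2)\ge c_a\zeta(e^{-2\beta}\phi^2/r^2+e^{-2\alpha}\phi_t^2)$ for $a$ close to $1$. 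One difference: the paper does not invoke a Hardy inequality for the $\phi^2/r^2$ contribution; that term appears directly in $\nabla_\nu P_\zeta^\nu$ via $\square\zeta$ and is absorbed into the positive combination $B_1$ by the Grillakis condition, with the $\phi^2$-to-$g^2(\phi)$ comparison supplying the final $\mathbf{f}$ bound.
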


\begin{proof}
Recall from \eqref{intro_px4} the computation of the vectorfield $P_{\mathcal{R}_a}$ and its divergence
\begin{align*}
 P_{\mathcal{R}_a} & = -e^{\beta -\alpha} r^a \, \mathbf{m} \ptl_t + r^a \,(\mathbf{e}-\mathbf{f}) \ptl_r,\\
\nabla_\nu  P^\nu_{\mathcal{R}_a} &= \halb \left (  (1+a) r^{a-1}  \right) e^{-2\alpha} \phi^2_t
+ \halb \left ( (a-1) r^{a-1}  \right) e^{-2\beta} \phi^2_r \\
& \quad + \halb \left ( (1-a) r^{a-1}  \right) \frac{g^2(\phi)}{r^2}.
 \end{align*}
Let now us construct the vector $P_\zeta^\nu$ such that
\begin{align*}
 P_\zeta^\nu \fdg = \zeta \phi^\nu \phi - \zeta^\nu \frac{\phi^2}{2},
\end{align*}
where $\zeta \fdg = \frac{1-a}{2}r^{a-1}$ for $a \in (\halb, 1)$. 
Then the divergence is given by
\begin{align*}
 \nabla_\nu  P^\nu_\zeta &= \nabla_\nu (\zeta \phi^\nu \phi) - \nabla_\nu \left(\zeta^\nu \frac{\phi^2}{2}\right) \\
&= \zeta (\square \phi)\phi + \zeta \phi^\nu \phi_\nu + \phi^\nu \zeta_\nu \phi - ( \square \zeta) \frac{\phi^2}{2} - \zeta^\nu \phi \phi_\nu \\
&= \zeta \frac{g(\phi)g'(\phi)\phi}{r^2} + \zeta \phi^\nu \phi_\nu - ( \square \zeta) \frac{\phi^2}{2}
\end{align*}
and
\begin{align*}
 \square \zeta &= e^{-2\beta} \left(\zeta_{rr} + \frac{\zeta_r}{r} + (\alpha_r -\beta_r)\zeta_r \right) \\
&= e^{-2\beta}r^{a-3}\frac{(1-a)^2}{2} \left(1-a + r^2 \kappa e^{2\beta}\, \mathbf{f} \right). 
\end{align*}
Let us define a vector $P^\nu_{\text{tot}}$ such that
\begin{align*}
 P^\nu_{\text{tot}} \fdg =P^\nu_{\mathcal{R}_a} + P^\nu_\zeta. 
\end{align*}
We have 
\begin{align}
 \nabla_\nu  P^\nu_{\text{tot}} &= \nabla_\nu  P^\nu_{{\mathcal{R}_a}}+ \nabla_\nu  P^\nu_\zeta \nonumber \\
&= \zeta \frac{g(\phi)g'(\phi)\phi}{r^2} + a r^{a-1} e^{-2\alpha} \phi_t^2 + \zeta \,\mathbf{f} - e^{-2\beta}\frac{(1-a)^2}{4} r^{a-1} \left(1-a 
+ r^2 \kappa  e^{2\beta} \mathbf{f} \right) \frac{\phi^2}{r^2} \nonumber \\
&= \zeta \left [ \frac{1}{r^2} B_1 + B_2 \right ] , \label{eq:Ptotdiv} 
\end{align}
where 
\begin{subequations}\label{eq:B12}
\begin{align} 
B_1 &= g(\phi) g'(\phi) \phi + g^2(\phi) - e^{-2\beta} \frac{(1-a)^2}{2} \phi^2 - \frac{1-a}{2} \kappa g^2(\phi) \phi^2 \\ 
B_2 &= \frac{2a}{1-a} e^{-2\alpha} \phi_t^2 
\end{align} 
\end{subequations} 
Applying  Stokes' theorem on $K(\uptau,s),$
\begin{align}\label{stokes_px4}
 \int_{K(\uptau,s)} \nabla_\nu  P_{\text{tot}} ^\nu\,  \bar{\mu}_g  = \int_{\Sigma^O_{s}} e^{\alpha} \, P_{\text{tot}}^{\,t} \,\bar{\mu}_q
- \int_{\Sigma^O_{\uptau}}e^{\alpha} \,P_{\text{tot}} ^{\,t}  \,\bar{\mu}_q
+ \text{Flux}(P_{\text{tot}}) (\uptau,s). 
\end{align}
We have
\begin{align} \label{px4_s}
 \int_{\Sigma^O_{s}} e^{\alpha} \, P_{\text{tot}}^{\,t} \, \bar{\mu}_q &= 
 \int_{\Sigma^O_{s}}( -\mathbf{m}\, r^a e^{\beta} +e^{\alpha} \zeta\, \phi^t \, \phi) \,\bar{\mu}_q \notag\\
&\leq \int_{\Sigma^O_{s}} \mathbf{e}\,r^a e^{\beta} +|  \, e^{-\alpha} \, \phi_t|\, |\zeta\,\phi| \,\bar{\mu}_q, \notag 
\intertext{and applying the Cauchy-Schwarz inequality, we get}
\int_{\Sigma^O_{s}} e^{\alpha} \, P_{\text{tot}}^{\,t} \, \bar{\mu}_q
&\leq c\,r^a_2(s)\int_{\Sigma^O_{s}} \mathbf{e} \,\bar{\mu}_q   
+ \frac{1-a}{2}\left( \int_{\Sigma^O_{s}} e^{-2\alpha} \, \phi^2_t r^{2a}\, 
\bar{\mu}_q \right)^\halb \left(\int_{\Sigma^O_{s}} \frac{\phi^2}{r^2}\,   \bar{\mu}_q \right)^\halb \notag \\
& \leq  c\,r^a_2(s). 
\end{align}
Similarly, the second term in \eqref{stokes_px4} can be estimated as 
\begin{align}\label{px4_tau}
-\int_{\Sigma^O_{\uptau}} e^{\alpha} \, P_{\text{tot}}^{\,t} \,  \bar{\mu}_q \leq c\, \int_{\Sigma^O_{\uptau}} \mathbf{e}\,r^a\,  \bar{\mu}_q 
+ c \left( \int_{\Sigma^O_\uptau} \mathbf{e}\,r^{2a}\,\bar{\mu}_q \right)^{\halb}.
\end{align}
The flux of $P_{\text{tot}}$ though the null surface $C(\uptau,s)$ can be decomposed as
\begin{align}\label{ptot_genflux}
 \text{Flux}(P_{\text{tot}}) (t,s) & =  \text{Flux}(P_{\mathcal{R}_a}) (t,s)+ \text{Flux}(P_{\zeta}) (t,s).
\end{align}
It follows from our previous estimates that exists a real constant $c$ dependent only
on the initial energy $E_0$ such that 
\begin{align} \label{eq:phi-gphi}
 \frac{1}{c} \phi^2 \leq g^2(\phi) \leq c \phi^2
\end{align}
(see for example \cite[Eq. (2.11)]{jal_tah1}). 
Let us consider the terms in the right side of \eqref{ptot_genflux} individually.
We have
\begin{align} 
 \text{Flux}(P_{{\mathcal{R}_a}})(\uptau,s) &= \int_{C(\uptau,s)} d\, {\ulin{u}} (P_{{\mathcal{R}_a}}) 
 \bar{\mu}_{\ulin{u}} \notag\\
&= \int_{C(\uptau,s)} e^{\beta-\cal{F}} r^a (\mathbf{e}-\mathbf{m}-\mathbf{f}) \bar{\mu}_{\ulin{u}} \notag \\
& \leq -c r^a_2(\uptau)\,\text{Flux}(P_{T})(\uptau,s) \notag
\intertext{and}
\text{Flux}(P_{\zeta})(\uptau,s) &= \int_{C(\uptau,s)} d\, {\ulin{u}} (P_{\zeta}) \bar{\mu}_{\ulin{u}} \notag \\
& =  \int_{C(\uptau,s)}  \left( \phi \, \zeta\, e^{-\cal{F}}  \left(-T(\phi) + R(\phi)\right) 
+  \halb \zeta e^{-(\beta +\cal{F})} (1-a)r^{-1} \phi^2  ) \right)\,\bar{\mu}_{\ulin{u}} \notag\\
&=  \int_{C(\uptau,s)}  \left( \phi\,\zeta\,e^{-\cal{F}} \left(-T(\phi) + R(\phi)\right) 
+  e^{-(\beta +\cal{F})} \frac{(1-a)^2}{4} \,\frac{ \phi^2}{r^2}r^a   \right)\,\bar{\mu}_{\ulin{u}} \notag\\
&\leq  \int_{C(\uptau,s)}  \left( \phi\,\zeta\,e^{-\cal{F}} \left(-T(\phi) + R(\phi)\right) 
+  c\, \frac{(1-a)^2}{4} \mathbf{f}\, r^a \,e^{-\cal{F}} \right)\,\bar{\mu}_{\ulin{u}} \notag\\
&\leq \int_{C(\uptau,s)}  \left( \phi\,\zeta\,e^{-\cal{F}} \left(-T(\phi) + R(\phi)\right) 
+  c\, \frac{(1-a)^2}{2} (\mathbf{e-m})\, r^a \,e^{-\cal{F}} \right)\,\bar{\mu}_{\ulin{u}}. 
\intertext{Using the Cauchy-Schwarz inequality, \eqref{ptot_flux} can be estimated as}
\text{Flux}(P_{\zeta})(\uptau,s) & \leq  
c r^a_2(\uptau) \left(\int_{C\uptau,s)} (\mathbf{e} - \mathbf{m} ) \bar{\mu}_{\ulin{u}} \right)\notag\\
& \leq  - c r_2^a(\uptau) \text{Flux}(P_T)(\uptau,s).
\end{align}
Therefore,
\begin{align}\label{ptot_flux}
\text{Flux}(P_{\text{tot}}) (t,s) &\leq - c r_2^a(\uptau) \text{Flux}(P_T)(\uptau,s).
\end{align}
Using \eqref{eq:phi-gphi} and proceeding as in \cite{jal_tah}, we can use the Grillakis condition \eqref{eq:grillakis-cond-first} to conclude that for 
$a$ sufficiently close to $1$ there is a small constant $c_a > 0$ such that 
$$
\frac{B_1}{r^2} + B_2 \geq c_a \left ( e^{-2\beta} \frac{\phi^2}{r^2} + e^{-2\alpha} \phi_t^2 \right ) 
$$
Now, if we go back to Stokes' theorem \eqref{stokes_px4} and use the estimates \eqref{px4_s}, \eqref{px4_tau} and 
\eqref{ptot_flux}, we get
\begin{align*}
 &c_a \left [ \int_{K(\uptau,s)} e^{-2\alpha} \phi^2_t r^{a-1} d\bar{\mu}_g  +  \int_{K(\uptau,s)} e^{-2\beta} \frac{\phi^2}{r^2} r^{a-1} \,\bar{\mu}_g \right ] \\
&\leq c \, r^a_2(s) + c \, \int_{\Sigma^O_{\uptau}} \mathbf{e}\,r^a\,  \bar{\mu}_q 
+ c\left(\int_{\Sigma^O_{\uptau}} \mathbf{e}\,r^{2a}\,  \bar{\mu}_q\right)^{\halb}  -c \, r^a_2(\uptau) \text{Flux}(P_T)(\uptau,s).
\end{align*}
As $s \to 0$ we get,
\begin{align}\label{simpler_stokes_ptot}
  &c_a \left [  \int_{K(\uptau)} e^{-2\alpha} \phi^2_t r^{a-1} \,\bar{\mu}_g   +  \int_{K(\uptau)} e^{-2\beta} \frac{\phi^2}{r^2} r^{a-1} \,\bar{\mu}_g  \right ] \nonumber \\
&\leq c \, \int_{\Sigma^O_{\uptau}} \mathbf{e}\,r^a \, \bar{\mu}_q + c \left(\int_{\Sigma^O_{\uptau}} \mathbf{e}\,r^{2a} \, \bar{\mu}_q\right)^{\halb} -c\,r^a_2(\uptau)\text{Flux}(P_T)(\uptau).
\end{align}
In \eqref{simpler_stokes_ptot}, we can estimate
\begin{align} \label{energy_ra}
r^{-a}_2(\uptau)\int_{\Sigma^O_{\uptau}} \mathbf{e}\,r^a\,  \bar{\mu}_q 
&= r^{-a}_2(\uptau)\left( \int_{B_{r_1(\uptau)}} \mathbf{e}\,r^a\,\bar{\mu}_q + \int_{B_{r_2(\uptau)} \setminus B_{r_1(\uptau)}} 
\mathbf{e}\,r^a\,\bar{\mu}_q  \right) \nonumber\\
 &\leq r^{-a}_2(\uptau)\left(r^a_1(\uptau) \int_{B_{r_1(\uptau)}} \mathbf{e}\,\bar{\mu}_q + r^a_2(\uptau) \int_{B_{r_2(\uptau)} \setminus B_{r_1(\uptau)}} 
\mathbf{e}\,\bar{\mu}_q  \right) \nonumber\\
 & \leq \lambda^a \, E_0 +  \,E^O_{\text{ext}} (\uptau)
\end{align}
and
\begin{align}\label{energy_ra_sqrt}
 r^{-a}_2(\uptau)\left(\int_{\Sigma^O_{\uptau}} \mathbf{e}\,r^{2a} \, \bar{\mu}_q\right)^{\halb} 
 &= r_2^{-a}(\uptau) \left(\int_{B_{r_1(\uptau)}} \mathbf{e}\,r^{2a}\,\bar{\mu}_q + \int_{B_{r_2(\uptau)} \setminus B_{r_1(\uptau)}} \mathbf{e}\, r^{2a}\,\bar{\mu}_q \right)^{\halb} \nonumber\\
 &\leq \left(\left(\frac{r_1(\uptau)}{r_2(\uptau)}\right)^{2a}\int_{B_{r_1(\uptau)}} \mathbf{e}\,\bar{\mu}_q 
 + \int_{B_{r_2(\uptau)}} \mathbf{e}\,\bar{\mu}_q \right)^{\halb} \nonumber\\
 &\leq \left( \lambda^{2a} E_0+ E^O_{\text{ext}}(\uptau) \right)^{\halb}.
\end{align}
Hence, in view of \eqref{energy_ra}, \eqref{energy_ra_sqrt}, Corollary \ref{flux_px1_zero} and Lemma \ref{ann},
we can choose $\lambda$ and $\uptau$ in \eqref{simpler_stokes_ptot} small enough so that
\[ \frac{1}{r^a_2(\uptau)}\int_{K(\uptau)}\frac{\phi^2}{r^2} r^{a-1} \,\bar{\mu}_g < \eps \]
for any $\eps >0$.
In view of \eqref{eq:phi-gphi},
\[  \mathbf{f} \leq \frac{c\phi^2}{r^2}.\]
Therefore it follows that
 \begin{align*}
 \frac{1}{r^a_2(\uptau)}\int_{K_\uptau} \mathbf{f}\,\,r^{a-1}\, \,\bar{\mu}_g \to 0 \,\,\,\text{as}\,\,\, \uptau \to 0.
 \end{align*}
This concludes the proof of Lemma \ref{fterm}.
\end{proof}

The remaining term in the energy is $e^{-2\beta}\phi_r^2$. We control it below.
\begin{cor}\label{pot}
Under the assumptions of Lemma \ref{fterm}, the spacetime integral of radial potential energy in the past null cone of $O$ does not concentrate
\begin{align}
 \frac{1}{r^a_2(\uptau)}\int_{K_{\uptau}} e^{-2\beta}\phi^2_r r^{a-1} \,\bar{\mu}_g \to 0\,\,\, \text{as}\,\,\, \uptau \to 0.
 \end{align}
\end{cor}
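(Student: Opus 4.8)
The plan is to run the same Stokes'-theorem argument as in Lemma \ref{kin}, but using the vectorfield $P_{\mathcal{R}_a}$ for the value $a\in(\halb,1)$ already fixed in Lemma \ref{fterm}, and exploiting the sign in the divergence formula \eqref{intro_px4}: there the coefficient of $e^{-2\beta}\phi_r^2$ is $\halb(a-1)r^{a-1}$, which is \emph{negative}, so once this term is moved to the left-hand side of the Stokes identity it carries the favourable weight $\halb(1-a)r^{a-1}>0$, while everything else is already controlled by the estimates of Lemmas \ref{kin}, \ref{fterm}, \ref{ann} and Corollary \ref{flux_px1_zero}. The Grillakis condition enters only indirectly, through the invocation of Lemma \ref{fterm}.

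Concretely, first I would apply Stokes' theorem \eqref{stokes_gen} to $P_{\mathcal{R}_a}$ on $K(\uptau,s)$ and substitute \eqref{intro_px4}; solving for the $\phi_r$ term and using $e^{\alpha}P^t_{\mathcal{R}_a}=-e^{\beta}r^a\mathbf{m}$ gives
\begin{align*}
\frac{1-a}{2}\int_{K(\uptau,s)} e^{-2\beta}\phi_r^2\, r^{a-1}\,\bar{\mu}_g &= \frac{1+a}{2}\int_{K(\uptau,s)} e^{-2\alpha}\phi_t^2\, r^{a-1}\,\bar{\mu}_g + \frac{1-a}{2}\int_{K(\uptau,s)} \mathbf{f}\, r^{a-1}\,\bar{\mu}_g \\
&\quad - \int_{\Sigma^O_s} e^{\alpha} P^t_{\mathcal{R}_a}\,\bar{\mu}_q + \int_{\Sigma^O_\uptau} e^{\alpha} P^t_{\mathcal{R}_a}\,\bar{\mu}_q - \text{Flux}(P_{\mathcal{R}_a})(\uptau,s).
\end{align*}
Then I would let $s\to 0$: since $|\mathbf{m}|\leq\mathbf{e}$ and $r\leq r_2(s)$ on $\Sigma^O_s$, the term on $\Sigma^O_s$ is bounded below by $-c\,r_2^a(s)E_0\to0$, exactly as in Lemma \ref{kin}. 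Dividing the resulting inequality by $r_2^a(\uptau)$, the first term on the right tends to $0$ by \eqref{simpler_stokes_ptot} (the bound on $\int_{K(\uptau)}e^{-2\alpha}\phi_t^2 r^{a-1}\,\bar{\mu}_g$ already obtained inside the proof of Lemma \ref{fterm}), the second tends to $0$ by Lemma \ref{fterm}, the boundary term on $\Sigma^O_\uptau$ is $\leq c\int_{\Sigma^O_\uptau}\mathbf{e}\,r^a\,\bar{\mu}_q$ and hence by \eqref{energy_ra} is controlled by $c(\lambda^a E_0+E^O_{\text{ext}}(\uptau))$, and, exactly as in Lemma \ref{fterm}, $|\text{Flux}(P_{\mathcal{R}_a})(\uptau,s)|\leq -c\,r_2^a(\uptau)\,\text{Flux}(P_T)(\uptau,s)$, so after division the flux contribution is $\leq -c\,\text{Flux}(P_T)(\uptau)\to0$ by Corollary \ref{flux_px1_zero}.

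The conclusion is then the usual two-parameter choice: given $\eps>0$, first pick $\lambda$ small so that $c\lambda^a E_0<\eps$, then $\uptau$ close to $0$ so that $cE^O_{\text{ext}}(\uptau)<\eps$ (Lemma \ref{ann}), $-c\,\text{Flux}(P_T)(\uptau)<\eps$, and the kinetic and $\mathbf{f}$ integrals divided by $r_2^a(\uptau)$ are $<\eps$; this yields $\tfrac{1}{r_2^a(\uptau)}\int_{K_\uptau} e^{-2\beta}\phi_r^2\, r^{a-1}\,\bar{\mu}_g\to0$. I expect no serious obstacle here — all the analytic work is already in Lemmas \ref{kin}--\ref{fterm} — so the only point requiring care is the sign bookkeeping: checking that the $\phi_r^2$ term indeed lands on the left with a positive coefficient, and that each boundary and flux term is estimated from \emph{above} with the correct sign, which works thanks to $|\mathbf{m}|\leq\mathbf{e}$ and $|\mathbf{e}-\mathbf{m}-\mathbf{f}|\leq\mathbf{e}-\mathbf{m}$.
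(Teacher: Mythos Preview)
Your proposal is correct and follows essentially the same route as the paper's own proof: apply Stokes' theorem \eqref{stokes_gen} to $P_{\mathcal{R}_a}$, exploit the sign $\halb(a-1)<0$ in \eqref{intro_px4} to move the $e^{-2\beta}\phi_r^2$ term to the left with a positive coefficient, and bound the remaining kinetic, $\mathbf{f}$, boundary, and flux contributions via the estimates already established in Lemmas \ref{kin}--\ref{fterm}, \eqref{energy_ra}, and Corollary \ref{flux_px1_zero}. The paper's version is more compressed (it writes the final inequality in one line and says ``for $\uptau$ small enough''), but your more detailed sign bookkeeping and the explicit invocation of \eqref{simpler_stokes_ptot} for the weighted kinetic term are exactly what underlies that line.
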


\begin{proof}
Let us again apply Stokes' theorem for the $\bar{\mu}_g$-divergence of $P_{{\mathcal{R}_a}}$
\begin{align*}
 \int_{K(\uptau,s)} \nabla_\mu  P_{{\mathcal{R}_a}} ^\mu\, \bar{\mu}_g  = \int_{\Sigma^O_{s}} e^{\alpha} \, P^t_{{\mathcal{R}_a}} \, \bar{\mu}_q
- \int_{\Sigma^O_{\uptau}}e^{\alpha} \,P^t_{{\mathcal{R}_a}} \, \bar{\mu}_q
+ \text{Flux}(P_{{\mathcal{R}_a}}) (\uptau,s) 
\end{align*}
therefore, as $s \to 0$
\begin{align*}
 \int_{K(\uptau)} e^{-2\beta} \phi^2_r \,r^{a-1}\, \bar{\mu}_g &\leq c \int_{K(\uptau)} \left(e^{-2\alpha} \phi^2_t + \frac{g^2(\phi)}{r^2} \right)r^{a-1}\, \bar{\mu}_g 
  + \int_{\Sigma^O_{\uptau}}\mathbf{e}\,r^a  \, \bar{\mu}_q - r^a_2(\uptau)\text{Flux}(P_{T}) (\uptau). 
 \end{align*}
Hence,
\begin{align*}
\frac{1}{r^a_2(\uptau)}\int_{K(\uptau)} e^{-2\beta} \phi^2_r \,r^{a-1}\, \bar{\mu}_g < \eps
\end{align*}
for $\uptau$ small enough. This concludes the proof of the corollary.
\end{proof}

\subsection{Proof of non-concentration of energy}

We are now in position to conclude the proof of Theorem \ref{theorem_ener_nonconc}. 
If we collect the terms from Lemmas \ref{kin}, \ref{fterm} and Corollary \ref{pot}, we get
\[  \frac{1}{r^a_2(\uptau)} \int_{K(\uptau)} \mathbf{e} \,r^{a-1} \, \bar{\mu}_g \to 0 \]
as $\uptau \to 0.$
But then,
\begin{align}\label{integrated_energy}
 \frac{1}{r^a_2(\uptau)} \int_{K(\uptau)} \mathbf{e}\, r^{a-1} \, \bar{\mu}_g
& \geq c\,\frac{1}{r^a_2(\uptau)} \int_{K(\uptau)} \mathbf{e} \, r^{a-1} \, \bar{\mu}_q \,d\,t \nonumber \\
&\geq c \frac{1}{r_2(\uptau)} \int_{K(\uptau)} \mathbf{e} \, \bar{\mu}_q \,d\,t \notag \\
&\to 0
\end{align}
as $\uptau \to 0$. We claim that there exists a sequence $ \{\uptau_i\} _i$ such that 
\begin{align}\label{energy_noncon_claim}
 \int_{\Sigma^O_{\uptau_i}} \mathbf{e}\, \bar{\mu}_q \to 0
\end{align}
as $ \{ \uptau_i\} _i \to 0 $.
Let us prove the claim by contradiction. Suppose there exists no sequence
such that \eqref{energy_noncon_claim} holds true. Then there exists an $\eps > 0$ 
such that 
\[ \int_{\Sigma^O_{\uptau}} \mathbf{e} \,\bar{\mu}_q > \eps \]
for all $\uptau \in (-1,0).$
Consequently,
\[ \frac{1}{|\uptau|}\int_{K(\uptau)} \mathbf{e} \,\bar{\mu}_q \, d\,t > \eps.  \]
This implies,
\begin{align}\label{normalized_spatial}
 \frac{1}{r_2(\uptau)} \int_{K(\uptau)} \mathbf{e} \,  \bar{\mu}_q \,d\,t > \eps
\end{align}
for all $\uptau \in [-1,0)$.
This contradicts \eqref{integrated_energy}.
Hence, there exists a $ \{\uptau_i\} _i$ such that 
\begin{align}
E^O (\uptau_i) =  \int_{\Sigma^O_{\uptau_i}} \mathbf{e}\,\bar{\mu}_q \to 0.
\end{align}
But $ E^O (\uptau)$ is monotonic with respect to $\uptau$, therefore
\[ E^O (\uptau) \to 0\]
for all $\uptau \to 0$ i.e., $E^O_{\text{conc}} =0$. This concludes the proof of Theorem \ref{theorem_ener_nonconc}.

\section{Global regularity of the $2+1$ Einstein-wave map problem}\label{sec:proofmaintheorem}

We now proceed to the proof of our main theorem, i.e. Theorem \ref{thm:main-first}. Let $(M, \gg, \phi)$ be the maximal Cauchy development of an asymptotically flat and regular Cauchy data set for the $2+1$ equivariant Einstein-wave map problem \eqref{eq:ein-equiv-wm-first} with target $(N, h)$. Assume that the metric $h$ has the form 
$$
h = d\rho^2 + g^2(\rho) d\theta^2 
$$
for an odd function $g: \Re \to \Re$ with $g'(0) = 1$, such that $g$ satisfies \eqref{eq:nospherecond-first} and the Grillakis condition \eqref{eq:grillakis-cond-first}. 

Our goal is to prove that $(M, \gg, \phi)$ is regular. Assume by contradiction that this does not hold. Then, there exists a first singularity which in view of Theorem \ref{thm:singax} occurs on the axis of symmetry $\Gamma$. Let us denote by $O$ this first singularity which corresponds to $u=\ub=0$ and $t=r=0$ in the $(u,\ub)$ and $(t,r)$ coordinates systems constructed respectively in Section \ref{sec:prel} and Section \ref{sec:tr-coord}. 

Let $\ep>0$ small to be chosen later. In view of Theorem \ref{theorem_ener_nonconc}, there exists a time $t_0<0$ such that
\begin{equation}\lab{eq:smallenergyassumption}
E^O(t_0)\leq \ep.
\end{equation}

\begin{lemma}\lab{lemma:controloffluxsmall}
Recall that $\tau=(\ub+u)/2$. There exists $\tau_0<0$ such that in the space-time region 
$$\{\tau_0\leq\tau\leq 0\}\cap \{\ub\leq 0\},$$
we have
$$\int_{2\tau_0-\ub}^{\ub} \left((\pr_u\phi)^2+\frac{g(\phi)^2}{r^2}\right)(u',\ub)du'\les\ep$$ 
and
$$\int_{\max(u,2\tau_0-u)}^0 \left((\pr_{\ub}\phi)^2+\frac{g(\phi)^2}{r^2}\right)(u,\ub')d\ub'\les\ep.$$
\end{lemma}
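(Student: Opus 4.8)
The plan is to integrate the mass equations \eqref{eq:mu}--\eqref{eq:mv} along the two families of null rays, exploiting that the Hawking-type mass $m$ vanishes on the axis $\Gamma$ and is uniformly small on the part of $J^-(O)$ cut out by $\tau\geq\tau_0$.

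First one propagates the smallness of the energy: by Lemma \ref{mon} the function $E^O$ is non-increasing, so \eqref{eq:smallenergyassumption} gives $E^O(t)\leq\ep$ for all $t\in[t_0,0)$. Since $t$ and $\tau$ are comparable in $J^-(O)$ by Corollary \ref{cor:comprttauvarho}, one may pick $\tau_0<0$ with $|\tau_0|$ so small that $\mathcal{D}:=\{\tau_0\leq\tau\leq 0\}\cap\{\ub\leq 0\}$ is contained in $J^-(O)\cap\{t\geq t_0\}$. For any $q\in\mathcal{D}$ the coordinate ball $B_{r(q)}$ on the slice through $q$ again lies inside $J^-(O)$, so by Lemma \ref{metric_uniform} (which gives $e^\beta=(1-\tfrac{\kappa}{2\pi}E(\phi)(t,r))^{-1}$) together with the identity $m=1-e^{-2\beta}$ (which follows from Section \ref{sec:tr-coord}) one obtains $0\leq m(q)\les E^O(t(q))\leq\ep$ on $\mathcal{D}$, while $m\equiv 0$ on $\Gamma$ by \eqref{eq:axis-cond-uub}.

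Now fix $\ub\in[\tau_0,0]$ and integrate \eqref{eq:mu} along the ingoing ray $\{\ub'=\ub\}$ from the point $(2\tau_0-\ub,\ub)$ on $\{\tau=\tau_0\}$ to the point $(\ub,\ub)$ on $\Gamma$. Using $S_{uu}=(\pr_u\phi)^2$, $S_{u\ub}=\tfrac{\Om^2}{4}\tfrac{g^2(\phi)}{r^2}$ and $\nu<0<\lambda$, the integrand $-\pr_u m$ is a sum of two nonnegative terms and its integral equals $m(2\tau_0-\ub,\ub)-m(\ub,\ub)\les\ep$; similarly, integrating \eqref{eq:mv} along the outgoing ray $\{u'=u\}$ from its lowest point in $\mathcal{D}$ (on $\{\tau=\tau_0\}$ or on $\Gamma$, hence the $\max$) up to the mantle $\{\ub'=0\}$, where again $m\les\ep$, yields the analogous bound with $\ub$-derivatives. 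One then passes to the stated form: away from $\Gamma$ the weights occurring in \eqref{eq:mu}--\eqref{eq:mv} are bounded above and below by the uniform estimates of Lemmas \ref{metric_uniform}, \ref{null_uniform}, the ensuing corollaries, and the relations $\Om^2=4|\nu|\varkappa$, $\lambda=\varkappa(1-m)$, $r\simeq\vrho$; moreover the $g^2(\phi)/r^2$ contribution is absorbed into the $\pr_u\phi$ (resp.\ $\pr_{\ub}\phi$) contribution by a Hardy inequality along the ray, using $\phi=0$ on $\Gamma$ together with \eqref{eq:phi-gphi}.

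The delicate point is that $m$ is an $r$-weighted energy, so the monotonicity argument alone only controls $\int(g^2(\phi)/r+r(\pr_u\phi)^2)\,du'$, which near $\Gamma$ — where $r\to 0$ — is genuinely weaker than the unweighted quantity in the statement; closing this gap uniformly as $\ub\to 0^-$, that is precisely in the part of $\mathcal{D}$ adjacent to $\Gamma$ where the first singularity sits, is where the improved near-axis control coming from the non-concentration estimates of Lemmas \ref{kin}, \ref{fterm} and Corollary \ref{pot} must be used. This is the step I expect to be the main obstacle.
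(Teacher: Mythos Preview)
Your approach via the mass equations \eqref{eq:mu}--\eqref{eq:mv} is essentially equivalent to the paper's route, which applies Stokes' theorem to the divergence-free current $P_T$ on the region between $\Sigma_{t_0}\cap J^-(O)$ and the null segment in question; both exploit the same conservation law and both terminate at the \emph{$r$-weighted} flux bounds
\[
\int_{2\tau_0-\ub}^{\ub}\Bigl((\pr_u\phi)^2+\tfrac{g(\phi)^2}{r^2}\Bigr)r\,du'\les\ep,
\qquad
\int_{\max(u,2\tau_0-u)}^{0}\Bigl((\pr_{\ub}\phi)^2+\tfrac{g(\phi)^2}{r^2}\Bigr)r\,d\ub'\les\ep.
\]
Your mass-function route (bounding $m\les\ep$ on $\mathcal D$ via $m=1-e^{-2\beta}$ and the identity for $e^{\beta}$, then integrating $-\pr_u m$ and $\pr_{\ub}m$ from or to $\Gamma$) is perfectly sound and arguably cleaner, since it singles out the monotone scalar quantity rather than unpacking the Stokes boundary terms.

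The step you flag as the ``main obstacle''---stripping the factor of $r$ to match the displayed statement---is a non-issue: the statement as printed is missing an $r$. The paper's own proof ends precisely with the $r$-weighted inequalities above, and the only downstream use of the lemma, namely the rescaled bounds \eqref{smallboundflux1}--\eqref{smallboundflux2}, explicitly carries the weight $r(u',\ub)$ (resp.\ $r(u,\ub')$). So neither a Hardy inequality nor the non-concentration Lemmas \ref{kin}, \ref{fterm}, Corollary \ref{pot} are needed here; your argument up to the $r$-weighted bound is already a complete proof of what the paper actually uses.
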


\begin{proof}
We choose $\tau_0<0$ small enough such that
$$\{\tau_0\leq\tau\leq 0\}\cap \{\ub\leq 0\}\subset \{t_0\leq t\leq 0\}\cap J^-(O)$$
which is possible since on the one hand $J^-(O)=\{\ub\leq 0\}$, and on the other hand $t$ and $\tau$ are comparable in view of Corollary \ref{cor:comprttauvarho}. In particular, together with \eqref{eq:smallenergyassumption}, Stokes' theorem, and the fact that the vectorfield  $P_T$ is divergence free, we infer
$$\left|\int_{2\tau_0-\ub}^{\ub}\int_{\theta=0}^{2\pi}e^{-\cal{F}}(\mathbf{e-m})  \bar{\mu}_{\ulin{u}}\right|\leq\ep$$
and 
$$\left|\int_{\max(u,2\tau_0-u)}^0\int_{\theta=0}^{2\pi}e^{-\cal{G}}(\mathbf{e+m})  \bar{\mu}_u\right|\leq\ep$$
where we are relying on notations and computations introduced in Section \ref{sec:J-est}. In view of the definition of $\bar{\mu}_{\ulin{u}}$ and $\bar{\mu}_u$, and the rotation invariance, we infer
\bee
\int_{2\tau_0-\ub}^{\ub}e^{-\cal{F}}(\mathbf{e-m}) r\Omega^2 du&\les&\ep,\\
\int_{\max(u,2\tau_0-u)}^0e^{-\cal{G}}(\mathbf{e+m})  r\Omega^2d\ub&\les&\ep.
\eee
In view of the definition of ${\bf e}$ and ${\bf m}$ and the identity \eqref{relationOmFG}, we deduce
\bee
\int_{2\tau_0-\ub}^{\ub}e^{\cal{G}}\left(e^{-2\mathcal{G}}(\pr_u\phi)^2+\frac{g(\phi)^2}{r^2}\right) r du&\les&\ep,\\
\int_{\max(u,2\tau_0-u)}^0e^{\cal{F}}\left(e^{-2\cal{F}}(\pr_{\ub}\phi)^2+\frac{g(\phi)^2}{r^2}\right)  rd\ub&\les&\ep.
\eee
Together with the estimates of Lemma \ref{null_uniform} for $\cal{F}$ and $\cal{G}$, we obtain
\bee
\int_{2\tau_0-\ub}^{\ub}\left((\pr_u\phi)^2+\frac{g(\phi)^2}{r^2}\right) r du&\les&\ep,\\
\int_{\max(u,2\tau_0-u)}^0\left((\pr_{\ub}\phi)^2+\frac{g(\phi)^2}{r^2}\right)  rd\ub&\les&\ep.
\eee
This concludes the proof of the lemma. 
\end{proof}

For any $a>0$, performing the scaling transformation
$$r\to ar,\,\,\,\, \Omega\to \Omega,\,\,\,\, \phi\to \phi,$$
as well as rescaling the coordinates $u$ and $\ub$ accordingly
$$u\to au,\,\,\,\, \ub\to a\ub$$
leads to another solution of the the Einstein-wave map problem. Using this rescaling with $a=|\tau_0|^{-1}$ implies in view of Lemma \ref{lemma:controloffluxsmall}
\bea
\int_{-2-\ub}^{\ub} \left((\pr_u\phi)^2+\frac{g(\phi)^2}{r^2}\right)r(u',\ub)du'&\leq&\ep, \label{smallboundflux1}\\
\int_{\max(u,-2-u)}^0 \left((\pr_{\ub}\phi)^2+\frac{g(\phi)^2}{r^2}\right)r(u,\ub')d\ub'&\leq&\ep. \label{smallboundflux2}
\eea
over the space-time region $\{-1\leq\tau\leq 0\}\cap \{\ub\leq 0\}$. 

\begin{theorem}[Small energy implies regularity]\label{th:smallenergyglobalex}
Let $(M,\gg,\Phi)$ be a solution of the $2+1$ equivariant Einstein-wave map problem \eqref{eq:ein-equiv-wm-first} which is regular in the space-time region
$$\{-1\leq\tau<0\}\cap \{\ub\leq 0\}.$$
Assume furthermore the smallness condition \eqref{smallboundflux1} \eqref{smallboundflux2} on the energy flux. Then, $(M,\gg,\Phi)$ is regular on the closure of $\{-1\leq\tau<0\}\cap \{\ub\leq 0\}$. In particular, there is no singularity at $O$.
\end{theorem}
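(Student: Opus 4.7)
The plan is a continuity/bootstrap argument combined with the extension criterion of Proposition~\ref{prop:cont}. Suppose for contradiction that the maximal development fails to extend regularly to the closure of the given region. Then by Theorem~\ref{thm:singax}, any obstruction must be a first singularity on the axis $\Gamma$, and the only such candidate here is $O$ itself (corresponding to $u=\ub=0$). It therefore suffices to produce uniform bounds on all quantities appearing in $N(Y)$, cf.~\eqref{eq:NYdef}, in a compact set generating $J^-(O)$ minus $O$. The small-flux hypothesis \eqref{smallboundflux1}--\eqref{smallboundflux2} is the crucial ingredient, exploited via the null structure of \eqref{eq:wave-null}.

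First, I would establish a uniform (possibly $r$-weighted) upper bound for $\phi$. Since $\phi$ vanishes on $\Gamma$, one writes $\phi(u,\ub)$ as the integral of $\pr_{\ub}\phi$ along a suitable outgoing null ray emanating from $\Gamma$, and applies Cauchy--Schwarz against the flux bound \eqref{smallboundflux2}. Using the comparison $r \sim \vrho$ from Corollary~\ref{cor:comprttauvarho}, the $L^\infty$ bound of Lemma~\ref{lem:phiLinfty}, and the proportionality $g(\phi)\sim\phi$ valid on the range covered (see \eqref{eq:phi-gphi}), this yields a smallness estimate of the form $|\phi| \les \sqrt{\ep}$, up to a mild weight degenerating only on $\Gamma$.

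Next, using this weighted $\phi$-bound I would derive uniform pointwise control on $\pr\phi$ in null coordinates. Rewriting \eqref{eq:wave-null} as
\[
\pr_u(r\pr_{\ub}\phi) + \pr_{\ub}(r\pr_u\phi) = \frac{\Omega^2\, g(\phi)g'(\phi)}{2r},
\]
the right-hand side is controlled by Step~1 together with the bounds on $\Omega$ from Lemma~\ref{null_uniform}. Integrating $\pr_u(r\pr_{\ub}\phi)$ along ingoing null rays and $\pr_{\ub}(r\pr_u\phi)$ along outgoing ones, then performing a null integration by parts to trade the bad-sign cross term for boundary contributions governed by the flux, produces uniform pointwise bounds on $r\pr_{\ub}\phi$ and $r\pr_u\phi$, provided $\ep$ is taken small enough to close the bootstrap.

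With $\phi$ and $\pr\phi$ uniformly bounded, the Einstein system \eqref{eq:ein-uv} together with the transport equation \eqref{eq:Tthth} for $\Omega$ propagates uniform bounds for $\Omega$, $\Omega^{-1}$, $r$, $r^{-1}$, and their first and second derivatives in a compact neighbourhood of $O$ minus $\{O\}$, exactly as in the last part of the proof of Theorem~\ref{thm:singax}. This gives $N(Y) < \infty$, contradicting Proposition~\ref{prop:cont}, and so rules out $O$ as a first singularity. The main obstacle is Step~2: extracting the null cancellation in the presence of the nonlinear term $g(\phi)g'(\phi)/r^2$ and the Einstein coupling through $\Omega$, which is where the smallness of $\ep$ is used decisively.
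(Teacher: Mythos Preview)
Your outline captures the logical shape---bootstrap $\phi$, then $\partial\phi$, then geometry---but Step~2 as stated does not close, and the paper's actual argument is substantially more delicate.

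The equations for $\Theta=r\pr_{\ub}\phi$ and $\Xi=r\pr_u\phi$ (your rewritten \eqref{eq:wave-null}) do not yield pointwise bounds on $\pr\phi$ from the input $|\phi|\les\sqrt{\ep}$ alone. Integrating the $\Theta$-equation along ingoing null rays produces a term $\int r^{-3/2}\phi\,d\sigma$ (after your integration by parts of the $\pr_u\phi$ cross term), and with only $|\phi|\les\sqrt{\ep}$ this integral diverges logarithmically at the axis. More precisely, the paper shows (Remark~\ref{rem:howtouselemmathetaxi}) that from $|\phi|\le Cr^\delta$ one obtains $|\pr_{\ub}\phi|\les Cr^{\delta-1}$ only for $\delta<\tfrac12$, and \emph{no} estimate whatsoever for $\pr_u\phi$; conversely, a bound on $\pr_u\phi$ requires $\delta>\tfrac12$. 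The gap at $\delta=\tfrac12$ is exactly the log loss you would encounter, and no amount of null cancellation between the two transport equations fixes it.

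What the paper does instead is rewrite the wave-map equation as a flat $2{+}1$ wave equation on $\phi$ with error terms (Lemma~\ref{lemma:structurewaveeqphi}), then apply an explicit Riemann-function representation formula with kernel $K(\mu)$. The $\pr_u\phi$ terms in the source are integrated by parts against the kernel, and the crucial estimate $|\lambda\pr_u\mu|\les|\mu-1|$ (Lemma~\ref{lemma:upperboundprumu}) makes this integration by parts gain enough to close a bootstrap on $r^{1-\delta}|\pr_{\ub}\phi|$ for $\delta<\tfrac12$. This in turn upgrades $\phi$ to $|\phi|\les C_0\sqrt{r}$. A second, more elaborate pass (Section~\ref{sec:improved-uniform-dphi}) differentiates once, applies a second representation formula with kernel $J(\mu)$ to $v=(\pr_\varrho+\varrho^{-1})\phi$, and uses Gr\"onwall to obtain $|v|\le C_0$, whence $|\phi|\le C_0 r$ and $|\pr_u\phi|+|\pr_{\ub}\phi|\le C_0$. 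Only then can one proceed to higher regularity, which the paper handles via the $4$-dimensional radial wave equation for $w=\phi/\varrho$ (Section~\ref{sec:small-energy-global}), not via Proposition~\ref{prop:cont}---note that proposition is explicitly restricted to $p\notin\overline{\Gamma}$ and so cannot be invoked at $O$.
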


In view of Theorem \ref{th:smallenergyglobalex}, we infer that $O$ can not be a first singularity of $(M, \gg, \Phi)$, hence contradicting our assumption. Thus, global regularity holds for $(M, \gg, \Phi)$. This concludes the proof of Theorem \ref{thm:main-first}.\\

The rest of the paper is devoted to the proof of Theorem \ref{th:smallenergyglobalex}. In Section \ref{sec:improved-uniform-phi} we derive a uniform weighted upper bound for $\phi$. In Section \ref{sec:improved-uniform-dphi}, we rely on the upper bound of Section \ref{sec:improved-uniform-phi} to derive a uniform upper bound for $\pr\phi$. Finally, we rely on the upper bound of Section \ref{sec:improved-uniform-dphi} to conclude the proof of Theorem \ref{th:smallenergyglobalex}.

\section{An improved uniform bound for $\phi$}\label{sec:improved-uniform-phi}

From now on, we will only work in the $(u,\ub)$ coordinate system. Recall from \eqref{eq:axis-cond-uub} our choice of normalization on $\Gamma$ for the $(u,\ub)$ coordinates system
$$r=0,\,\, \pr_{\ub} r=\frac{1}{2},\,\, \pr_u r=-\frac{1}{2}\textrm{ and }\Omega=1\text{ on }\Gamma.$$
Also, recall that $\tau$ and $\varrho$ are defined by
$$\tau=\frac{u+\ub}{2},\,\, \varrho=\frac{\ub-u}{2}.$$
We restrict our attention to the space-time region
$$\{-1\leq\tau<0\}\cap \{\ub\leq 0\}$$
where our solution is regular, and we where intend to derive estimates which are uniform up to the origin $O$. We assume throughout the rest of the paper the smallness condition \eqref{smallboundflux1} \eqref{smallboundflux2} on the energy flux. Finally, recall from Section \ref{sec:stress-null} and Section \ref{sec:Einsteineqinnullcoordinates} that the 2+1 dimensional equivariant Einstein-wave map system is given in the $(u,\ub)$ coordinates by
$$
\left\{\ba{rcl}
\ds\pr_u(\Omega^{-2}\pr_ur) &=&\ds -\Omega^{-2}r\kappa (\pr_u\phi)^2,\\[2mm]
\ds\pr_{\ub}(\Omega^{-2}\pr_{\ub}r) &=&\ds -\Omega^{-2}r\kappa (\pr_{\ub}\phi)^2,\\[2mm]
\ds\pr_u\pr_{\ub}r &=&\ds r\kappa \frac{\Omega^2}{4}\frac{g(\phi)^2}{r^2},\\[3mm]
\ds\Omega^{-2}(\pr_u\Omega\pr_{\ub}\Omega-\Omega\pr_u\pr_{\ub}\Omega) &=&\ds \frac{1}{8}\Omega^2\kappa\left(\frac{4}{\Omega^2}\pr_u\phi\pr_{\ub}\phi+\frac{g(\phi)^2}{r^2}\right)\\[3mm]
\ds\frac{2}{r\Omega^2}\left(-\pr_u(r\pr_{\ub}\phi)-\pr_{\ub}(r\pr_u\phi)\right) &=& \ds\frac{f(\phi)}{r^2}
\ea\right.
$$
where $f(\phi)=g(\phi)g'(\phi)$. Since $g$ is odd with $g'(0)=1$, note that there exists a smooth function $\zeta$ such that
$$f(\phi)=\phi+\phi^3\zeta(\phi).$$

\subsection{Preliminary estimates}

We start with simple consequences of the smallness condition \eqref{smallboundflux1} \eqref{smallboundflux2} on the energy flux.
\begin{lemma}\lab{lemma:basicphiunif}
We have
$$|\phi|\les \sqrt{\ep}.$$
\end{lemma}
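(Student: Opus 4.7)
The plan is to exploit the vanishing of $\phi$ on the axis $\Gamma$ together with the smallness of the null flux \eqref{smallboundflux1}. By equivariance with rotation number $k=1$ and regularity of $\Phi$, the image $\Phi(p)$ at any axis point $p$ must lie in the fixed-point set of the rotation $e^{i\theta}$ on $N$, which for the metric $h = d\rho^2 + g^2(\rho)d\theta^2$ (with $g(0)=0$) is precisely $\{\rho = 0\}$; hence $\phi \equiv 0$ on $\Gamma = \{u = \ub\}$.

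Fix $(u_0,\ub_0)$ in the region $\{-1 \leq \tau \leq 0\} \cap \{\ub \leq 0\}$ and work with the antiderivative $\wp(\phi) = \int_0^\phi g(s)\,ds$ introduced in Lemma \ref{lem:phiLinfty}. Since the point $(\ub_0,\ub_0)$ lies on $\Gamma$ and $\phi$ vanishes there, the fundamental theorem of calculus along the null ray $\{\ub = \ub_0\}$ gives
$$
\wp(\phi(u_0,\ub_0)) \;=\; -\int_{u_0}^{\ub_0} g(\phi)\,\pr_u\phi\,(u',\ub_0)\,du'.
$$
Splitting the integrand as $g(\phi)\pr_u\phi = (g(\phi) r^{-1/2})\cdot (\pr_u\phi\, r^{1/2})$ and applying Cauchy--Schwarz,
$$
|\wp(\phi(u_0,\ub_0))| \;\leq\; \left(\int_{u_0}^{\ub_0}\frac{g(\phi)^2}{r^2}\,r\,du'\right)^{1/2}\left(\int_{u_0}^{\ub_0}(\pr_u\phi)^2\,r\,du'\right)^{1/2}.
$$
The constraint $\tau_0 \geq -1$ is equivalent to $u_0 \geq -2-\ub_0$, so the interval $[u_0,\ub_0]$ is contained in the range of integration of \eqref{smallboundflux1}. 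Both factors are therefore bounded by $\sqrt{\ep}$, yielding $|\wp(\phi(u_0,\ub_0))| \les \ep$ uniformly.

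To pass from this to the pointwise bound $|\phi| \les \sqrt{\ep}$, I would use that $g$ is odd with $g'(0)=1$, so $\wp$ is even and $\wp(\phi) = \phi^2/2 + O(\phi^4)$ near $0$. Under the Grillakis condition $(sg(s))'>0$ together with $\lim_{s \to 0^+} sg(s)=0$, one has $g(s) > 0$ for $s>0$, so $\wp$ is strictly increasing on $[0,\infty)$ and invertible there. Combined with the a priori bound $\|\phi\|_\infty \leq c$ from Lemma \ref{lem:phiLinfty} --- which localizes $|\phi|$ to a fixed bounded range --- one can choose $\ep$ small enough (as already granted by Theorem \ref{th:smallenergyglobalex}) that $\wp(|\phi|) \leq \ep$ forces $|\phi|$ into the neighbourhood of $0$ where $\wp(s) \geq s^2/4$, giving $|\phi(u_0,\ub_0)|^2 \leq 4\ep$.

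The only genuine subtlety is this last inversion of $\wp$, which requires the smallness of $\ep$ to stay in the quadratic regime; everything else is a clean Cauchy--Schwarz matching of the weight $r$ in the flux against the factor $1/r$ inside $g(\phi)/r$. No serious obstacle arises, and the constant in $|\phi| \les \sqrt{\ep}$ depends only on $g'(0)=1$ and the initial energy.
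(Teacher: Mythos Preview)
Your proof is correct and follows essentially the same approach as the paper: integrate $\wp(\phi)$ along the null ray $\{\ub = \ub_0\}$ from the axis, apply Cauchy--Schwarz with the weight $r$, and invoke the flux bound \eqref{smallboundflux1}. Your treatment of the inversion step is in fact more careful than the paper's, which simply writes $\wp(\phi)=\phi^2/2+O(\phi^3)$ and concludes; your observation that $\wp$ is even (so the remainder is $O(\phi^4)$) and that the a~priori bound from Lemma~\ref{lem:phiLinfty} localizes $\phi$ to the quadratic regime makes the final implication rigorous.
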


\begin{proof}
The proof is in the same spirit as Lemma \ref{lem:phiLinfty}. Let 
$$\wp(\phi) := \int^\phi_0 g(s)\,ds.$$
Then, since $\phi$ vanishes on $\Gamma$, we have for $\ub<0$,
\begin{align*}
\wp(\phi(u,\ub)) & = \int_{\ub}^u \ptl_u(\wp(\phi(u',\ub))) \, du'  \\
& = \int_{\ub}^u g(\phi)\pr_u\phi(u',\ub) \, du'.
\intertext{Together with \eqref{smallboundflux1}, we infer}
|\wp(\phi(u,\ub))| & \leq \left(\int_u^{\ub} \frac{g(\phi)^2}{r^2}r(u',\ub) \,  du'\right)^{\frac{1}{2}}\left(\int_u^{\ub} (\pr_u\phi)^2 r(u',\ub) \,  du'\right)^{\frac{1}{2}}\\
& \leq \ep. 
 \end{align*}
Since $\wp(0)=0$, $\wp'(0)=0$ and $\wp''(0)=1$, we have in the neighborhood of 0 the following Taylor expansion
$$\wp(\phi)=\frac{\phi^2}{2}+O(\phi^3)$$
and hence, in view of the estimate for $\wp(\phi)$ above and the smallness of $\ep$, there exists $\underline{\phi}$ such that 
$$\wp\left(\underline{\phi}\right)=\wp(\phi),\,\,\,\, \left|\underline{\phi}\right|\les \sqrt{\ep}.$$ 
  
Next, note that $g(\rho)>0$ for all $\rho>0$ in view of the Grillakis condition \eqref{eq:grillakis-cond-first} and the fact that $g(0)=0$. Since $\wp'=g$, this implies that $\wp$ is strictly increasing. In particular, $\wp$ is one-to-one and hence
$$\underline{\phi}=\phi.$$
We infer from the above estimate for $\underline{\phi}$ that 
$$|\phi|\les \sqrt{\ep}.$$
This concludes the proof of the lemma.
\end{proof}

\begin{lemma}\lab{lemma:basicestimate}
We have
\bee
\left|\pr_{\ub}r-\frac{1}{2}\right| \les\ep, \,\,\,\, \left|\pr_ur+\frac{1}{2}\right| \les\ep,\,\,\,\, |r-\varrho|\les \ep\varrho,\,\,\,\, |\Omega-1| \les \ep.
\eee
\end{lemma}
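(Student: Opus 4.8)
The plan is to exploit that smallness of the energy flux forces the geometry to be $O(\ep)$--close to the flat one, for which $r=\varrho$, $\Omega=1$, $\pr_u r=-\tfrac12$ and $\pr_{\ub}r=\tfrac12$. Each of the four quantities will be controlled by integrating a suitable null evolution equation outwards from the axis $\Gamma$ (where it takes its normalized value, see \eqref{eq:axis-cond-uub}) and bounding the resulting error term by the flux; this is in the same spirit as the proof of Lemma \ref{lemma:basicphiunif}.

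The first ingredient is that the mass is small: $0\le m\les\ep$ on all of $\{-1\le\tau<0\}\cap\{\ub\le 0\}$. The lower bound is Theorem \ref{thm:regular}. For the upper bound, note $m=0$ on $\Gamma$ and, from \eqref{eq:mu} together with the crude bounds $c^-_\Omega\le\Omega\le c^+_\Omega$ and $|\pr_u r|,\pr_{\ub}r\les 1$ (Corollaries following Lemmas \ref{metric_uniform} and \ref{null_uniform}), $|\pr_u m|\les\big((\pr_u\phi)^2+g(\phi)^2/r^2\big)r$; integrating $\pr_u m$ along $\{\ub=\mathrm{const}\}$ from $\Gamma$ and invoking \eqref{smallboundflux1} — whose range $[u,\ub]$ stays in $\{\tau\ge -1\}$ precisely because $u+\ub\ge -2$ — gives $m(u,\ub)\les\ep$. (Alternatively, use the relation between $m$ and $E(\phi)(t,r)\le E^O(t)$ from Lemma \ref{metric_uniform} and the monotonicity of $E^O$.) Combined with $m<m_\infty<1$ and the crude $\Omega$--bound, \eqref{eq:mdef} in the form $-4\Omega^{-2}\pr_u r\,\pr_{\ub}r=1-m$ also yields crude two--sided bounds $0<c\le|\pr_u r|,\ \pr_{\ub}r\le C$. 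The second ingredient is a null--integrated bound on the potential: writing out \eqref{eq:mv} as $\pr_{\ub}m=\kappa\,g(\phi)^2 r^{-1}\pr_{\ub}r+4\kappa\,\Omega^{-2}r(\pr_{\ub}\phi)^2|\pr_u r|$, a sum of two nonnegative terms since $\pr_{\ub}r>0>\pr_u r$, one integrates from $\Gamma$ along $\{u=\mathrm{const}\}$ to get $\kappa\int_u^{\ub}g(\phi)^2 r^{-1}\pr_{\ub}r\,d\ub'\le m(u,\ub)\les\ep$, hence $\int_u^{\ub}\frac{g(\phi)^2}{r}(u,\ub')\,d\ub'\les\ep$; the analogous use of \eqref{eq:mu} along $\{\ub=\mathrm{const}\}$ gives $\int_u^{\ub}\frac{g(\phi)^2}{r}(u',\ub)\,du'\les\ep$.

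With these in hand the conclusions follow quickly. By \eqref{eq:Tuv}, $\pr_u\pr_{\ub}r=\pr_{\ub}\pr_u r=\tfrac{\kappa}{4}\Omega^2 g(\phi)^2 r^{-1}\ge 0$; integrating along $\{\ub=\mathrm{const}\}$ from $\Gamma$ (where $\pr_{\ub}r=\tfrac12$) and along $\{u=\mathrm{const}\}$ from $\Gamma$ (where $\pr_u r=-\tfrac12$), and using $\Omega\le c^+_\Omega$ with the potential bound, gives
$$0\le \tfrac12-\pr_{\ub}r\les\ep,\qquad 0\le \pr_u r+\tfrac12\les\ep .$$
Integrating the second estimate once more along $\{\ub=\mathrm{const}\}$ from $\Gamma$ (where $r=0$) gives $r(u,\ub)=\tfrac{\ub-u}{2}+O(\ep)\,|\ub-u|=\varrho+O(\ep\varrho)$, i.e. $|r-\varrho|\les\ep\varrho$. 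Finally, for $\Omega$ I would integrate nothing: the constraint \eqref{eq:mdef} is $\Omega^2=-4\,\pr_u r\,\pr_{\ub}r/(1-m)$, and substituting $\pr_u r=-\tfrac12+O(\ep)$, $\pr_{\ub}r=\tfrac12+O(\ep)$ and $m=O(\ep)$ yields $\Omega^2=1+O(\ep)$, hence $|\Omega-1|\les\ep$.

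The one delicate point — and the reason the argument is routed through the mass rather than directly through \eqref{smallboundflux2} — is that \eqref{smallboundflux2} controls $\int(\cdots)r\,d\ub'$ only over $\ub'\in[\max(u,-2-u),0]$, which for $u<-1$ (points of the region lying far from the axis) does not reach $\Gamma$; the mass monotonicity is what supplies the missing smallness, since $m\les\ep$ holds uniformly up to $\Gamma$, and this is also why $\Omega$ is best obtained from the constraint \eqref{eq:mdef} rather than from the $\theta\theta$--Einstein equation \eqref{eq:Tthth}, whose rotational--potential term would only be integrable with a worse power of $\ep$. Beyond this bookkeeping the proof is a short chain of one--dimensional integrations from the axis followed by one algebraic substitution.
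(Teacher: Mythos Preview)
Your proof is correct and follows the same skeleton as the paper's: integrate the null constraint equations from the axis $\Gamma$ and bound the error by the small flux. There are two genuine differences worth recording.

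First, for $|\Omega-1|\les\ep$ the paper does not use the algebraic mass identity; instead it integrates \eqref{eq:Tuu} in $u$ from $\Gamma$, obtaining $\bigl|\Omega^{-2}\pr_u r+\tfrac12\bigr|\les\ep$ directly from \eqref{smallboundflux1}, and then combines this with the already established $\bigl|\pr_u r+\tfrac12\bigr|\les\ep$ to extract $|\Omega^{-2}-1|\les\ep$. Your route via $\Omega^2=-4\,\pr_u r\,\pr_{\ub}r/(1-m)$ is equally short and has the advantage that the smallness of $m$ is a useful byproduct.

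Second, the paper simply invokes \eqref{smallboundflux1} and \eqref{smallboundflux2} after integrating $\pr_u\pr_{\ub}r$ from $\Gamma$, without discussing the range issue you raise: for $u<-1$ the outgoing null segment from $(u,u)\in\Gamma$ to $(u,\ub)$ leaves the region $\{\tau\ge-1\}$, so \eqref{smallboundflux2} does not directly control $\int_u^{\ub}g(\phi)^2r^{-1}\,d\ub'$. Your detour through the mass---establishing $m(u,\ub)\les\ep$ via the $u$--integral (which does stay in the region) and then reading off $\int_u^{\ub}g(\phi)^2r^{-1}\pr_{\ub}r\,d\ub'\le m(u,\ub)$ from the nonnegativity of both summands in \eqref{eq:mv}---is a clean way to close this gap, and makes your write-up more self-contained than the paper's on this point.
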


\begin{proof}
Integrating from the axis of symmetry $\Gamma$ the following equation 
$$\pr_u\pr_{\ub}r=r\kappa \frac{\Omega^2}{4}\frac{g(\phi)^2}{r^2}$$
and in view of the smallness condition \eqref{smallboundflux1} \eqref{smallboundflux2} on the energy flux, and the initialization on $\Gamma$, we deduce
$$\left|\pr_{\ub}r-\frac{1}{2}\right| \les\ep, \left|\pr_ur+\frac{1}{2}\right| \les\ep.$$
Moreover, in view of the definition of $\varrho$ and the initialization on $\Gamma$, we have
$$r-\varrho=\pr_u(r-\varrho)=\pr_{\ub}(r-\varrho)=0\textrm{ on }\Gamma$$
which together with the smallness condition \eqref{smallboundflux1} \eqref{smallboundflux2} on the energy flux and the fact that 
$$\pr_u\pr_{\ub}\varrho=0,$$
yields
$$|r-\varrho|\les \ep\varrho.$$
Finally, the control of $\pr_u$ together with the integration from the axis of symmetry $\Gamma$ of the equation 
$$\pr_u(\Omega^{-2}\pr_ur)=-\Omega^{-2}r\kappa (\pr_u\phi)^2,$$
the initialization on $\Gamma$ and the smallness condition \eqref{smallboundflux1} on the energy flux yields
$$|\Omega-1| \les \ep.$$
This concludes the proof of the lemma.
\end{proof}

\subsection{Reduction to a semilinear wave equation}

\begin{lemma}\lab{lemma:waveopinnullcoord}
Let $\phi$ a function depending only on $u$ and $\ub$. Then, we have
\bee
\square_\gg(\phi) &=& \frac{1}{\Omega^2}\Bigg(-4\pr_u\pr_{\ub}\phi+\frac{1}{\varrho}(\pr_{\ub}\phi-\pr_u\phi)\\
&&+\frac{\varrho-r}{r\varrho}(\pr_{\ub}\phi-\pr_u\phi)-\frac{2\pr_ur+1}{r}\pr_{\ub}\phi-\frac{2\pr_{\ub}r-1}{r}\pr_u\phi\Bigg).
\eee
\end{lemma}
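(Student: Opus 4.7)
My plan is to derive the identity by starting from the previously established formula \eqref{eq:wave-null} for the d'Alembertian acting on symmetric functions in the null coordinate system, namely
\[
\square_{\gg} \phi = - \frac{2}{\Om^2 r} \Big(\ptl_u (r \ptl_\ub \phi) + \ptl_\ub (r \ptl_u \phi)\Big),
\]
and then rearranging the resulting expression into the form claimed in the lemma.

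First, I would apply the Leibniz rule to each of the two terms, yielding
\[
\square_\gg\phi \;=\; \frac{1}{\Omega^2}\left(-4\pr_u\pr_{\ub}\phi \;-\; \frac{2\pr_ur}{r}\pr_{\ub}\phi \;-\; \frac{2\pr_{\ub}r}{r}\pr_u\phi\right).
\]
This already extracts the principal part $-4\pr_u\pr_{\ub}\phi/\Omega^2$. The remaining task is purely algebraic: to split the lower-order coefficients in a way that isolates the flat model (where $\pr_ur=-1/2$ and $\pr_{\ub}r=1/2$ so that $r=\varrho$) from genuine error contributions.

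The key observation is the following algebraic identity, which is what forces the precise shape of the claimed expression:
\[
\frac{1}{\varrho}\;+\;\frac{\varrho-r}{r\varrho}\;=\;\frac{1}{r}.
\]
Writing $-2\pr_ur/r = (-2\pr_ur-1)/r + 1/r$ and $-2\pr_{\ub}r/r = (-2\pr_{\ub}r+1)/r - 1/r$, the lower-order coefficients in the expansion become
\[
-\frac{2\pr_ur+1}{r}\pr_{\ub}\phi \;-\; \frac{2\pr_{\ub}r-1}{r}\pr_u\phi \;+\; \frac{1}{r}(\pr_{\ub}\phi - \pr_u\phi),
\]
and then substituting the above identity to split $1/r$ into $1/\varrho + (\varrho-r)/(r\varrho)$ yields exactly the formula stated in the lemma. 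No step here is an obstacle; the only care required is the bookkeeping that matches the quantities $2\pr_ur+1$ and $2\pr_{\ub}r-1$, which vanish on $\Gamma$ by the normalization \eqref{eq:axis-cond-uub} and are controlled by $O(\ep)$ in view of Lemma \ref{lemma:basicestimate}, together with the regular factor $(\varrho-r)/(r\varrho)$ which likewise is $O(\ep/\varrho)$. This decomposition is precisely what will make the formula useful in the subsequent sections: the principal part matches the flat wave operator in null coordinates, and each correction term is multiplied by a quantity that is small by the energy flux smallness assumption.
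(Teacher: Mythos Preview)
Your proof is correct and follows essentially the same route as the paper: expand \eqref{eq:wave-null} via Leibniz, then split $-2\pr_ur/r$ and $-2\pr_{\ub}r/r$ by adding and subtracting $1/r$, and finally use the identity $1/r = 1/\varrho + (\varrho-r)/(r\varrho)$. The additional remarks on the smallness of the correction terms are not needed for the lemma itself but correctly anticipate its later use.
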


\begin{proof}
Recall that
$$\square_\gg(\phi) = \frac{2}{r\Omega^2}\left(-\pr_u(r\pr_{\ub}\phi)-\pr_{\ub}(r\pr_u\phi)\right).$$
We infer
\bee
\square_\gg(\phi) &=& \frac{1}{\Omega^2}\left(-4\pr_u\pr_{\ub}\phi-\frac{2\pr_ur}{r}\pr_{\ub}\phi-\frac{2\pr_{\ub}r}{r}\pr_u\phi\right)\\
&=& \frac{1}{\Omega^2}\left(-4\pr_u\pr_{\ub}\phi+\frac{1}{r}(\pr_{\ub}\phi-\pr_u\phi)-\frac{2\pr_ur+1}{r}\pr_{\ub}\phi-\frac{2\pr_{\ub}r-1}{r}\pr_u\phi\right)\\
&=& \frac{1}{\Omega^2}\Bigg(-4\pr_u\pr_{\ub}\phi+\frac{1}{\varrho}(\pr_{\ub}\phi-\pr_u\phi)\\
&&+\frac{\varrho-r}{r\varrho}(\pr_{\ub}\phi-\pr_u\phi)-\frac{2\pr_ur+1}{r}\pr_{\ub}\phi-\frac{2\pr_{\ub}r-1}{r}\pr_u\phi\Bigg).
\eee
This concludes the proof of the lemma.
\end{proof}

We deduce the following corollary.
\begin{corollary}
We have
\bee
&&-4\pr_u\pr_{\ub}\phi+\frac{1}{\varrho}(\pr_{\ub}\phi-\pr_u\phi)-\frac{\phi}{\varrho^2}\\ 
&=& -\frac{\varrho-r}{r\varrho}(\pr_{\ub}\phi-\pr_u\phi)+\frac{2\pr_ur+1}{r}\pr_{\ub}\phi+\frac{2\pr_{\ub}r-1}{r}\pr_u\phi+\frac{\varrho^2-r^2}{r^2\varrho^2}\phi+\frac{\phi^3\zeta(\phi)}{r^2}\\
&&+(\Omega^2-1)\frac{f(\phi)}{r^2}.
\eee
\end{corollary}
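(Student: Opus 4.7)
The statement is a direct algebraic consequence of the wave map equation in null coordinates, so the plan is purely computational: isolate a ``flat'' $2$-dimensional axisymmetric wave operator plus the linear potential $\phi/\varrho^2$ on the left-hand side, and collect all remaining terms on the right-hand side as perturbations which will later be seen to be small by Lemma \ref{lemma:basicestimate} and Lemma \ref{lemma:basicphiunif}.

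Concretely, I would start from the wave map equation
$$\square_\gg\phi=\frac{f(\phi)}{r^2},\qquad f(\phi)=\phi+\phi^3\zeta(\phi),$$
and apply Lemma \ref{lemma:waveopinnullcoord} to the left-hand side. Multiplying through by $\Omega^2$ gives
\begin{align*}
-4\pr_u\pr_{\ub}\phi+\frac{1}{\varrho}(\pr_{\ub}\phi-\pr_u\phi)
&=\Omega^2\frac{f(\phi)}{r^2}-\frac{\varrho-r}{r\varrho}(\pr_{\ub}\phi-\pr_u\phi)\\
&\quad+\frac{2\pr_ur+1}{r}\pr_{\ub}\phi+\frac{2\pr_{\ub}r-1}{r}\pr_u\phi.
\end{align*}

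I would then subtract $\phi/\varrho^2$ from both sides and rewrite the contribution $\Omega^2 f(\phi)/r^2-\phi/\varrho^2$ through the decomposition
$$\Omega^2\frac{f(\phi)}{r^2}-\frac{\phi}{\varrho^2}=\frac{\phi+\phi^3\zeta(\phi)}{r^2}-\frac{\phi}{\varrho^2}+(\Omega^2-1)\frac{f(\phi)}{r^2}=\frac{\varrho^2-r^2}{r^2\varrho^2}\phi+\frac{\phi^3\zeta(\phi)}{r^2}+(\Omega^2-1)\frac{f(\phi)}{r^2},$$
which uses only the expansion $f(\phi)=\phi+\phi^3\zeta(\phi)$ coming from $g$ odd with $g'(0)=1$. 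Substituting this identity yields exactly the right-hand side displayed in the corollary, completing the derivation.

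There is no real obstacle; this is a bookkeeping identity. The only observation worth highlighting is that each term on the right-hand side is manifestly perturbative: the factors $\varrho-r$, $\Omega^2-1$, $2\pr_ur+1$ and $2\pr_{\ub}r-1$ are all $O(\ep)$ by Lemma \ref{lemma:basicestimate}, while $\phi^3\zeta(\phi)$ is cubic in $\phi$ with $|\phi|\les\sqrt{\ep}$ by Lemma \ref{lemma:basicphiunif}. Thus the reformulation places the analysis in the form of a semilinear perturbation of the flat $2$-dimensional axisymmetric wave equation with potential $\varrho^{-2}$, which is the correct setup for the uniform weighted estimates on $\phi$ to be derived in Section \ref{sec:improved-uniform-phi}.
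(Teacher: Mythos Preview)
Your proof is correct and follows exactly the same approach as the paper: apply Lemma \ref{lemma:waveopinnullcoord} to $\square_\gg\phi=f(\phi)/r^2$, multiply through by $\Omega^2$, and then decompose $\Omega^2 f(\phi)/r^2-\phi/\varrho^2$ via $f(\phi)=\phi+\phi^3\zeta(\phi)$ and $\phi/r^2-\phi/\varrho^2=(\varrho^2-r^2)\phi/(r^2\varrho^2)$. Your additional remark about the perturbative nature of the right-hand side is not part of the paper's proof of this corollary but correctly anticipates its later use.
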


\begin{proof}
In view of the previous lemma, we have
\bee
&&\frac{1}{\Omega^2}\Bigg(-4\pr_u\pr_{\ub}\phi+\frac{1}{\varrho}(\pr_{\ub}\phi-\pr_u\phi)\\
&&+\frac{\varrho-r}{r\varrho}(\pr_{\ub}\phi-\pr_u\phi)-\frac{2\pr_ur+1}{r}\pr_{\ub}\phi-\frac{2\pr_{\ub}r-1}{r}\pr_u\phi\Bigg)= \frac{f(\phi)}{r^2},
\eee
which we rewrite
\bee
&&-4\pr_u\pr_{\ub}\phi+\frac{1}{\varrho}(\pr_{\ub}\phi-\pr_u\phi)\\ 
&=& -\frac{\varrho-r}{r\varrho}(\pr_{\ub}\phi-\pr_u\phi)+\frac{2\pr_ur+1}{r}\pr_{\ub}\phi+\frac{2\pr_{\ub}r-1}{r}\pr_u\phi+\Omega^2 \frac{f(\phi)}{r^2}.
\eee
We have
\bee
\Omega^2 \frac{f(\phi)}{r^2} &=& \frac{f(\phi)}{r^2}+(\Omega^2-1)\frac{f(\phi)}{r^2}\\
&=& \frac{\phi}{r^2}+\frac{\phi^3\zeta(\phi)}{r^2}+(\Omega^2-1)\frac{f(\phi)}{r^2}\\
&=& \frac{\phi}{\varrho^2}+\frac{\varrho^2-r^2}{r^2\varrho^2}\phi+\frac{\phi^3\zeta(\phi)}{r^2}+(\Omega^2-1)\frac{f(\phi)}{r^2}.
\eee
We infer
\bee
&&-4\pr_u\pr_{\ub}\phi+\frac{1}{\varrho}(\pr_{\ub}\phi-\pr_u\phi)-\frac{\phi}{\varrho^2}\\ 
&=& -\frac{\varrho-r}{r\varrho}(\pr_{\ub}\phi-\pr_u\phi)+\frac{2\pr_ur+1}{r}\pr_{\ub}\phi+\frac{2\pr_{\ub}r-1}{r}\pr_u\phi+\frac{\varrho^2-r^2}{r^2\varrho^2}\phi+\frac{\phi^3\zeta(\phi)}{r^2}\\
&&+(\Omega^2-1)\frac{f(\phi)}{r^2}.
\eee
This concludes the proof of the corollary.
\end{proof}

\begin{corollary}
We have
$$\left(-\pr_\tau^2+\pr_{\varrho}^2+\frac{1}{\varrho}\pr_\varrho-\frac{1}{\varrho^2}\right)\phi=\frac{F}{\varrho^2}$$
where
\bee
F &=& -\frac{\varrho(\varrho-r)}{r}(\pr_{\ub}\phi-\pr_u\phi)+\frac{\varrho^2(2\pr_ur+1)}{r}\pr_{\ub}\phi+\frac{\varrho^2(2\pr_{\ub}r-1)}{r}\pr_u\phi+\frac{\varrho^2-r^2}{r^2}\phi+\frac{\varrho^2\phi^3\zeta(\phi)}{r^2}\\
&&+\varrho^2(\Omega^2-1)\frac{f(\phi)}{r^2}.
\eee
\end{corollary}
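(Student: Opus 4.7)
The plan is to obtain this statement by a direct change of coordinates from $(u,\ub)$ to $(\tau,\varrho)$, applied to the identity proved in the previous corollary. Since $u=\tau-\varrho$ and $\ub=\tau+\varrho$, the chain rule gives $\pr_u=\frac{1}{2}(\pr_\tau-\pr_\varrho)$ and $\pr_{\ub}=\frac{1}{2}(\pr_\tau+\pr_\varrho)$, from which I record the two key identities
$$4\pr_u\pr_{\ub}=\pr_\tau^2-\pr_\varrho^2,\qquad \pr_{\ub}-\pr_u=\pr_\varrho.$$

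Substituting these into the left-hand side of the identity from the previous corollary, the combination $-4\pr_u\pr_{\ub}\phi+\varrho^{-1}(\pr_{\ub}\phi-\pr_u\phi)-\varrho^{-2}\phi$ becomes exactly $\left(-\pr_\tau^2+\pr_\varrho^2+\varrho^{-1}\pr_\varrho-\varrho^{-2}\right)\phi$, which is the operator on the left-hand side of the target statement. I recognize this as the principal part of the equivariant (rotation number one) wave map equation on $2+1$-dimensional Minkowski space, which is the natural benchmark for the semilinear analysis that follows.

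For the right-hand side, I multiply and divide by $\varrho^2$ and absorb the factor of $\varrho^2$ into each of the six source terms of the previous corollary: the first term picks up one factor of $\varrho$ cancelling the $\varrho^{-1}$, producing $-\varrho(\varrho-r)/r\cdot(\pr_{\ub}\phi-\pr_u\phi)$; the second and third terms pick up a full $\varrho^2$; the fourth term retains $(\varrho^2-r^2)/r^2\cdot\phi$ after cancellation with the $\varrho^{-2}$ in its denominator; and the last two terms pick up a bare $\varrho^2$. This reproduces exactly the expression $F$ in the statement, so that the right-hand side reads $F/\varrho^2$.

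There is no real obstacle here, since the statement is pure algebraic bookkeeping. Its purpose, and why I expect it to be useful in subsequent sections, is that it exhibits the equivariant Einstein--wave map equation as a small perturbation of the flat-space equivariant wave map equation; the perturbative smallness of $F$ is then controlled by the already-established quantitative bounds on $r-\varrho$, $\Omega-1$, $2\pr_{\ub}r-1$, $2\pr_ur+1$, and $\phi$ from Lemma \ref{lemma:basicestimate} and Lemma \ref{lemma:basicphiunif}.
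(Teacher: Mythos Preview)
Your proof is correct and follows essentially the same approach as the paper: both rewrite the operator via the chain-rule identities $\pr_u=\tfrac{1}{2}(\pr_\tau-\pr_\varrho)$, $\pr_{\ub}=\tfrac{1}{2}(\pr_\tau+\pr_\varrho)$, so that $-4\pr_u\pr_{\ub}+\varrho^{-1}(\pr_{\ub}-\pr_u)-\varrho^{-2}$ becomes the flat equivariant operator, and then identify $F$ as $\varrho^2$ times the right-hand side of the previous corollary.
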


\begin{proof}
$$\pr_u=\frac{1}{2}(\pr_\tau-\pr_\varrho)\textrm{ and }\pr_{\ub}=\frac{1}{2}(\pr_\tau+\pr_\varrho)$$
and hence
$$\pr_u\pr_{\ub}=\frac{1}{4}(\pr_\tau^2-\pr_{\varrho}^2)\textrm{ and }\frac{1}{\varrho}(\pr_{\ub}-\pr_u)=\frac{1}{\varrho}\pr_\varrho.$$
Thus, we have
\bee
-4\pr_u\pr_{\ub}+\frac{1}{\varrho}(\pr_{\ub}-\pr_u)-\frac{1}{\varrho^2} = -\pr_\tau^2+\pr_{\varrho}^2+\frac{1}{\varrho}\pr_\varrho-\frac{1}{\varrho^2}.
\eee
In view of the previous corollary, this concludes the proof of this corollary.
\end{proof}

\subsection{Set up of the bootstrap procedure}

Let the space-time domain 
$$I_0:=\{\ub\leq 0,\,\, u\leq -1,\,\, \tau\geq -1\}.$$
Let 
$$-1\leq\ubb<0$$
and the space-time domain
$$Q_{\ubb}:=\{-1\leq \ub<\ubb,\,\, -1\leq u<0\},$$
see Figure \ref{fig:Qbregion}. Let 
$$0<\delta<\frac{1}{2}.$$
We make the following bootstrap assumption on $Q_{\ubb}$:
\be\lab{eq:boot1}
\sup_{Q_{\ubb}}r^{1-\delta}|\pr_{\ub}\phi|\leq C.
\ee

\begin{figure}
\centering
\psfrag{ubar=0}{$\ub=0$}
\psfrag{ubar=ubarb}{$\ub=\ub_b$}
\psfrag{Qbar}{$Q_{\ubb}$}
\psfrag{u=-1}{$u=-1$}
\psfrag{tau=-1}{$\tau=-1$}
\psfrag{Io}{$I_0$}
\psfrag{O}{$O$}
\includegraphics[width=2.5in]{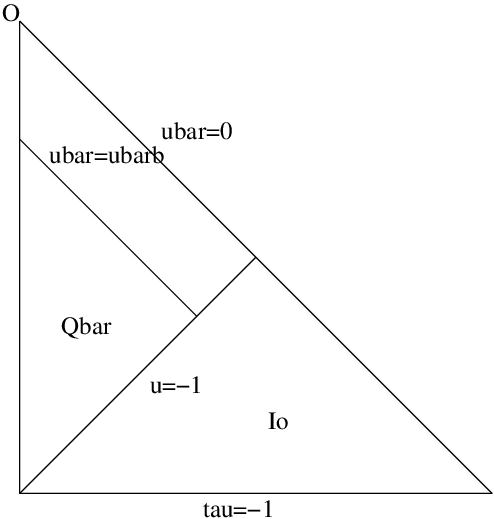}
\caption{The bootstrap region $Q_{\ubb}$}
\label{fig:Qbregion}
\end{figure}

The goal of this section will be to prove that we can improve this bootstrap assumption.
\begin{proposition}\label{prop:boot1}
Assume that 
$$0<\delta<\frac{1}{2}.$$
Then, there exists a universal constant $\underline{C}$ and a constant $C_0$ only depending on the values of the solution in $I_0$ such that for any $-1\leq \ubb<0$, we have
$$\sup_{Q_{\ubb}}r^{1-\delta}|\pr_{\ub}\phi|\leq  \underline{C}(C_0+\ep C).$$
\end{proposition}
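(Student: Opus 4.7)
The plan is to derive a first-order divergence-form identity for the weighted quantity $\varrho^2\pr_\ub\phi$ along the $\pr_u$ direction, integrate it along ingoing null rays, and estimate each source contribution using the preliminary estimates, the flux smallness, and the bootstrap assumption. Starting from the semilinear form of the wave equation obtained in the previous corollary, multiplying through by $\varrho^2$, and using the Leibniz-type identity $\varrho^2\pr_u\pr_\ub\phi = \pr_u(\varrho^2\pr_\ub\phi) + \varrho\pr_\ub\phi$, I arrive at
$$-4\pr_u\bigl[\varrho^2\pr_\ub\phi\bigr] = F + \phi + \varrho\pr_u\phi + 3\varrho\pr_\ub\phi.$$
Since the smoothness of the equivariant solution on the axis $u'=\ub$ (together with $\varrho=0$ there) forces $\varrho^2\pr_\ub\phi|_{u'=\ub}=0$, integrating in $u'$ from $u'=\ub$ down to $u'=u$ yields
$$4\varrho^2\pr_\ub\phi(u,\ub) = \int_u^{\ub}\bigl[F + \phi + \varrho\pr_u\phi + 3\varrho\pr_\ub\phi\bigr](u',\ub)\,du'.$$

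The structural key point is that every piece of $F$ carries an explicit factor of order $\ep$: the metric near-flatness bounds $|r-\varrho|\les\ep\varrho$, $|2\pr_u r+1|\les\ep$, $|2\pr_\ub r-1|\les\ep$, $|(\varrho^2-r^2)/r^2|\les\ep$, $|\Omega^2-1|\les\ep$ from Lemma \ref{lemma:basicestimate}, together with $|\phi|\les\sqrt\ep$ from Lemma \ref{lemma:basicphiunif}. Combining these with the bootstrap \eqref{eq:boot1} on the $\pr_\ub\phi$ factors inside $F$, and handling the $\pr_u\phi$ factors by Cauchy--Schwarz against the flux smallness \eqref{smallboundflux1}, each piece of $\int_u^{\ub}F\,du'$ will be majorized by $\underline C\ep C\,\varrho^{1+\delta}$. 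The standalone integrals $\int_u^{\ub}\phi\,du'$ and $\int_u^{\ub}\varrho\pr_u\phi\,du'$ are treated similarly, using the bootstrap-derived weighted bound $|\phi(u,\ub)|\les(C/\delta)\varrho^\delta$ that follows by integrating the bootstrap along $\pr_\ub$ from $\phi|_\Gamma=0$, and again invoking \eqref{smallboundflux1} with Cauchy--Schwarz.

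The main obstacle is the self-referential term $3\int_u^{\ub}\varrho\pr_\ub\phi\,du'$: a direct application of the bootstrap only bounds it by $\tfrac{3C}{2(1+\delta)}\varrho^{1+\delta}$, and since $\tfrac{3}{2(1+\delta)}>1$ precisely in the regime $\delta<1/2$ we are in, a naive Volterra iteration does not close the improvement. The plan to overcome this is to view the divergence-form identity as a first-order transport ODE in $u$ for $\chi:=\varrho^2\pr_\ub\phi$: writing $3\varrho\pr_\ub\phi = 3\chi/\varrho$ leads to
$$\pr_u\chi + \frac{3}{4\varrho}\chi = -\tfrac{1}{4}\bigl[F + \phi + \varrho\pr_u\phi\bigr],$$
which can be solved via the explicit integrating factor $(2\varrho)^{-3/2}$. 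Initializing from the initial slice $u_0=-1$ rather than from the axis then produces a closed representation
$$\chi(u,\ub) = \Bigl(\frac{\varrho(u,\ub)}{\varrho(u_0,\ub)}\Bigr)^{3/2}\chi(u_0,\ub) - \tfrac{1}{4}(2\varrho)^{3/2}\int_{u_0}^{u}(2\varrho(u'))^{-3/2}\bigl[F+\phi+\varrho\pr_u\phi\bigr](u',\ub)\,du',$$
free of self-reference. The data term decays like $\varrho^{3/2}$, and $\varrho^{3/2}\les\varrho^{1+\delta}$ thanks to the hypothesis $\delta<1/2$, which yields the $C_0$ part of the advertised bound. The bulk integral must then be estimated by pairing the singular weight $(2\varrho(u'))^{-3/2}$ against the $\ep$-smallness of the source and the bootstrap-controlled decay of $\phi$ and $\pr\phi$, in such a way that the resulting contribution is controlled by $\underline C\ep C\,\varrho^{1+\delta}$. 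Carrying out this weighted balancing---verifying carefully that the singular integrating factor integrates against the suitably decaying source to produce exactly the $\ep C\,\varrho^{1+\delta}$ scaling---is where I expect the real technical difficulty to lie, and it is precisely the outgoing/ingoing null structure advertised in the introduction of the paper that makes it possible.
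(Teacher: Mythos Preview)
Your integrating-factor step is fine and is in fact equivalent to the paper's Lemma~\ref{lemma:eqthetaandxi}: writing $\chi=\varrho^2\pr_\ub\phi$ and multiplying by $\varrho^{-3/2}$ gives precisely $\varrho^{-3/2}\chi=\sqrt{\varrho}\,\pr_\ub\phi$, which is the $\varrho$-version of the paper's quantity $\Theta/\sqrt{r}$. So you have correctly identified the transport structure, and your source is (up to $r$ versus $\varrho$) the same as the paper's.

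The genuine gap is in how you estimate the source. After the integrating factor, the bulk is $\int_{u_0}^u \varrho(u')^{-3/2}\bigl[F+\phi+\varrho\pr_u\phi\bigr]\,du'$. You correctly note that every piece of $F$ carries an explicit $\ep$, but the standalone $\phi$ term does \emph{not}: it is the linear potential $\phi/\varrho^2$, with coefficient $1$. Using only the bootstrap-derived bound $|\phi|\les C\varrho^\delta$, the contribution is
\[
\int_{u_0}^u \varrho^{-3/2}|\phi|\,du' \;\les\; C\int_{u_0}^u\varrho^{\delta-3/2}\,du' \;\sim\; \frac{C}{\tfrac12-\delta}\,\varrho(u,\ub)^{\delta-1/2},
\]
since $\delta-3/2<-1$. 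This returns $\varrho^{1-\delta}|\pr_\ub\phi|\les C/(\tfrac12-\delta)$ with \emph{no} $\ep$ factor, so the bootstrap does not improve. The same issue recurs for $\varrho\pr_u\phi$: your proposed Cauchy--Schwarz against the flux gives $\sqrt{\ep}\,\varrho^{-1/2}$, hence $\varrho^{1-\delta}|\pr_\ub\phi|\les \sqrt{\ep}\,\varrho^{-\delta}$, which blows up. Integrating this term by parts instead converts it into another $\phi$ integral, so again everything hinges on the $\phi$ bound.

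What is missing is a \emph{sharper} pointwise bound on $\phi$ that separates a data piece from a small nonlinear piece. The paper obtains this in Lemma~\ref{lemma:improvedunifboundphi} \emph{before} running your transport argument: via a Duhamel representation for the linear equation $(-\pr_\tau^2+\pr_\varrho^2+\varrho^{-1}\pr_\varrho-\varrho^{-2})\phi=F/\varrho^2$ with the explicit kernel $K(\mu)$ (Lemma~\ref{lemma:repformulaphiwave}), after integrating the $\pr_u\phi$ terms inside $F$ by parts against the kernel, one gets
\[
|\phi(\tau,\varrho)|\;\les\; C_0\sqrt{\varrho}+\ep C\,\varrho^\delta.
\]
Feeding this into the same transport integral now yields the desired split: the $C_0\sqrt{\varrho}$ piece contributes $C_0\varrho^{1/2-\delta}|\log\varrho|\les C_0$ (using $\delta<1/2$), and the $\ep C\varrho^\delta$ piece contributes $\ep C$. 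Without this intermediate step, the argument cannot close.
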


\begin{remark}
The constant $\underline{C}$ in Proposition \ref{prop:boot1} depends on $\delta$ such that $0<\delta<1/2$. In particular, it degenerates as $\delta\to 0$ of $\delta\to 1/2$. We will use the improved estimate of Proposition \ref{prop:boot1} at two places in the proof of Theorem \ref{th:smallenergyglobalex}
\begin{itemize}
\item In Lemma \ref{lemma:refinedboundsphietal}, where we apply Proposition \ref{prop:boot1} with any fixed $\delta$ such that 
$$\frac{1}{6}<\delta<\frac{1}{2}.$$

\item In Proposition \ref{prop:unifupperboundv}, where we apply Proposition \ref{prop:boot1} with any fixed $\delta$ such that 
$$\frac{1}{3}<\delta<\frac{1}{2}.$$
\end{itemize}
Thus, we could replace $\delta$ for instance with $5/12$ in the statement of Proposition \ref{prop:boot1}. To make the proof easier to follow, we choose to do it with a general $\delta$, but do not mention the dependence of various constants on $\delta$ since one should think  of a $\delta$ fixed once and for all, e.g. $5/12$. 
\end{remark}

\begin{remark}
The constant $C_0$ appearing in Proposition \ref{prop:boot1} denotes the supremum of the (finitely many) norms on $I_0$ of the solution\footnote{Recall from the assumptions of Theorem \ref{th:smallenergyglobalex} that the solution $(\gg,\Phi)$ is regular in the space-time region
$$\{-1\leq\tau<0\}\cap \{\ub\leq 0\}$$
and hence on the compact region $I_0$. Thus, $C_0$ is a finite constant.} appearing in the proof of Proposition \ref{prop:boot1} below. These norms are not controlled by the energy, and could be in particular arbitrary large compared to $\ep^{-1}$. It is thus crucial that the constant in front of $C$ in the statement of Proposition \ref{prop:boot1}, i.e. $\underline{C}\ep$, does not depend on $C_0$. This will allow us to improve on our bootstrap assumption \eqref{eq:boot1} in Corollary \ref{cor:consequenceboot1} by choosing $\ep$ sufficiently small compared to the universal constant $\underline{C}$.
\end{remark}

\begin{remark}\lab{rem:nocontrolpruphi}
In order to prove Proposition \ref{prop:boot1}, we will first obtain an improved bound for $\phi$ using a representation formula for the wave equation (see Lemma \ref{lemma:improvedunifboundphi}). Then, we infer an improved bound for $\pr_{\ub}\phi$ using Lemma \ref{lemma:eqthetaandxi}. Note that we can not infer a improved bound for $\pr_u\phi$ in this way (see Remark \ref{rem:howtouselemmathetaxi}). This explains why we only have a bootstrap assumption for $\pr_{\ub}\phi$ in Proposition \ref{prop:boot1}, while the terms $\pr_u\phi$ will have to be integrated by parts (see Remark \ref{rem:integrationbypartsuexplained}).
\end{remark}

\begin{remark}
The non-concentration of energy is used in two crucial places in the proof of Theorem \ref{th:smallenergyglobalex}. One chooses $\ep>0$ small enough 
\begin{itemize}
\item In Corollary \ref{cor:consequenceboot1} to improve the bootstrap assumption \eqref{eq:boot1} thanks to Proposition \ref{prop:boot1}.

\item In Proposition \ref{prop:unifupperboundv} in order to exploit the estimate of Corollary \ref{cor:wilallowtoconclude}.
\end{itemize}
\end{remark}

\subsection{First consequences of the bootstrap assumptions}

\begin{lemma}\lab{lemma:consequencebootass}
We have
$$\sup_{Q_{\ubb}}r^{-\delta}|\phi|\les C.$$
and
$$\sup_{Q_{\ubb}}r^{-2\delta}\left(\frac{|r-\varrho|}{\varrho}+\left|\pr_ur+\frac{1}{2}\right|+\left|\pr_{\ub}r-\frac{1}{2}\right|+|\Omega-1|\right)\les C^2.$$
\end{lemma}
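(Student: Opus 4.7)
The plan is to chain four integrations, using the bootstrap hypothesis \eqref{eq:boot1} together with the Einstein equations in $(u,\ub)$ coordinates and the normalization $r=0,\ \pr_u r=-1/2,\ \pr_{\ub}r=1/2,\ \Omega=1$ on $\Gamma$. Throughout, I would freely use Lemma \ref{lemma:basicestimate} which gives the crude bounds $r\sim\varrho$ and $\Omega\sim 1$, and the fact that $g$ is odd with $g'(0)=1$, so $|g(\phi)|\les |\phi|$ for $\phi$ bounded (which is guaranteed by Lemma \ref{lemma:basicphiunif}).

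\textbf{Step 1 ($\phi$-bound).} Since $\phi$ vanishes on $\Gamma$ (where $u=\ub$), I would integrate $\pr_{\ub}\phi$ at fixed $u$ from $\ub'=u$ to $\ub'=\ub$. The bootstrap \eqref{eq:boot1} together with $r(u,\ub')\sim \varrho(u,\ub')=(\ub'-u)/2$ gives
$$|\phi(u,\ub)| \le \int_u^{\ub}|\pr_{\ub}\phi|(u,\ub')\, d\ub' \les C\int_u^{\ub}(\ub'-u)^{\delta-1}\, d\ub' \les C(\ub-u)^{\delta}\les Cr^{\delta},$$
where convergence of the integral at $\ub'=u$ uses $\delta>0$.

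\textbf{Step 2 ($r$-bounds).} From the Einstein equation $\pr_u\pr_{\ub}r = \frac{\kappa}{4}\Omega^2 g(\phi)^2/r$, together with $|g(\phi)|\les|\phi|\les Cr^{\delta}$ and $\Omega\sim 1$, I would obtain
$$|\pr_u\pr_{\ub}r|\les C^2 r^{2\delta-1}.$$
Since $\pr_u\pr_{\ub}\varrho=0$, this is also the bound on $\pr_u\pr_{\ub}(r-\varrho)$. Using $\pr_{\ub}(r-\varrho)|_{\Gamma}=0$, integration at fixed $\ub$ from $u'=\ub$ down to $u'=u$ yields (exactly as in Step 1, and using $2\delta<1$ for integrability)
$$\left|\pr_{\ub}r-\tfrac{1}{2}\right|(u,\ub)\les C^2\int_u^{\ub}(\ub-u')^{2\delta-1}\, du'\les C^2 r^{2\delta}.$$
The bound on $|\pr_u r+\tfrac{1}{2}|$ follows by symmetric integration in $\ub$ at fixed $u$. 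A second integration of $\pr_u(r-\varrho)=\pr_ur+\tfrac{1}{2}$ from the axis then gives $|r-\varrho|\les C^2 \varrho^{2\delta+1}$, i.e.\ $|r-\varrho|/\varrho\les C^2 r^{2\delta}$.

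\textbf{Step 3 ($\Omega$-bound).} As noted in Remark \ref{rem:nocontrolpruphi}, the bootstrap only controls $\pr_{\ub}\phi$, so to obtain pointwise control of $\Omega-1$ I would use the $\pr_{\ub}$-Raychaudhuri equation
$$\pr_{\ub}(\Omega^{-2}\pr_{\ub}r) = -\Omega^{-2}r\kappa(\pr_{\ub}\phi)^2,$$
rather than its $\pr_u$-analog. Since $\Omega^{-2}\pr_{\ub}r=\tfrac{1}{2}$ on $\Gamma$, integration at fixed $u$ from $\ub'=u$ to $\ub'=\ub$, together with $\Omega\sim 1$, $r\sim\varrho$ and the bootstrap $(\pr_{\ub}\phi)^2\les C^2 r^{2\delta-2}$, yields
$$\left|\Omega^{-2}\pr_{\ub}r-\tfrac{1}{2}\right|\les C^2\int_u^{\ub}r^{2\delta-1}\, d\ub'\les C^2 r^{2\delta}.$$
Finally, writing
$$\Omega^{-2}\pr_{\ub}r-\tfrac{1}{2}=\Omega^{-2}(\pr_{\ub}r-\tfrac{1}{2})+\tfrac{1}{2}(\Omega^{-2}-1),$$
and inserting the bound from Step 2, I extract $|\Omega^{-2}-1|\les C^2 r^{2\delta}$, hence $|\Omega-1|\les C^2 r^{2\delta}$ since $\Omega\sim 1$.

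The only subtle point is the choice to integrate the $\pr_{\ub}$-Raychaudhuri equation rather than its $\pr_u$-counterpart: the latter would only yield $|\Omega-1|\les\ep$ via the energy flux, which is insufficient near $r=0$. Using the $\pr_{\ub}$ equation leverages the pointwise $\pr_{\ub}\phi$ bootstrap and is what produces the correct $r^{2\delta}$ weight at the axis.
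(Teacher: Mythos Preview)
Your proof is correct and follows essentially the same approach as the paper: integrate $\pr_{\ub}\phi$ from the axis using the bootstrap for the $\phi$-bound, use the equation $\pr_u\pr_{\ub}r=\kappa\Omega^2 g(\phi)^2/(4r)$ together with the $\phi$-bound for the $r$-estimates, and then the $\pr_{\ub}$-Raychaudhuri equation $\pr_{\ub}(\Omega^{-2}\pr_{\ub}r)=-\Omega^{-2}r\kappa(\pr_{\ub}\phi)^2$ for the $\Omega$-bound. A minor remark: the integrability of $(\ub-u')^{2\delta-1}$ near the axis uses $\delta>0$ rather than $2\delta<1$, but this does not affect the argument.
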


\begin{proof}
We start with $\phi$. We have
$$\phi(u,\ub)=\int_u^{\ub}\pr_{\ub}\phi(u,\sigma)d\sigma,$$
and hence using the bootstrap assumption \eqref{eq:boot1}
\bee
|\phi(u,\ub)|&\leq&\int_u^{\ub}|\pr_{\ub}\phi(u,\sigma)|d\sigma\\
&\leq&  C\int_u^{\ub}r(u,\sigma)^{\delta-1}d\sigma\\
&\les&  C\int_u^{\ub}(\sigma-u)^{\delta-1}d\sigma\\
&\les& C(\ub-u)^\delta\\
&\les & Cr(u,\ub)^\delta,
\eee
where we used the fact that $\delta>0$.

Next, recall that 
$$\pr_u\pr_{\ub}r=r\kappa \frac{\Omega^2}{4}\frac{g(\phi)^2}{r^2}.$$
We infer
\bee
\left|\pr_ur+\frac{1}{2}\right| &\leq & \int_u^{\ub}|\pr_{\ub}\pr_ur|(u,\sigma)d\sigma\\
&\les& \int_u^{\ub}\frac{g(\phi)^2}{r}(u,\sigma)d\sigma\\
&\les& C^2\int_u^{\ub}\frac{\phi^2}{r}(u,\sigma)d\sigma\\
&\les& C^2\int_u^{\ub}r(u,\sigma)^{2\delta-1}d\sigma\\
&\les& C^2\int_u^{\ub}(\sigma-u)^{2\delta-1}d\sigma\\
&\les& C^2(\ub-u)^{2\delta}\\
&\les& C^2r(u,\ub)^{2\delta},
\eee
where we used the fact that $\delta>0$ and the previous bound on $\phi$.

Similarly, we have
\bee
\left|\pr_{\ub}r-\frac{1}{2}\right| &\leq & \int_u^{\ub}|\pr_{\ub}\pr_ur|(\sigma,\ub)d\sigma\\
&\les& C^2r(u,\ub)^{2\delta}.
\eee

Next, we consider the bound for $r-\varrho$. We have
\bee
|r-\varrho| &\leq& \int_u^{\ub}\left|\pr_{\ub}r-\frac{1}{2}\right|(u,\sigma)d\sigma\\
&\les& C^2 \int_u^{\ub}r(u,\ub)^{2\delta}(u,\sigma)\sigma\\
&\les& C^2\int_u^{\ub}(\sigma-u)^{2\delta}\sigma\\
&\les& C^2(\ub-u)^{2\delta+1}\\
&\les& C^2r(u, \ub)^{2\delta+1}.
\eee

Finally, we consider $\Omega$. We have
$$\pr_{\ub}(\Omega^{-2}\pr_{\ub}r)=-\Omega^{-2}r\kappa (\pr_{\ub}\phi)^2.$$
This yields
\bee
\left|\Omega^{-2}\pr_{\ub}r-\frac{1}{2}\right| &\leq & \int_u^{\ub}|\pr_{\ub}(\Omega^{-2}\pr_{\ub}r)|(u,\sigma)d\sigma\\
&\les & \int_u^{\ub}r(\pr_{\ub}\phi)^2(u,\sigma)d\sigma\\
&\les & C^2\int_u^{\ub}r(u,\sigma)^{2\delta-1}d\sigma\\
&\les& C^2r(u,\ub)^{2\delta}.
\eee
We infer
\bee
|\Omega^{-2}-1| &\les & \left|\Omega^{-2}\pr_{\ub}r-\pr_{\ub}r\right|\\
&\les& \left|\Omega^{-2}\pr_{\ub}r-\frac{1}{2}\right|+\left|\pr_{\ub}r-\frac{1}{2}\right|\\
&\les& C^2r(u,\ub)^{2\delta}.
\eee
This concludes the proof of the lemma.
\end{proof}

\subsection{An improved uniform bound for $\phi$}

Here we derive an improved uniform bound for $r^{-\delta}\phi$ relying on an explicit representation formula for the flat wave equation. Our approach is inspired by \cite{chris_tah1} (see also \cite{jal_tah1} for a similar approach).

\begin{lemma}\lab{lemma:structurewaveeqphi}
We have
$$\left(-\pr_\tau^2+\pr_{\varrho}^2+\frac{1}{\varrho}\pr_\varrho-\frac{1}{\varrho^2}\right)\phi=\pr_u\left(\frac{F_1}{\varrho}\right)+\frac{F_2}{\varrho^2},$$
where
\bee
F_1 &=& \frac{\varrho-r+\varrho(2\pr_{\ub}r-1)}{r}\phi,
\eee
and
\bee
F_2 &=&  -\frac{\varrho(\varrho-r+\varrho(2\pr_ur+1))}{r}\pr_{\ub}\phi+\frac{\frac{1}{2}(\varrho+r)(\varrho-r)+\frac{\varrho^2}{2}(2\pr_ur+1)+\varrho^2(2\pr_{\ub}r-1)\pr_ur}{r^2}\phi\\
&&- \frac{\kappa\varrho^2\Omega^2g(\phi)^2\phi}{2r^2}
+\frac{\varrho^2\phi^3\zeta(\phi)}{r^2}+\varrho^2(\Omega^2-1)\frac{f(\phi)}{r^2}.
\eee
\end{lemma}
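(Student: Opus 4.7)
The plan is to start from the identity derived in the preceding corollary,
$$
\left(-\pr_\tau^2+\pr_\varrho^2+\frac{1}{\varrho}\pr_\varrho-\frac{1}{\varrho^2}\right)\phi=\frac{F}{\varrho^2},
$$
and to rearrange $F/\varrho^2$ so that the two terms proportional to $\pr_u\phi$ are absorbed into a single total $u$-derivative $\pr_u(F_1/\varrho)$. Collecting the $\pr_u\phi$-coefficient of $F$ gives
$$
\frac{\varrho(\varrho-r)}{r}+\frac{\varrho^2(2\pr_{\ub}r-1)}{r}=\frac{\varrho(\varrho-r+\varrho(2\pr_{\ub}r-1))}{r}=\frac{\varrho F_1}{\phi},
$$
so that, dividing by $\varrho^2$ and applying the Leibniz rule,
$$
\frac{F_1}{\phi\varrho}\,\pr_u\phi=\pr_u\!\left(\frac{F_1}{\varrho}\right)-\phi\,\pr_u\!\left(\frac{F_1}{\phi\varrho}\right).
$$
The first term on the right produces the advertised $\pr_u(F_1/\varrho)$, while the second must be combined with the remaining pieces of $F/\varrho^2$ to recover $F_2/\varrho^2$.

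Next I would compute $\pr_u(F_1/(\phi\varrho))$ explicitly. The key algebraic simplification is
$$
\frac{F_1}{\phi\varrho}=\frac{2\pr_{\ub}r}{r}-\frac{1}{\varrho},
$$
so that, using $\pr_u\varrho=-\tfrac{1}{2}$ and the Einstein equation $\pr_u\pr_{\ub}r=\kappa\Omega^2 g(\phi)^2/(4r)$ from the system \eqref{ewmequations}, one obtains a closed expression for $\pr_u(F_1/(\phi\varrho))$ in terms of $r,\varrho,\pr_u r,\pr_{\ub}r,\Omega$ and $g(\phi)^2$ only. When multiplied by $-\phi$, the contribution coming from $\pr_u\pr_{\ub}r$ yields exactly the Einstein-coupling term $-\kappa\varrho^2\Omega^2 g(\phi)^2\phi/(2r^2)$ appearing in the definition of $F_2$.

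What remains is algebraic bookkeeping. The $\pr_{\ub}\phi$-part of $F/\varrho^2$ passes directly into $F_2/\varrho^2$, since $\pr_u(F_1/\varrho)$ contains no $\pr_{\ub}\phi$. The linear-in-$\phi$ coefficient of $F_2$ is assembled from the term $(\varrho^2-r^2)\phi/(r^2\varrho^2)$ appearing in $F/\varrho^2$ together with the remaining pieces of $-\phi\,\pr_u(F_1/(\phi\varrho))$ (namely the ones involving $\pr_{\ub}r\,\pr_ur/r^2$ and $1/\varrho^2$). Finally, the cubic correction $\varrho^2\phi^3\zeta(\phi)/r^2$ and the conformal-defect term $\varrho^2(\Omega^2-1)f(\phi)/r^2$ are transferred unchanged from $F$ to $F_2$. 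Reducing everything to the common denominator $r^2$ and simplifying yields the stated formula for $F_2$. The main obstacle is purely computational, namely the careful tracking of several rational expressions in $r,\varrho,\pr_u r,\pr_{\ub}r$; no analytic ingredient is required beyond the Leibniz rule and the Einstein equation for $\pr_u\pr_{\ub}r$.
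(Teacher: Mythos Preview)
Your approach is correct and follows the same idea as the paper: integrate by parts the two $\pr_u\phi$ terms in $F$ to create the total derivative $\pr_u(F_1/\varrho)$, use the Einstein equation $\pr_u\pr_{\ub}r=\kappa\Omega^2 g(\phi)^2/(4r)$ to rewrite the resulting $\pr_u\pr_{\ub}r$ contribution, and collect the remainder as $F_2/\varrho^2$. The paper carries this out at the level of $F$ (writing $F=\pr_u(\varrho F_1)+\text{rest}$ and then passing to $F/\varrho^2$), whereas you work directly with $F/\varrho^2$; your observation that $F_1/(\phi\varrho)=2\pr_{\ub}r/r-1/\varrho$ is a pleasant computational shortcut the paper does not make explicit, but the substance is identical. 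One cosmetic remark: the equation $\pr_u\pr_{\ub}r=\kappa\Omega^2 g(\phi)^2/(4r)$ is the null-coordinate Einstein equation \eqref{eq:Tuv}, not the $(t,r)$ system \eqref{ewmequations}.
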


\begin{remark}\lab{rem:integrationbypartsuexplained}
Since we have no control over $\pr_u\phi$ (see Remark \ref{rem:nocontrolpruphi}), we need to integrate the terms involving $\pr_u\phi$ by parts. This results in the term $\pr_u(F_1/\varrho)$ in the statement of Lemma \ref{lemma:structurewaveeqphi}. The fact that this integration by parts is possible is a consequence of the following two observations
\begin{itemize}
\item We are able to estimate $F_2$ (see Lemma \ref{lemma:estF1F2}), which itself is a consequence of the null structure of the problem. 

\item We are able to control the $u$ derivative of the kernel $K$ of the representation formula for the wave equation of Lemma \ref{lemma:repformulaphiwave}. To achieve this, the crucial estimate is the one of Lemma \ref{lemma:upperboundprumu}.
\end{itemize}
\end{remark}

\begin{proof}
Recall that 
$$\left(-\pr_\tau^2+\pr_{\varrho}^2+\frac{1}{\varrho}\pr_\varrho-\frac{1}{\varrho^2}\right)\phi=\frac{F}{\varrho^2}$$
where
\bee
F &=& -\frac{\varrho(\varrho-r)}{r}\pr_{\varrho}\phi +\frac{\varrho^2(2\pr_ur+1)}{r}\pr_{\ub}\phi+\frac{\varrho^2(2\pr_{\ub}r-1)}{r}\pr_u\phi+\frac{\varrho^2-r^2}{r^2}\phi+\frac{\varrho^2\phi^3\zeta(\phi)}{r^2}\\
&&+\varrho^2(\Omega^2-1)\frac{f(\phi)}{r^2}.
\eee
We rewrite $F$ as
\bee
F &=& -\frac{\varrho(\varrho-r)}{r}\pr_{\ub}\phi+\pr_u\left(\frac{\varrho(\varrho-r)}{r}\phi\right)-\pr_u\left(\frac{\varrho(\varrho-r)}{r}\right)\phi
+\frac{\varrho^2(2\pr_ur+1)}{r}\pr_{\ub}\phi\\
&&+\pr_u\left(\frac{\varrho^2(2\pr_{\ub}r-1)}{r}\phi\right)-\pr_u\left(\frac{\varrho^2(2\pr_{\ub}r-1)}{r}\right)\phi
+\frac{\varrho^2-r^2}{r^2}\phi+\frac{\varrho^2\phi^3\zeta(\phi)}{r^2}\\
&&+\varrho^2(\Omega^2-1)\frac{f(\phi)}{r^2}\\
&=&\pr_u\left(\frac{\varrho(\varrho-r)}{r}\phi+\frac{\varrho^2(2\pr_{\ub}r-1)}{r}\phi\right)-\frac{\varrho(\varrho-r)}{r}\pr_{\ub}\phi-\frac{(r-\varrho)^2-\varrho^2(2\pr_ur+1)}{2r^2}\phi\\
&&+\frac{\varrho^2(2\pr_ur+1)}{r}\pr_{\ub}\phi+\frac{\varrho(2\pr_{\ub}r-1)(\varrho\pr_ur+r)-2\varrho^2r\pr_u\pr_{\ub}r}{r^2}\phi+\frac{\varrho^2-r^2}{r^2}\phi+\frac{\varrho^2\phi^3\zeta(\phi)}{r^2}\\
&&+\varrho^2(\Omega^2-1)\frac{f(\phi)}{r^2}\\
&=&\pr_u\left(\frac{\varrho(\varrho-r+\varrho(2\pr_{\ub}r-1))}{r}\phi\right)-\frac{\varrho(\varrho-r+\varrho(2\pr_ur+1))}{r}\pr_{\ub}\phi\\
&&+\frac{\frac{1}{2}(\varrho+3r)(\varrho-r)+\frac{\varrho^2}{2}(2\pr_ur+1)+\varrho(2\pr_{\ub}r-1)(\varrho\pr_ur+r)}{r^2}\phi- \frac{\kappa\varrho^2\Omega^2g(\phi)^2\phi}{2r^2}
+\frac{\varrho^2\phi^3\zeta(\phi)}{r^2}\\
&&+\varrho^2(\Omega^2-1)\frac{f(\phi)}{r^2}.
\eee
This yields
\bee
\frac{F}{\varrho^2} &=& \pr_u\left(\frac{F_1}{\varrho}\right)+\frac{F_2}{\varrho^2},
\eee
where
\bee
F_1 &=& \frac{\varrho-r+\varrho(2\pr_{\ub}r-1)}{r}\phi,
\eee
and
\bee
F_2 &=&  -\frac{\varrho(\varrho-r+\varrho(2\pr_ur+1))}{r}\pr_{\ub}\phi+\frac{\frac{1}{2}(\varrho+r)(\varrho-r)+\frac{\varrho^2}{2}(2\pr_ur+1)+\varrho^2(2\pr_{\ub}r-1)\pr_ur}{r^2}\phi\\
&&- \frac{\kappa\varrho^2\Omega^2g(\phi)^2\phi}{2r^2}
+\frac{\varrho^2\phi^3\zeta(\phi)}{r^2}+\varrho^2(\Omega^2-1)\frac{f(\phi)}{r^2}.
\eee
This concludes the proof of the lemma.
\end{proof}

\begin{lemma}\lab{lemma:estF1F2}
We have
$$\sup_{Q_{\ubb}}r^{-\delta}(|F_1|+|F_2|)\les C\ep.$$
\end{lemma}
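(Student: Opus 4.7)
The plan is to estimate each term in $F_1$ and $F_2$ by combining two sources of smallness: the uniform $\varepsilon$-smallness from the basic estimates of Lemma \ref{lemma:basicestimate} (which uses only the smallness of the energy flux and does \emph{not} involve the bootstrap constant $C$), and the $r$-weighted smallness of $\phi$ and $\partial_{\ub}\phi$ coming from the bootstrap assumption \eqref{eq:boot1} and Lemma \ref{lemma:consequencebootass}. The key heuristic is that every factor of the form $\varrho-r$, $\Omega-1$, $2\partial_u r +1$, $2\partial_{\ub}r-1$ supplies a factor of $\varepsilon$, while each factor of $\phi$ or $\varrho\,\partial_{\ub}\phi$ supplies a factor of $Cr^\delta$.

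First I would estimate $F_1$. Factoring out $\phi$, the coefficient is $(\varrho-r+\varrho(2\partial_{\ub}r-1))/r$. Using $|r-\varrho|\lesssim\varepsilon\varrho$ and $|2\partial_{\ub}r-1|\lesssim \varepsilon$ from Lemma \ref{lemma:basicestimate}, together with $r\sim\varrho$, this coefficient is bounded by $\varepsilon$. Multiplying by $|\phi|\lesssim Cr^\delta$ from Lemma \ref{lemma:consequencebootass} yields $|F_1|\lesssim C\varepsilon\, r^\delta$.

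Next I would treat $F_2$ term by term. For the first term $-\varrho(\varrho-r+\varrho(2\partial_ur+1))r^{-1}\partial_{\ub}\phi$, the same extraction as above gives a coefficient bounded by $\varepsilon\varrho$; combining with the bootstrap assumption $|\partial_{\ub}\phi|\le Cr^{\delta-1}$ produces $C\varepsilon r^\delta$. For the second (linear-in-$\phi$) term, each of the three summands in the numerator is controlled by $\varepsilon r^2$ using Lemma \ref{lemma:basicestimate} (and $|\partial_u r|\lesssim 1$), so the whole expression is $\lesssim \varepsilon |\phi|\lesssim C\varepsilon r^\delta$. For the last term $\varrho^2(\Omega^2-1)f(\phi)/r^2$, the factor $|\Omega-1|\lesssim\varepsilon$ and $|f(\phi)|\lesssim|\phi|\lesssim Cr^\delta$ give the required bound.

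The main (and only slightly subtle) step is the cubic terms $-\kappa\varrho^2\Omega^2 g(\phi)^2\phi/(2r^2)$ and $\varrho^2\phi^3\zeta(\phi)/r^2$. Since $g(\phi)=\phi+O(\phi^3)$ and $\zeta$ is bounded on the range of $\phi$, both are controlled pointwise by $|\phi|^3$ (up to a universal factor, using $|\phi|\lesssim\sqrt{\varepsilon}$ from Lemma \ref{lemma:basicphiunif} to bound the higher-order remainders). To estimate $|\phi|^3$ I would interpolate between $|\phi|\le C r^\delta$ and $|\phi|\le c\sqrt{\varepsilon}$: splitting according to whether $Cr^\delta\le \sqrt{\varepsilon}$ or not, in the first case $|\phi|^3\le C^3 r^{3\delta}=C r^\delta\cdot C^2 r^{2\delta}\le C\varepsilon r^\delta$, and in the second case $|\phi|^3\le \varepsilon^{3/2}\le \varepsilon\cdot Cr^\delta$. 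Either way $|\phi|^3\lesssim C\varepsilon r^\delta$, which closes the estimate.

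Summing the six contributions gives $\sup_{Q_{\ubb}}r^{-\delta}(|F_1|+|F_2|)\lesssim C\varepsilon$, as claimed. The only real work is the interpolation argument for the cubic terms; all other bounds are direct bookkeeping using Lemmas \ref{lemma:basicphiunif}, \ref{lemma:basicestimate}, \ref{lemma:consequencebootass} and the bootstrap assumption \eqref{eq:boot1}.
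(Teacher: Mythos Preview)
Your proof is correct and follows exactly the approach the paper intends: the paper's own proof simply declares the estimate an immediate consequence of Lemmas \ref{lemma:basicphiunif}, \ref{lemma:basicestimate}, \ref{lemma:consequencebootass} and the bootstrap assumption \eqref{eq:boot1}, and you have merely spelled out the term-by-term bookkeeping. Your interpolation for the cubic terms is more elaborate than necessary, though: since $|\phi|\lesssim\sqrt{\ep}$ and $|\phi|\lesssim Cr^\delta$, one obtains directly $|\phi|^3\le|\phi|^2\,|\phi|\lesssim \ep\cdot Cr^\delta$ without any case splitting.
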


\begin{proof}
This is an immediate consequence of Lemma \ref{lemma:basicphiunif}, Lemma \ref{lemma:basicestimate}, the bootstrap assumption  \eqref{eq:boot1} and Lemma \ref{lemma:consequencebootass}, as well as the definition of $F_1$ and $F_2$. 
\end{proof}

\begin{lemma}\lab{lemma:repformulaphiwave}
For $(\tau, \varrho)$ such that $\tau+\varrho\leq 0$ and $-1\leq \tau<0$, we have
\bee
\phi(\tau, \varrho) &=& \phi_0(\tau,\varrho)+\frac{K(-1)}{\sqrt{\varrho}}\int_{\tau-\varrho}^{\tau+\varrho}\frac{F_1\left(\frac{\tau-\varrho+\ub'}{2},\frac{-\tau+\varrho+\ub'}{2}\right)}{\sqrt{\frac{-\tau+\varrho+\ub'}{2}}}  d\ub' -\frac{1}{2}\sqrt{\varrho}\int_0^{+\infty}\frac{F_1(-1, \la)\sqrt{\la}}{\mu\varrho+\la}K(\mu) d\mu\\
&&-\frac{1}{2\sqrt{\varrho}}\int_{\sqrt{(\tau+1)^2-\varrho^2}}^{\tau+\varrho+1}K(\mu)\frac{F_1(-1, \la)}{\sqrt{\la}} d\la\\
&&+\int_{-1}^{+\infty}\int_0^{\la^*}\frac{\sqrt{\varrho}}{\sqrt{\la}\sqrt{\varrho^2+\la^2+2\varrho\la\mu}}K(\mu)\left(\frac{1}{4}F_1(\sigma, \la)+F_2(\sigma, \la)\right)d\la d\mu\\
&&-\int_{-1}^{+\infty}\int_0^{\la^*}\frac{\sqrt{\varrho}}{\sqrt{\la}\sqrt{\varrho^2+\la^2+2\varrho\la\mu}}\la\pr_u\mu\, K'(\mu)F_1(\sigma, \la)d\la d\mu,
\eee
where $\phi_0$ denotes the solution to the homogeneous equation with the same initial conditions as $\phi$ at $\tau=-1$, $\mu$ is given by
$$\mu=\frac{(\tau-\sigma)^2-\varrho^2-\la^2}{2\varrho\la},$$
$\la^*$ is given by
$$\la^*=\sqrt{(1+\tau)^2+(\mu^2-1)\varrho^2}-\mu\varrho,$$
and $K$ is given by
$$K(\mu)=\int_{\max(-\mu,-1)}^1\frac{xdx}{\sqrt{1-x^2}\sqrt{\mu+x}}.$$
\end{lemma}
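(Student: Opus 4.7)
The operator on the left-hand side is the $2+1$ dimensional Minkowski wave operator restricted to the rotation-number-one axisymmetric mode. Extending $\phi(\tau,\varrho)$ to $\Psi(\tau,x):=\phi(\tau,|x|)\,e^{i\theta(x)}$ on $\Re_\tau\times\Re^2_x$ (with $\theta(x)$ the polar angle of $x$), a direct computation using $\Delta_{\Re^2}=\pr_\varrho^2+\varrho^{-1}\pr_\varrho+\varrho^{-2}\pr_\theta^2$ gives
\[
\square_\eta\Psi \;=\; \left(\pr_u\!\left(\frac{F_1}{\varrho}\right)+\frac{F_2}{\varrho^2}\right)e^{i\theta}, \qquad \square_\eta=-\pr_\tau^2+\Delta_{\Re^2}.
\]
The plan is to apply the classical 2D Duhamel formula to $\Psi$ with initial data on $\{\tau=-1\}$ and then extract the rotation-one mode by angular integration. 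The homogeneous contribution yields exactly the term $\phi_0(\tau,\varrho)$.

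For the inhomogeneous part, choosing $x=(\varrho,0)$ and $y=(\la\cos\theta_y,\la\sin\theta_y)$, one computes $(\tau-\sigma)^2-|y-x|^2=2\varrho\la(\mu+\cos\theta_y)$ with $\mu$ as in the statement. The angular integral $\int_0^{2\pi}e^{i\theta_y}(\cdots)\,d\theta_y$, after taking real parts and substituting $x_0=\cos\theta_y$, produces precisely the kernel $K(\mu)$, the constraint $|y-x|<\tau-\sigma$ giving $x_0>-\mu$, hence the lower limit $\max(-\mu,-1)$. Changing the source time variable from $\sigma$ to $\mu$ via $(\tau-\sigma)^2=\varrho^2+\la^2+2\varrho\la\mu$ introduces the Jacobian factor $\varrho\la/\sqrt{\varrho^2+\la^2+2\varrho\la\mu}$; combined with the $1/\la^2$ factor inside the source $F_2/\varrho^2$ and the $\sqrt{2\varrho\la}$ from the angular integration, this produces the prefactor $\sqrt{\varrho}/(\sqrt{\la}\sqrt{\varrho^2+\la^2+2\varrho\la\mu})$ of the bulk integrals. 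The constraints $\sigma\geq -1$ and $\mu\geq -1$ determine the domain $\mu\in[-1,+\infty)$, $\la\in[0,\la^*(\mu)]$, with $\la^*$ obtained as the value of $\la$ at which $\sigma=-1$ along the curve $(\tau-\sigma)^2=\varrho^2+\la^2+2\varrho\la\mu$.

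To treat the distributional source $\pr_u(F_1/\varrho)$, I integrate by parts in the null coordinate $u_s=\sigma-\la$ (at fixed $\ub_s=\sigma+\la$): this transfers $\pr_u$ from $F_1$ onto the Duhamel kernel. The derivative falling on $K(\mu)$ produces the term $\la\,\pr_u\mu\,K'(\mu)F_1$, which is the last bulk integral of the statement; the derivative falling on the prefactor $\sqrt{\varrho}/(\sqrt{\la}\sqrt{\varrho^2+\la^2+2\varrho\la\mu})$ simplifies, after regrouping, to the $\tfrac{1}{4}F_1$ contribution inside the preceding bulk integral. The resulting boundary terms are collected along the three boundary components of the $(\sigma,\la)$ integration region: the past outgoing null characteristic $\{\tau-\sigma=\varrho-\la\}$, parametrized by $\ub'$ from $\tau-\varrho$ (axis) to $\tau+\varrho$ (current point), giving the first $F_1$-surface integral with the limiting value $K(-1)=\lim_{\mu\to -1^+}K(\mu)$; and the initial slice $\{\sigma=-1\}$, which under the two available parametrizations (by $\mu$ or by $\la$) splits into the two remaining surface integrals, with $\la^*$ delineating the transition between the portion near the axis and the portion on the past characteristic cone.

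The main obstacle will be the careful bookkeeping of these boundary contributions. One must verify: (i) that the decomposition of the boundary under the $(\sigma,\la)\leftrightarrow(\mu,\la)$ parametrizations yields exactly the three surface integrals of the statement; (ii) the correct identification of $\la^*(\mu)=\sqrt{(1+\tau)^2+(\mu^2-1)\varrho^2}-\mu\varrho$ as the intersection of the past light cone of $(\tau,\varrho)$ with $\{\sigma=-1\}$; and (iii) that all resulting integrals are finite uniformly as $\varrho\to 0^+$, where the rotation-one structure of $\phi$ supplies the cancellations needed to offset the apparent singularity of the kernel on the axis.
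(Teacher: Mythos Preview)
Your outline is correct and essentially matches the paper's proof. The only difference is that the paper simply quotes the representation formula $\phi(\tau,\varrho)=\phi_0(\tau,\varrho)+\int_R\frac{\sqrt{\la}}{\sqrt{\varrho}}K(\mu)h(\sigma,\la)\,d\la\,d\sigma$ from \cite{jal_tah1} as a black box, whereas you re-derive it via the extension $\Psi=\phi\,e^{i\theta}$ and the 2D Duhamel formula; after that, both proofs perform the identical integration by parts in $u$ (splitting off the $\tfrac14 F_1$ bulk term, the $\la\,\pr_u\mu\,K'(\mu)$ bulk term, and the three boundary pieces) followed by the change of variables $(\sigma,\la)\to(\mu,\la)$ with Jacobian $\pr_\sigma\mu=-\sqrt{\varrho^2+\la^2+2\varrho\la\mu}/(\varrho\la)$.
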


\begin{proof}
We recall the representation formula derived in \cite{jal_tah1} for the solution $\phi$ of 
$$\left(-\pr_\tau^2+\pr_{\varrho}^2+\frac{1}{\varrho}\pr_\varrho-\frac{1}{\varrho^2}\right)\phi=h.$$
$\phi$ is given by (see \cite{jal_tah} p. 960/961)
$$\phi(\tau, \varrho)=\phi_0(\tau,\varrho)+\int_{R_{\tau, \varrho}}\frac{\sqrt{\la}}{\sqrt{\varrho}}K(\mu)h(\sigma, \la)d\la d\sigma,$$
where 
$$R_{\tau, \varrho}=\{(\sigma, \la)\,/\,\, -1\leq\sigma\leq \tau,\,\,\max(0,\varrho-\tau+\sigma)\leq\la\leq\varrho+\tau-\sigma\},$$
see Figure \ref{fig:Rregion}, $\phi_0$ denotes the solution to the homogeneous equation with the same initial conditions as $\phi$ at $\tau=-1$, $\mu$ is given by
$$\mu=\frac{(\tau-\sigma)^2-\varrho^2-\la^2}{2\varrho\la},$$
and $K$ is given by
$$K(\mu)=\int_{\max(-\mu,-1)}^1\frac{xdx}{\sqrt{1-x^2}\sqrt{\mu+x}}.$$

\begin{figure}
\centering
\psfrag{(tau, rho)}{$(\tau, \varrho)$}
\psfrag{O}{$O$}
\psfrag{muinf}{$\mu=+\infty$}
\psfrag{mu1}{$\mu=1$}
\psfrag{mum1}{$\mu=-1$}
\includegraphics[width=2.5in]{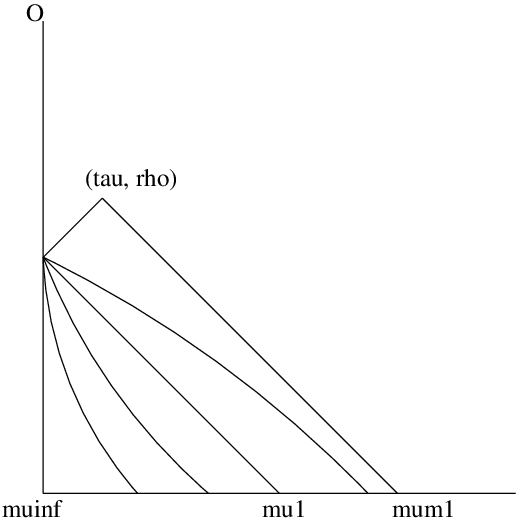}
\caption{The level sets of $\mu$}
\label{fig:mulevelsets}
\end{figure}

\begin{figure}
\centering
\psfrag{ubar=0}{$\ub=0$}
%\psfrag{ubar=ubarb}{$\ub=\ub_b$}
\psfrag{tau =-1}{$\tau =-1$}
\psfrag{(tau, rho)}{$(\tau, \varrho)$}
\psfrag{R}{$R_{\tau,\varrho}$}
\psfrag{O}{$O$}
\includegraphics[width=2.5in]{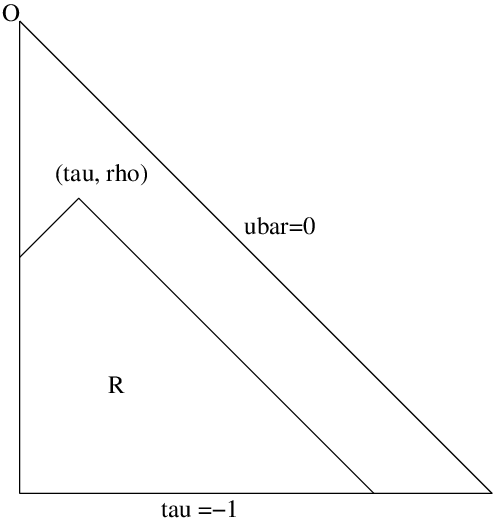}
\caption{The spacetime region $R_{\tau, \varrho}$}
\label{fig:Rregion}
\end{figure}

In our case, note that this formula is well defined since for $(\tau, \varrho)$ such that $\tau+\varrho\leq 0$ and $-1\leq \tau<0$, $R_{\tau, \varrho}$ is included in the region where the solution is assumed to be smooth. Also, we have  
$$h=\pr_u\left(\frac{F_1}{\varrho}\right)+\frac{F_2}{\varrho^2}.$$
Hence, we have 
\bee
\phi(\tau, \varrho) &=& \phi_0(\tau,\varrho)+\int_{R_{\tau, \varrho}}\frac{\sqrt{\la}}{\sqrt{\varrho}}K(\mu)\pr_u\left(\frac{F_1(\sigma, \la)}{\la}\right)d\la d\sigma+\int_{R_{\tau, \varrho}}\frac{\sqrt{\la}}{\sqrt{\varrho}}K(\mu)\frac{F_2(\sigma, \la)}{\la^2}d\la d\sigma\\
&=& \phi_0(\tau,\varrho)+\int_{\pr R_{\tau, \varrho}}\frac{\sqrt{\la}}{\sqrt{\varrho}}K(\mu)\frac{F_1(\sigma, \la)}{\la}\gg(\pr_u,\nu_R) -\int_{R_{\tau, \varrho}}\pr_u\left(\frac{\sqrt{\la}}{\sqrt{\varrho}}K(\mu)\right)\frac{F_1(\sigma, \la)}{\la}d\la d\sigma\\
&&+\int_{R_{\tau, \varrho}}\frac{\sqrt{\la}}{\sqrt{\varrho}}K(\mu)\frac{F_2(\sigma, \la)}{\la^2}d\la d\sigma\\
&=& \phi_0(\tau,\varrho)+\int_{\pr R_{\tau, \varrho}}\frac{\sqrt{\la}}{\sqrt{\varrho}}K(\mu)\frac{F_1(\sigma, \la)}{\la}\gg(\pr_u,\nu_R) +\int_{R_{\tau, \varrho}}\frac{1}{4\sqrt{\la\varrho}}K(\mu)\frac{F_1(\sigma, \la)}{\la}d\la d\sigma\\
&&-\int_{R_{\tau, \varrho}}\frac{\sqrt{\la}}{\sqrt{\varrho}}\pr_u\mu\, K'(\mu)\frac{F_1(\sigma, \la)}{\la}d\la d\sigma+\int_{R_{\tau, \varrho}}\frac{\sqrt{\la}}{\sqrt{\varrho}}K(\mu)\frac{F_2(\sigma, \la)}{\la^2}d\la d\sigma.
\eee

Next, we compute the boundary term. We have
\bee
&&\int_{\pr R_{\tau, \varrho}}f \gg(\pr_u,\nu_R) \\
&=& \int_{\tau-\varrho}^{\tau+\varrho} f\left(\frac{\tau-\varrho+\ub'}{2},\frac{-\tau+\varrho+\ub'}{2}\right)d\ub'+\frac{1}{2}\int_{-1}^{\tau-\varrho}f(\sigma,0)d\sigma-\frac{1}{2}\int_0^{\tau+\varrho+1} f(-1, \la)d\la,
\eee
and
$$\mu=-1\textrm{ on }u=\tau-\varrho.$$
Hence, we have\footnote{Here, we have dropped the boundary term on $\la=0$ as it vanishes. Indeed, we have 
$$\left|\sqrt{\la}K(\mu)\frac{F_1(\sigma, \la)}{\la}\right|\leq C_{\tau, \varrho} \sqrt{\la}|K(\mu)|\leq C_{\tau, \varrho} \sqrt{\la}\to 0\textrm{ as }\la\to 0,$$
where the constant $C_{\tau, \varrho}$ may blow up as $(\tau, \varrho)$ tends to the origin but is finite away from it, and where we used in particular the fact that $F_1/\la$ is bounded in view of the asymptotic for $\rho-r$, $2\pr_{\ub}r-1$ and $\phi$ as $\varrho\to 0$, the fact that $\mu\to +\infty$ when $\la\to 0$ with $\sigma<\tau-\varrho$, and the fact that $K$ is bounded for $\mu\geq 2$ which is immediate from the definition of $K$.}
\bee
\int_{\pr R_{\tau, \varrho}}\frac{\sqrt{\la}}{\sqrt{\varrho}}K(\mu)\frac{F_1(\sigma, \la)}{\la}\gg(\pr_u,\nu_R) &=& \frac{K(-1)}{\sqrt{\varrho}}\int_{\tau-\varrho}^{\tau+\varrho}\frac{F_1\left(\frac{\tau-\varrho+\ub'}{2},\frac{-\tau+\varrho+\ub'}{2}\right)}{\sqrt{\frac{-\tau+\varrho+\ub'}{2}}}  d\ub'\\
&& -\frac{1}{2}\int_0^{\tau+\varrho+1}\frac{\sqrt{\la}}{\sqrt{\varrho}}K(\mu)\frac{F_1(-1, \la)}{\la} d\la.
\eee
We compute
\bee
\pr_\la\mu &=& \frac{\varrho^2-\la^2-(\tau-\sigma)^2}{2\varrho\la^2}\\
&=& \frac{-2\mu\la\varrho-2\la^2}{2\varrho\la^2}\\
&=& -\frac{\mu\varrho+\la}{\varrho\la}.
\eee 
We decompose and perform a change of variable
\bee
&&\int_0^{\tau+\varrho+1}\frac{\sqrt{\la}}{\sqrt{\varrho}}K(\mu)\frac{F_1(-1, \la)}{\la} d\la \\
&=& \int_0^{+\infty}\frac{\sqrt{\la}}{\sqrt{\varrho}}K(\mu)\frac{F_1(-1, \la)}{\la} \frac{\varrho\la}{\mu\varrho+\la}d\mu+\int_{\sqrt{(\tau+1)^2-\varrho^2}}^{\tau+\varrho+1}\frac{\sqrt{\la}}{\sqrt{\varrho}}K(\mu)\frac{F_1(-1, \la)}{\la} d\la\\
&=& \sqrt{\varrho}\int_0^{+\infty}\frac{F_1(-1, \la)\sqrt{\la}}{\mu\varrho+\la}K(\mu) d\mu+\frac{1}{\sqrt{\varrho}}\int_{\sqrt{(\tau+1)^2-\varrho^2}}^{\tau+\varrho+1}K(\mu)\frac{F_1(-1, \la)}{\sqrt{\la}} d\la
\eee
which yields
\bee
&&\int_{\pr R_{\tau, \varrho}}\frac{\sqrt{\la}}{\sqrt{\varrho}}K(\mu)\frac{F_1(\sigma, \la)}{\la}\gg(\pr_u,\nu_R) \\
&=& \frac{K(-1)}{\sqrt{\varrho}}\int_{\tau-\varrho}^{\tau+\varrho}\frac{F_1\left(\frac{\tau-\varrho+\ub'}{2},\frac{-\tau+\varrho+\ub'}{2}\right)}{\sqrt{\frac{-\tau+\varrho+\ub'}{2}}}  d\ub' -\frac{1}{2}\sqrt{\varrho}\int_0^{+\infty}\frac{F_1(-1, \la)\sqrt{\la}}{\mu\varrho+\la}K(\mu) d\mu\\
&&-\frac{1}{2\sqrt{\varrho}}\int_{\sqrt{(\tau+1)^2-\varrho^2}}^{\tau+\varrho+1}K(\mu)\frac{F_1(-1, \la)}{\sqrt{\la}} d\la.
\eee

We deduce
\bee
\phi(\tau, \varrho) &=& \phi_0(\tau,\varrho)+\frac{K(-1)}{\sqrt{\varrho}}\int_{\tau-\varrho}^{\tau+\varrho}\frac{F_1\left(\frac{\tau-\varrho+\ub'}{2},\frac{-\tau+\varrho+\ub'}{2}\right)}{\sqrt{\frac{-\tau+\varrho+\ub'}{2}}}  d\ub' -\frac{1}{2}\sqrt{\varrho}\int_0^{+\infty}\frac{F_1(-1, \la)\sqrt{\la}}{\mu\varrho+\la}K(\mu) d\mu\\
&&-\frac{1}{2\sqrt{\varrho}}\int_{\sqrt{(\tau+1)^2-\varrho^2}}^{\tau+\varrho+1}K(\mu)\frac{F_1(-1, \la)}{\sqrt{\la}} d\la+\int_{R_{\tau, \varrho}}\frac{1}{4\sqrt{\la\varrho}}K(\mu)\frac{F_1(\sigma, \la)}{\la}d\la d\sigma\\
&&-\int_{R_{\tau, \varrho}}\frac{\sqrt{\la}}{\sqrt{\varrho}}\pr_u\mu\, K'(\mu)\frac{F_1(\sigma, \la)}{\la}d\la d\sigma+\int_{R_{\tau, \varrho}}\frac{\sqrt{\la}}{\sqrt{\varrho}}K(\mu)\frac{F_2(\sigma, \la)}{\la^2}d\la d\sigma.
\eee
In the space-time integral, we perform the change of variable $(\sigma, \la)\to (\mu,\la)$ which yields
\bee
\phi(\tau, \varrho) &=& \phi_0(\tau,\varrho)+\frac{K(-1)}{\sqrt{\varrho}}\int_{\tau-\varrho}^{\tau+\varrho}\frac{F_1\left(\frac{\tau-\varrho+\ub'}{2},\frac{-\tau+\varrho+\ub'}{2}\right)}{\sqrt{\frac{-\tau+\varrho+\ub'}{2}}}  d\ub' -\frac{1}{2}\sqrt{\varrho}\int_0^{+\infty}\frac{F_1(-1, \la)\sqrt{\la}}{\mu\varrho+\la}K(\mu) d\mu\\
&&-\frac{1}{2\sqrt{\varrho}}\int_{\sqrt{(\tau+1)^2-\varrho^2}}^{\tau+\varrho+1}K(\mu)\frac{F_1(-1, \la)}{\sqrt{\la}} d\la+\int_{-1}^{+\infty}\int_0^{\la^*}\frac{1}{4\sqrt{\la\varrho}}K(\mu)\frac{F_1(\sigma, \la)}{\la}\frac{1}{|\pr_\sigma\mu|}d\la d\mu\\
&&-\int_{-1}^{+\infty}\int_0^{\la^*}\frac{\sqrt{\la}}{\sqrt{\varrho}}\pr_u\mu\, K'(\mu)\frac{F_1(\sigma, \la)}{\la}\frac{1}{|\pr_\sigma\mu|}d\la d\mu+\int_{-1}^{+\infty}\int_0^{\la^*}\frac{\sqrt{\la}}{\sqrt{\varrho}}K(\mu)\frac{F_2(\sigma, \la)}{\la^2}\frac{1}{|\pr_\sigma\mu|}d\la d\mu,
\eee
where $\la^*$ is given by
$$\la^*=\sqrt{(1+\tau)^2+(\mu^2-1)\varrho^2}-\mu\varrho.$$
We compute
\bee
\pr_\sigma\mu &=& \frac{\sigma-\tau}{\varrho\la}\\
&=& -\frac{\sqrt{\varrho^2+\la^2+2\varrho\la\mu}}{\varrho\la}.
\eee
We infer
\bee
\phi(\tau, \varrho) &=& \phi_0(\tau,\varrho)+\frac{K(-1)}{\sqrt{\varrho}}\int_{\tau-\varrho}^{\tau+\varrho}\frac{F_1\left(\frac{\tau-\varrho+\ub'}{2},\frac{-\tau+\varrho+\ub'}{2}\right)}{\sqrt{\frac{-\tau+\varrho+\ub'}{2}}}  d\ub' -\frac{1}{2}\sqrt{\varrho}\int_0^{+\infty}\frac{F_1(-1, \la)\sqrt{\la}}{\mu\varrho+\la}K(\mu) d\mu\\
&&-\frac{1}{2\sqrt{\varrho}}\int_{\sqrt{(\tau+1)^2-\varrho^2}}^{\tau+\varrho+1}K(\mu)\frac{F_1(-1, \la)}{\sqrt{\la}} d\la\\
&&+\int_{-1}^{+\infty}\int_0^{\la^*}\frac{\sqrt{\varrho}}{\sqrt{\la}\sqrt{\varrho^2+\la^2+2\varrho\la\mu}}K(\mu)\left(\frac{1}{4}F_1(\sigma, \la)+F_2(\sigma, \la)\right)d\la d\mu\\
&&-\int_{-1}^{+\infty}\int_0^{\la^*}\frac{\sqrt{\varrho}}{\sqrt{\la}\sqrt{\varrho^2+\la^2+2\varrho\la\mu}}\la\pr_u\mu\, K'(\mu)F_1(\sigma, \la)d\la d\mu.
\eee
This concludes the proof of the lemma.
\end{proof}

\begin{lemma}\lab{lemma:upperboundprumu}
We have
$$|\la\pr_u\mu|\les |\mu-1|,\,\,\forall -1\leq\mu<+\infty.$$
\end{lemma}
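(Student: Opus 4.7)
The plan is to compute $\partial_u \mu$ explicitly as a function of the integration variables $(\sigma, \lambda)$, rationalize to expose a factor of $\mu^2 - 1$, and then bound the resulting denominator from below by $(1+\mu)\varrho$.

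First, since the change of variables is $\sigma = (u+\ub)/2,\ \lambda = (\ub-u)/2$, the operator $\partial_u$ acting on functions of $(\sigma,\lambda)$ equals $\frac{1}{2}(\partial_\sigma - \partial_\lambda)$. From
$$
\mu = \frac{(\tau-\sigma)^2 - \varrho^2 - \lambda^2}{2\varrho\lambda},
$$
direct differentiation gives $\partial_\sigma \mu = (\sigma-\tau)/(\varrho\lambda)$ and $\partial_\lambda \mu = -1/\varrho - \mu/\lambda$, so that
$$
\lambda\,\partial_u \mu \;=\; \frac{1}{2\varrho}\bigl(\sigma - \tau + \lambda + \mu\varrho\bigr).
$$
Because $(\sigma,\lambda) \in R$ forces $\sigma\le \tau$, we have $\sigma - \tau = -\sqrt{(\tau-\sigma)^2} = -\sqrt{\varrho^2 + \lambda^2 + 2\varrho\lambda\mu}$.

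Next, setting $X = \lambda + \mu\varrho$, I would observe the identity $\varrho^2 + \lambda^2 + 2\varrho\lambda\mu = X^2 + (1-\mu^2)\varrho^2$, so that $2\varrho\lambda \partial_u\mu = X - \sqrt{X^2 + (1-\mu^2)\varrho^2}$. Multiplying by the conjugate yields
$$
\lambda\,\partial_u \mu \;=\; \frac{(\mu^2 - 1)\,\varrho}{2\bigl(X + \sqrt{X^2 + (1-\mu^2)\varrho^2}\bigr)} \;=\; \frac{(\mu-1)(\mu+1)\varrho}{2\bigl(\lambda + \mu\varrho + \sqrt{\varrho^2+\lambda^2+2\varrho\lambda\mu}\bigr)},
$$
which already isolates the factor $\mu - 1$ appearing on the right-hand side of the lemma. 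Note that $X\ge 0$ since $\lambda,\mu\varrho \ge 0$ for $\mu \ge 0$, while $X + \sqrt{X^2 + (1-\mu^2)\varrho^2} \ge 0$ for $-1\le\mu\le 1$, so the denominator is non-negative.

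Finally, I would prove the uniform lower bound $X + \sqrt{X^2 + (1-\mu^2)\varrho^2} \ge (1+\mu)\varrho$ in two cases. Rewriting $X^2 + (1-\mu^2)\varrho^2 = \lambda^2 + 2\lambda\mu\varrho + \varrho^2$, one has $\sqrt{\lambda^2 + 2\lambda\mu\varrho + \varrho^2} \ge |\lambda - \varrho|$ whenever $\mu\ge -1$. Splitting on whether $\lambda < \varrho$ or $\lambda \ge \varrho$, the lower bound $(\lambda+\mu\varrho) + |\lambda-\varrho| \ge (1+\mu)\varrho$ follows by an elementary algebraic check (in the first subcase one directly gets $(1+\mu)\varrho$; in the second $2\lambda + (\mu-1)\varrho \ge (1+\mu)\varrho$ since $\lambda \ge \varrho$). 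For $\mu > 1$ the argument is even simpler: $\sqrt{\lambda^2 + 2\lambda\mu\varrho + \varrho^2} \ge \varrho$ directly gives denom $\ge (1+\mu)\varrho$. Combining with the previous display yields $|\lambda\,\partial_u \mu| \le \tfrac{1}{2}|\mu-1|$, proving the lemma.

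The only mildly delicate point is ensuring the denominator lower bound is uniform as $\mu$ approaches $1$ or as $\lambda$ becomes large compared to $\varrho$; the case split above is designed precisely to absorb both degenerations, and I expect no other obstacles.
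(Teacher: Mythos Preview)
Your proof is correct and follows essentially the same strategy as the paper: compute $\lambda\partial_u\mu$ explicitly, rationalize to expose the factor $\mu^2-1$, and bound the denominator below by $(1+\mu)\varrho$. For $\mu\ge 0$ the two arguments are identical. For $-1\le\mu<0$ the paper instead rationalizes with the conjugate $\lambda-\varrho\mu+\sqrt{\cdots}$ and only shows $|\lambda\partial_u\mu|\lesssim 1$ (which suffices since $|\mu-1|\ge 1$ there), whereas your case split on $\lambda\lessgtr\varrho$ yields the sharper uniform bound $|\lambda\partial_u\mu|\le\tfrac12|\mu-1|$ in one stroke; this is a mild streamlining. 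One tiny point: at $\mu=-1$ with $\lambda\le\varrho$ your rationalized quotient is formally $0/0$, but the conclusion there follows immediately either by continuity or by the direct evaluation $\lambda\partial_u\mu=(\lambda-\varrho)/\varrho$, so there is no real gap.
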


\begin{proof}
We compute
\bee
\pr_u\mu &=& \frac{\la((\sigma-\tau)+\la)+\frac{1}{2}((\tau-\sigma)^2-\varrho^2-\la^2)}{2\varrho\la^2}\\
&=&  \frac{\la(\sigma-\tau)+\frac{\la^2}{2}+\frac{1}{2}(\tau-\sigma)^2-\frac{1}{2}\varrho^2}{2\varrho\la^2}\\
&=&  \frac{(\la+\sigma-\tau)^2-\varrho^2}{4\varrho\la^2}\\
&=& \frac{(\la-\sqrt{\varrho^2+\la^2+2\varrho\la\mu})^2-\varrho^2}{4\varrho\la^2}\\
&=& \frac{\la^2-2\la\sqrt{\varrho^2+\la^2+2\varrho\la\mu}+\varrho^2+\la^2+2\varrho\la\mu-\varrho^2}{4\varrho\la^2}\\
&=& \frac{\la+\varrho\mu-\sqrt{\varrho^2+\la^2+2\varrho\la\mu}}{2\varrho\la}.
\eee
We infer
\bee
\la\pr_u\mu &=& \frac{\la+\varrho\mu-\sqrt{\varrho^2+\la^2+2\varrho\la\mu}}{2\varrho}.
\eee

We consider two cases. If $\mu\geq 0$, we have
\bee
\la\pr_u\mu &=& \frac{\la^2+\varrho^2\mu^2+2\la\varrho\mu-\varrho^2-\la^2-2\varrho\la\mu}{2\varrho(\la+\varrho\mu+\sqrt{\varrho^2+\la^2+2\varrho\la\mu})}\\
&=& \frac{\varrho^2(\mu^2-1)}{2\varrho(\la+\varrho\mu+\sqrt{\varrho^2+\la^2+2\varrho\la\mu})}\\
&=& \frac{\varrho(\mu+1)(\mu-1)}{2(\la+\varrho\mu+\sqrt{\varrho^2+\la^2+2\varrho\la\mu})}.
\eee
Since $\mu\geq 0$, $\la\geq 0$ and $\varrho\geq 0$, we have
$$\la+\varrho\mu+\sqrt{\varrho^2+\la^2+2\varrho\la\mu}\geq \varrho(1+\mu).$$
We infer
\bee
|\la\pr_u\mu| &\leq & \frac{|\mu-1|}{2}.
\eee

If $-1\leq\mu<0$, we have
\bee
\la\pr_u\mu &=& \frac{\la^2-\varrho^2\mu^2+2\varrho\mu\sqrt{\varrho^2+\la^2+2\varrho\la\mu}-\varrho^2-\la^2-2\varrho\la\mu}{2\varrho(\la-\varrho\mu+\sqrt{\varrho^2+\la^2+2\varrho\la\mu})}\\
&=& \frac{-\varrho(\mu^2+1)+2\mu(\sqrt{\varrho^2+\la^2+2\varrho\la\mu}-\la)}{2(\la-\varrho\mu+\sqrt{\varrho^2+\la^2+2\varrho\la\mu})}.
\eee
Since $-1\leq\mu<0$, this yields
\bee
\left|\la\pr_u\mu\right| &=& \frac{\left|-\varrho(\mu^2+1)+2|\mu|(\la-\sqrt{\varrho^2+\la^2-2|\mu|\varrho\la})\right|}{2(\la+|\mu|\varrho+\sqrt{\varrho^2+\la^2-2|\mu|\varrho\la})}\\
&\les& \frac{\varrho+\la+\sqrt{\varrho^2+\la^2}}{\la+\varrho}\\
&\les& 1.
\eee
Together with the case $\mu\geq 0$, we obtain for all $\mu\geq -1$
\bee
\left|\la\pr_u\mu\right| &\les& |\mu-1|.
\eee
This concludes the proof of the lemma.
\end{proof}

\begin{lemma}
For $(\tau, \varrho)\in Q_{\ubb}$, we have
\bee
|\phi(\tau, \varrho)| &\les& C_0\sqrt{\varrho}+(\ep C+C_0)\varrho^{\delta}\\
&&+(\ep C+C_0)\sqrt{\varrho}\int_{-1}^{+\infty}\int_0^{\la^*}\frac{\la^{\delta-\frac{1}{2}}}{\sqrt{\varrho^2+\la^2+2\varrho\la\mu}}(|K(\mu)|+|\mu-1||K'(\mu)|) d\la d\mu,
\eee
where the constant $C_0$ only depends on the values of the solution in $I_0$.
\end{lemma}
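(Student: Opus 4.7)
The plan is to insert the bounds of Lemma \ref{lemma:estF1F2} and Lemma \ref{lemma:upperboundprumu} into each of the six terms of the representation formula of Lemma \ref{lemma:repformulaphiwave} and to sum the resulting estimates.

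The homogeneous piece $\phi_0(\tau, \varrho)$ solves the free equation $\left(-\pr_\tau^2+\pr_\varrho^2+\varrho^{-1}\pr_\varrho-\varrho^{-2}\right)\phi_0=0$ with the initial data of $\phi$. Standard representation bounds for this operator (applied to $C^1$ compactly supported data and absorbing the initial-data norms into $C_0$) give $|\phi_0(\tau,\varrho)| \les C_0 \sqrt{\varrho}$, which accounts for the first term of the bound.

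For the boundary integral along the past null cone of $(\tau,\varrho)$, observe that the point $\left(\tfrac{\tau-\varrho+\ub'}{2}, \tfrac{-\tau+\varrho+\ub'}{2}\right)$ has $\varrho$-coordinate equal to $\tfrac{-\tau+\varrho+\ub'}{2}$, which by Lemma \ref{lemma:basicestimate} is comparable to $r$ at that point. Lemma \ref{lemma:estF1F2} then yields $|F_1| \les \ep C \left(\tfrac{-\tau+\varrho+\ub'}{2}\right)^\delta$, so (since $\delta<1/2$)
\[
\frac{|K(-1)|}{\sqrt{\varrho}}\int_{\tau-\varrho}^{\tau+\varrho}\frac{|F_1|}{\sqrt{(-\tau+\varrho+\ub')/2}}\, d\ub' \les \frac{\ep C}{\sqrt{\varrho}}\int_0^{2\varrho} s^{\delta-1/2}\, ds \les \ep C\, \varrho^\delta,
\]
producing the second term. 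The two integrals supported on the initial slice $\{\tau=-1\}$ involve only $F_1(-1,\la)$, which is controlled by a finite number of norms of the initial data, hence absorbed into $C_0$; a direct computation using $K\in L^1_{\rm loc}$ and the explicit weights $\sqrt{\varrho}/((\mu\varrho+\la)\sqrt{\la})$ and $1/(\sqrt{\varrho\la})$ shows these contribute at most $C_0\sqrt{\varrho}$.

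For the first space-time double integral, Lemma \ref{lemma:estF1F2} together with $r\sim \la$ (where $\la$ is the $\varrho$-coordinate at $(\sigma,\la)$) gives $|F_1(\sigma,\la)|+|F_2(\sigma,\la)| \les \ep C\, \la^\delta$, producing the $|K(\mu)|$ piece of the final term. For the second space-time integral, Lemma \ref{lemma:upperboundprumu} gives $|\la\,\pr_u\mu| \les |\mu-1|$, so multiplying by $|K'(\mu)|$ yields the $|\mu-1||K'(\mu)|$ piece. Combining all contributions gives the stated inequality. The estimate is essentially bookkeeping: no step is expected to be delicate, the main point being that the representation formula has already been arranged (via the integration by parts in Lemma \ref{lemma:structurewaveeqphi}) so that the only objects needing pointwise bounds are $F_1$, $F_2$, and $\la\pr_u\mu$, all of which are already under control.
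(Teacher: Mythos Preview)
Your proposal is correct and follows essentially the same approach as the paper: estimate each of the six terms in the representation formula of Lemma \ref{lemma:repformulaphiwave} using the pointwise bounds on $F_1$, $F_2$ from Lemma \ref{lemma:estF1F2} and on $\la\pr_u\mu$ from Lemma \ref{lemma:upperboundprumu}, absorbing the homogeneous and initial-slice contributions into $C_0\sqrt{\varrho}$. The only cosmetic difference is that the paper first records an intermediate inequality in terms of $|F_1|$, $|F_2|$ (and uses the specific kernel facts $K(-1)=\pi/\sqrt{2}$, $\sup_{-1\le\mu\le 0}|K|\les 1$, $K\in L^1(-1,\infty)$ rather than just $K\in L^1_{\rm loc}$) before applying the $\ep C r^\delta$ bound, whereas you apply it term by term; the logic is the same.
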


\begin{proof}
Recall that
\bee
\phi(\tau, \varrho) &=& \phi_0(\tau,\varrho)+\frac{K(-1)}{\sqrt{\varrho}}\int_{\tau-\varrho}^{\tau+\varrho}\frac{F_1\left(\frac{\tau-\varrho+\ub'}{2},\frac{-\tau+\varrho+\ub'}{2}\right)}{\sqrt{\frac{-\tau+\varrho+\ub'}{2}}}  d\ub' -\frac{1}{2}\sqrt{\varrho}\int_0^{+\infty}\frac{F_1(-1, \la)\sqrt{\la}}{\mu\varrho+\la}K(\mu) d\mu\\
&&-\frac{1}{2\sqrt{\varrho}}\int_{\sqrt{(\tau+1)^2-\varrho^2}}^{\tau+\varrho+1}K(\mu)\frac{F_1(-1, \la)}{\sqrt{\la}} d\la\\
&&+\int_{-1}^{+\infty}\int_0^{\la^*}\frac{\sqrt{\varrho}}{\sqrt{\la}\sqrt{\varrho^2+\la^2+2\varrho\la\mu}}K(\mu)\left(\frac{1}{4}F_1(\sigma, \la)+F_2(\sigma, \la)\right)d\la d\mu\\
&&-\int_{-1}^{+\infty}\int_0^{\la^*}\frac{\sqrt{\varrho}}{\sqrt{\la}\sqrt{\varrho^2+\la^2+2\varrho\la\mu}}\la\pr_u\mu\, K'(\mu)F_1(\sigma, \la)d\la d\mu.
\eee
We have the following properties for $K$ (see for example \cite{chris_tah1} p. 1061):
$$K(-1)=\frac{\pi}{\sqrt{2}},\,\, \sup_{-1\leq \mu\leq 0}|K|\les 1,\,\, K\in L^1(-1,+\infty).$$
Also, we have 
$$\{\sqrt{(\tau+1)^2-\varrho^2}\leq\la\leq \tau+\varrho+1\}\cap\{\sigma=-1\}=\{-1\leq\mu\leq 0\}\cap\{\sigma=-1\}.$$
We deduce
\bee
|\phi(\tau, \varrho)| &\les& |\phi_0(\tau,\varrho)|+\frac{|K(-1)|}{\sqrt{\varrho}}\int_0^{\varrho}\frac{\left|F_1\left(\tau-\varrho+\la,\la\right)\right|}{\sqrt{\la}}  d\la \\
&&+\sqrt{\varrho}\left(\sup_{\la\geq 0}\frac{|F_1(-1, \la)|}{\sqrt{\la}}\right) \int_0^{+\infty}|K(\mu)| d\mu\\
&&+\frac{1}{\sqrt{\varrho}}\left(\sup_{-1\leq \mu\leq 0}|K|\right)\left(\sup_{\la\geq 0}\frac{|F_1(-1, \la)|}{\sqrt{\la}}\right)(\tau+\varrho+1-\sqrt{(\tau+1)^2-\varrho^2})\\
&&+\int_{-1}^{+\infty}\int_0^{\la^*}\frac{\sqrt{\varrho}}{\sqrt{\la}\sqrt{\varrho^2+\la^2+2\varrho\la\mu}}|K(\mu)|(|F_1(\sigma, \la)|+|F_2(\sigma, \la)|)d\la d\mu\\
&&+\int_{-1}^{+\infty}\int_0^{\la^*}\frac{\sqrt{\varrho}}{\sqrt{\la}\sqrt{\varrho^2+\la^2+2\varrho\la\mu}}|\la\pr_u\mu||K'(\mu)| |F_1(\sigma, \la)| d\la d\mu\\
&\les& |\phi_0(\tau,\varrho)|+\sup_{0\leq\la\leq\varrho}|F_1(\tau-\varrho+\la,\la)| +\sqrt{\varrho}\left(\sup_{\la\geq 0}\frac{|F_1(-1, \la)|}{\sqrt{\la}}\right) \\
&&+\sqrt{\varrho}\left(\sup_{\la\geq 0}\frac{|F_1(-1, \la)|}{\sqrt{\la}}\right)\frac{\varrho+\tau+1}{\tau+\varrho+1+\sqrt{(\tau+1)^2-\varrho^2}}\\
&&+\int_{-1}^{+\infty}\int_0^{\la^*}\frac{\sqrt{\varrho}}{\sqrt{\la}\sqrt{\varrho^2+\la^2+2\varrho\la\mu}}|K(\mu)|(|F_1(\sigma, \la)|+|F_2(\sigma, \la)|)d\la d\mu\\
&&+\int_{-1}^{+\infty}\int_0^{\la^*}\frac{\sqrt{\varrho}}{\sqrt{\la}\sqrt{\varrho^2+\la^2+2\varrho\la\mu}}|\la\pr_u\mu||K'(\mu)| |F_1(\sigma, \la)| d\la d\mu.
\eee
Assuming enough regularity on the initial data, we have
$$\sup_{\varrho>0}\varrho^{-\frac{1}{2}}|\phi_0(\tau,\varrho)|+\sup_{\la\geq 0}\frac{|F_1(-1, \la)|}{\sqrt{\la}}\leq C_0$$
where the constant $C_0$ only depend on initial data. Hence, together with the previous lemma, we deduce\footnote{Observe that $v_1(\tau, \varrho, \theta)=\phi_0(\tau, \varrho)\cos(\theta)$ and $v_2(\tau, \varrho, \theta)=\phi_0(\tau, \varrho)\sin(\theta)$ both satisfy the standard wave equation on $\mathbb{R}^{2+1}$ which allows to infer estimates for $\phi_0$ from corresponding estimates for the standard wave equation.}
\bee
|\phi(\tau, \varrho)| &\les& C_0\sqrt{\varrho}+\sup_{0\leq\la\leq\varrho}|F_1(\tau-\varrho+\la,\la)|\\
&&+\int_{-1}^{+\infty}\int_0^{\la^*}\frac{\sqrt{\varrho}}{\sqrt{\la}\sqrt{\varrho^2+\la^2+2\varrho\la\mu}}|K(\mu)|(|F_1(\sigma, \la)|+|F_2(\sigma, \la)|)d\la d\mu\\
&&+\int_{-1}^{+\infty}\int_0^{\la^*}\frac{\sqrt{\varrho}}{\sqrt{\la}\sqrt{\varrho^2+\la^2+2\varrho\la\mu}}|\mu-1||K'(\mu)| |F_1(\sigma, \la)| d\la d\mu.
\eee
Finally, recall that we have
$$\sup_{Q_{\ubb}}r^{-\delta}(|F_1|+|F_2|)\les C\ep.$$
Since we have
$$R_{\tau, \varrho}\subset Q_{\ubb}\cup I_0\textrm{ for any }(\tau, \varrho)\in Q_{\ubb},$$
and 
$$\sup_{I_0}r^{-\delta}(|F_1|+|F_2|)\les C_0,$$
we infer
$$\sup_{(\tau, \varrho)\in Q_{\ubb}}\sup_{R_{\tau, \varrho}}r^{-\delta}(|F_1|+|F_2|)\les C\ep+C_0.$$
Hence, we deduce for $(\tau, \varrho)\in Q_{\ubb}$
\bee
|\phi(\tau, \varrho)| &\les& C_0\sqrt{\varrho}+(\ep C+C_0)\varrho^{\delta}\\
&&+(\ep C+C_0)\sqrt{\varrho}\int_{-1}^{+\infty}\int_0^{\la^*}\frac{\la^{\delta-\frac{1}{2}}}{\sqrt{\varrho^2+\la^2+2\varrho\la\mu}}(|K(\mu)|+|\mu-1||K'(\mu)|) d\la d\mu.
\eee
This concludes the proof of the lemma.
\end{proof}

\begin{lemma}\lab{lemma:improvedunifboundphi}
For $(\tau, \varrho)\in Q_{\ubb}$, we have
\bee
|\phi(\tau, \varrho)| &\les& C_0\sqrt{\varrho}+(\ep C+C_0)\varrho^\delta
\eee
where the constant $C_0$ only depends on the values of the solution in $I_0$.
\end{lemma}

\begin{proof}
Recall that
\bee
|\phi(\tau, \varrho)| &\les& C_0\sqrt{\varrho}+(\ep C+C_0)\varrho^{\delta}\\
&&+(\ep C+C_0)\sqrt{\varrho}\int_{-1}^{+\infty}\int_0^{\la^*}\frac{\la^{\delta-\frac{1}{2}}}{\sqrt{\varrho^2+\la^2+2\varrho\la\mu}}(|K(\mu)|+|\mu-1||K'(\mu)|) d\la d\mu.
\eee
We evaluate the integral in the right-hand side. We have
\bee
&& \int_{-1}^{+\infty}\int_0^{\la^*}\frac{\la^{\delta-\frac{1}{2}}}{\sqrt{\varrho^2+\la^2+2\varrho\la\mu}}(|K(\mu)|+|\mu-1||K'(\mu)|) d\la d\mu\\
&\les& \int_{-1}^0\int_0^{\la^*}\frac{\la^{\delta-\frac{1}{2}}}{\sqrt{\varrho^2+\la^2+2\varrho\la\mu}}(|K(\mu)|+|K'(\mu)|) d\la d\mu\\
&&+\int_0^{+\infty}\int_0^{\la^*}\frac{\la^{\delta-\frac{1}{2}}}{\sqrt{\varrho^2+\la^2+2\varrho\la\mu}}(|K(\mu)|+|\mu-1||K'(\mu)|) d\la d\mu.
\eee

We estimate the two integral on the right-hand side starting with the first one. We have
\bee
&& \int_{-1}^0\int_0^{\la^*}\frac{\la^{\delta-\frac{1}{2}}}{\sqrt{\varrho^2+\la^2+2\varrho\la\mu}}(|K(\mu)|+|K'(\mu)|) d\la d\mu\\
&\les& \sup_{-1\leq \mu\leq 0}(|K(\mu)|+|K'(\mu)|)\int_{-1}^0\int_0^{\la^*}\frac{\la^{\delta-\frac{1}{2}}}{\sqrt{\varrho^2+\la^2+2\varrho\la\mu}} d\la d\mu\\
&\les& \int_0^{+\infty}\int_{\tau-\sqrt{\la^2+\varrho^2}}^{\tau-|\la-\varrho|}\frac{\la^{\delta-\frac{1}{2}}}{\varrho\la} d\sigma d\la, 
\eee
where we used the fact that 
$$\sup_{-1\mu\leq 0}(|K(\mu)|+|K'(\mu)|)<+\infty,$$
which is a consequence of the estimates for $K$ and $K'$ on p. 1061 \cite{chris_tah1},
$$\{-1\leq\mu\leq 0\}=\{|\la-\varrho|\leq \tau-\sigma\leq \sqrt{\varrho^2+\la^2}\},$$
and
$$\pr_{\sigma}\mu=-\frac{\sqrt{\varrho^2+\la^2+2\varrho\la\mu}}{\varrho\la}.$$
We infer
\bee
&& \int_{-1}^0\int_0^{\la^*}\frac{\la^{\delta-\frac{1}{2}}}{\sqrt{\varrho^2+\la^2+2\varrho\la\mu}}(|K(\mu)|+|K'(\mu)|) d\la d\mu\\
&\les& \int_0^{+\infty}(\sqrt{\la^2+\varrho^2}-|\la-\varrho|)\frac{\la^{\delta-\frac{1}{2}}}{\varrho\la}  d\la\\
&\les& \int_0^{+\infty}\frac{\la^{\delta-\frac{1}{2}}}{\sqrt{\la^2+\varrho^2}+|\la-\varrho|}  d\la\\
&\les& \varrho^{\delta-\frac{1}{2}}\int_0^{+\infty}\frac{\la^{\delta-\frac{1}{2}}}{\sqrt{\la^2+1}+|\la-1|}  d\la\\
&\les&  \varrho^{\delta-\frac{1}{2}}, 
\eee
where we used in the last inequality the fact that $0<\delta<1/2$.

Next, we estimate the second integral on the right-hand side. We have
\bee
&&\int_0^{+\infty}\int_0^{\la^*}\frac{\la^{\delta-\frac{1}{2}}}{\sqrt{\varrho^2+\la^2+2\varrho\la\mu}}(|K(\mu)|+|\mu-1||K'(\mu)|) d\la d\mu\\
&\les& \left(\int_0^{+\infty}(|K(\mu)|+|\mu-1||K'(\mu)|)d\mu\right)\int_0^{+\infty}\frac{\la^{\delta-\frac{1}{2}}}{\varrho+\la} d\la\\
&\les& \varrho^{\delta-\frac{1}{2}}\int_0^{+\infty}\frac{\la^{\delta-\frac{1}{2}}}{1+\la} d\la\\
&\les& \varrho^{\delta-\frac{1}{2}},
\eee
where we used the fact that $0<\delta<1/2$ and 
$$\int_0^{+\infty}(|K(\mu)|+|\mu-1||K'(\mu)|)d\mu<+\infty$$
which is a consequence of the estimates for $K$ and $K'$ on p. 1061 \cite{chris_tah1}. We deduce
\bea\lab{eq:estimusefullagainater}
 \int_{-1}^{+\infty}\int_0^{\la^*}\frac{\la^{\delta-\frac{1}{2}}}{\sqrt{\varrho^2+\la^2+2\varrho\la\mu}}(|K(\mu)|+|\mu-1||K'(\mu)|) d\la d\mu &\les& \varrho^{\delta-\frac{1}{2}}
\eea
which yields
\bee
|\phi(\tau, \varrho)| &\les& C_0\sqrt{\varrho}+(\ep C+C_0)\varrho^\delta.
\eee
This concludes the proof of the lemma.
\end{proof}

\subsection{Proof of Proposition \ref{prop:boot1}}

\begin{lemma}\lab{lemma:eqthetaandxi}
Let
$$\Theta=r\pr_{\ub}\phi\textrm{ and }\Xi=r\pr_u\phi.$$
We have
\bee
\pr_u\left(\frac{\Theta}{\sqrt{r}}\right)=-\frac{1}{2\sqrt{r}}\pr_{\ub}r \pr_u\phi-\frac{\Omega^2f(\phi)}{4r^{\frac{3}{2}}},
\eee
and
\bee
\pr_{\ub}\left(\frac{\Xi}{\sqrt{r}}\right)=-\frac{1}{2\sqrt{r}}\pr_ur \pr_{\ub}\phi-\frac{\Omega^2f(\phi)}{4r^{\frac{3}{2}}}.
\eee
\end{lemma}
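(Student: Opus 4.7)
The plan is to verify both identities by direct computation starting from the wave equation for $\phi$. Recall from \eqref{eq:wave-null} and the definition of the equivariant system that
$$\partial_u \Theta + \partial_{\ub}\Xi = -\frac{\Omega^2 f(\phi)}{2r},$$
since $\square_\gg\phi = f(\phi)/r^2$ unfolds into $-\frac{2}{\Omega^2 r}(\partial_u(r\partial_{\ub}\phi) + \partial_{\ub}(r\partial_u\phi)) = f(\phi)/r^2$.

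First I would apply the Leibniz rule to both terms on the left-hand side, obtaining
$$\partial_u r\,\partial_{\ub}\phi + \partial_{\ub} r\,\partial_u\phi + 2r\,\partial_u\partial_{\ub}\phi = -\frac{\Omega^2 f(\phi)}{2r},$$
and solve for the mixed derivative:
$$r\,\partial_u\partial_{\ub}\phi = -\tfrac{1}{2}\partial_u r\,\partial_{\ub}\phi - \tfrac{1}{2}\partial_{\ub} r\,\partial_u\phi - \frac{\Omega^2 f(\phi)}{4r}.$$
Substituting this back into $\partial_u\Theta = \partial_u r\,\partial_{\ub}\phi + r\,\partial_u\partial_{\ub}\phi$ gives the key intermediate identity
$$\partial_u \Theta = \tfrac{1}{2}\partial_u r\,\partial_{\ub}\phi - \tfrac{1}{2}\partial_{\ub} r\,\partial_u\phi - \frac{\Omega^2 f(\phi)}{4r}.$$

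Next I would apply the product rule to $\Theta/\sqrt{r}$:
$$\partial_u\left(\frac{\Theta}{\sqrt{r}}\right) = \frac{\partial_u\Theta}{\sqrt{r}} - \frac{\Theta\,\partial_u r}{2r^{3/2}} = \frac{\partial_u\Theta}{\sqrt{r}} - \frac{\partial_u r\,\partial_{\ub}\phi}{2\sqrt{r}}.$$
Inserting the formula for $\partial_u\Theta$ above, the two terms proportional to $\partial_u r\,\partial_{\ub}\phi$ cancel exactly, leaving precisely
$$\partial_u\left(\frac{\Theta}{\sqrt{r}}\right) = -\frac{1}{2\sqrt{r}}\,\partial_{\ub} r\,\partial_u\phi - \frac{\Omega^2 f(\phi)}{4r^{3/2}},$$
as claimed. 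The second identity follows by the symmetric argument, interchanging the roles of $u$ and $\ub$; since the wave equation and the definitions of $\Theta$, $\Xi$ are symmetric under this swap, the computation is literally identical.

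There is no real obstacle here: the lemma is an algebraic consequence of the wave equation and the product rule, and the cancellation of the $\partial_u r\,\partial_{\ub}\phi$ term (which is what makes the identity useful later, since it eliminates the ``bad'' derivative along the same null direction as $\partial_u$) is automatic once one solves the wave equation for $r\,\partial_u\partial_{\ub}\phi$ before computing $\partial_u(\Theta/\sqrt{r})$.
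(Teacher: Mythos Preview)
Your proof is correct and follows essentially the same approach as the paper: both start from the wave equation $\pr_u\Theta + \pr_{\ub}\Xi = -\Omega^2 f(\phi)/(2r)$, derive the intermediate identity $\pr_u\Theta = \tfrac{1}{2}\pr_u r\,\pr_{\ub}\phi - \tfrac{1}{2}\pr_{\ub}r\,\pr_u\phi - \Omega^2 f(\phi)/(4r)$ (the paper writes this as $\pr_u\Theta = \tfrac{1}{2}\tfrac{\pr_u r}{r}\Theta - \tfrac{1}{2}\pr_{\ub}r\,\pr_u\phi - \Omega^2 f(\phi)/(4r)$, which is the same thing), and then observe that dividing by $\sqrt{r}$ absorbs the first term on the right. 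Your write-up simply makes the Leibniz-rule cancellation more explicit.
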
 

\begin{proof}
From 
$$\pr_u(r\pr_{\ub}\phi)+\pr_{\ub}(r\pr_u\phi)=-\frac{\Omega^2}{2}\frac{f(\phi)}{r},$$
we infer 
\bee
\pr_u\Theta=\frac{1}{2}\frac{\pr_ur}{r}\Theta-\frac{1}{2}\pr_{\ub}r \pr_u\phi-\frac{\Omega^2f(\phi)}{4r},
\eee
and
\bee
\pr_{\ub}\Xi=\frac{1}{2}\frac{\pr_{\ub}r}{r}\Xi-\frac{1}{2}\pr_ur \pr_{\ub}\phi-\frac{\Omega^2f(\phi)}{4r}.
\eee
We rewrite the system as
\bee
\pr_u\left(\frac{\Theta}{\sqrt{r}}\right)=-\frac{1}{2\sqrt{r}}\pr_{\ub}r \pr_u\phi-\frac{\Omega^2f(\phi)}{4r^{\frac{3}{2}}},
\eee
and
\bee
\pr_{\ub}\left(\frac{\Xi}{\sqrt{r}}\right)=-\frac{1}{2\sqrt{r}}\pr_ur \pr_{\ub}\phi-\frac{\Omega^2f(\phi)}{4r^{\frac{3}{2}}}.
\eee
This concludes the proof of the lemma.
\end{proof}

We are now in position to prove Proposition \ref{prop:boot1}. Recall that
\bee
\pr_u\left(\frac{\Theta}{\sqrt{r}}\right)=-\frac{1}{2\sqrt{r}}\pr_{\ub}r \pr_u\phi-\frac{\Omega^2f(\phi)}{4r^{\frac{3}{2}}}.
\eee
We integrate between $(u,\ub)$ and $(-\ub-2, \ub)$, where $(-\ub-2, \ub)$ is on $\tau=-1$. We deduce
\bee
\frac{\Theta(u,\ub)}{\sqrt{r(u,\ub)}} &=& \frac{\Theta(-\ub-2,\ub)}{\sqrt{r(-\ub-2,\ub)}}-\int_{-\ub-2}^u\frac{1}{2\sqrt{r}}\pr_{\ub}r \pr_u\phi(\sigma,\ub)d\sigma -\int_{-\ub-2}^u\frac{\Omega^2f(\phi)}{4r^{\frac{3}{2}}}(\sigma,\ub)d\sigma\\
&=& \frac{\Theta(-\ub-2,\ub)}{\sqrt{r(-\ub-2,\ub)}}-\left[\frac{1}{2\sqrt{r}}\pr_{\ub}r \phi(\sigma,\ub)\right]_{-\ub-2}^u+\int_{-\ub-2}^u\frac{1}{2\sqrt{r}}\pr_u\pr_{\ub}r \phi(\sigma,\ub)d\sigma\\
&&-\int_{-\ub-2}^u\frac{1}{4r^{\frac{3}{2}}}\pr_ur\pr_{\ub}r \phi(\sigma,\ub)d\sigma-\int_{-\ub-2}^u\frac{\Omega^2f(\phi)}{4r^{\frac{3}{2}}}(\sigma,\ub)d\sigma.
\eee
We infer
\bee
\sqrt{r(u,\ub)}\pr_{\ub}\phi(u,\ub) &=& \sqrt{r(-\ub-2,\ub)}\pr_{\ub}\phi(-\ub-2,\ub)-\left[\frac{1}{2\sqrt{r}}\pr_{\ub}r \phi(\sigma,\ub)\right]_{-\ub-2}^u\\
&&+\int_{-\ub-2}^u\frac{\Omega^2}{8\sqrt{r}}\kappa \frac{g(\phi)^2}{r}\phi(\sigma,\ub)d\sigma - \int_{-\ub-2}^u\frac{1}{4r^{\frac{3}{2}}}\pr_ur\pr_{\ub}r \phi(\sigma,\ub)d\sigma\\
&& -\int_{-\ub-2}^u\frac{\Omega^2f(\phi)}{4r^{\frac{3}{2}}}(\sigma,\ub)d\sigma.
\eee
We deduce
\bee
\sqrt{r(u,\ub)}|\pr_{\ub}\phi(u,\ub)| &\les& \sqrt{r(-\ub-2,\ub)}|\pr_{\ub}\phi(-\ub-2,\ub)|+\frac{|\phi(-\ub-2,\ub)|}{ \sqrt{r(-\ub-2,\ub)}}+\frac{|\phi(u,\ub)|}{\sqrt{r(u,\ub)}}\\
&&+\int_{-\ub-2}^u\frac{|\phi(\sigma,\ub)|}{r^{\frac{3}{2}}} d\sigma
\eee
and hence
\bee
r(u,\ub)^{1-\delta}|\pr_{\ub}\phi(u,\ub)| &\les& r(u,\ub)^{\frac{1}{2}-\delta}\sqrt{r(-\ub-2,\ub)}|\pr_{\ub}\phi(-\ub-2,\ub)|+r(u,\ub)^{\frac{1}{2}-\delta}\frac{|\phi(-\ub-2,\ub)|}{ \sqrt{r(-\ub-2,\ub)}}\\
&&+r(u,\ub)^{-\delta}|\phi(u,\ub)|+r(u,\ub)^{\frac{1}{2}-\delta}\int_{-\ub-2}^u\frac{|\phi(\sigma,\ub)|}{{r(\sigma,\ub)^{\frac{3}{2}}}} d\sigma\\
&\les& C_0r(u,\ub)^{\frac{1}{2}-\delta}+r(u,\ub)^{-\delta}|\phi(u,\ub)|+r(u,\ub)^{\frac{1}{2}-\delta}\int_{-\ub-2}^u\frac{|\phi(\sigma,\ub)|}{{r(\sigma,\ub)^{\frac{3}{2}}}} d\sigma\\
&\les& C_0+r(u,\ub)^{-\delta}|\phi(u,\ub)|+r(u,\ub)^{\frac{1}{2}-\delta}\int_{-\ub-2}^u\frac{|\phi(\sigma,\ub)|}{{r(\sigma,\ub)^{\frac{3}{2}}}} d\sigma
\eee
where $C_0$ only depends on initial data.

Using the improved uniform bound for $\phi$ of lemma \ref{lemma:improvedunifboundphi}, we infer on $Q_{\ubb}$
\bee
r(u,\ub)^{1-\delta}|\pr_{\ub}\phi(u,\ub)| &\les& C_0+r(u,\ub)^{-\delta}(C_0\sqrt{\varrho(u,\ub)}+(\ep C+C_0)\varrho(u,\ub)^{\delta})\\
&&+r(u,\ub)^{\frac{1}{2}-\delta}\int_{-\ub-2}^u\frac{C_0\sqrt{\varrho(\sigma, \ub)}+(\ep C+C_0)\varrho(\sigma, \ub)^{\delta}}{{r(\sigma,\ub)^{\frac{3}{2}}}} d\sigma\\
&\les& C_0+\ep C
\eee
where we used the fact that $r\sim\varrho$ and $\delta<1/2$. Finally, we have obtained the existence of a universal constant $0<\underline{C}<+\infty$ such that we have on $Q_{\ubb}$
\bee
r(u,\ub)^{1-\delta}|\pr_{\ub}\phi(u,\ub)| &\leq& \underline{C}(C_0+\ep C).
\eee
This concludes the proof of Proposition \ref{prop:boot1}. 

\begin{remark}\lab{rem:howtouselemmathetaxi}
Lemma \ref{lemma:eqthetaandxi} is used as follows. The equation for $\Theta$ is always integrated from the initial data, while the equation for $\Xi$ is always integrated from the axis of symmetry $\Gamma$. We have the following three cases
\begin{itemize}
\item If $|\phi|\leq Cr^\delta$ with $\delta<1/2$, we deduce $|\pr_{\ub}\phi|\les Cr^{\delta-1}$, but no estimate for $\pr_u\phi$. We used this case above for the proof of Proposition \ref{prop:boot1}.

\item If $|\phi|\leq Cr^\delta$ with $\delta>1/2$, then we deduce $|\pr_u\phi|\les Cr^{\delta-1}$, and only $\pr_{\ub}\phi\les C\sqrt{r}$. This case will be used in the proof Lemma \ref{lemma:utlimaterefinedbounds}.

\item If $|\phi|\leq C\sqrt{r}$, then we have the log loss estimate $|\pr_{\ub}\phi|\les C|\log(r)|\sqrt{r}$, and no estimate for $\pr_u\phi$. Due to the log loss for $\pr_{\ub}\phi$, this case is never used.  
\end{itemize}
\end{remark}

\subsection{A more refined bound for $\phi$}

\begin{corollary}\label{cor:consequenceboot1}
For any
$$0<\delta<\frac{1}{2},$$
there exists a constant $C_0$ only depending on the values of the solution in $I_0$ such that we have for all $(\tau, \varrho)$ with $\tau\geq -1$ and $\ub\leq 0$
$$r^{1-\delta}|\pr_{\ub}\phi|+r^{-\delta}|\phi|\leq C_0$$
and
$$r^{-2\delta}\left(\frac{|r-\varrho|}{\varrho}+\left|\pr_ur+\frac{1}{2}\right|+\left|\pr_{\ub}r-\frac{1}{2}\right|+|\Omega-1|\right)\leq C_0.$$
\end{corollary}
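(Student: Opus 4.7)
The plan is to run a standard continuity/bootstrap argument, using Proposition \ref{prop:boot1} to close the bootstrap, and then to read off the remaining bounds from Lemma \ref{lemma:consequencebootass}. Fix $\delta \in (0, 1/2)$ and let $\underline{C}$ be the universal constant, and $C_0$ the constant depending only on the initial data, provided by Proposition \ref{prop:boot1}. We choose $\ep > 0$ small enough (via the non-concentration statement of Theorem \ref{theorem_ener_nonconc}, and hence via the choice of $\tau_0$ made before Lemma \ref{lemma:controloffluxsmall}) that
\[
\underline{C}\,\ep \;\leq\; \frac{1}{2}.
\]

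Define, for $-1 \leq \ubb < 0$,
\[
\mathcal{C}(\ubb) \;:=\; \sup_{Q_{\ubb}}\, r^{1-\delta}|\pr_{\ub}\phi|,
\]
and consider the set $I \subset [-1, 0)$ of values $\ubb$ for which $\mathcal{C}(\ubb) \leq 4\underline{C}\,C_0$. The set $I$ is nonempty (as $\ubb \to -1$, only the initial slice contributes and the bound holds by the assumptions on the initial data, up to enlarging $C_0$) and relatively closed in $[-1,0)$ by the continuity of $\mathcal{C}$ in $\ubb$. The main step is to show $I$ is also relatively open. For $\ubb \in I$, the bootstrap assumption \eqref{eq:boot1} holds with $C = 4\underline{C}\,C_0$, so Proposition \ref{prop:boot1} yields
\[
\mathcal{C}(\ubb) \;\leq\; \underline{C}\bigl(C_0 + \ep\cdot 4\underline{C}\,C_0\bigr) \;\leq\; \underline{C}\,C_0 + 2\underline{C}\,C_0 \;=\; 3\underline{C}\,C_0,
\]
which is strictly better than $4\underline{C}\,C_0$. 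By the continuity in $\ubb$, this strict improvement persists in a neighborhood, so $I$ is open. Therefore $I = [-1, 0)$, and we conclude the uniform estimate
\[
\sup_{Q_{\ubb}} r^{1-\delta}|\pr_{\ub}\phi| \;\leq\; 4\underline{C}\,C_0
\]
for every $\ubb \in [-1, 0)$, i.e. everywhere in the region under consideration. Renaming $4\underline{C}\,C_0$ as $C_0$, we obtain the first half of the first inequality.

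Once the bound $r^{1-\delta}|\pr_{\ub}\phi| \leq C_0$ is established without the bootstrap assumption, we plug it into Lemma \ref{lemma:consequencebootass} (which was proved assuming exactly \eqref{eq:boot1}). Its conclusions immediately give
\[
r^{-\delta}|\phi| \;\lesssim\; C_0, \qquad r^{-2\delta}\!\left(\tfrac{|r-\varrho|}{\varrho} + \bigl|\pr_u r + \tfrac{1}{2}\bigr| + \bigl|\pr_{\ub}r - \tfrac{1}{2}\bigr| + |\Omega - 1|\right) \;\lesssim\; C_0^2,
\]
which, after absorbing constants into $C_0$, yields the corollary. The only genuine point requiring care is the smallness of $\ep$, which is controlled once and for all by the non-concentration result together with the rescaling that normalized $\tau_0$ to $-1$; everything else is bookkeeping of the constants appearing in Proposition \ref{prop:boot1} and Lemma \ref{lemma:consequencebootass}.
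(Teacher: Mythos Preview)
Your proof is correct and follows essentially the same approach as the paper: choose $\ep$ small enough that the $\underline{C}\ep C$ term in Proposition \ref{prop:boot1} can be absorbed, close the bootstrap on $r^{1-\delta}|\pr_{\ub}\phi|$, and then read off the remaining bounds from Lemma \ref{lemma:consequencebootass}. The paper's proof is terser---it simply states that choosing $\ep$ small yields the improvement and hence the bound holds for all $\ubb$---while you have spelled out the standard continuity argument explicitly; the content is the same.
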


\begin{proof}
Choosing $\ep>0$ sufficiently small in Proposition \ref{prop:boot1} yields 
\bea\lab{eq:consequencepropositionimproveboot}
\sup_{Q_{\ubb}}r^{1-\delta}|\pr_{\ub}\phi|\leq  \underline{C}C_0
\eea
which is an improvement of the bootstrap assumption \eqref{eq:boot1}. This implies that \eqref{eq:consequencepropositionimproveboot}  holds for all $-1\leq \ub_b<0$. Hence, as
$$\{\tau\geq -1, \,\,\,\, \ub\leq 0\}=\overline{\cup_{-1\leq \ub_b<0}Q_{\ubb}}\cup I_0,$$
we have for all $(\tau, \varrho)$ with $\tau\geq -1$ and $\ub\leq 0$
$$r^{1-\delta}|\pr_{\ub}\phi|\leq C_0$$
for a constant $C_0$ only depending on the values of the solution in $I_0$. Arguing as in Lemma \ref{lemma:consequencebootass}, we infer
$$r^{-\delta}|\phi|\leq C_0$$
and
$$r^{-2\delta}\left(\frac{|r-\varrho|}{\varrho}+\left|\pr_ur+\frac{1}{2}\right|+\left|\pr_{\ub}r-\frac{1}{2}\right|+|\Omega-1|\right)\leq C_0.$$
This concludes the proof of the corollary.
\end{proof}

In this section, we would like to prove the following refined bounds.
\begin{lemma}\label{lemma:refinedboundsphietal}
There exists a constant $C_0$ only depending on the values of the solution in $I_0$ such that we have for all $(\tau, \varrho)$ with $\tau\geq -1$ and $\ub\leq 0$
$$r^{-\frac{1}{2}}|\phi|\leq C_0$$
and
$$r^{-1}\left(\frac{|r-\varrho|}{\varrho}+\left|\pr_ur+\frac{1}{2}\right|+\left|\pr_{\ub}r-\frac{1}{2}\right|\right)\leq C_0.$$
\end{lemma}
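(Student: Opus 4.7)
The plan is to iterate once more on Corollary \ref{cor:consequenceboot1} by exploiting the cubic gain in the nonlinear terms $F_1,F_2$ of Lemma \ref{lemma:structurewaveeqphi}, and then to propagate the improved bound on $\phi$ into the structure equations for $r$.

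First, I apply Corollary \ref{cor:consequenceboot1} with some fixed exponent $\delta\in(1/6,1/2)$ (say $\delta=5/12$). This gives $|\phi|\leq C_0 r^\delta$, $|\pr_{\ub}\phi|\leq C_0 r^{\delta-1}$ and $|r-\varrho|/\varrho+|\pr_u r+1/2|+|\pr_{\ub}r-1/2|+|\Omega-1|\leq C_0 r^{2\delta}$. Plugging these bounds into the expressions for $F_1,F_2$ from Lemma \ref{lemma:structurewaveeqphi}, every term is now estimated by a product of three quantities, each carrying at least a factor $r^\delta$ or $r^{2\delta}$; the upshot is the improved pointwise bound
\begin{equation*}
|F_1(u,\ub)|+|F_2(u,\ub)|\leq C_0\, r(u,\ub)^{3\delta},\qquad 3\delta>\tfrac12.
\end{equation*}

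Second, I re-run the representation-formula argument of Lemma \ref{lemma:improvedunifboundphi} using this bound on $F_1,F_2$ in place of the one from Lemma \ref{lemma:estF1F2}. This leads, as there, to the estimate
\begin{equation*}
|\phi(\tau,\varrho)|\lesssim C_0\sqrt{\varrho}+C_0\varrho^{3\delta}+C_0\sqrt{\varrho}\int_{-1}^{+\infty}\!\!\int_0^{\la^*}\frac{\la^{3\delta-\frac12}}{\sqrt{\varrho^2+\la^2+2\varrho\la\mu}}\bigl(|K(\mu)|+|\mu-1||K'(\mu)|\bigr)\,d\la\, d\mu.
\end{equation*}
The inequality $3\delta>1/2$ changes the nature of the integral: instead of the scale-invariant estimate \eqref{eq:estimusefullagainater} which produced $\varrho^{\delta-1/2}$, the integrand is now integrable at $\la=0$, and since $\la^*\leq \sqrt{(1+\tau)^2-\varrho^2}\leq 1$ uniformly in $\mu$, one gets the scale-free estimate $\int_0^{\la^*}\la^{3\delta-3/2}\,d\la\les (\la^*)^{3\delta-1/2}\les 1$, so the double integral is bounded. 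Combining this with $3\delta>1/2$ (which yields $\varrho^{3\delta}\les\sqrt\varrho$ on $\varrho\leq 1$), I conclude
\begin{equation*}
|\phi(\tau,\varrho)|\leq C_0\sqrt{\varrho}\leq C_0\, r^{1/2},
\end{equation*}
using $r\sim\varrho$ from Lemma \ref{lemma:basicestimate}.

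Third, I feed the sharp bound $|\phi|\leq C_0 r^{1/2}$ into the Einstein equation $\pr_u\pr_{\ub}r=\kappa\Omega^2 g(\phi)^2/(4r)$: since $|g(\phi)|\les|\phi|\les\sqrt r$, the right-hand side is bounded, $|\pr_u\pr_{\ub}r|\les C_0$. Integrating in $u$ (resp.\ in $\ub$) from the axis, where $\pr_{\ub}r=1/2$ and $\pr_u r=-1/2$, gives $|\pr_{\ub}r-1/2|+|\pr_u r+1/2|\les \varrho\les r$. For $r-\varrho$ I use that $r-\varrho$, $\pr_u(r-\varrho)$ and $\pr_{\ub}(r-\varrho)$ all vanish on $\Gamma$ (cf.\ \eqref{eq:axis-cond-uub} and the definition of $\varrho$), together with $\pr_u\pr_{\ub}(r-\varrho)=\pr_u\pr_{\ub}r\les 1$; two integrations from $\Gamma$ yield $|r-\varrho|\les\varrho^2\les r\varrho$, hence $|r-\varrho|/\varrho\les r$. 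This completes the desired bounds.

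The main obstacle is the third step of the representation-formula argument: the original estimate \eqref{eq:estimusefullagainater} is built around the exponent $\delta<1/2$ and cannot be used verbatim when the effective exponent $3\delta$ exceeds $1/2$. The point that makes things work is the geometric fact $\la^*\leq 1$ uniformly in $\mu\geq 0$ (since $\la^*$ is decreasing in $\mu$ for $\varrho<1+\tau$, with maximum value $\sqrt{(1+\tau)^2-\varrho^2}$), which, combined with $3\delta-3/2>-1$, replaces the long-range divergence by a harmless finite integral.
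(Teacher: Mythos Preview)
Your overall strategy matches the paper's: iterate Corollary \ref{cor:consequenceboot1} once to obtain $|F_1|+|F_2|\lesssim C_0\, r^{3\delta}$ with $3\delta>1/2$, feed this into the representation formula of Lemma \ref{lemma:repformulaphiwave}, and then read off the bounds on $\partial_u r$, $\partial_{\ub}r$, $r-\varrho$ from $\partial_u\partial_{\ub}r=\kappa\,\Omega^2 g(\phi)^2/(4r)$. Your third step is fine.

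However, your treatment of the double integral has a gap in the range $-1\leq\mu<0$. To reduce the inner integral to $\int_0^{\la^*}\la^{3\delta-3/2}\,d\la$ you are implicitly using $\sqrt{\varrho^2+\la^2+2\varrho\la\mu}\geq \la$, i.e.\ $\varrho+2\la\mu\geq 0$; this fails for $\mu<0$ as soon as $\la>\varrho/(2|\mu|)$. Your bound $\la^*\leq\sqrt{(1+\tau)^2-\varrho^2}$ is likewise only valid for $\mu\geq 0$ (at $\mu=-1$ one has $\la^*=(1+\tau)+\varrho$), as you yourself concede in the final paragraph. Since the range $\mu\in[-1,0)$ is always present---and is in fact the \emph{only} range when $\varrho\geq 1+\tau$---this is not a cosmetic omission.

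The paper closes this gap by splitting the $\la$-integral at $\la=2\varrho$, uniformly in $\mu$. For $\la\leq 2\varrho$ one writes $\la^{3\delta-1/2}\leq (2\varrho)^{2\delta}\la^{\delta-1/2}$ and falls back on the already-proved estimate \eqref{eq:estimusefullagainater}, gaining a factor $\varrho^{2\delta}$. For $\la\geq 2\varrho$ one uses $\sqrt{\varrho^2+\la^2+2\varrho\la\mu}\geq|\varrho-\la|\geq\la/2$, which holds for \emph{all} $\mu\geq -1$, to obtain $\int_{2\varrho}^{\la^*}\la^{3\delta-3/2}\,d\la\lesssim(\la^*)^{3\delta-1/2}\lesssim 1$. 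Alternatively, your own argument can be rescued on $[-1,0)$ via the bound $\sqrt{\varrho^2+\la^2+2\varrho\la\mu}\geq\la\sqrt{1-\mu^2}$ and the integrability of $(1-\mu^2)^{-1/2}$ against the bounded weight $|K|+|K'|$ there.
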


\begin{remark}\lab{rem:problemlogloss}
The above improvement can not be true for $r\pr_{\ub}\phi$ due to a log loss when integrating the improved estimate for $\phi$ using the equation for $\Theta$ (see Remark \ref{rem:howtouselemmathetaxi}). In turn, this improvement can also not be true for $\Omega-1$.
\end{remark}

\begin{proof}
Recall that 
\bee
|\phi(\tau, \varrho)| &\les& C_0\sqrt{\varrho}+\sup_{0\leq\la\leq\varrho}|F_1(\tau-\varrho+\la,\la)|\\
&&+\int_{-1}^{+\infty}\int_0^{\la^*}\frac{\sqrt{\varrho}}{\sqrt{\la}\sqrt{\varrho^2+\la^2+2\varrho\la\mu}}|K(\mu)|(|F_1(\sigma, \la)|+|F_2(\sigma, \la)|)d\la d\mu\\
&&+\int_{-1}^{+\infty}\int_0^{\la^*}\frac{\sqrt{\varrho}}{\sqrt{\la}\sqrt{\varrho^2+\la^2+2\varrho\la\mu}}|\mu-1||K'(\mu)| |F_1(\sigma, \la)| d\la d\mu.
\eee
Also, we have in view of the definition of $F_1$ and $F_2$ together with the estimates of  Corollary \ref{cor:consequenceboot1}
$$r^{-3\delta}(|F_1|+|F_2|)\les C_0,$$
where we choose from now on $\delta$ such that
$$\frac{1}{6}<\delta<\frac{1}{2}.$$
We infer 
\bee
|\phi(\tau, \varrho)| &\les& C_0\sqrt{\varrho}+C_0\varrho^{3\delta}\\
&&+C_0\sqrt{\varrho}\int_{-1}^{+\infty}\int_0^{\la^*}\frac{\la^{3\delta-\frac{1}{2}}}{\sqrt{\varrho^2+\la^2+2\varrho\la\mu}}(|K(\mu)|+|\mu-1||K'(\mu)|) d\la d\mu.
\eee
We have
\bee
&&\int_0^{\la^*}\frac{\la^{3\delta-\frac{1}{2}}}{\sqrt{\varrho^2+\la^2+2\varrho\la\mu}} d\la \\
&=& \int_0^{2\varrho}\frac{\la^{3\delta-\frac{1}{2}}}{\sqrt{\varrho^2+\la^2+2\varrho\la\mu}} d\la+\int_{2\varrho}^{\la^*}\frac{\la^{3\delta-\frac{1}{2}}}{\sqrt{\varrho^2+\la^2+2\varrho\la\mu}} d\la\\
&\les&  \varrho^{2\delta}\int_0^{2\varrho}\frac{\la^{\delta-\frac{1}{2}}}{\sqrt{\varrho^2+\la^2+2\varrho\la\mu}} d\la+\int_{2\varrho}^{\la^*}\frac{\la^{3\delta-\frac{1}{2}}}{\sqrt{(\varrho-\la)^2+2\varrho\la(1+\mu)}} d\la.
\eee
Since $\mu\geq -1$, we infer
\bee
&&\int_0^{\la^*}\frac{\la^{3\delta-\frac{1}{2}}}{\sqrt{\varrho^2+\la^2+2\varrho\la\mu}} d\la \\
&\les&  \varrho^{2\delta}\int_0^{2\varrho}\frac{\la^{\delta-\frac{1}{2}}}{\sqrt{\varrho^2+\la^2+2\varrho\la\mu}} d\la+\int_{2\varrho}^{\la^*}\frac{\la^{3\delta-\frac{1}{2}}}{\la} d\la\\
&\les&  \varrho^{2\delta}\int_0^{\la^*}\frac{\la^{\delta-\frac{1}{2}}}{\sqrt{\varrho^2+\la^2+2\varrho\la\mu}} d\la+\left[\la^{3\delta-\frac{1}{2}}\right]_{2\varrho}^{\la^*}\\
&\les&  1+\varrho^{2\delta}\int_0^{\la^*}\frac{\la^{\delta-\frac{1}{2}}}{\sqrt{\varrho^2+\la^2+2\varrho\la\mu}} d\la
\eee
where used the fact that we chose $\delta>1/6$. 

We have obtained
\bee
|\phi(\tau, \varrho)| &\les& C_0\sqrt{\varrho}\\
&&+C_0\varrho^{2\delta}\sqrt{\varrho}\int_{-1}^{+\infty}\int_0^{\la^*}\frac{\la^{\delta-\frac{1}{2}}}{\sqrt{\varrho^2+\la^2+2\varrho\la\mu}}(|K(\mu)|+|\mu-1||K'(\mu)|) d\la d\mu,
\eee
where we used the fact that we chose $\delta$ such that $\delta>1/6$. Recall estimate \eqref{eq:estimusefullagainater}:
$$\sqrt{\varrho}\int_{-1}^{+\infty}\int_0^{\la^*}\frac{\la^{\delta-\frac{1}{2}}}{\sqrt{\varrho^2+\la^2+2\varrho\la\mu}}(|K(\mu)|+|\mu-1||K'(\mu)|) d\la d\mu\les \varrho^{\delta}.$$
We infer
\bee
|\phi(\tau, \varrho)| &\les& C_0\sqrt{\varrho}+C_0\varrho^{3\delta}\\
&\les& C_0\sqrt{\varrho},
\eee
where we used the fact that we chose $\delta$ such that $\delta>1/6$. This concludes the improvement for $\phi$. \\

Then, one obtains the improvements\footnote{Note that the proof of these improvements only relies on the equation $\pr_u\pr_{\ub}r=\kappa \Omega^2g(\phi)^2/(4r)$. Therefore, the proof requires the improved estimate for $\phi$. The important point is that it does not require the corresponding improvement for $\pr_{\ub}\phi$ which does not hold due to a log loss (see Remark \ref{rem:problemlogloss}).} for $\pr_{\ub}r-1/2$, $\pr_ur+1/2$ and $r-\varrho$ as in the proof of Lemma \ref{lemma:consequencebootass}. This concludes the proof of the lemma.
\end{proof}

\section{An improved uniform bound for $\pr\phi$} \label{sec:improved-uniform-dphi}

Here, we differentiate equation for $\phi$ once and we obtain a uniform bound for $\pr\phi$ and $\phi/r$ again relying on an explicit representation formula for the flat wave equation by adapting the approach in \cite{jal_tah1}.

\subsection{Upper bounds for higher order derivatives}\lab{sec:higherderest}

We start by deriving an upper bound for $\pr_{\ub}\Omega$. 
\begin{lemma}\lab{lemma:upperboundprubOmega}
There exists a constant $C_0$ only depending on the values of the solution in $I_0$ such that we have for any $0<\delta<1/2$ and  for all $(\tau, \varrho)$ with $\tau\geq -1$ and $\ub\leq 0$
\bee
r^{1-2\delta}|\pr_{\ub}\Omega| &\les& C_0.
\eee 
\end{lemma}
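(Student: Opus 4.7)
The plan is to integrate the Einstein equation for $\Omega$ along $u'$-lines. From equation \eqref{eq:Tthth} one obtains
\begin{equation*}
-\pr_u\pr_{\ub}\log\Omega = \frac{\kappa}{2}\pr_u\phi\,\pr_{\ub}\phi + \frac{\kappa\Omega^2 g(\phi)^2}{8r^2},
\end{equation*}
and integrating in $u'$ from the initial slice $\tau=-1$ (where $u'=-2-\ub$) up to the point $u$ yields
\begin{equation*}
\pr_{\ub}\log\Omega(u,\ub) = \pr_{\ub}\log\Omega(-2-\ub,\ub) - \int_{-2-\ub}^u\!\!\left[\frac{\kappa}{2}\pr_{u'}\phi\,\pr_{\ub}\phi + \frac{\kappa\Omega^2 g(\phi)^2}{8r^2}\right](u',\ub)\,du'.
\end{equation*}
The initial-data term is bounded by $C_0$, and since $\Omega$ is uniformly bounded above and below (Corollary \ref{cor:consequenceboot1}), proving the lemma reduces to showing each of the two integrals is $\les C_0\, r^{2\delta-1}$.

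The ``potential'' piece is essentially immediate. From $|\phi|\leq C_0 r^\delta$ (Corollary \ref{cor:consequenceboot1}) one has $\Omega^2 g(\phi)^2/r^2 \les C_0^2\, r^{2\delta-2}$, and since $r\sim(\ub-u')/2$ along the integration path (Lemma \ref{lemma:basicestimate}), the change of variable $s=\ub-u'$ gives $\int_{2\vrho}^{2\ub+2} s^{2\delta-2}\,ds\les \vrho^{2\delta-1}$, the dominant contribution coming from the endpoint at $u'=u$ precisely because $2\delta-1<0$. The genuine difficulty is the cross term $\pr_u\phi\,\pr_{\ub}\phi$: a naive Cauchy--Schwarz using the flux bound $\int(\pr_u\phi)^2\, r\,du'\les\ep$ and the pointwise estimate for $\pr_{\ub}\phi$ yields only $\sqrt{\ep}\,r^{\delta-1}$, which for $\delta<1/2$ is strictly worse than the target $r^{2\delta-1}$ and so cannot be used.

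To beat this, I would integrate by parts in $u'$ twice. First,
\begin{equation*}
\int_{-2-\ub}^u \pr_{u'}\phi\,\pr_{\ub}\phi\,du' = \bigl[\phi\,\pr_{\ub}\phi\bigr]_{-2-\ub}^u - \int_{-2-\ub}^u \phi\,\pr_{u'}\pr_{\ub}\phi\,du';
\end{equation*}
the boundary contributions are $\les C_0\, r^{2\delta-1}+C_0$. In the interior, I substitute the wave equation in the form $-\pr_{u'}\pr_{\ub}\phi = (\pr_{u'}r\,\pr_{\ub}\phi + \pr_{\ub}r\,\pr_{u'}\phi)/(2r) + \Omega^2 f(\phi)/(4r^2)$. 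The two pieces $\phi\,\pr_{u'}r\,\pr_{\ub}\phi/r$ and $\phi\,\Omega^2 f(\phi)/r^2$ are pointwise bounded by $C_0^2 r^{2\delta-2}$ and integrate to $\les C_0\, r^{2\delta-1}$; only the piece containing $\pr_{u'}\phi$ survives.

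For that last piece I would use $\phi\,\pr_{u'}\phi=\tfrac12\pr_{u'}(\phi^2)$ and integrate by parts a second time:
\begin{equation*}
\int\frac{\phi\,\pr_{\ub}r\,\pr_{u'}\phi}{2r}\,du' = \frac{1}{4}\left[\frac{\pr_{\ub}r}{r}\phi^2\right]_{-2-\ub}^u -\frac{1}{4}\int \pr_{u'}\!\Bigl(\frac{\pr_{\ub}r}{r}\Bigr)\phi^2\,du'.
\end{equation*}
The boundary term is $\les C_0\,r^{2\delta-1}$ since $\phi^2/r\les C_0^2 r^{2\delta-1}$ and $\pr_{\ub}r$ is bounded. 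In the interior, $\pr_{u'}(\pr_{\ub}r/r) = \pr_{u'}\pr_{\ub}r/r - \pr_{u'}r\,\pr_{\ub}r/r^2$; using the Einstein equation $\pr_{u'}\pr_{\ub}r = \kappa\Omega^2 g(\phi)^2/(4r)$, the first summand multiplied by $\phi^2$ is bounded by $C_0^4\, r^{4\delta-2}$, and since $r\les 1$ and $\delta>0$ we have $r^{4\delta-2}\leq r^{2\delta-2}$, so it again integrates to $\les r^{2\delta-1}$; the second summand is directly $\les C_0^2 r^{2\delta-2}$ with the same conclusion. Assembling everything gives $|\pr_{\ub}\log\Omega|\les C_0\, r^{2\delta-1}$, and since $\Omega$ is uniformly bounded, the claim follows. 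The main obstacle is the cross term $\pr_u\phi\,\pr_{\ub}\phi$, as $\pr_u\phi$ admits no pointwise bound; the two successive integrations by parts --- supported by the null structure of the wave equation and by $\phi\,\pr_u\phi=\tfrac12\pr_u(\phi^2)$ --- reduce it to integrals in which $\pr_u\phi$ no longer appears.
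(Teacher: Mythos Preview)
Your proof is correct and follows essentially the same route as the paper's: both start from $\pr_u\pr_{\ub}\log\Omega = -\tfrac{\kappa}{2}\pr_u\phi\,\pr_{\ub}\phi - \tfrac{\kappa\Omega^2}{8}g(\phi)^2/r^2$, handle the potential term directly, and deal with the cross term $\pr_u\phi\,\pr_{\ub}\phi$ by two successive integrations by parts (first via $\pr_u(\phi\,\pr_{\ub}\phi)$ and the wave equation, then via $\phi\,\pr_u\phi=\tfrac12\pr_u(\phi^2)$ and the equation for $\pr_u\pr_{\ub}r$). The only cosmetic difference is that the paper performs these rewritings algebraically on the integrand before integrating, whereas you integrate first and then integrate by parts; the resulting boundary and bulk terms, and their estimates, are identical.
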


\begin{proof}
Recall that 
$$\Omega^{-2}(\pr_u\Omega\pr_{\ub}\Omega-\Omega\pr_u\pr_{\ub}\Omega)=\frac{1}{8}\Omega^2\kappa\left(\frac{4}{\Omega^2}\pr_u\phi\pr_{\ub}\phi+\frac{g(\phi)^2}{r^2}\right).$$
We deduce
\bee
\pr_u(\pr_{\ub}\log(\Omega)) &=& \pr_u\left(\frac{\pr_{\ub}\Omega}{\Omega}\right)\\
&=& \frac{\pr_u\pr_{\ub}\Omega}{\Omega}-\frac{\pr_u\Omega\pr_{\ub}\Omega}{\Omega^2}\\
&=& -\frac{\kappa}{2}\pr_u\phi\pr_{\ub}\phi-\frac{\kappa\Omega^2}{8}\frac{g(\phi)^2}{r^2}.
\eee
Also, recall that
\bee
\square_\gg(\phi) &=& \frac{1}{\Omega^2}\left(-4\pr_u\pr_{\ub}\phi-\frac{2\pr_ur}{r}\pr_{\ub}\phi-\frac{2\pr_{\ub}r}{r}\pr_u\phi\right)
\eee
and
$$\square_\gg\phi=\frac{f(\phi)}{r^2}.$$
We infer
\bee
\pr_u\pr_{\ub}\phi=-\frac{\pr_ur}{2r}\pr_{\ub}\phi-\frac{\pr_{\ub}r}{2r}\pr_u\phi-\frac{\Omega^2f(\phi)}{4r^2}.
\eee
Hence, we deduce
\bee
\pr_u(\pr_{\ub}\log(\Omega)) &=& -\frac{\kappa}{2}\pr_u(\phi\pr_{\ub}\phi)+\frac{\kappa}{2}\phi\pr_u\pr_{\ub}\phi-\frac{\kappa\Omega^2}{8}\frac{g(\phi)^2}{r^2}\\
&=&  -\frac{\kappa}{2}\pr_u(\phi\pr_{\ub}\phi)+\frac{\kappa}{2}\phi\left(-\frac{\pr_ur}{2r}\pr_{\ub}\phi-\frac{\pr_{\ub}r}{2r}\pr_u\phi-\frac{\Omega^2f(\phi)}{4r^2}\right)-\frac{\kappa\Omega^2}{8}\frac{g(\phi)^2}{r^2}\\
&=&  -\frac{\kappa}{2}\pr_u(\phi\pr_{\ub}\phi)-\frac{\kappa}{8}\pr_u\left(\frac{\pr_{\ub}r}{r}\phi^2\right)+\frac{\kappa\pr_u\pr_{\ub}r}{8r}\phi^2-\frac{\kappa\pr_ur\pr_{\ub}r}{8r^2}\phi^2-\frac{\kappa\pr_ur}{4r}\phi\pr_{\ub}\phi\\
&&-\frac{\kappa\Omega^2\phi f(\phi)}{8r^2}-\frac{\kappa\Omega^2}{8}\frac{g(\phi)^2}{r^2}\\
&=&  -\frac{\kappa}{2}\pr_u(\phi\pr_{\ub}\phi)-\frac{\kappa}{8}\pr_u\left(\frac{\pr_{\ub}r}{r}\phi^2\right)+\frac{\kappa^2\Omega^2}{32r^2}g(\phi)^2\phi^2-\frac{\kappa\pr_ur\pr_{\ub}r}{8r^2}\phi^2-\frac{\kappa\pr_ur}{4r}\phi\pr_{\ub}\phi\\
&&-\frac{\kappa\Omega^2\phi f(\phi)}{8r^2}-\frac{\kappa\Omega^2}{8}\frac{g(\phi)^2}{r^2}.
\eee
We integrate between $(u,\ub)$ and $(-\ub-2, \ub)$, where $(-\ub-2, \ub)$ is on $\tau=-1$. We deduce 
\bee
\left[\pr_{\ub}\log(\Omega)(\sigma,\ub)\right]_{-\ub-2}^u &=&  -\frac{\kappa}{2}\left[\phi\pr_{\ub}\phi(\sigma,\ub)\right]_{-\ub-2}^u-\frac{\kappa}{8}\left[\frac{\pr_{\ub}r}{r}\phi^2(\sigma,\ub)\right]_{-\ub-2}^u+\int_{-\ub-2}^u\frac{\kappa^2\Omega^2}{32r^2}g(\phi)^2\phi^2(\sigma,\ub)d\sigma\\
&&-\int_{-\ub-2}^u\frac{\kappa\pr_ur\pr_{\ub}r}{8r^2}\phi^2(\sigma,\ub)d\sigma-\int_{-\ub-2}^u\frac{\kappa\pr_ur}{4r}\phi\pr_{\ub}\phi(\sigma,\ub)d\sigma\\
&&-\int_{-\ub-2}^u\frac{\kappa\Omega^2\phi f(\phi)}{8r^2}(\sigma,\ub)d\sigma-\int_{-\ub-2}^u\frac{\kappa\Omega^2}{8}\frac{g(\phi)^2}{r^2}(\sigma,\ub)d\sigma.
\eee 
This yields
\bee
|\pr_{\ub}\log(\Omega)(u,\ub)| &\les& C_0+ |\phi|\left(|\pr_{\ub}\phi|+\frac{|\pr_{\ub}r|}{r}|\phi|\right)(u,\ub)\\
&&+\int_{-\ub-2}^u\frac{|\pr_ur|}{r}|\phi|\left(|\pr_{\ub}\phi|+\frac{|\pr_{\ub}r|}{r}|\phi|\right)(\sigma,\ub)d\sigma+\int_{-\ub-2}^u\frac{\Omega^2|\phi|}{r^2}(\sigma,\ub)d\sigma\\
&\les& C_0+r^{2\delta-1}C_0\\
&\les& r^{2\delta-1}C_0,
\eee 
where we used the fact that $0<\delta<1/2$, the upper bounds on $\phi$, $\pr_{\ub}\phi$ of Corollary \ref{cor:consequenceboot1}, the uniform bounds for $\pr_ur$, $\pr_{\ub}r$ and $\Omega$, and the fact that $\varrho\sim r$. In view of the fact that $|\Omega|\sim 1$, we finally obtain
\bee
r^{1-2\delta}|\pr_{\ub}\Omega| &\les& C_0.
\eee 
This concludes the proof of the lemma.
\end{proof}

Next, we derive an upper bound for $\pr_{\ub}^2r$. 
\begin{lemma}\lab{lemma:upperboundforpr2ubr}
There exists a constant $C_0$ only depending on the values of the solution in $I_0$ such that we have for any $0<\delta<1/2$ and for all $(\tau, \varrho)$ with $\tau\geq -1$ and $\ub\leq 0$
\bee
|\pr_{\ub}^2r(u,\ub)| &\les&  r^{2\delta-1}C_0.
\eee
\end{lemma}

\begin{proof}
Recall that
$$\pr_u\pr_{\ub}r=\kappa \frac{\Omega^2g(\phi)^2}{4r}.$$
We deduce
\bee
\pr_u\pr_{\ub}^2r &=& \kappa \frac{\Omega(\pr_{\ub}\Omega)g(\phi)^2}{2r}+ \kappa \frac{\Omega^2g(\phi)g'(\phi)\pr_{\ub}\phi}{2r}- \kappa \frac{\Omega^2g(\phi)^2\pr_{\ub}r}{4r^2}.
\eee
We integrate between $(u,\ub)$ and $(-\ub-2, \ub)$, where $(-\ub-2, \ub)$ is on $\tau=-1$. We deduce
\bee
[\pr_{\ub}^2r(\sigma,\ub)]_{-\ub-2}^u &=& \kappa\int_{-\ub-2}^u\frac{\Omega(\pr_{\ub}\Omega)g(\phi)^2}{2r}(\sigma, \ub)d\sigma+ \kappa\int_{-\ub-2}^u \frac{\Omega^2g(\phi)g'(\phi)\pr_{\ub}\phi}{2r}(\sigma, \ub)d\sigma \\
&&- \kappa \int_{-\ub-2}^u\frac{\Omega^2g(\phi)^2\pr_{\ub}r}{4r^2}(\sigma, \ub)d\sigma
\eee
and hence
\bee
|\pr_{\ub}^2r(u,\ub)| &\les& C_0+\int_{-\ub-2}^u\left(\frac{|\pr_{\ub}\Omega||\phi|^2}{r}+\frac{|\phi||\pr_{\ub}\phi|}{r}+\frac{|\phi|^2|\pr_{\ub}r|}{r^2}\right)(\sigma, \ub)d\sigma\\
&\les & C_0+ r^{2\delta-1}C_0\\
&\les&  r^{2\delta-1}C_0,
\eee
where we used the fact that $0<\delta<1/2$, the upper bounds on $\phi$, $\pr_{\ub}\phi$ of Corollary \ref{cor:consequenceboot1}, the uniform bounds for $\pr_{\ub}r$ and $\Omega$, the upper bounds on $\pr_{\ub}\Omega$ of Lemma \ref{lemma:upperboundprubOmega}, the fact that $\pr^2_{\ub}r$ is bounded on $\tau=-1$ by $C_0$ since $C_0$ controls in particular two derivatives of the solution on $I_0$, and the fact that $\varrho\sim r$. This concludes the proof of the lemma.
\end{proof}

Next, we derive an upper bound for $\pr_{\ub}^2\phi$. 
\begin{lemma}\lab{lemma:upperboundpr2ubphi}
There exists a constant $C_0$ only depending on the values of the solution in $I_0$ such that we have for any $0<\delta<1/2$ and for all $(\tau, \varrho)$ with $\tau\geq -1$ and $\ub\leq 0$
\bee
|\pr_{\ub}^2\phi| &\les&  r^{\delta-2}C_0.
\eee
\end{lemma}

\begin{proof}
Recall that
$$\Theta=r\pr_{\ub}\phi$$
satisfies
\bee
\pr_u\left(\frac{\Theta}{\sqrt{r}}\right)=-\frac{1}{2\sqrt{r}}\pr_{\ub}r \pr_u\phi-\frac{\Omega^2f(\phi)}{4r^{\frac{3}{2}}}.
\eee
Differentiating with respect to $\ub$, we obtain
\bee
\pr_u\pr_{\ub}\left(\frac{\Theta}{\sqrt{r}}\right) &=& -\frac{1}{2\sqrt{r}}\pr_{\ub}r \pr_u\pr_{\ub}\phi-\frac{1}{2\sqrt{r}}\pr^2_{\ub}r \pr_u\phi+\frac{1}{4r^{\frac{3}{2}}}(\pr_{\ub}r)^2 \pr_u\phi-\frac{\Omega^2f'(\phi)\pr_{\ub}\phi}{4r^{\frac{3}{2}}}\\
&&-\frac{2\Omega (\pr_{\ub}\Omega)f(\phi)}{4r^{\frac{3}{2}}}+\frac{3\Omega^2f(\phi)\pr_{\ub}r}{8r^{\frac{5}{2}}}.
\eee
In view of
\bee
\pr_u\pr_{\ub}\phi=-\frac{\pr_ur}{2r}\pr_{\ub}\phi-\frac{\pr_{\ub}r}{2r}\pr_u\phi-\frac{\Omega^2f(\phi)}{4r^2}.
\eee
we deduce
\bee
\pr_u\pr_{\ub}\left(\frac{\Theta}{\sqrt{r}}\right) &=& -\frac{1}{2\sqrt{r}}\pr_{\ub}r \left(-\frac{\pr_ur\pr_{\ub}\phi}{2r}-\frac{\pr_{\ub}r\pr_u\phi}{2r}-\frac{\Omega^2f(\phi)}{4r^2}\right)-\frac{1}{2\sqrt{r}}\pr^2_{\ub}r \pr_u\phi+\frac{1}{4r^{\frac{3}{2}}}(\pr_{\ub}r)^2 \pr_u\phi\\
&&-\frac{\Omega^2f'(\phi)\pr_{\ub}\phi}{4r^{\frac{3}{2}}}-\frac{2\Omega (\pr_{\ub}\Omega)f(\phi)}{4r^{\frac{3}{2}}}+\frac{3\Omega^2f(\phi)\pr_{\ub}r}{8r^{\frac{5}{2}}}\\
&=& \left(-\frac{1}{2\sqrt{r}}\pr^2_{\ub}r +\frac{1}{2r^{\frac{3}{2}}}(\pr_{\ub}r)^2\right) \pr_u\phi+\frac{ \pr_{\ub}r\pr_ur\pr_{\ub}\phi}{4r^{\frac{3}{2}}}\\
&&-\frac{\Omega^2f'(\phi)\pr_{\ub}\phi}{4r^{\frac{3}{2}}}-\frac{2\Omega (\pr_{\ub}\Omega)f(\phi)}{4r^{\frac{3}{2}}}+\frac{\Omega^2f(\phi)\pr_{\ub}r}{2r^{\frac{5}{2}}}\\
&=& \pr_u\left[-\frac{1}{2\sqrt{r}}\pr^2_{\ub}r\phi +\frac{1}{2r^{\frac{3}{2}}}(\pr_{\ub}r)^2\phi\right]+\frac{1}{2\sqrt{r}}\pr_u\pr^2_{\ub}r\phi -\frac{\pr_ur}{4r^{\frac{3}{2}}}\pr^2_{\ub}r\phi -\frac{1}{r^{\frac{3}{2}}}\pr_{\ub}r\pr_u\pr_{\ub}r\phi\\
&&+\frac{3}{4r^{\frac{5}{2}}}\pr_ur(\pr_{\ub}r)^2\phi+\frac{ \pr_{\ub}r\pr_ur\pr_{\ub}\phi}{4r^{\frac{3}{2}}}-\frac{\Omega^2f'(\phi)\pr_{\ub}\phi}{4r^{\frac{3}{2}}}-\frac{2\Omega (\pr_{\ub}\Omega)f(\phi)}{4r^{\frac{3}{2}}}+\frac{\Omega^2f(\phi)\pr_{\ub}r}{2r^{\frac{5}{2}}}.
\eee
Recall that
$$\pr_u\pr_{\ub}r=\kappa \frac{\Omega^2g(\phi)^2}{4r},$$
and
\bee
\pr_u\pr_{\ub}^2r &=& \kappa \frac{\Omega(\pr_{\ub}\Omega)g(\phi)^2}{2r}+ \kappa \frac{\Omega^2g(\phi)g'(\phi)\pr_{\ub}\phi}{2r}- \kappa \frac{\Omega^2g(\phi)^2\pr_{\ub}r}{4r^2}.
\eee
We deduce
\bee
\pr_u\pr_{\ub}\left(\frac{\Theta}{\sqrt{r}}\right) &=& \pr_u\left[-\frac{1}{2\sqrt{r}}\pr^2_{\ub}r\phi +\frac{1}{2r^{\frac{3}{2}}}(\pr_{\ub}r)^2\phi\right]\\
&&+ \frac{\kappa\Omega(\pr_{\ub}\Omega)g(\phi)^2}{4r^{\frac{3}{2}}}\phi + \frac{\kappa\Omega^2g(\phi)g'(\phi)\pr_{\ub}\phi}{4r^{\frac{3}{2}}}\phi -  \frac{3\kappa\Omega^2g(\phi)^2\pr_{\ub}r}{8r^{\frac{5}{2}}}\phi \\
&&-\frac{\pr_ur}{4r^{\frac{3}{2}}}\pr^2_{\ub}r\phi +\frac{3}{4r^{\frac{5}{2}}}\pr_ur(\pr_{\ub}r)^2\phi+\frac{ \pr_{\ub}r\pr_ur\pr_{\ub}\phi}{4r^{\frac{3}{2}}}-\frac{\Omega^2f'(\phi)\pr_{\ub}\phi}{4r^{\frac{3}{2}}}\\
&&-\frac{\Omega (\pr_{\ub}\Omega)f(\phi)}{2r^{\frac{3}{2}}}+\frac{\Omega^2f(\phi)\pr_{\ub}r}{2r^{\frac{5}{2}}}.
\eee
We integrate between $(u,\ub)$ and $(-\ub-2, \ub)$, where $(-\ub-2, \ub)$ is on $\tau=-1$. We deduce
\bee
\left[\pr_{\ub}\left(\frac{\Theta}{\sqrt{r}}\right)(\sigma,\ub)\right]_{-\ub-2}^u &=& \left[-\frac{1}{2\sqrt{r}}\pr^2_{\ub}r\phi +\frac{1}{2r^{\frac{3}{2}}}(\pr_{\ub}r)^2\phi(\sigma,\ub)\right]_{-\ub-2}^u\\
&&+ \int_{-\ub-2}^u\Bigg\{\frac{\kappa\Omega(\pr_{\ub}\Omega)g(\phi)^2}{4r^{\frac{3}{2}}}\phi + \frac{\kappa\Omega^2g(\phi)g'(\phi)\pr_{\ub}\phi}{4r^{\frac{3}{2}}}\phi -  \frac{3\kappa\Omega^2g(\phi)^2\pr_{\ub}r}{8r^{\frac{5}{2}}}\phi \\
&&-\frac{\pr_ur}{4r^{\frac{3}{2}}}\pr^2_{\ub}r\phi +\frac{3}{4r^{\frac{5}{2}}}\pr_ur(\pr_{\ub}r)^2\phi+\frac{ \pr_{\ub}r\pr_ur\pr_{\ub}\phi}{4r^{\frac{3}{2}}}-\frac{\Omega^2f'(\phi)\pr_{\ub}\phi}{4r^{\frac{3}{2}}}\\
&&-\frac{\Omega (\pr_{\ub}\Omega)f(\phi)}{2r^{\frac{3}{2}}}+\frac{\Omega^2f(\phi)\pr_{\ub}r}{2r^{\frac{5}{2}}}\Bigg\}(\sigma,\ub)d\sigma
\eee
and hence
\bee
\left|\pr_{\ub}\left(\frac{\Theta}{\sqrt{r}}\right)(u,\ub)\right| &\les& \frac{C_0}{\sqrt{r(-\ub-2, \ub)}}+\left(\frac{|\phi||\pr^2_{\ub}r|}{\sqrt{r}}+\frac{(\pr_{\ub}r)^2|\phi|}{r^{\frac{3}{2}}}\right)(u,\ub)\\
&&+\int_{-\ub-2}^u\frac{|\pr_{\ub}\phi||\phi|+|\pr^2_{\ub}r||\phi|+|\phi|+|\pr_{\ub}\phi|+|\pr_{\ub}\Omega||\phi|}{r^{\frac{3}{2}}}(\sigma,\ub)d\sigma\\
&& +\int_{-\ub-2}^u\frac{|\phi|}{r^{\frac{5}{2}}}(\sigma,\ub)d\sigma\\
&\les &  \frac{C_0}{\sqrt{r(-\ub-2, \ub)}}+ r^{\delta-\frac{3}{2}}C_0\\
&\les&  r^{\delta-\frac{3}{2}}C_0,
\eee
where we used the fact that $0<\delta<1/2$, the upper bounds on $\phi$, $\pr_{\ub}\phi$ of Corollary \ref{cor:consequenceboot1}, the uniform bounds for $\pr_ur$, $\pr_{\ub}r$ and $\Omega$, the upper bounds on $\pr_{\ub}\Omega$ of Lemma \ref{lemma:upperboundprubOmega}, the upper bounds on $\pr^2_{\ub}r$ of Lemma \ref{lemma:upperboundforpr2ubr}, the fact that $\varrho\sim r$ and the fact that $r(-\ub-2, \ub)\geq r$. Since
$$\Theta=r\pr_{\ub}\phi,$$
we infer
$$\pr_{\ub}\left(\frac{\Theta}{\sqrt{r}}\right)=\sqrt{r}\pr_{\ub}^2\phi+\frac{1}{2\sqrt{r}}\pr_{\ub}\phi$$
and hence
\bee
|\pr_{\ub}^2\phi| &\les&  r^{\delta-2}C_0.
\eee
This concludes the proof of the lemma.
\end{proof}

\subsection{A wave equation satisfied by $v$}

\begin{lemma}\lab{lemma:structurewaveeqv}
Let 
$$v=D\phi=\left(\pr_{\varrho}+\frac{1}{\varrho}\right)\phi.$$
We have
\bee
\left(-\pr_\tau^2+\pr_{\varrho}^2+\frac{1}{\varrho}\pr_\varrho\right)v &=& \pr_u\left(\frac{B_1}{\varrho}v+\frac{B_2}{\varrho^2}\right)+\frac{B_3}{\varrho^2}v+\frac{B_4}{\varrho^3},
\eee
where
\bee
B_1 &=&  \frac{\varrho-r+\varrho(2\pr_{\ub}r-1)}{r},
\eee
\bee
B_2 &=&\frac{2\varrho^2(\pr_{\varrho}r-1)}{r}\pr_{\ub}\phi -\frac{\varrho-r+\varrho(2\pr_{\ub}r-1)-2\varrho^2\pr_{\ub}^2r}{r}\phi-\frac{\varrho^2(\Omega^2-1)f(\phi)}{r^2},
\eee
\bee
B_3 &=& \frac{\varrho-r}{r}\left(\frac{1}{2}+\frac{3\varrho}{2r}\pr_{\ub}r+\frac{\varrho}{r}\right)+\frac{\varrho(2\pr_{\ub}r-1)}{r}\left(-1+\frac{\varrho}{r}\pr_ru+\frac{\varrho}{r}\pr_{\varrho}r\right)+\frac{\varrho(2\pr_ur+1)}{2r}\left(1-\frac{\varrho}{r}\pr_{\ub}r\right)\\
&& \frac{\varrho(\pr_{\varrho}r-1)}{r}\left(1-\frac{\varrho}{r}\pr_{\ub}r\right)+\frac{\varrho^2(3\phi^2\zeta(\phi)+\phi^3\zeta'(\phi))}{r^2},
\eee
and
\bee
B_4 &=& \Bigg[\left(-\frac{3}{2}-\frac{\varrho}{r}\pr_ur-\frac{2\varrho^2+3\varrho r}{r^2}\pr_{\varrho}r-\frac{\varrho}{2r}\pr_{\ub}r\right)\frac{\varrho-r}{r}+\left(-\frac{\varrho\pr_{\ub}r}{r}+1\right)\frac{\varrho(2\pr_{\ub}r-1)}{r}\\
&&+\left(-3+\frac{\varrho\pr_{\ub}r}{r}\right)\frac{\varrho(\pr_{\varrho}r-1)}{r}+\left(-\frac{1}{2}+\frac{\varrho\pr_{\ub}r}{2r}\right)\frac{\varrho\left(2\pr_ur+1\right)}{r}+\frac{\kappa\varrho^3\Omega^2g(\phi)^2\pr_{\ub}r}{2r^3}\\
&&+\frac{2\varrho^3\pr_ur\pr_{\ub}^2r}{r^2}-\frac{\varrho^2(2\phi^2\zeta(\phi)+\phi^3\zeta'(\phi))}{r^2}-\frac{2\varrho^3\phi^2\zeta(\phi)\pr_{\varrho}r}{r^3}\Bigg]\phi\\
&&+\Bigg[\frac{\varrho(3\pr_ur+\pr_{\ub}r)}{r} \frac{\varrho^2(\pr_{\varrho}r-1)}{r}\\
&&+\left(-\frac{5\varrho^2(\varrho-r)}{2r}+\frac{\varrho^3(2\pr_ur+1)}{2r}-\frac{2\varrho^3(\pr_{\varrho}r-1)}{r}\right)\frac{\pr_ur+\pr_{\ub}r}{r}\\
&&-\frac{\kappa\varrho^3\Omega^2g(\phi)g'(\phi)\phi}{r^2}-\frac{\kappa\varrho^3\Omega^2g(\phi)^2}{2r^2}\Bigg]\pr_{\ub}\phi+\Bigg[-\frac{\varrho^2(\varrho-r)}{r}+\frac{\varrho^3(2\pr_ur+1)}{r}\Bigg]\pr_{\ub}^2\phi\\
&&+\Bigg[\left(-\frac{2\varrho\pr_{\ub}r}{r}+1\right)\frac{\varrho^2f(\phi)}{r^2}+\frac{\varrho^2f'(\phi)\varrho\pr_{\ub}\phi}{r^2}\Bigg](\Omega^2-1)\\
&&+\Bigg[-\frac{\varrho(\varrho-r)}{r}\frac{\varrho\Omega^2}{4r^2}+\frac{\varrho^3(2\pr_{\ub}r-1)\Omega^2}{4r^3}\Bigg]f(\phi)+(2f(\phi)-\kappa g(\phi)^2\phi)\frac{\varrho^3\Omega\pr_{\ub}\Omega}{r^2}.
\eee
\end{lemma}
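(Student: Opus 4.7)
The starting point is the wave equation from Lemma \ref{lemma:structurewaveeqphi}, namely
$$
\Box_1 \phi := \left(-\pr_\tau^2 + \pr_\varrho^2 + \frac{1}{\varrho}\pr_\varrho - \frac{1}{\varrho^2}\right)\phi = \pr_u\!\left(\frac{F_1}{\varrho}\right) + \frac{F_2}{\varrho^2},
$$
with $F_1, F_2$ as recorded there. The plan is to differentiate this identity by the angular modified derivative $D = \pr_\varrho + \tfrac{1}{\varrho}$, which is precisely the conjugation that turns the singular 2-dimensional wave operator $\Box_1$ into the regular one $\Box_0 = -\pr_\tau^2 + \pr_\varrho^2 + \tfrac{1}{\varrho}\pr_\varrho$. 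The reason for this choice is the identity $\Box_0 D = D \Box_1$, which one verifies by a direct computation: writing both compositions in normal form, one finds each equals $\pr_\varrho^3 + \tfrac{2}{\varrho}\pr_\varrho^2 - \tfrac{1}{\varrho^2}\pr_\varrho + \tfrac{1}{\varrho^3}$ on the $\pr_\varrho$-part, while the $\pr_\tau^2$-piece commutes trivially. Thus applying $D$ to the equation above yields, modulo a typo for the left-hand side,
$$
\Box_0 v = D\, \Box_1\phi = D\pr_u\!\left(\frac{F_1}{\varrho}\right) + D\!\left(\frac{F_2}{\varrho^2}\right).
$$

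Next, the commutator $[\pr_u, D] = \tfrac{1}{2\varrho^2}$ (from $\pr_u = \tfrac{1}{2}(\pr_\tau - \pr_\varrho)$) gives $D\pr_u = \pr_u D - \tfrac{1}{2\varrho^2}$, so
$$
D\pr_u\!\left(\frac{F_1}{\varrho}\right) = \pr_u\, D\!\left(\frac{F_1}{\varrho}\right) - \frac{F_1}{2\varrho^3}.
$$
Writing $F_1 = a_1(u,\ub)\,\phi$ with $a_1 = \frac{\varrho - r + \varrho(2\pr_{\ub}r - 1)}{r}$, one computes
$$
D\!\left(\frac{F_1}{\varrho}\right) = \frac{a_1}{\varrho}\,D\phi + \pr_\varrho\!\left(\frac{a_1}{\varrho}\right)\phi = \frac{B_1}{\varrho}\,v + \frac{C_1}{\varrho^2}\,\phi,
$$
where the identification $B_1 = a_1$ is immediate from the Leibniz rule, and $C_1$ is a concrete expression in $r,\pr_u r,\pr_\ub r, \pr_\varrho r, \pr_\ub^2 r$. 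This produces the leading inhomogeneity $\pr_u(B_1 v/\varrho)$, while the $\pr_u(C_1\phi/\varrho^2)$ piece will be absorbed, after simplification, into the $B_2$ term of the stated formula. Similarly, $D(F_2/\varrho^2)$ is computed by a direct Leibniz expansion; it contributes to both the non-$\pr_u$ terms $B_3 v/\varrho^2$ and $B_4/\varrho^3$, plus additional $\pr_u(\cdots/\varrho^2)$ contributions that get combined into $B_2$.

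The bulk of the work is now bookkeeping. Throughout, one rewrites each occurrence of $\pr_\varrho\phi$ as $v - \phi/\varrho$ in order to isolate a clean $v$-dependence versus a $\phi$-dependence, and every occurrence of a mixed derivative of the metric is reduced by the Einstein equations recorded in the preamble of Section \ref{sec:improved-uniform-phi}: specifically $\pr_u\pr_{\ub}r = \kappa\Omega^2 g(\phi)^2/(4r)$, the wave equation for $\phi$ written as $\pr_u\pr_{\ub}\phi = -\tfrac{\pr_u r}{2r}\pr_{\ub}\phi - \tfrac{\pr_{\ub}r}{2r}\pr_u\phi - \tfrac{\Omega^2 f(\phi)}{4r^2}$, and the analogous identity for $\pr_u\pr_{\ub}\Omega$. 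In the end, the $B_i$ are read off by collecting: (i) the $v$-coefficient inside $\pr_u(\cdot/\varrho)$, which is $B_1$; (ii) the remaining $\phi$-coefficient inside $\pr_u(\cdot/\varrho^2)$, which defines $B_2$ after using $f(\phi) = \phi + \phi^3\zeta(\phi)$ to split out the $(\Omega^2-1)f(\phi)$-contribution explicitly; (iii) the $v$-coefficient not under $\pr_u$, which is $B_3$; and (iv) the purely $\phi$-, $\pr_{\ub}\phi$-, $\pr_{\ub}^2\phi$-dependent remainder, which is $B_4$.

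The main obstacle is purely computational: verifying that the collected terms are exactly those listed, in particular checking the many cancellations among terms proportional to $\pr_u\pr_{\ub}r$, $\pr_{\ub}^2 r$, $\pr_u\pr_{\ub}\phi$ and $g(\phi)g'(\phi)$. Two structural features keep the result usable despite its complexity. First, the ``small'' factors $\varrho - r$, $2\pr_{\ub}r - 1$, $2\pr_u r + 1$, $\Omega^2 - 1$, and $\pr_\varrho r - 1 = (\pr_{\ub}r - \tfrac{1}{2}) - (\pr_u r + \tfrac{1}{2})$ all appear as prefactors, guaranteeing by Lemma \ref{lemma:refinedboundsphietal} the decay needed later to close the representation-formula argument. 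Second, the only ``bad'' derivatives that cannot be integrated from the axis, namely $\pr_u\phi$, appear exclusively multiplied by these same small factors, preserving the null structure already exploited in the proof of Lemma \ref{lemma:structurewaveeqphi}. Hence nothing further is needed beyond careful algebraic manipulation and organized use of the Einstein equations.
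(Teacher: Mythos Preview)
Your sketch is correct and follows essentially the same route as the paper: apply the intertwining $\Box_0 D = D\Box_1$, rewrite every $\pr_\varrho\phi$ as $v-\phi/\varrho$, substitute the field equations for the mixed derivatives $\pr_u\pr_{\ub}r$ and $\pr_u\pr_{\ub}\phi$, and push every remaining $u$-derivative inside a total $\pr_u(\cdots)$ via integration by parts. The only organizational difference is that the paper differentiates the raw source $F/\varrho^2$ (obtaining $\pr_\varrho F/\varrho^2-F/\varrho^3$) rather than starting from the already once-integrated $F_1,F_2$; this makes the bookkeeping a single systematic pass and, in particular, makes visible that not only $\pr_u\phi$ but also the terms $\pr_u\pr_\varrho r$ and $\pr_u\Omega$ (which in your route emerge from $\pr_\varrho(2\pr_u r+1)$ and $\pr_\varrho(\Omega^2-1)$ inside $F_2$) must be integrated by parts rather than eliminated by an Einstein identity---your last paragraph names only $\pr_u\phi$ as ``bad'', but all three feed into $B_2$.
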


\begin{remark}
Again, we need to integrate by parts the terms involving derivatives with respect to $u$ due to a better behavior with respect to $\ub$ derivatives (see for example the estimates for $\pr^2_{\ub}r$, $\pr_{\ub}\Omega$ and $\pr_{\ub}^2\phi$ of section \ref{sec:higherderest} which do not have a corresponding $u$ counterpart). This results in the term $\pr_u(B_1v/\varrho+B_2\varrho^2)$ in the statement of Lemma \ref{lemma:structurewaveeqv}. As emphasized in Remark \ref{rem:integrationbypartsuexplained}, the fact that this integration by parts is possible is a consequence on the one hand of the null structure of the problem, and on the other hand of the nice behavior of the kernel of the representation formula for the wave equation with respect to $u$ derivatives. 
\end{remark}

\begin{remark}
The crucial point of the decomposition of the right-hand side of the wave equation for $v$ in the statement of Lemma \ref{lemma:structurewaveeqv} is the fact that both $B_1$ and $B_3$ include neither $\pr_{\ub}\phi$ nor $\pr_{\ub}\Omega$ in their definition, and hence will satisfy better estimates than $B_2$ and $B_4$ (see Lemma \ref{lemma:upperboundB1234} and Remark \ref{rem:B13betterthanB24}).
\end{remark}

\begin{proof}
We have
$$D\circ\left[-\pr_\tau^2+\pr_{\varrho}^2+\frac{1}{\varrho}\pr_\varrho-\frac{1}{\varrho^2}\right]=\left[-\pr_\tau^2+\pr_{\varrho}^2+\frac{1}{\varrho}\pr_\varrho\right]\circ D.$$
Recall that
$$\left(-\pr_\tau^2+\pr_{\varrho}^2+\frac{1}{\varrho}\pr_\varrho-\frac{1}{\varrho^2}\right)\phi=\frac{F}{\varrho^2}$$
where
\bee
F &=& -\frac{\varrho(\varrho-r)}{r}(\pr_{\ub}\phi-\pr_u\phi)+\frac{\varrho^2(2\pr_ur+1)}{r}\pr_{\ub}\phi+\frac{\varrho^2(2\pr_{\ub}r-1)}{r}\pr_u\phi+\frac{\varrho^2-r^2}{r^2}\phi+\frac{\varrho^2\phi^3\zeta(\phi)}{r^2}\\
&&+\varrho^2(\Omega^2-1)\frac{f(\phi)}{r^2}.
\eee
We infer
$$\left(-\pr_\tau^2+\pr_{\varrho}^2+\frac{1}{\varrho}\pr_\varrho\right)v=\frac{\pr_{\varrho}F}{\varrho^2}-\frac{F}{\varrho^3}.$$

Next, we compute $\pr_{\varrho}F$. We have
\bee
\pr_{\varrho}F &=& -\frac{\varrho(\varrho-r)}{r}(\pr_{\ub}\pr_{\varrho}\phi-\pr_u\pr_{\varrho}\phi)-\frac{r(2\varrho-r-\varrho\pr_{\varrho}r)-\varrho(\varrho-r)\pr_{\varrho}r}{r^2}(\pr_{\ub}\phi-\pr_u\phi)\\
&&+\frac{\varrho^2(2\pr_ur+1)}{r}\pr_{\ub}\pr_{\varrho}\phi+\frac{r(2\varrho(2\pr_ur+1)+2\varrho^2\pr_u\pr_{\varrho}r)-\varrho^2(2\pr_ur+1)\pr_{\varrho}r}{r^2}\pr_{\ub}\phi\\
&&+\frac{\varrho^2(2\pr_{\ub}r-1)}{r}\pr_u\pr_{\varrho}\phi+\frac{r(2\varrho(2\pr_{\ub}r-1)+2\varrho^2\pr_{\ub}\pr_{\varrho}r)-\varrho^2(2\pr_{\ub}r-1)\pr_{\varrho}r}{r^2}\pr_u\phi\\
&&+\frac{\varrho^2-r^2}{r^2}\pr_{\varrho}\phi+\frac{r^2(2\varrho-2r\pr_{\varrho}r)-2(\varrho^2-r^2)r\pr_{\varrho}r}{r^4}\phi\\
&&+\frac{r^2(2\varrho\phi^3\zeta(\phi)+\varrho^2(3\phi^2\zeta(\phi)+\phi^3\zeta'(\phi))\pr_{\varrho}\phi)-2\varrho^2\phi^3\zeta(\phi)r\pr_{\varrho}r}{r^4}\\
&&+\frac{r^2(2\varrho(\Omega^2-1)f(\phi)+2\varrho^2\Omega\pr_{\varrho}(\Omega)f(\phi)+\varrho^2(\Omega^2-1)f'(\phi)\pr_{\varrho}\phi)-2\varrho^2(\Omega^2-1)f(\phi)r\pr_{\varrho}r}{r^4}\\
&=& \pr_u\left(\frac{\varrho(\varrho-r)}{r}\pr_{\varrho}\phi+\frac{2\varrho^2(\pr_{\varrho}r-1)}{r}\pr_{\ub}\phi+\frac{\varrho^2(2\pr_{\ub}r-1)}{r}\pr_{\varrho}\phi-\frac{\varrho^2(\Omega^2-1)f(\phi)}{r^2}\right)\\
&& -\pr_u\left(\frac{\varrho(\varrho-r)}{r}\right)\pr_{\varrho}\phi-\pr_u\left(\frac{2\varrho^2}{r}\pr_{\ub}\phi\right)(\pr_{\varrho}r-1)-\pr_u\left(\frac{\varrho^2(2\pr_{\ub}r-1)}{r}\right)\pr_{\varrho}\phi\\
&&+\pr_u\left(\frac{\varrho^2f(\phi)}{r^2}\right)(\Omega^2-1)-\frac{\varrho(\varrho-r)}{r}\pr_{\ub}\pr_{\varrho}\phi-\frac{r(2\varrho-r-\varrho\pr_{\varrho}r)-\varrho(\varrho-r)\pr_{\varrho}r}{r^2}\pr_{\varrho}\phi\\
&&+\frac{\varrho^2(2\pr_ur+1)}{r}\pr_{\ub}\pr_{\varrho}\phi+\frac{2r\varrho(2\pr_ur+1)-\varrho^2(2\pr_ur+1)\pr_{\varrho}r}{r^2}\pr_{\ub}\phi\\
&&+\frac{r(2\varrho(2\pr_{\ub}r-1)-2\varrho^2\pr_{\ub}\pr_ur)-\varrho^2(2\pr_{\ub}r-1)\pr_{\varrho}r}{r^2}(\pr_{\ub}\phi-\pr_{\varrho}\phi)+\frac{2\varrho^2\pr_{\ub}^2r}{r}\pr_u\phi\\
&&+\frac{\varrho^2-r^2}{r^2}\pr_{\varrho}\phi+\frac{r^2(2\varrho-2r\pr_{\varrho}r)-2(\varrho^2-r^2)r\pr_{\varrho}r}{r^4}\phi\\
&&+\frac{r^2(2\varrho\phi^3\zeta(\phi)+\varrho^2(3\phi^2\zeta(\phi)+\phi^3\zeta'(\phi))\pr_{\varrho}\phi)-2\varrho^2\phi^3\zeta(\phi)r\pr_{\varrho}r}{r^4}\\
&&+\frac{r^2(2\varrho(\Omega^2-1)f(\phi)+2\varrho^2\Omega\pr_{\ub}(\Omega)f(\phi)+\varrho^2(\Omega^2-1)f'(\phi)\pr_{\varrho}\phi)-2\varrho^2(\Omega^2-1)f(\phi)r\pr_{\varrho}r}{r^4}.
\eee
We rewrite the term $\varrho^2\pr_{\ub}^2r\pr_u\phi/r$ as\footnote{This term needs to be integrated by parts as it would otherwise lead to a dangerous term of the type $\pr_{\ub}^2r v$.}
\bee
\frac{\varrho^2\pr_{\ub}^2r}{r}\pr_u\phi &=& \pr_u\left(\frac{\varrho^2\pr_{\ub}^2r}{r}\phi\right)-\frac{\varrho^2\pr_u\pr_{\ub}^2r}{r}\phi+\frac{(\varrho r+\pr_ur\varrho^2)\pr_{\ub}^2r \phi}{r^2}.
\eee
Recall that
\bee
\pr_u\pr_{\ub}^2r &=& \kappa \frac{\Omega(\pr_{\ub}\Omega)g(\phi)^2}{2r}+ \kappa \frac{\Omega^2g(\phi)g'(\phi)\pr_{\ub}\phi}{2r}- \kappa \frac{\Omega^2g(\phi)^2\pr_{\ub}r}{4r^2}.
\eee
We infer
\bee
\frac{\varrho^2\pr_{\ub}^2r}{r}\pr_u\phi &=& \pr_u\left(\frac{\varrho^2\pr_{\ub}^2r}{r}\phi\right)-\frac{\kappa\varrho^2\Omega(\pr_{\ub}\Omega)g(\phi)^2\phi}{2r^2}-\frac{\kappa\varrho^2\Omega^2g(\phi)g'(\phi)\phi\pr_{\ub}\phi}{2r^2}\\
&&+\frac{\kappa\varrho^2\Omega^2g(\phi)^2\phi\pr_{\ub}r}{4r^3}+\frac{(\varrho r+\pr_ur\varrho^2)\pr_{\ub}^2r \phi}{r^2}.
\eee
We obtain
\bee
\pr_{\varrho}F &=& \pr_u\Bigg(\frac{\varrho(\varrho-r)}{r}\left(v-\frac{\phi}{\varrho}\right)+\frac{2r\varrho^2(\pr_{\varrho}r-1)}{r^2}\pr_{\ub}\phi+\frac{\varrho^2(2\pr_{\ub}r-1)}{r}\left(v-\frac{\phi}{\varrho}\right)-\frac{\varrho^2(\Omega^2-1)f(\phi)}{r^2}\\
&&+\frac{2\varrho^2\pr_{\ub}^2r}{r}\phi\Bigg)\\
&& -\frac{r(-\varrho+\frac{1}{2}r-\varrho\pr_ur)-\varrho(\varrho-r)\pr_ur}{r^2}\left(v-\frac{\phi}{\varrho}\right)-2\frac{r(-\varrho\pr_{\ub}\phi+\varrho^2\pr_u\pr_{\ub}\phi)-\varrho^2\pr_{\ub}\phi\pr_ur}{r^2}(\pr_{\varrho}r-1)\\
&&-\frac{r(-\varrho(2\pr_{\ub}r-1)+2\varrho^2\pr_u\pr_{\ub}r)-\varrho^2(2\pr_{\ub}r-1)\pr_ur}{r^2}\left(v-\frac{\phi}{\varrho}\right)\\
&&+\frac{r^2\left(-\varrho f(\phi)+\varrho^2f'(\phi)\left(\pr_{\ub}\phi-v+\frac{\phi}{\varrho}\right)\right)-2\varrho^2f(\phi)r\pr_ur}{r^4}(\Omega^2-1)\\
&&-\frac{\varrho(\varrho-r)}{r}(\pr_{\ub}^2\phi-\pr_{\ub}\pr_u\phi)-\frac{r(2\varrho-r-\varrho\pr_{\varrho}r)-\varrho(\varrho-r)\pr_{\varrho}r}{r^2}\left(v-\frac{\phi}{\varrho}\right)\\
&&+\frac{\varrho^2(2\pr_ur+1)}{r}(\pr_{\ub}^2\phi-\pr_{\ub}\pr_u\phi)+\frac{2r\varrho(2\pr_ur+1)-\varrho^2(2\pr_ur+1)\pr_{\varrho}r}{r^2}\pr_{\ub}\phi\\
&&+\frac{r(2\varrho(2\pr_{\ub}r-1)-2\varrho^2\pr_{\ub}\pr_ur)-\varrho^2(2\pr_{\ub}r-1)\pr_{\varrho}r}{r^2}\left(\pr_{\ub}\phi-v+\frac{\phi}{\varrho}\right)\\
&& -\frac{\kappa\varrho^2\Omega(\pr_{\ub}\Omega)g(\phi)^2\phi}{r^2}-\frac{\kappa\varrho^2\Omega^2g(\phi)g'(\phi)\phi\pr_{\ub}\phi}{r^2}+\frac{\kappa\varrho^2\Omega^2g(\phi)^2\phi\pr_{\ub}r}{2r^3}+\frac{2(\varrho r+\pr_ur\varrho^2)\pr_{\ub}^2r \phi}{r^2}\\
&&+\frac{\varrho^2-r^2}{r^2}\left(v-\frac{\phi}{\varrho}\right)+\frac{r^2(2\varrho-2r\pr_{\varrho}r)-2(\varrho^2-r^2)r\pr_{\varrho}r}{r^4}\phi\\
&&+\frac{r^2\left(2\varrho\phi^3\zeta(\phi)+\varrho^2(3\phi^2\zeta(\phi)+\phi^3\zeta'(\phi))\left(v-\frac{\phi}{\varrho}\right)\right)-2\varrho^2\phi^3\zeta(\phi)r\pr_{\varrho}r}{r^4}\\
&&+\frac{r^2\left(2\varrho(\Omega^2-1)f(\phi)+2\varrho^2\Omega\pr_{\ub}(\Omega)f(\phi)+\varrho^2(\Omega^2-1)f'(\phi)\left(v-\frac{\phi}{\varrho}\right)\right)-2\varrho^2(\Omega^2-1)f(\phi)r\pr_{\varrho}r}{r^4}.
\eee

Recall that
$$\pr_u\pr_{\ub}r=r\kappa \frac{\Omega^2}{4}\frac{g(\phi)^2}{r^2}.$$
Also, recall that
\bee
&&-4\pr_u\pr_{\ub}\phi+\frac{1}{\varrho}(\pr_{\ub}\phi-\pr_u\phi)-\frac{\phi}{\varrho^2}\\ 
&=& -\frac{\varrho-r}{r\varrho}(\pr_{\ub}\phi-\pr_u\phi)+\frac{2\pr_ur+1}{r}\pr_{\ub}\phi+\frac{2\pr_{\ub}r-1}{r}\pr_u\phi+\frac{\varrho^2-r^2}{r^2\varrho^2}\phi+\frac{\phi^3\zeta(\phi)}{r^2}\\
&&+(\Omega^2-1)\frac{f(\phi)}{r^2},
\eee
which yields
\bee
-\pr_u\pr_{\ub}\phi &=& -\frac{\pr_{\ub}r}{2r}\left(v-\frac{\phi}{\varrho}\right)+\frac{\pr_ur+\pr_{\ub}r}{2r}\pr_{\ub}\phi+\frac{\Omega^2f(\phi)}{4r^2}.
\eee
This allows us to rewrite $\pr_{\varrho}F$ as
\bee
\pr_{\varrho}F &=& \pr_u\Bigg(\frac{\varrho(\varrho-r)}{r}\left(v-\frac{\phi}{\varrho}\right)+\frac{2r\varrho^2(\pr_{\varrho}r-1)}{r^2}\pr_{\ub}\phi+\frac{\varrho^2(2\pr_{\ub}r-1)}{r}\left(v-\frac{\phi}{\varrho}\right)-\frac{\varrho^2(\Omega^2-1)f(\phi)}{r^2}\\
&&+\frac{2\varrho^2\pr_{\ub}^2r}{r}\phi\Bigg)-\frac{r(-\varrho+\frac{1}{2}r-\varrho\pr_ur)-\varrho(\varrho-r)\pr_ur}{r^2}\left(v-\frac{\phi}{\varrho}\right)\\
&&-2\frac{r\left(-\varrho\pr_{\ub}\phi+\varrho^2\left(\frac{\pr_{\ub}r}{2r}\left(v-\frac{\phi}{\varrho}\right)-\frac{\pr_ur+\pr_{\ub}r}{2r}\pr_{\ub}\phi-\frac{\Omega^2f(\phi)}{4r^2}\right)\right)-\varrho^2\pr_{\ub}\phi\pr_ur}{r^2}(\pr_{\varrho}r-1)\\
&&-\frac{r\left(-\varrho(2\pr_{\ub}r-1)+2\varrho^2\kappa \frac{\Omega^2}{4}\frac{g(\phi)^2}{r}\right)-\varrho^2(2\pr_{\ub}r-1)\pr_ur}{r^2}\left(v-\frac{\phi}{\varrho}\right)\\
&&+\frac{r^2\left(-\varrho f(\phi)+\varrho^2f'(\phi)\left(\pr_{\ub}\phi-v+\frac{\phi}{\varrho}\right)\right)-2\varrho^2f(\phi)r\pr_ur}{r^4}(\Omega^2-1)\\
&&-\frac{\varrho(\varrho-r)}{r}\left(\pr_{\ub}^2\phi-\frac{\pr_{\ub}r}{2r}\left(v-\frac{\phi}{\varrho}\right)+\frac{\pr_ur+\pr_{\ub}r}{2r}\pr_{\ub}\phi+\frac{\Omega^2f(\phi)}{4r^2}\right)\\
&&-\frac{r(2\varrho-r-\varrho\pr_{\varrho}r)-\varrho(\varrho-r)\pr_{\varrho}r}{r^2}\left(v-\frac{\phi}{\varrho}\right)\\
&&+\frac{\varrho^2(2\pr_ur+1)}{r}\left(\pr_{\ub}^2\phi-\frac{\pr_{\ub}r}{2r}\left(v-\frac{\phi}{\varrho}\right)+\frac{\pr_ur+\pr_{\ub}r}{2r}\pr_{\ub}\phi+\frac{\Omega^2f(\phi)}{4r^2}\right)\\
&&+\frac{2r\varrho(2\pr_ur+1)-\varrho^2(2\pr_ur+1)\pr_{\varrho}r}{r^2}\pr_{\ub}\phi\\
&&+\frac{r\left(2\varrho(2\pr_{\ub}r-1)-\varrho^2\kappa \frac{\Omega^2}{2}\frac{g(\phi)^2}{r}\right)-\varrho^2(2\pr_{\ub}r-1)\pr_{\varrho}r}{r^2}\left(\pr_{\ub}\phi-v+\frac{\phi}{\varrho}\right)\\
&& -\frac{\kappa\varrho^2\Omega(\pr_{\ub}\Omega)g(\phi)^2\phi}{r^2}-\frac{\kappa\varrho^2\Omega^2g(\phi)g'(\phi)\phi\pr_{\ub}\phi}{r^2}+\frac{\kappa\varrho^2\Omega^2g(\phi)^2\phi\pr_{\ub}r}{2r^3}+\frac{2(\varrho r+\pr_ur\varrho^2)\pr_{\ub}^2r \phi}{r^2}\\
&&+\frac{\varrho^2-r^2}{r^2}\left(v-\frac{\phi}{\varrho}\right)+\frac{r^2(2\varrho-2r\pr_{\varrho}r)-2(\varrho^2-r^2)r\pr_{\varrho}r}{r^4}\phi\\
&&+\frac{r^2\left(2\varrho\phi^3\zeta(\phi)+\varrho^2(3\phi^2\zeta(\phi)+\phi^3\zeta'(\phi))\left(v-\frac{\phi}{\varrho}\right)\right)-2\varrho^2\phi^3\zeta(\phi)r\pr_{\varrho}r}{r^4}\\
&&+\frac{r^2\left(2\varrho(\Omega^2-1)f(\phi)+2\varrho^2\Omega\pr_{\ub}(\Omega)f(\phi)+\varrho^2(\Omega^2-1)f'(\phi)\left(v-\frac{\phi}{\varrho}\right)\right)-2\varrho^2(\Omega^2-1)f(\phi)r\pr_{\varrho}r}{r^4}.
\eee
We infer
\bee
\pr_{\varrho}F &=& \pr_u(A_1v+A_2)+A_3v+A_4,
\eee
where
\bee
A_1 &=& \frac{\varrho(\varrho-r)}{r}+\frac{\varrho^2(2\pr_{\ub}r-1)}{r},
\eee
\bee
A_2 &=& -\frac{\varrho(\varrho-r)}{r}\frac{\phi}{\varrho}+\frac{2r\varrho^2(\pr_{\varrho}r-1)}{r^2}\pr_{\ub}\phi-\frac{\varrho^2(2\pr_{\ub}r-1)}{r}\frac{\phi}{\varrho}-\frac{\varrho^2(\Omega^2-1)f(\phi)}{r^2}+\frac{2\varrho^2\pr_{\ub}^2r}{r}\phi,
\eee
\bee
A_3 &=& -\frac{r(-\varrho+\frac{1}{2}r-\varrho\pr_ur)-\varrho(\varrho-r)\pr_ur}{r^2}-2\frac{r\varrho^2\frac{\pr_{\ub}r}{2r}}{r^2}(\pr_{\varrho}r-1)\\
&&-\frac{r\left(-\varrho(2\pr_{\ub}r-1)+2\varrho^2\kappa \frac{\Omega^2}{4}\frac{g(\phi)^2}{r}\right)-\varrho^2(2\pr_{\ub}r-1)\pr_ur}{r^2}\\
&&-\frac{r^2\varrho^2f'(\phi)}{r^4}(\Omega^2-1)+\frac{\varrho(\varrho-r)}{r}\frac{\pr_{\ub}r}{2r}-\frac{r(2\varrho-r-\varrho\pr_{\varrho}r)-\varrho(\varrho-r)\pr_{\varrho}r}{r^2}\\
&&-\frac{\varrho^2(2\pr_ur+1)}{r}\frac{\pr_{\ub}r}{2r}\\
&&-\frac{r\left(2\varrho(2\pr_{\ub}r-1)-\varrho^2\kappa \frac{\Omega^2}{2}\frac{g(\phi)^2}{r}\right)-\varrho^2(2\pr_{\ub}r-1)\pr_{\varrho}r}{r^2}\\
&&+\frac{\varrho^2-r^2}{r^2}+\frac{r^2\varrho^2(3\phi^2\zeta(\phi)+\phi^3\zeta'(\phi))}{r^4}+\frac{r^2\varrho^2(\Omega^2-1)f'(\phi)}{r^4},
\eee
and 
\bee
A_4 &=& \frac{r(-\varrho+\frac{1}{2}r-\varrho\pr_ur)-\varrho(\varrho-r)\pr_ur}{r^2}\frac{\phi}{\varrho}\\
&&-2\frac{r\left(-\varrho\pr_{\ub}\phi+\varrho^2\left(-\frac{\pr_{\ub}r}{2r}\frac{\phi}{\varrho}-\frac{\pr_ur+\pr_{\ub}r}{2r}\pr_{\ub}\phi-\frac{\Omega^2f(\phi)}{4r^2}\right)\right)-\varrho^2\pr_{\ub}\phi\pr_ur}{r^2}(\pr_{\varrho}r-1)\\
&&+\frac{r\left(-\varrho(2\pr_{\ub}r-1)+2\varrho^2\kappa \frac{\Omega^2}{4}\frac{g(\phi)^2}{r}\right)-\varrho^2(2\pr_{\ub}r-1)\pr_ur}{r^2}\frac{\phi}{\varrho}\\
&&+\frac{r^2\left(-\varrho f(\phi)+\varrho^2f'(\phi)\left(\pr_{\ub}\phi+\frac{\phi}{\varrho}\right)\right)-2\varrho^2f(\phi)r\pr_ur}{r^4}(\Omega^2-1)\\
&&-\frac{\varrho(\varrho-r)}{r}\left(\pr_{\ub}^2\phi+\frac{\pr_{\ub}r}{2r}\frac{\phi}{\varrho}+\frac{\pr_ur+\pr_{\ub}r}{2r}\pr_{\ub}\phi+\frac{\Omega^2f(\phi)}{4r^2}\right)\\
&&+\frac{r(2\varrho-r-\varrho\pr_{\varrho}r)-\varrho(\varrho-r)\pr_{\varrho}r}{r^2}\frac{\phi}{\varrho}\\
&&+\frac{\varrho^2(2\pr_ur+1)}{r}\left(\pr_{\ub}^2\phi+\frac{\pr_{\ub}r}{2r}\frac{\phi}{\varrho}+\frac{\pr_ur+\pr_{\ub}r}{2r}\pr_{\ub}\phi+\frac{\Omega^2f(\phi)}{4r^2}\right)\\
&&+\frac{2r\varrho(2\pr_ur+1)-\varrho^2(2\pr_ur+1)\pr_{\varrho}r}{r^2}\pr_{\ub}\phi\\
&&+\frac{r\left(2\varrho(2\pr_{\ub}r-1)-\varrho^2\kappa \frac{\Omega^2}{2}\frac{g(\phi)^2}{r}\right)-\varrho^2(2\pr_{\ub}r-1)\pr_{\varrho}r}{r^2}\left(\pr_{\ub}\phi+\frac{\phi}{\varrho}\right)\\
&& -\frac{\kappa\varrho^2\Omega(\pr_{\ub}\Omega)g(\phi)^2\phi}{r^2}-\frac{\kappa\varrho^2\Omega^2g(\phi)g'(\phi)\phi\pr_{\ub}\phi}{r^2}+\frac{\kappa\varrho^2\Omega^2g(\phi)^2\phi\pr_{\ub}r}{2r^3}+\frac{2(\varrho r+\pr_ur\varrho^2)\pr_{\ub}^2r \phi}{r^2}\\
&&-\frac{\varrho^2-r^2}{r^2}\frac{\phi}{\varrho}+\frac{r^2(2\varrho-2r\pr_{\varrho}r)-2(\varrho^2-r^2)r\pr_{\varrho}r}{r^4}\phi\\
&&+\frac{r^2\left(2\varrho\phi^3\zeta(\phi)-\varrho^2(3\phi^2\zeta(\phi)+\phi^3\zeta'(\phi))\frac{\phi}{\varrho}\right)-2\varrho^2\phi^3\zeta(\phi)r\pr_{\varrho}r}{r^4}\\
&&+\frac{r^2\left(2\varrho(\Omega^2-1)f(\phi)+2\varrho^2\Omega\pr_{\ub}(\Omega)f(\phi)-\varrho^2(\Omega^2-1)f'(\phi)\frac{\phi}{\varrho}\right)-2\varrho^2(\Omega^2-1)f(\phi)r\pr_{\varrho}r}{r^4}.
\eee

We also rewrite $F$. We have
\bee
F &=& -\frac{\varrho(\varrho-r)}{r}\left(v-\frac{\phi}{\varrho}\right)+\frac{\varrho^2(2\pr_ur+1)}{r}\pr_{\ub}\phi+\frac{\varrho^2(2\pr_{\ub}r-1)}{r}\left(\pr_{\ub}\phi-v+\frac{\phi}{\varrho}\right)\\
&&+\frac{\varrho^2-r^2}{r^2}\phi+\frac{\varrho^2\phi^3\zeta(\phi)}{r^2}+\varrho^2(\Omega^2-1)\frac{f(\phi)}{r^2}\\
&=& A_5v+A_6,
\eee
where
\bee
A_5 &=& -\frac{\varrho(\varrho-r)}{r}-\frac{\varrho^2(2\pr_{\ub}r-1)}{r},
\eee
and
\bee
A_6 &=& \frac{\varrho(\varrho-r)}{r}\frac{\phi}{\varrho}+\frac{\varrho^2(2\pr_ur+1)}{r}\pr_{\ub}\phi+\frac{\varrho^2(2\pr_{\ub}r-1)}{r}\left(\pr_{\ub}\phi+\frac{\phi}{\varrho}\right)+\frac{\varrho^2-r^2}{r^2}\phi\\
&&+\frac{\varrho^2\phi^3\zeta(\phi)}{r^2}+\varrho^2(\Omega^2-1)\frac{f(\phi)}{r^2}.
\eee

Finally, we have obtained
\bee
\frac{\pr_{\varrho}F}{\varrho^2}-\frac{F}{\varrho^3} &=& \frac{\pr_u(A_1v+A_2)+A_3v+A_4}{\varrho^2}-\frac{A_5v+A_6}{\varrho^3}\\
&=& \pr_u\left(\frac{A_1v+A_2}{\varrho^2}\right)-\frac{A_1v+A_2}{\varrho^3}+\frac{A_3v+A_4}{\varrho^2}-\frac{A_5v+A_6}{\varrho^3}\\
&=& \pr_u\left(\frac{B_1}{\varrho}v+\frac{B_2}{\varrho^2}\right)+\frac{B_3}{\varrho^2}v+\frac{B_4}{\varrho^3},
\eee
where
\bee
B_1 &=& \frac{A_1}{\varrho}\\
&=&  \frac{\varrho-r+\varrho(2\pr_{\ub}r-1)}{r},
\eee
\bee
B_2 &=& A_2\\
&=&\frac{2\varrho^2(\pr_{\varrho}r-1)}{r}\pr_{\ub}\phi -\frac{\varrho-r+\varrho(2\pr_{\ub}r-1)-2\varrho^2\pr_{\ub}^2r}{r}\phi-\frac{\varrho^2(\Omega^2-1)f(\phi)}{r^2},
\eee
\bee
B_3 &=& -\frac{A_1}{\varrho}+A_3-\frac{A_5}{\varrho}\\
&=& \frac{\varrho-r+\varrho(2\pr_ur+1)}{2r}+\frac{\varrho(\varrho-r)\pr_ur}{r^2}-\frac{\varrho^2\pr_{\ub}r}{r^2}(\pr_{\varrho}r-1)\\
&&+\frac{\varrho(2\pr_{\ub}r-1)}{r}-\frac{\kappa\varrho^2\Omega^2g(\phi)^2}{2r^2}+\frac{\varrho^2(2\pr_{\ub}r-1)\pr_ur}{r^2}-\frac{\varrho^2f'(\phi)}{r^2}(\Omega^2-1)\\
&&+\frac{\varrho(\varrho-r)}{2r^2}\pr_{\ub}r-\frac{\varrho-r+\varrho(1-\pr_{\varrho}r)}{r}+\frac{\varrho(\varrho-r)\pr_{\varrho}r}{r^2}-\frac{\varrho^2(2\pr_ur+1)}{2r^2}\pr_{\ub}r\\
&& -\frac{2\varrho(2\pr_{\ub}r-1)}{r}+\frac{\kappa\varrho^2\Omega^2 g(\phi)^2}{2r^2}+\frac{\varrho^2(2\pr_{\ub}r-1)\pr_{\varrho}r}{r^2}\\
&&+\frac{\varrho^2-r^2}{r^2}+\frac{\varrho^2(3\phi^2\zeta(\phi)+\phi^3\zeta'(\phi))}{r^2}+\frac{\varrho^2(\Omega^2-1)f'(\phi)}{r^2}\\
&=& \frac{\varrho-r}{r}\left(\frac{1}{2}+\frac{3\varrho}{2r}\pr_{\ub}r+\frac{\varrho}{r}\right)+\frac{\varrho(2\pr_{\ub}r-1)}{r}\left(-1+\frac{\varrho}{r}\pr_ru+\frac{\varrho}{r}\pr_{\varrho}r\right)+\frac{\varrho(2\pr_ur+1)}{2r}\left(1-\frac{\varrho}{r}\pr_{\ub}r\right)\\
&& \frac{\varrho(\pr_{\varrho}r-1)}{r}\left(1-\frac{\varrho}{r}\pr_{\ub}r\right)+\frac{\varrho^2(3\phi^2\zeta(\phi)+\phi^3\zeta'(\phi))}{r^2},
\eee
and
\bee
B_4 &=& -A_2+\varrho A_4 -A_6\\
&=& \frac{\varrho-r}{r}\phi-\frac{2\varrho^2(\pr_{\varrho}r-1)}{r}\pr_{\ub}\phi+\frac{\varrho(2\pr_{\ub}r-1)}{r}\phi+\frac{\varrho^2(\Omega^2-1)f(\phi)}{r^2} -\frac{2\varrho^2\pr_{\ub}^2r}{r}\phi\\
&& +\left(-\frac{\varrho-r+\varrho\left(2\pr_ur+1\right)}{2r}-\frac{\varrho(\varrho-r)\pr_ur}{r^2}\right)\phi\\
&&+2\left(\frac{\varrho^2\pr_{\ub}\phi}{r}+\frac{\varrho^2\pr_{\ub}r\phi}{2r^2}+\frac{\varrho^3(\pr_ur+\pr_{\ub}r)\pr_{\ub}\phi}{2r^2}+\frac{\varrho^3\Omega^2f(\phi)}{4r^3}+\frac{\varrho^3\pr_{\ub}\phi\pr_ur}{r^2}\right)(\pr_{\varrho}r-1)\\
&&-\frac{\varrho(2\pr_{\ub}r-1)}{r}\phi+\frac{\kappa\varrho^2 \Omega^2g(\phi)^2}{2r^2}\phi-\frac{\varrho^2(2\pr_{\ub}r-1)\pr_ur}{r^2}\phi\\
&&+\left(\frac{-\varrho^2 f(\phi)+\varrho^2f'(\phi)\left(\varrho\pr_{\ub}\phi+\phi\right)}{r^2}-\frac{2\varrho^3f(\phi)\pr_ur}{r^3}\right)(\Omega^2-1)\\
&&-\frac{\varrho(\varrho-r)}{r}\left(\varrho\pr_{\ub}^2\phi+\frac{\pr_{\ub}r}{2r}\phi+\frac{\pr_ur+\pr_{\ub}r}{2r}\varrho\pr_{\ub}\phi+\frac{\varrho\Omega^2f(\phi)}{4r^2}\right)\\
&&+\left(\frac{\varrho-r-\varrho(\pr_{\varrho}r-1)}{r}-\frac{\varrho(\varrho-r)\pr_{\varrho}r}{r^2}\right)\phi\\
&&+\frac{\varrho^2(2\pr_ur+1)}{r}\left(\varrho\pr_{\ub}^2\phi+\frac{\pr_{\ub}r}{2r}\phi+\frac{\pr_ur+\pr_{\ub}r}{2r}\varrho\pr_{\ub}\phi+\frac{\varrho\Omega^2f(\phi)}{4r^2}\right)\\
&&+\left(\frac{2\varrho^2(2\pr_ur+1)}{r}-\frac{\varrho^3(2\pr_ur+1)\pr_{\varrho}r}{r^2}\right)\pr_{\ub}\phi\\
&&+\left(\frac{2\varrho(2\pr_{\ub}r-1)}{r}-\frac{\kappa\varrho^2\Omega^2g(\phi)^2}{2r^2}-\frac{\varrho^2(2\pr_{\ub}r-1)\pr_{\varrho}r}{r^2}\right)\left(\varrho\pr_{\ub}\phi+\phi\right)\\
&& -\frac{\kappa\varrho^3\Omega(\pr_{\ub}\Omega)g(\phi)^2\phi}{r^2}-\frac{\kappa\varrho^3\Omega^2g(\phi)g'(\phi)\phi\pr_{\ub}\phi}{r^2}+\frac{\kappa\varrho^3\Omega^2g(\phi)^2\phi\pr_{\ub}r}{2r^3}+\frac{2\varrho(\varrho r+\pr_ur\varrho^2)\pr_{\ub}^2r \phi}{r^2}\\
&&-\frac{\varrho^2-r^2}{r^2}\phi+\frac{\varrho(2\varrho-2r\pr_{\varrho}r)}{r^2}\phi-\frac{2\varrho(\varrho^2-r^2)\pr_{\varrho}r}{r^3}\phi\\
&&-\frac{\varrho^2(\phi^2\zeta(\phi)+\phi^3\zeta'(\phi))\phi}{r^2}-\frac{2\varrho^3\phi^3\zeta(\phi)\pr_{\varrho}r}{r^3}\\
&&+\frac{2\varrho^2(\Omega^2-1)f(\phi)+2\varrho^3\Omega\pr_{\ub}(\Omega)f(\phi)-\varrho^2(\Omega^2-1)f'(\phi)\phi}{r^2}-\frac{2\varrho^3(\Omega^2-1)f(\phi)\pr_{\varrho}r}{r^3}\\
&& -\frac{\varrho-r}{r}\phi-\frac{\varrho^2(2\pr_ur+1)}{r}\pr_{\ub}\phi-\frac{\varrho(2\pr_{\ub}r-1)}{r}\left(\varrho\pr_{\ub}\phi+\phi\right)-\frac{\varrho^2-r^2}{r^2}\phi\\
&&-\frac{\varrho^2\phi^3\zeta(\phi)}{r^2}-\varrho^2(\Omega^2-1)\frac{f(\phi)}{r^2}.
\eee
We rewrite $B_4$ as follows
\bee
B_4 &=& \Bigg[\frac{\varrho-r}{r}+\frac{\varrho(2\pr_{\ub}r-1)}{r}-\frac{2\varrho^2\pr_{\ub}^2r}{r}-\frac{\varrho-r+\varrho\left(2\pr_ur+1\right)}{2r}-\frac{\varrho(\varrho-r)\pr_ur}{r^2}\\
&&-\frac{\varrho(2\pr_{\ub}r-1)}{r}+\frac{\kappa\varrho^2 \Omega^2g(\phi)^2}{2r^2}-\frac{\varrho^2(2\pr_{\ub}r-1)\pr_ur}{r^2}+\frac{\varrho-r-\varrho(\pr_{\varrho}r-1)}{r}\\
&&-\frac{\varrho(\varrho-r)\pr_{\varrho}r}{r^2}+\frac{2\varrho(2\pr_{\ub}r-1)}{r}-\frac{\kappa\varrho^2\Omega^2g(\phi)^2}{2r^2}-\frac{\varrho^2(2\pr_{\ub}r-1)\pr_{\varrho}r}{r^2}\\
&&+\frac{\kappa\varrho^3\Omega^2g(\phi)^2\pr_{\ub}r}{2r^3}+\frac{2\varrho(\varrho r+\pr_ur\varrho^2)\pr_{\ub}^2r}{r^2}\\
&&-\frac{\varrho^2-r^2}{r^2}+\frac{\varrho(2\varrho-2r\pr_{\varrho}r)}{r^2}-\frac{2\varrho(\varrho^2-r^2)\pr_{\varrho}r}{r^3}-\frac{\varrho-r}{r}-\frac{\varrho(2\pr_{\ub}r-1)}{r}-\frac{\varrho^2-r^2}{r^2}\\
&&+2\frac{\varrho^2\pr_{\ub}r}{2r^2}(\pr_{\varrho}r-1)-\frac{\varrho(\varrho-r)}{r}\frac{\pr_{\ub}r}{2r}
+\frac{\varrho^2(2\pr_ur+1)}{r}\frac{\pr_{\ub}r}{2r}-\frac{\varrho^2(\phi^2\zeta(\phi)+\phi^3\zeta'(\phi))}{r^2}\\
&&-\frac{2\varrho^3\phi^2\zeta(\phi)\pr_{\varrho}r}{r^3}-\frac{\varrho^2\phi^2\zeta(\phi)}{r^2}\Bigg]\phi\\
&&+\Bigg[-\frac{2\varrho^2(\pr_{\varrho}r-1)}{r}+2\left(\frac{\varrho^2}{r}+\frac{\varrho^3(\pr_ur+\pr_{\ub}r)}{2r^2}+\frac{\varrho^3\pr_ur}{r^2}\right)(\pr_{\varrho}r-1)\\
&&-\frac{\varrho(\varrho-r)}{r}\frac{\pr_ur+\pr_{\ub}r}{2r}\varrho+\frac{\varrho^2(2\pr_ur+1)}{r}\frac{\pr_ur+\pr_{\ub}r}{2r}\varrho+\frac{2\varrho^2(2\pr_ur+1)}{r}-\frac{\varrho^3(2\pr_ur+1)\pr_{\varrho}r}{r^2}\\
&&+\left(\frac{2\varrho(2\pr_{\ub}r-1)}{r}-\frac{\kappa\varrho^2\Omega^2g(\phi)^2}{2r^2}-\frac{\varrho^2(2\pr_{\ub}r-1)\pr_{\varrho}r}{r^2}\right)\varrho
-\frac{\kappa\varrho^3\Omega^2g(\phi)g'(\phi)\phi}{r^2}-\frac{\varrho^2(2\pr_ur+1)}{r}\\
&&-\frac{\varrho(2\pr_{\ub}r-1)}{r}\varrho\Bigg]\pr_{\ub}\phi+\Bigg[-\frac{\varrho(\varrho-r)}{r}\varrho+\frac{\varrho^2(2\pr_ur+1)}{r}\varrho\Bigg]\pr_{\ub}^2\phi\\
&&+\Bigg[\frac{\varrho^2f(\phi)}{r^2}+\frac{-\varrho^2 f(\phi)+\varrho^2f'(\phi)\left(\varrho\pr_{\ub}\phi+\phi\right)}{r^2}-\frac{2\varrho^3f(\phi)\pr_ur}{r^3}\\
&&+\frac{2\varrho^2f(\phi)-\varrho^2f'(\phi)\phi}{r^2}-\frac{2\varrho^3f(\phi)\pr_{\varrho}r}{r^3}-\varrho^2\frac{f(\phi)}{r^2}\Bigg](\Omega^2-1)\\
&&+\Bigg[2\frac{\varrho^3\Omega^2}{4r^3}(\pr_{\varrho}r-1)-\frac{\varrho(\varrho-r)}{r}\frac{\varrho\Omega^2}{4r^2}+\frac{\varrho^2(2\pr_ur+1)}{r}\frac{\varrho\Omega^2}{4r^2}\Bigg]f(\phi)+(2f(\phi)-\kappa g(\phi)^2\phi)\frac{\varrho^3\Omega\pr_{\ub}\Omega}{r^2},
\eee
and we finally obtain
\bee
B_4 &=& \Bigg[\left(-\frac{3}{2}-\frac{\varrho}{r}\pr_ur-\frac{2\varrho^2+3\varrho r}{r^2}\pr_{\varrho}r-\frac{\varrho}{2r}\pr_{\ub}r\right)\frac{\varrho-r}{r}+\left(-\frac{\varrho\pr_{\ub}r}{r}+1\right)\frac{\varrho(2\pr_{\ub}r-1)}{r}\\
&&+\left(-3+\frac{\varrho\pr_{\ub}r}{r}\right)\frac{\varrho(\pr_{\varrho}r-1)}{r}+\left(-\frac{1}{2}+\frac{\varrho\pr_{\ub}r}{2r}\right)\frac{\varrho\left(2\pr_ur+1\right)}{r}+\frac{\kappa\varrho^3\Omega^2g(\phi)^2\pr_{\ub}r}{2r^3}\\
&&+\frac{2\varrho^3\pr_ur\pr_{\ub}^2r}{r^2}-\frac{\varrho^2(2\phi^2\zeta(\phi)+\phi^3\zeta'(\phi))}{r^2}-\frac{2\varrho^3\phi^2\zeta(\phi)\pr_{\varrho}r}{r^3}\Bigg]\phi\\
&&+\Bigg[\frac{\varrho(3\pr_ur+\pr_{\ub}r)}{r} \frac{\varrho^2(\pr_{\varrho}r-1)}{r}\\
&&+\left(-\frac{5\varrho^2(\varrho-r)}{2r}+\frac{\varrho^3(2\pr_ur+1)}{2r}-\frac{2\varrho^3(\pr_{\varrho}r-1)}{r}\right)\frac{\pr_ur+\pr_{\ub}r}{r}\\
&&-\frac{\kappa\varrho^3\Omega^2g(\phi)g'(\phi)\phi}{r^2}-\frac{\kappa\varrho^3\Omega^2g(\phi)^2}{2r^2}\Bigg]\pr_{\ub}\phi+\Bigg[-\frac{\varrho^2(\varrho-r)}{r}+\frac{\varrho^3(2\pr_ur+1)}{r}\Bigg]\pr_{\ub}^2\phi\\
&&+\Bigg[\left(-\frac{2\varrho\pr_{\ub}r}{r}+1\right)\frac{\varrho^2f(\phi)}{r^2}+\frac{\varrho^2f'(\phi)\varrho\pr_{\ub}\phi}{r^2}\Bigg](\Omega^2-1)\\
&&+\Bigg[-\frac{\varrho(\varrho-r)}{r}\frac{\varrho\Omega^2}{4r^2}+\frac{\varrho^3(2\pr_{\ub}r-1)\Omega^2}{4r^3}\Bigg]f(\phi)+(2f(\phi)-\kappa g(\phi)^2\phi)\frac{\varrho^3\Omega\pr_{\ub}\Omega}{r^2}.
\eee
This concludes the proof of the lemma.
\end{proof}

Next, we derive upper bounds for $B_1$, $B_2$, $B_3$ and $B_4$.
\begin{lemma}\lab{lemma:upperboundB1234}
We have for all $(\tau, \varrho)$ with $\tau\geq -1$ and $\ub\leq 0$
\bee
|B_1| \les \ep\textrm{ and }|B_1|\les C_0r,\,\,\,\, |B_2| \les C_0r^{3\delta},\,\,\,\, |B_3| \les C_0r,\textrm{ and }|B_4| \les C_0r^{3\delta},
\eee
where the constant $C_0$ only depends on the values of the solution in $I_0$.
\end{lemma}

\begin{remark}\lab{rem:B13betterthanB24}
$B_1$ and $B_3$ behave better\footnote{The estimates for $B_1$ and $B_3$ correspond to the case $\delta=1/2$, while we have $\delta<1/2$ for  $B_2$ and $B_4$.} than $B_2$ and $B_4$. This is due to the fact that both $B_1$ and $B_3$ include neither $\pr_{\ub}\phi$ nor $\pr_{\ub}\Omega$ in their definition, so that we can estimate them using Lemma \ref{lemma:refinedboundsphietal} which has a $1/2-\delta$ gain with respect to Corollary \ref{cor:consequenceboot1}. 
\end{remark}

\begin{proof}
In view of the definition of $B_1$ and the Lemma \ref{lemma:basicestimate}, we have
\bee
|B_1| &\les& \ep.
\eee
Also, in view of Lemma \ref{lemma:refinedboundsphietal}, we have
\bee
|B_1| &\les & \frac{|\varrho-r|}{r}+\left|2\pr_{\ub}r-1\right|\\
&\les& C_0r.
\eee

In view of the definition of $B_2$, Corollary \ref{cor:consequenceboot1} and Lemma \ref{lemma:upperboundforpr2ubr}, we have
\bee
|B_2| &\les & \varrho\left|\pr_{\varrho}r-1\right||\pr_{\ub}\phi| +\frac{|\varrho-r|}{r}|\phi|+\left|2\pr_{\ub}r-1\right||\phi|+|\Omega^2-1||f(\phi)|+\varrho|\pr^2_{\ub}r||\phi|\\
&\les& C_0r^{3\delta}.
\eee

In view of the definition of $B_3$ and Lemma \ref{lemma:refinedboundsphietal}, we have
\bee
|B_3| &\les& \frac{|\varrho-r|}{r}+\left|\pr_{\ub}r-\frac{1}{2}\right|+\left|\pr_ur+\frac{1}{2}\right|+|\phi|^2\\
&\les& C_0r.
\eee

Finally, in view of the definition of $B_4$, Corollary \ref{cor:consequenceboot1}, Lemma \ref{lemma:upperboundprubOmega}, Lemma \ref{lemma:upperboundforpr2ubr} and Lemma \ref{lemma:upperboundpr2ubphi}, we have
\bee
|B_4| &\les& \left(\frac{|\varrho-r|}{r}+\left|\pr_{\ub}r-\frac{1}{2}\right|+\left|\pr_ur+\frac{1}{2}\right|+\varrho|\pr^2_{\ub}r|+|\phi|^2+|\Omega^2-1|+\varrho|\pr_{\ub}\Omega|\right)\\
&&\times (|\phi|+\varrho|\pr_{\ub}\phi|+\varrho^2|\pr_{\ub}^2\phi|)\\
&\les& C_0r^{3\delta}.
\eee
This concludes the proof of the lemma.
\end{proof}

\subsection{An upper bound for $v$}

\begin{lemma}
We have for all $(\tau, \varrho)$ with $\tau\geq -1$ and $\ub\leq 0$
\bee
&&v(\tau, \varrho) \\
&=& v_0(\tau,\varrho)\\
&+&\frac{J(-1)}{\sqrt{\varrho}}\int_{\tau-\varrho}^{\tau+\varrho}\left(\frac{B_1\left(\frac{\tau-\varrho+\ub'}{2},\frac{-\tau+\varrho+\ub'}{2}\right)}{\sqrt{\frac{-\tau+\varrho+\ub'}{2}}}v\left(\frac{\tau-\varrho+\ub'}{2},\frac{-\tau+\varrho+\ub'}{2}\right)+\frac{B_2\left(\frac{\tau-\varrho+\ub'}{2},\frac{-\tau+\varrho+\ub'}{2}\right)}{\left(\frac{-\tau+\varrho+\ub'}{2}\right)^{\frac{3}{2}}}\right) d\ub'\\
&& -\frac{1}{2}\sqrt{\varrho}\int_0^{+\infty}J(\mu)\left(B_1(-1, \la)v(-1, \la)+\frac{B_2(-1, \la)}{\la}\right)\frac{\sqrt{\la}}{\mu\varrho+\la}d\mu\\
&&-\frac{1}{2\sqrt{\varrho}}\int_{\sqrt{(\tau+1)^2-\varrho^2}}^{\tau+\varrho+1}J(\mu)\left(\frac{B_1(-1, \la)}{\sqrt{\la}}v(-1, \la)+\frac{B_2(-1, \la)}{\la^{\frac{3}{2}}}\right) d\la\\
&&+\int_{R_{\tau, \varrho}}\frac{1}{4\sqrt{\la\varrho}}J(\mu)\left(\frac{B_1(\sigma, \la)}{\la}v(\sigma, \la)+\frac{B_2(\sigma, \la)}{\la^2}\right)d\la d\sigma\\
&&-\int_{R_{\tau, \varrho}}\frac{\sqrt{\la}}{\sqrt{\varrho}}\pr_u\mu\, J'(\mu)\left(\frac{B_1(\sigma, \la)}{\la}v(\sigma, \la)+\frac{B_2(\sigma, \la)}{\la^2}\right)d\la d\sigma\\
&&+\int_{R_{\tau, \varrho}}\frac{\sqrt{\la}}{\sqrt{\varrho}}J(\mu)\left(\frac{B_3(\sigma, \la)}{\la^2}v(\sigma, \la)+\frac{B_4(\sigma, \la)}{\la^3}\right)d\la d\sigma,
\eee
where $v_0$ denotes the solution to the homogeneous equation with the same initial conditions as $v$ at $\tau=-1$, $\mu$ is given by
$$\mu=\frac{(\tau-\sigma)^2-\varrho^2-\la^2}{2\varrho\la},$$
$R_{\tau, \varrho}$ is the space-time region given by
$$R_{\tau, \varrho}=\{(\sigma, \la)\,/\,\, -1\leq\sigma\leq \tau,\,\,\max(0,\varrho-\tau+\sigma)\leq\la\leq\varrho+\tau-\sigma\},$$
and $J$ is given by
$$J(\mu)=\int_{\max(-\mu,-1)}^1\frac{dx}{\sqrt{1-x^2}\sqrt{\mu+x}}.$$
\end{lemma}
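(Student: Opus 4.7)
The proof mirrors that of Lemma~\ref{lemma:repformulaphiwave} closely, with three adaptations: (i) the operator on the left is now $\Box_2 := -\pr_\tau^2 + \pr_\varrho^2 + \tfrac{1}{\varrho}\pr_\varrho$ without the $-1/\varrho^2$ term, since the identity $D \circ (\Box_2 - 1/\varrho^2) = \Box_2 \circ D$ is precisely what converts the $\phi$-equation into the $v$-equation used in Lemma~\ref{lemma:structurewaveeqv}; (ii) the source contains not only a $\pr_u$-divergence piece $\pr_u(B_1 v/\varrho + B_2/\varrho^2)$ but also two ``ordinary'' pieces $B_3 v/\varrho^2$ and $B_4/\varrho^3$; and (iii) the fundamental solution for $\Box_2$ on radial functions uses the kernel $J$ rather than $K$, reflecting the absence of the $1/\varrho^2$ potential. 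The starting point is the standard Kirchhoff-type formula recalled in \cite{jal_tah1}: for $\Box_2 w = h$ with data on $\tau = -1$,
\[
w(\tau,\varrho) = w_0(\tau,\varrho) + \int_R \frac{\sqrt{\la}}{\sqrt{\varrho}}\, J(\mu)\, h(\sigma,\la)\, d\sigma\, d\la.
\]

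Substituting $h(\sigma,\la) = \pr_u\!\bigl(B_1(\sigma,\la) v(\sigma,\la)/\la + B_2(\sigma,\la)/\la^2\bigr) + B_3(\sigma,\la) v(\sigma,\la)/\la^2 + B_4(\sigma,\la)/\la^3$ splits the integral into three. The $B_3$ and $B_4$ pieces are already in final form and yield the last $\int_R$ term of the statement. For the $\pr_u$-divergence piece I apply Stokes' theorem on $R$ exactly as in the proof of Lemma~\ref{lemma:repformulaphiwave}. The boundary $\pr R$ has three parts: the past ingoing null segment $u = \tau-\varrho$, on which $\mu \equiv -1$ so that $J(\mu) = J(-1)$ comes out and we recover the first boundary integral $\frac{J(-1)}{\sqrt{\varrho}} \int_{\tau-\varrho}^{\tau+\varrho} \cdots d\ub'$; the initial slice $\sigma = -1$; and the axis segment $\la = 0$, on which the integrand vanishes because of the prefactor $\sqrt{\la}$ together with boundedness of $B_1 v$ and $B_2$ near $\Gamma$. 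On the initial slice I split the $\la$-range at $\la = \sqrt{(\tau+1)^2 - \varrho^2}$ (where $\mu$ crosses $0$) and change variables $\la \to \mu$ on the inner portion using $\pr_\la \mu = -(\mu\varrho + \la)/\varrho$, producing the two remaining boundary integrals, one over $\mu \in [0,\infty)$ with denominator $\mu\varrho+\la$ and one over the complementary range of $\la$.

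The bulk contribution from the integration by parts is $-\int_R \pr_u\!\bigl(\tfrac{\sqrt{\la}}{\sqrt{\varrho}}J(\mu)\bigr) \bigl(B_1 v/\la + B_2/\la^2\bigr)\, d\sigma d\la$. Computing $\pr_u \sqrt{\la} = -1/(4\sqrt{\la})$ (using $\la = (\ub-u)/2$) and the chain rule on $J$, this equals
\[
\int_R \frac{1}{4\sqrt{\la\varrho}} J(\mu)\!\left(\tfrac{B_1}{\la}v + \tfrac{B_2}{\la^2}\right)d\sigma\, d\la \;-\; \int_R \frac{\sqrt{\la}}{\sqrt{\varrho}}\, \pr_u\mu\, J'(\mu)\!\left(\tfrac{B_1}{\la}v + \tfrac{B_2}{\la^2}\right)d\sigma\, d\la,
\]
which are precisely the two bulk $R$-integrals appearing in the statement. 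In contrast to Lemma~\ref{lemma:repformulaphiwave}, no further change of variables $(\sigma,\la) \to (\mu,\la)$ is performed on the bulk terms; the final form of the $v$-representation is expressed in the original $(\sigma,\la)$-coordinates on $R$.

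\textbf{Main obstacle.} The principal technical point is verifying that the axis contribution to the Stokes boundary vanishes. Using the refined bounds of Section~\ref{sec:improved-uniform-phi} (Lemma~\ref{lemma:upperboundB1234} gives $|B_1| \les \la$ and $|B_2| \les \la^{3\delta}$, while $v = \pr_\varrho \phi + \phi/\varrho$ stays bounded on $\Gamma$ because $\phi$ vanishes to first order on the axis by Lemma~\ref{lemma:refinedboundsphietal}), the Stokes integrand behaves like $\sqrt{\la}\,(B_1 v/\la + B_2/\la^2) = O(\la^{3\delta - 3/2})$; combined with the $d\sigma$ integration along the axis this is integrable for $\delta > 1/6$, so the axis piece indeed drops out. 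All remaining computations are bookkeeping fully parallel to Lemma~\ref{lemma:repformulaphiwave}, with $J$ and $J'$ replacing $K$ and $K'$ throughout.
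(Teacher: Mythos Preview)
Your approach is correct and essentially identical to the paper's: invoke the kernel-$J$ representation formula from \cite{chris_tah1} for the operator $-\pr_\tau^2+\pr_\varrho^2+\tfrac{1}{\varrho}\pr_\varrho$, apply Stokes to the $\pr_u$-divergence part of the source to produce the null-cone and initial-slice boundary terms together with the two bulk integrals coming from $\pr_u\bigl(\tfrac{\sqrt{\la}}{\sqrt{\varrho}}J(\mu)\bigr)$, and split the initial-slice term via the change of variables $\la\to\mu$ exactly as in Lemma~\ref{lemma:repformulaphiwave}; the paper likewise leaves the bulk integrals in $(\sigma,\la)$-coordinates. One quibble on your axis argument: you omit the factor $J(\mu)$, which on the axis decays like $\sqrt{\la}$ (since $\mu\sim c/\la$ and $J(\mu)\sim\pi/\sqrt{\mu}$), so the full boundary integrand is $O(\la^{3\delta-1})$ and one needs $\delta>1/3$ for pointwise vanishing at $\la=0$ rather than $\delta>1/6$; the $d\sigma$-integrability remark is beside the point since that range is compact. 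The paper itself simply drops the axis term without comment.
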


\begin{proof}
We recall the representation formula derived in \cite{chris_tah1} for the solution $v$ of 
$$\left(-\pr_\tau^2+\pr_{\varrho}^2+\frac{1}{\varrho}\pr_\varrho\right)v=h.$$
$v$ is given by (see \cite{chris_tah1} p. 1060)
$$v(\tau, \varrho)=v_0(\tau,\varrho)+\int_{R_{\tau, \varrho}}\frac{\sqrt{\la}}{\sqrt{\varrho}}J(\mu)h(\sigma, \la)d\la d\sigma,$$
where 
$$R_{\tau, \varrho}=\{(\sigma, \la)\,/\,\, -1\leq\sigma\leq \tau,\,\,\max(0,\varrho-\tau+\sigma)\leq\la\leq\varrho+\tau-\sigma\},$$
$v_0$ denotes the solution to the homogeneous equation
$$\left(-\pr_\tau^2+\pr_{\varrho}^2+\frac{1}{\varrho}\pr_\varrho\right)v_0=0$$
with the same initial conditions as $v$ at $\tau=-1$, $\mu$ is given as before by
$$\mu=\frac{(\tau-\sigma)^2-\varrho^2-\la^2}{2\varrho\la},$$
and $J$ is given by
$$J(\mu)=\int_{\max(-\mu,-1)}^1\frac{dx}{\sqrt{1-x^2}\sqrt{\mu+x}}.$$
In our case, we have  
\bee
h= \pr_u\left(\frac{B_1}{\varrho}v+\frac{B_2}{\varrho^2}\right)+\frac{B_3}{\varrho^2}v+\frac{B_4}{\varrho^3}.
\eee
Hence, we have 
\bee
v(\tau, \varrho) &=& v_0(\tau,\varrho)+\int_{R_{\tau, \varrho}}\frac{\sqrt{\la}}{\sqrt{\varrho}}J(\mu)\pr_u\left(\frac{B_1(\sigma, \la)}{\la}v(\sigma, \la)+\frac{B_2(\sigma, \la)}{\la^2}\right)d\la d\sigma\\
&&+\int_{R_{\tau, \varrho}}\frac{\sqrt{\la}}{\sqrt{\varrho}}J(\mu)\left(\frac{B_3(\sigma, \la)}{\la^2}v(\sigma, \la)+\frac{B_4(\sigma, \la)}{\la^3}\right)d\la d\sigma\\
&=& v_0(\tau,\varrho)+\int_{\pr R_{\tau, \varrho}}\frac{\sqrt{\la}}{\sqrt{\varrho}}J(\mu)\left(\frac{B_1(\sigma, \la)}{\la}v(\sigma, \la)+\frac{B_2(\sigma, \la)}{\la^2}\right)\gg(\pr_u,\nu_R)\\
&& -\int_{R_{\tau, \varrho}}\pr_u\left(\frac{\sqrt{\la}}{\sqrt{\varrho}}J(\mu)\right)\left(\frac{B_1(\sigma, \la)}{\la}v(\sigma, \la)+\frac{B_2(\sigma, \la)}{\la^2}\right)d\la d\sigma\\
&&+\int_{R_{\tau, \varrho}}\frac{\sqrt{\la}}{\sqrt{\varrho}}J(\mu)\left(\frac{B_3(\sigma, \la)}{\la^2}v(\sigma, \la)+\frac{B_4(\sigma, \la)}{\la^3}\right)d\la d\sigma\\
&=& v_0(\tau,\varrho)+\int_{\pr R_{\tau, \varrho}}\frac{\sqrt{\la}}{\sqrt{\varrho}}J(\mu)\left(\frac{B_1(\sigma, \la)}{\la}v(\sigma, \la)+\frac{B_2(\sigma, \la)}{\la^2}\right)\gg(\pr_u,\nu_R) \\
&&+\int_{R_{\tau, \varrho}}\frac{1}{4\sqrt{\la\varrho}}J(\mu)\left(\frac{B_1(\sigma, \la)}{\la}v(\sigma, \la)+\frac{B_2(\sigma, \la)}{\la^2}\right)d\la d\sigma\\
&&-\int_{R_{\tau, \varrho}}\frac{\sqrt{\la}}{\sqrt{\varrho}}\pr_u\mu\, J'(\mu)\left(\frac{B_1(\sigma, \la)}{\la}v(\sigma, \la)+\frac{B_2(\sigma, \la)}{\la^2}\right)d\la d\sigma\\
&&+\int_{R_{\tau, \varrho}}\frac{\sqrt{\la}}{\sqrt{\varrho}}J(\mu)\left(\frac{B_3(\sigma, \la)}{\la^2}v(\sigma, \la)+\frac{B_4(\sigma, \la)}{\la^3}\right)d\la d\sigma.
\eee

Next, we compute the boundary term. Recall that we have
\bee
&&\int_{\pr R_{\tau, \varrho}}f \gg(\pr_u,\nu_R) \\
&=& \int_{\tau-\varrho}^{\tau+\varrho} f\left(\frac{\tau-\varrho+\ub'}{2},\frac{-\tau+\varrho+\ub'}{2}\right)d\ub'+\frac{1}{2}\int_{-1}^{\tau-\varrho}f(\sigma,0)d\sigma-\frac{1}{2}\int_0^{\tau+\varrho+1} f(-1, \la)d\la,
\eee
and
$$\mu=-1\textrm{ on }u=\tau-\varrho.$$
Hence, we have\footnote{Here, we have dropped the boundary term on $\la=0$ as it vanishes. Indeed, we have 
$$\left|\sqrt{\la}J(\mu)\left(\frac{B_1(\sigma, \la)}{\la}v(\sigma, \la)+\frac{B_2(\sigma, \la)}{\la^2}\right)\right|\leq C_{\tau, \varrho} \sqrt{\la}(1+|v(\sigma, \la)|)|J(\mu)|\leq C_{\tau, \varrho} \sqrt{\la}\to 0\textrm{ as }\la\to 0,$$
where the constant $C_{\tau, \varrho}$ may blow up as $(\tau, \varrho)$ tends to the origin but is finite away from it, and where we used in particular the fact that $B_1/\la$ and $B_2/\la^2$ are bounded, the fact that $\mu\to +\infty$ when $\la\to 0$ with $\sigma<\tau-\varrho$, and the fact that $J$ is bounded for $\mu\geq 2$.}
\bee
&&\int_{\pr R_{\tau, \varrho}}\frac{\sqrt{\la}}{\sqrt{\varrho}}J(\mu)\left(\frac{B_1(\sigma, \la)}{\la}v(\sigma, \la)+\frac{B_2(\sigma, \la)}{\la^2}\right)\gg(\pr_u,\nu_R)\\
&=&  \frac{J(-1)}{\sqrt{\varrho}}\int_{\tau-\varrho}^{\tau+\varrho}\left(\frac{B_1\left(\frac{\tau-\varrho+\ub'}{2},\frac{-\tau+\varrho+\ub'}{2}\right)}{\sqrt{\frac{-\tau+\varrho+\ub'}{2}}}v\left(\frac{\tau-\varrho+\ub'}{2},\frac{-\tau+\varrho+\ub'}{2}\right)+\frac{B_2\left(\frac{\tau-\varrho+\ub'}{2},\frac{-\tau+\varrho+\ub'}{2}\right)}{\left(\frac{-\tau+\varrho+\ub'}{2}\right)^{\frac{3}{2}}}\right) d\ub'\\
&& -\frac{1}{2}\int_0^{\tau+\varrho+1}\frac{\sqrt{\la}}{\sqrt{\varrho}}J(\mu)\left(\frac{B_1(-1, \la)}{\la}v(-1, \la)+\frac{B_2(-1, \la)}{\la^2}\right) d\la
\eee

Recall that
\bee
\pr_\la\mu &=& -\frac{\mu\varrho+\la}{\varrho\la}.
\eee 
We decompose and perform a change of variable
\bee
&&\int_0^{\tau+\varrho+1}\frac{\sqrt{\la}}{\sqrt{\varrho}}J(\mu)\left(\frac{B_1(-1, \la)}{\la}v(-1, \la)+\frac{B_2(-1, \la)}{\la^2}\right) d\la\\
&=& \int_0^{+\infty}\frac{\sqrt{\la}}{\sqrt{\varrho}}J(\mu)\left(\frac{B_1(-1, \la)}{\la}v(-1, \la)+\frac{B_2(-1, \la)}{\la^2}\right)\frac{\varrho\la}{\mu\varrho+\la}d\mu\\
&&+\int_{\sqrt{(\tau+1)^2-\varrho^2}}^{\tau+\varrho+1}\frac{\sqrt{\la}}{\sqrt{\varrho}}J(\mu)\left(\frac{B_1(-1, \la)}{\la}v(-1, \la)+\frac{B_2(-1, \la)}{\la^2}\right) d\la\\
&=& \sqrt{\varrho}\int_0^{+\infty}J(\mu)\left(B_1(-1, \la) v(-1, \la)+\frac{B_2(-1, \la)}{\la}\right)\frac{\sqrt{\la}}{\mu\varrho+\la}d\mu\\
&&+\frac{1}{\sqrt{\varrho}}\int_{\sqrt{(\tau+1)^2-\varrho^2}}^{\tau+\varrho+1}J(\mu)\left(\frac{B_1(-1, \la)}{\sqrt{\la}}v(-1, \la)+\frac{B_2(-1, \la)}{\la^{\frac{3}{2}}}\right) d\la
\eee
which yields
\bee
&&\int_{\pr R_{\tau, \varrho}}\frac{\sqrt{\la}}{\sqrt{\varrho}}J(\mu)\left(\frac{B_1(\sigma, \la)}{\la}v(\sigma, \la)+\frac{B_2(\sigma, \la)}{\la^2}\right)\gg(\pr_u,\nu_R)\\
&=&  \frac{J(-1)}{\sqrt{\varrho}}\int_{\tau-\varrho}^{\tau+\varrho}\left(\frac{B_1\left(\frac{\tau-\varrho+\ub'}{2},\frac{-\tau+\varrho+\ub'}{2}\right)}{\sqrt{\frac{-\tau+\varrho+\ub'}{2}}}v\left(\frac{\tau-\varrho+\ub'}{2},\frac{-\tau+\varrho+\ub'}{2}\right)+\frac{B_2\left(\frac{\tau-\varrho+\ub'}{2},\frac{-\tau+\varrho+\ub'}{2}\right)}{\left(\frac{-\tau+\varrho+\ub'}{2}\right)^{\frac{3}{2}}}\right) d\ub'\\
&& -\frac{1}{2}\sqrt{\varrho}\int_0^{+\infty}J(\mu)\left(B_1(-1, \la)v(-1, \la)+\frac{B_2(-1, \la)}{\la}\right)\frac{\sqrt{\la}}{\mu\varrho+\la}d\mu\\
&&-\frac{1}{2\sqrt{\varrho}}\int_{\sqrt{(\tau+1)^2-\varrho^2}}^{\tau+\varrho+1}J(\mu)\left(\frac{B_1(-1, \la)}{\sqrt{\la}}v(-1, \la)+\frac{B_2(-1, \la)}{\la^{\frac{3}{2}}}\right) d\la.
\eee

Finally, we deduce
\bee
&&v(\tau, \varrho) \\
&=& v_0(\tau,\varrho)\\
&+&\frac{J(-1)}{\sqrt{\varrho}}\int_{\tau-\varrho}^{\tau+\varrho}\left(\frac{B_1\left(\frac{\tau-\varrho+\ub'}{2},\frac{-\tau+\varrho+\ub'}{2}\right)}{\sqrt{\frac{-\tau+\varrho+\ub'}{2}}}v\left(\frac{\tau-\varrho+\ub'}{2},\frac{-\tau+\varrho+\ub'}{2}\right)+\frac{B_2\left(\frac{\tau-\varrho+\ub'}{2},\frac{-\tau+\varrho+\ub'}{2}\right)}{\left(\frac{-\tau+\varrho+\ub'}{2}\right)^{\frac{3}{2}}}\right) d\ub'\\
&& -\frac{1}{2}\sqrt{\varrho}\int_0^{+\infty}J(\mu)\left(B_1(-1, \la)v(-1, \la)+\frac{B_2(-1, \la)}{\la}\right)\frac{\sqrt{\la}}{\mu\varrho+\la}d\mu\\
&&-\frac{1}{2\sqrt{\varrho}}\int_{\sqrt{(\tau+1)^2-\varrho^2}}^{\tau+\varrho+1}J(\mu)\left(\frac{B_1(-1, \la)}{\sqrt{\la}}v(-1, \la)+\frac{B_2(-1, \la)}{\la^{\frac{3}{2}}}\right) d\la\\
&&+\int_{R_{\tau, \varrho}}\frac{1}{4\sqrt{\la\varrho}}J(\mu)\left(\frac{B_1(\sigma, \la)}{\la}v(\sigma, \la)+\frac{B_2(\sigma, \la)}{\la^2}\right)d\la d\sigma\\
&&-\int_{R_{\tau, \varrho}}\frac{\sqrt{\la}}{\sqrt{\varrho}}\pr_u\mu\, J'(\mu)\left(\frac{B_1(\sigma, \la)}{\la}v(\sigma, \la)+\frac{B_2(\sigma, \la)}{\la^2}\right)d\la d\sigma\\
&&+\int_{R_{\tau, \varrho}}\frac{\sqrt{\la}}{\sqrt{\varrho}}J(\mu)\left(\frac{B_3(\sigma, \la)}{\la^2}v(\sigma, \la)+\frac{B_4(\sigma, \la)}{\la^3}\right)d\la d\sigma.
\eee
This concludes the proof of the lemma.
\end{proof}

\begin{lemma}
We have for all $(\tau, \varrho)$ with $\tau\geq -1$ and $\ub\leq 0$
\bee
|v(\tau, \varrho)| &\les& C_0+C_0r^{3\delta-1}+\ep\sup_{0\leq\la\leq\varrho}|v(\tau-\varrho+\la,\la)|\\
&&+C_0\int_{R_{\tau, \varrho}}\frac{\sqrt{\la}}{\sqrt{\varrho}}(J(\mu)+|\mu-1||J'(\mu)|)\left(\frac{|v(\sigma, \la)|}{\la}+\frac{\la^{3\delta}}{\la^3}\right)d\la d\sigma.
\eee
where the constant $C_0$ only depends on the values of the solution in $I_0$. 
\end{lemma}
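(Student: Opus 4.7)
The plan is to estimate each of the six terms of the representation formula from the preceding lemma, using the bounds on $B_1,B_2,B_3,B_4$ from Lemma \ref{lemma:upperboundB1234}, the properties of the kernels $J,J'$ recorded on p.~1061 of \cite{chris_tah1}, and the pointwise estimate $|\la\pr_u\mu|\les|\mu-1|$ from Lemma \ref{lemma:upperboundprumu}. The homogeneous piece $v_0(\tau,\varrho)$ will contribute $\les C_0$ by the assumed regularity of the initial data.

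The boundary integral over the characteristic $\sigma=\tau-\varrho$ is the source of the two non-routine terms in the statement. I will change variables via $\la'=(-\tau+\varrho+\ub')/2\in[0,\varrho]$. The key observation here is that Lemma \ref{lemma:upperboundB1234} provides \emph{two} bounds for $B_1$, namely $|B_1|\les\ep$ and $|B_1|\les C_0 r$. Applying the small bound $|B_1|\les\ep$ on the $B_1 v$ piece, pulling the supremum of $|v|$ out of the integral and using $\int_0^\varrho (\la')^{-1/2}\,d\la'\les\sqrt{\varrho}$ cancels the prefactor $1/\sqrt{\varrho}$ and yields exactly $\ep\sup_{0\le\la\le\varrho}|v(\tau-\varrho+\la,\la)|$. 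Applying $|B_2|\les C_0(\la')^{3\delta}$ (valid since $r\sim\la'$ here by Corollary \ref{cor:comprttauvarho}) together with $\int_0^\varrho(\la')^{3\delta-3/2}\,d\la'\les\varrho^{3\delta-1/2}$---an estimate that converges provided $\delta>1/6$---then produces the $C_0 r^{3\delta-1}$ contribution after dividing by $\sqrt{\varrho}$.

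The two remaining boundary integrals, supported on $\{\sigma=-1\}$, will be controlled by the initial data. I will bound $v(-1,\la)$, $B_1(-1,\la)/\sqrt{\la}$ and $B_2(-1,\la)/\la^{3/2}$ uniformly by $C_0$ under the assumed initial regularity. For the integral against $J(\mu)/(\mu\varrho+\la)$ on $\{0\le\mu<+\infty\}$ I use $J\in L^1(-1,+\infty)$; for the integral on $\{\sqrt{(\tau+1)^2-\varrho^2}\le\la\le\tau+\varrho+1\}$ I use that this range corresponds to $\mu\in[-1,0]$, where $|J|$ is bounded, and that the range has length $\les\varrho$. Both contributions are then $\les C_0$.

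For the three space-time integrals I will bundle together the pieces weighted by $J(\mu)$ and by $\la\pr_u\mu\,J'(\mu)$. Using $|B_1|+|B_3|\les C_0\la$, $|B_2|+|B_4|\les C_0\la^{3\delta}$ and $|\la\pr_u\mu|\les|\mu-1|$, each of the three integrands is pointwise dominated by
\[
C_0\,\frac{\sqrt{\la}}{\sqrt{\varrho}}\bigl(J(\mu)+|\mu-1|\,|J'(\mu)|\bigr)\left(\frac{|v(\sigma,\la)|}{\la}+\frac{\la^{3\delta}}{\la^{3}}\right),
\]
which is exactly the integrand in the statement, once one notes the identity $1/\sqrt{\la\varrho}=(\sqrt{\la}/\sqrt{\varrho})\cdot(1/\la)$. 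The main technical obstacle is the careful book-keeping of powers of $\la$ across the numerous terms, in particular ensuring that the integral of $(\la')^{3\delta-3/2}$ over $[0,\varrho]$ converges---this is precisely the source of the constraint $\delta>1/6$ already flagged in the remarks following Proposition \ref{prop:boot1}.
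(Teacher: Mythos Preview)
Your treatment of the characteristic boundary term on $u=\tau-\varrho$, the boundary term on $\{\sigma=-1,\,-1\le\mu\le 0\}$, and the three space-time integrals is correct and matches the paper's argument essentially line for line.

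There is, however, a genuine gap in your handling of the boundary integral
\[
\sqrt{\varrho}\int_0^{+\infty}J(\mu)\left(\frac{B_1(-1,\la)}{\sqrt{\la}}v(-1,\la)+\frac{B_2(-1,\la)}{\la^{3/2}}\right)\frac{d\mu}{\mu\varrho+\la}.
\]
You write that you will use $J\in L^1(-1,+\infty)$, but this is false: unlike $K$, the kernel $J$ satisfies $J(\mu)\sim c\mu^{-1/2}$ as $\mu\to+\infty$ and is therefore \emph{not} integrable on $(0,+\infty)$. The properties quoted from \cite{chris_tah1} only give $J\in L^1(0,2)$ together with $\sup_{\mu\ge 2}\sqrt{\mu}\,J(\mu)\les 1$. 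Moreover, even if you try to repair the argument by absorbing the decay into $1/(\mu\varrho+\la)$, a straightforward bound still fails to be uniform near $\varrho=\tau+1$.

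The paper resolves this by splitting at $\mu=2$ and extracting a different power of $\la$ in each piece. For $0\le\mu\le 2$ one writes $\frac{B_1}{\sqrt{\la}}v+\frac{B_2}{\la^{3/2}}=\la\cdot(\frac{B_1}{\la^{3/2}}v+\frac{B_2}{\la^{5/2}})$ and uses $\la/(\mu\varrho+\la)\le 1$ together with $J\in L^1(0,2)$; for $\mu\ge 2$ one extracts $\la^{3/2}$, uses $\la^{3/2}/(\mu\varrho+\la)\le\sqrt{\la}$, and then exploits $\sqrt{\mu}J(\mu)\les 1$ and $\la\mu\le 1/\varrho$ to get $\int_2^{+\infty}\sqrt{\la}\,J(\mu)\,d\mu\les 1/\sqrt{\varrho}$. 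Both pieces are then $\les C_0$. You should replace your $L^1$ claim with this two-region argument.
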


\begin{proof}
Recall that
\bee
&&v(\tau, \varrho) \\
&=& v_0(\tau,\varrho)\\
&+&\frac{J(-1)}{\sqrt{\varrho}}\int_{\tau-\varrho}^{\tau+\varrho}\left(\frac{B_1\left(\frac{\tau-\varrho+\ub'}{2},\frac{-\tau+\varrho+\ub'}{2}\right)}{\sqrt{\frac{-\tau+\varrho+\ub'}{2}}}v\left(\frac{\tau-\varrho+\ub'}{2},\frac{-\tau+\varrho+\ub'}{2}\right)+\frac{B_2\left(\frac{\tau-\varrho+\ub'}{2},\frac{-\tau+\varrho+\ub'}{2}\right)}{\left(\frac{-\tau+\varrho+\ub'}{2}\right)^{\frac{3}{2}}}\right) d\ub'\\
&& -\frac{1}{2}\sqrt{\varrho}\int_0^{+\infty}J(\mu)\left(B_1(-1, \la)v(-1, \la)+\frac{B_2(-1, \la)}{\la}\right)\frac{\sqrt{\la}}{\mu\varrho+\la}d\mu\\
&&-\frac{1}{2\sqrt{\varrho}}\int_{\sqrt{(\tau+1)^2-\varrho^2}}^{\tau+\varrho+1}J(\mu)\left(\frac{B_1(-1, \la)}{\sqrt{\la}}v(-1, \la)+\frac{B_2(-1, \la)}{\la^{\frac{3}{2}}}\right) d\la\\
&&+\int_{R_{\tau, \varrho}}\frac{1}{4\sqrt{\la\varrho}}J(\mu)\left(\frac{B_1(\sigma, \la)}{\la}v(\sigma, \la)+\frac{B_2(\sigma, \la)}{\la^2}\right)d\la d\sigma\\
&&-\int_{R_{\tau, \varrho}}\frac{\sqrt{\la}}{\sqrt{\varrho}}\pr_u\mu\, J'(\mu)\left(\frac{B_1(\sigma, \la)}{\la}v(\sigma, \la)+\frac{B_2(\sigma, \la)}{\la^2}\right)d\la d\sigma\\
&&+\int_{R_{\tau, \varrho}}\frac{\sqrt{\la}}{\sqrt{\varrho}}J(\mu)\left(\frac{B_3(\sigma, \la)}{\la^2}v(\sigma, \la)+\frac{B_4(\sigma, \la)}{\la^3}\right)d\la d\sigma.
\eee
Noticing that $J(\mu)\geq 0$ for all $\mu\geq -1$ and that 
$$\{\sqrt{(\tau+1)^2-\varrho^2}\leq\la\leq\tau+\varrho+1\}\cap\{\sigma=-1\}=\{-1\leq\mu\leq 0\}\cap\{\sigma=-1\},$$
this yields
\bee
&&|v(\tau, \varrho)| \\
&\les& |v_0(\tau,\varrho)|+\frac{J(-1)}{\sqrt{\varrho}}\int_0^{\varrho}\left(\frac{|B_1(\tau-\varrho+\la,\la)|}{\sqrt{\la}}|v(\tau-\varrho+\la,\la)|+\frac{|B_2(\tau-\varrho+\la,\la)|}{\la^{\frac{3}{2}}}\right) d\la\\
&& +\sqrt{\varrho}\left(\sup_{\la\geq 0}\left(\frac{|B_1(-1, \la)|}{\sqrt{\la}}|v(-1, \la)|+\frac{|B_2(-1, \la)|}{\la^{\frac{3}{2}}}\right)\right)\int_0^2J(\mu)d\mu\\
&& +\sqrt{\varrho}\left(\sup_{\la\geq 0}\left(\frac{|B_1(-1, \la)|}{\la}|v(-1, \la)|+\frac{|B_2(-1, \la)|}{\la^2}\right)\right)\int_2^{+\infty}\frac{\la^{\frac{3}{2}}}{\mu\varrho+\la}J(\mu)d\mu\\
&&+\frac{1}{\sqrt{\varrho}}\left( \sup_{-1\leq\mu\leq 0}J(\mu)\right)\left(\sup_{\la\geq 0}\left(\frac{|B_1(-1, \la)|}{\sqrt{\la}}|v(-1, \la)|+\frac{|B_2(-1, \la)|}{\la^{\frac{3}{2}}}\right)\right)(\tau+\varrho+1-\sqrt{(\tau+1)^2-\varrho^2})\\
&&+\int_{R_{\tau, \varrho}}\frac{1}{4\sqrt{\la\varrho}}J(\mu)\left(\frac{|B_1(\sigma, \la)|}{\la}v(\sigma, \la)+\frac{|B_2(\sigma, \la)|}{\la^2}\right)d\la d\sigma\\
&&+\int_{R_{\tau, \varrho}}\frac{\sqrt{\la}}{\sqrt{\varrho}}|\pr_u\mu|\, |J'(\mu)|\left(\frac{|B_1(\sigma, \la)|}{\la}|v(\sigma, \la)|+\frac{|B_2(\sigma, \la)|}{\la^2}\right)d\la d\sigma\\
&&+\int_{R_{\tau, \varrho}}\frac{\sqrt{\la}}{\sqrt{\varrho}}J(\mu)\left(\frac{|B_3(\sigma, \la)|}{\la^2}|v(\sigma, \la)|+\frac{|B_4(\sigma, \la)|}{\la^3}\right)d\la d\sigma.
\eee
We have the following properties for $J$ (see for example \cite{chris_tah1} p. 1061):
$$J(-1)=\frac{\pi}{\sqrt{2}},\,\, \sup_{-1\leq\mu\leq 0}J(\mu)\les 1,\,\, J\in L^1(0,2).$$
Also, we have
$$\sup_{2\leq\mu<+\infty}\sqrt{\mu}J(\mu)\les 1$$
and hence, we infer
\bee
\int_2^{+\infty}\frac{\la^{\frac{3}{2}}}{\mu\varrho+\la}J(\mu)d\mu &\les& \int_2^{+\infty}\frac{\la^{\frac{3}{2}}}{\sqrt{\mu}(\mu\varrho+\la)}d\mu\\
&\les& \frac{1}{\sqrt{\varrho}}\int_0^{+\infty}\frac{\la^{\frac{3}{2}}}{\sqrt{s}(s+\la)}ds\\
&\les& \frac{1}{\sqrt{\varrho}}\int_0^1\frac{\sqrt{\la}}{\sqrt{s}}ds+\frac{1}{\sqrt{\varrho}}\int_1^{+\infty}\frac{\la^{\frac{3}{2}}}{s^{\frac{3}{2}}}ds\\
&\les&  \frac{1}{\sqrt{\varrho}}\left(\int_0^1\frac{ds}{\sqrt{s}}+\int_1^{+\infty}\frac{ds}{s^{\frac{3}{2}}}\right)\\
&\les&  \frac{1}{\sqrt{\varrho}}. 
\eee
We deduce
\bee
&&|v(\tau, \varrho)| \\
&\les& |v_0(\tau,\varrho)|+\sup_{0\leq\la\leq\varrho}\left(|B_1(\tau-\varrho+\la,\la)||v(\tau-\varrho+\la,\la)|+\frac{|B_2(\tau-\varrho+\la,\la)|}{\la}\right)\\
&& +\sqrt{\varrho}\left(\sup_{\la\geq 0}\left(\frac{|B_1(-1, \la)|}{\sqrt{\la}}|v(-1, \la)|+\frac{|B_2(-1, \la)|}{\la^{\frac{3}{2}}}\right)\right)\\
&&+\sup_{\la\geq 0}\left(\frac{|B_1(-1, \la)|}{\la}|v(-1, \la)|+\frac{|B_2(-1, \la)|}{\la^2}\right)\\
&&+\sqrt{\varrho}\left(\sup_{\la\geq 0}\left(\frac{|B_1(-1, \la)|}{\sqrt{\la}}|v(-1, \la)|+\frac{|B_2(-1, \la)|}{\la^{\frac{3}{2}}}\right)\right)\frac{\tau+\varrho+1}{\tau+\varrho+1+\sqrt{(\tau+1)^2-\varrho^2}}\\
&&+\int_{R_{\tau, \varrho}}\frac{\sqrt{\la}}{\sqrt{\varrho}}J(\mu)\left(\frac{|B_1(\sigma, \la)|+|B_3(\sigma, \la)|}{\la^2}|v(\sigma, \la)|+\frac{|B_2(\sigma, \la)|+|B_4(\sigma, \la)|}{\la^3}\right)d\la d\sigma\\
&&+\int_{R_{\tau, \varrho}}\frac{\sqrt{\la}}{\sqrt{\varrho}}|\pr_u\mu|\, |J'(\mu)|\left(\frac{|B_1(\sigma, \la)|}{\la}|v(\sigma, \la)|+\frac{|B_2(\sigma, \la)|}{\la^2}\right)d\la d\sigma.
\eee
Assuming enough regularity on the initial data, we have
\bee
&&\sup_{\varrho\geq 0} |v_0(\tau,\varrho)|+\sup_{\la\geq 0}\left(\frac{|B_1(-1, \la)|}{\sqrt{\la}}|v(-1, \la)|+\frac{|B_2(-1, \la)|}{\la^{\frac{3}{2}}}\right)\\
&&+\sup_{\la\geq 0}\left(\frac{|B_1(-1, \la)|}{\la}|v(-1, \la)|+\frac{|B_2(-1, \la)|}{\la^2}\right)\\
&\leq& C_0,
\eee
where the constant $C_0$ only depends on initial data. Hence, together with Lemma \ref{lemma:upperboundprumu}, we deduce
\bee
&&|v(\tau, \varrho)| \\
&\les& C_0+\sup_{0\leq\la\leq\varrho}\left(|B_1(\tau-\varrho+\la,\la)||v(\tau-\varrho+\la,\la)|+\frac{|B_2(\tau-\varrho+\la,\la)|}{\la}\right)\\
&+&\int_{R_{\tau, \varrho}}\frac{\sqrt{\la}}{\sqrt{\varrho}}(J(\mu)+|\mu-1||J'(\mu)|)\left(\frac{|B_1(\sigma, \la)|+|B_3(\sigma, \la)|}{\la^2}|v(\sigma, \la)|+\frac{|B_2(\sigma, \la)|+|B_4(\sigma, \la)|}{\la^3}\right)d\la d\sigma.
\eee
Together with Lemma \ref{lemma:upperboundB1234}, we infer
\bee
|v(\tau, \varrho)| &\les& C_0+C_0r^{3\delta-1}+\ep\sup_{0\leq\la\leq\varrho}|v(\tau-\varrho+\la,\la)|\\
&&+C_0\int_{R_{\tau, \varrho}}\frac{\sqrt{\la}}{\sqrt{\varrho}}(J(\mu)+|\mu-1||J'(\mu)|)\left(\frac{|v(\sigma, \la)|}{\la}+\frac{\la^{3\delta}}{\la^3}\right)d\la d\sigma,
\eee
where the constant $C_0$ only depends on the values of the solution in $I_0$. This concludes the proof of the lemma.
\end{proof}

\begin{lemma}
We have for all $\mu\geq -1$
$$|\mu-1||J'(\mu)|\les J(\mu).$$
\end{lemma}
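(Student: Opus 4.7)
The plan is to compute $J'(\mu)$ explicitly in the two regimes $-1\le\mu\le 1$ and $\mu\ge 1$ (corresponding to whether the lower endpoint of the defining integral is $-\mu$ or $-1$), after a substitution that resolves the square-root singularity at that endpoint. The desired bound then follows from a pointwise comparison of the integrands of $|\mu-1|\,|J'(\mu)|$ and $J(\mu)$ via an elementary inequality.

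For $-1\le\mu\le 1$, I would substitute $x=-\mu+(1+\mu)t$ with $t\in[0,1]$ to obtain
$$J(\mu)=\int_0^1 \frac{dt}{\sqrt{t(1-t)(1-\mu+(1+\mu)t)}},$$
which now has only fixed, integrable endpoint singularities at $t=0$ and $t=1$. Differentiation under the integral sign gives
$$J'(\mu)=\frac{1}{2}\int_0^1 \frac{\sqrt{1-t}\,dt}{\sqrt{t}\,(1-\mu+(1+\mu)t)^{3/2}}.$$
The ratio of the integrand of $(1-\mu)J'(\mu)$ to that of $\tfrac{1}{2}J(\mu)$ is $(1-\mu)(1-t)/(1-\mu+(1+\mu)t)$, which is bounded by $1$: indeed, $(1-\mu)(1-t)\le 1-\mu+(1+\mu)t$ simplifies to $0\le 2t$. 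Hence $|\mu-1|\,|J'(\mu)|\le\tfrac{1}{2}J(\mu)$ on this range.

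For $\mu\ge 1$ the lower endpoint is fixed at $x=-1$, and the substitution $x=-1+2s$ with $s\in[0,1]$ yields
$$J(\mu)=\int_0^1 \frac{ds}{\sqrt{s(1-s)(\mu-1+2s)}},\qquad J'(\mu)=-\frac{1}{2}\int_0^1 \frac{ds}{\sqrt{s(1-s)}\,(\mu-1+2s)^{3/2}},$$
so the ratio of the integrand of $(\mu-1)|J'(\mu)|$ to that of $\tfrac{1}{2}J(\mu)$ is $(\mu-1)/(\mu-1+2s)\le 1$, giving the same estimate.

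The only subtle point is justifying differentiation under the integral sign, since $J$ is singular at $\mu=1$ (logarithmically) and degenerates at $\mu=-1$. After the substitutions above the moving singularity has been absorbed, and on any compact subinterval of $\mu$ strictly inside $(-1,1)$ or $(1,\infty)$ the integrands are dominated by fixed integrable functions of $t$ or $s$; a routine dominated-convergence argument legitimizes the computation of $J'$, and continuity in $\mu$ (together with the vanishing of $|\mu-1|$ at $\mu=1$, which neutralizes the singularity of $J'$ there) extends the pointwise inequality to all $\mu\ge-1$.
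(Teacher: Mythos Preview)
Your proof is correct and, for the range $-1\le\mu\le 1$, genuinely different from the paper's. The paper argues that range indirectly: it cites \cite{chris_tah1} for a formula showing $J'(\mu)\ge 0$ on $(-1,1)$, hence $J(\mu)\ge J(-1)=\pi/\sqrt{2}\gtrsim 1$, and separately cites a uniform bound $|\mu-1||J'(\mu)|\lesssim 1$ from the same reference; the conclusion then follows by comparing constants. Your substitution $x=-\mu+(1+\mu)t$ removes the $\mu$-dependent lower limit, makes differentiation under the integral sign transparent, and yields a pointwise comparison of integrands giving the sharp constant $\tfrac{1}{2}$; this is self-contained and does not rely on \cite{chris_tah1}. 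For $\mu\ge 1$ the two arguments are essentially the same: the paper compares integrands directly via $\mu-1\le\mu+x$ for $x\ge -1$, which is exactly your inequality $(\mu-1)/(\mu-1+2s)\le 1$ after the change of variables $x=-1+2s$.

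One small remark: since $J'(\mu)$ blows up as $\mu\to 1$ and $J(1)=+\infty$, the inequality at $\mu=1$ should simply be read as holding wherever both sides are finite (or as a statement about the product $|\mu-1|\,|J'(\mu)|$, which your bound shows is $\le\tfrac12 J(\mu)$ on both open intervals and hence, being dominated by an everywhere-finite-except-at-$1$ quantity, causes no issue in the integrals where the lemma is applied). This is a cosmetic point and does not affect the argument.
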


\begin{proof}
For $-1\leq \mu< 1$, we have (see for example \cite{chris_tah1} p. 1087)
$$J'(\mu)=\frac{1}{4(1+\mu)}\int_{-\mu}^1\frac{\sqrt{1-x}dx}{(1+x)^{\frac{3}{2}}\sqrt{\mu+x}}.$$ 
We see in particular that $J'(\mu)\geq 0$ for $-1\leq \mu< 1$, and hence
$$J(\mu)\geq J(-1)\textrm{ on }-1\leq \mu< 1.$$
Since we have
$$J(-1)=\frac{\pi}{\sqrt{2}},$$
we deduce
$$J(\mu)\gtrsim 1\textrm{ on }-1\leq \mu< 1.$$
Also, we have for all $\mu\geq -1$ (see for example \cite{chris_tah1} p. 1061)
$$|\mu-1||J'(\mu)|\les 1.$$
We infer
$$|\mu-1||J'(\mu)|\les J(\mu)\textrm{ on }-1\leq \mu< 1.$$

Next, we consider the case $\mu\geq 1$, Then, we have
$$J'(\mu)=-\frac{1}{2}\int_{-1}^1\frac{dx}{\sqrt{1-x^2}(\mu+x)^{\frac{3}{2}}}.$$
We infer
\bee
|\mu-1||J'(\mu)| &\les& \int_{-1}^1\frac{|\mu-1|dx}{\sqrt{1-x^2}(\mu+x)^{\frac{3}{2}}}\\
&\les &  \int_{-1}^1\frac{dx}{\sqrt{1-x^2}\sqrt{\mu+x}}\\
&\les& J(\mu).
\eee
This concludes the proof of the lemma.
\end{proof}

We deduce the following corollary.
\begin{corollary}\lab{cor:wilallowtoconclude}
We have for all $(\tau, \varrho)$ with $\tau\geq -1$ and $\ub\leq 0$
\bee
|v(\tau, \varrho)| &\les& C_0+C_0r^{3\delta-1}+\ep\sup_{0\leq\la\leq\varrho}|v(\tau-\varrho+\la,\la)|+C_0\int_{R_{\tau, \varrho}}\frac{\sqrt{\la}}{\sqrt{\varrho}}J(\mu)\left(\frac{|v(\sigma, \la)|}{\la}+\frac{\la^{3\delta}}{\la^3}\right)d\la d\sigma.
\eee
where the constant $C_0$ only depends on the values of the solution in $I_0$.
\end{corollary}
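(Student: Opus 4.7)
The plan is to obtain Corollary \ref{cor:wilallowtoconclude} as an immediate consequence of the two preceding results. More precisely, the lemma just before the corollary provides the bound
\begin{align*}
|v(\tau, \varrho)| &\les C_0+C_0r^{3\delta-1}+\ep\sup_{0\leq\la\leq\varrho}|v(\tau-\varrho+\la,\la)|\\
&\quad+C_0\int_R\frac{\sqrt{\la}}{\sqrt{\varrho}}(J(\mu)+|\mu-1||J'(\mu)|)\left(\frac{|v(\sigma, \la)|}{\la}+\frac{\la^{3\delta}}{\la^3}\right)d\la d\sigma,
\end{align*}
while the intermediate lemma establishes the pointwise comparison $|\mu-1||J'(\mu)|\les J(\mu)$ for all $\mu\geq -1$.

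Therefore the only step I need to carry out is to substitute the pointwise bound for $|\mu-1||J'(\mu)|$ directly into the space-time integral of the previous lemma. Since $\mu$ is a function of $(\sigma,\la;\tau,\varrho)$ which stays in the range $\mu\geq -1$ throughout the region $R$ by the definition of $\mu$, the pointwise inequality applies everywhere in the domain of integration. Consequently, $J(\mu)+|\mu-1||J'(\mu)|\les J(\mu)$ uniformly on $R$, and absorbing the implicit constant into $C_0$ yields exactly the stated bound.

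There is no genuine obstacle here: the result is bookkeeping. The substantive content has already been expended in (i) deriving the representation formula for $v$ via the flat wave kernel, (ii) controlling the boundary terms by the initial data constant $C_0$, (iii) estimating $B_1,\dots,B_4$ through Lemma \ref{lemma:upperboundB1234}, and (iv) comparing $|\mu-1||J'(\mu)|$ with $J(\mu)$. The purpose of isolating this corollary is merely to present the estimate in a form ready for the Gronwall-type argument that will follow in the proof of Proposition \ref{prop:unifupperboundv}, where only the kernel $J(\mu)$ (and not $J'(\mu)$) needs to be integrated.
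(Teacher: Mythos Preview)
Your proposal is correct and matches the paper's approach exactly: the paper states this corollary with no proof beyond ``We deduce the following corollary,'' since it is an immediate combination of the preceding estimate on $|v(\tau,\varrho)|$ (which carries the kernel $J(\mu)+|\mu-1||J'(\mu)|$) and the lemma $|\mu-1||J'(\mu)|\les J(\mu)$ for $\mu\geq -1$. Your observation that $\mu\geq -1$ holds throughout $R$ is the only point one needs to verify, and it follows directly from the definition of $\mu$ on this domain.
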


The following proposition is the core of this section.
\begin{proposition}\lab{prop:unifupperboundv}
We have for all $(\tau, \varrho)$ with $\tau\geq -1$ and $\ub\leq 0$
$$|v|\leq C_0,$$
where the constant $C_0$ only depends on the values of the solution in $I_0$.
\end{proposition}

\begin{proof}
We choose $\delta$ such that
$$\frac{1}{3}<\delta<\frac{1}{2}.$$
Let 
$$\varpi=\varrho^{3\delta-1}.$$ 
Then, $\varpi$ satisfies 
$$\left(-\pr^2_{\tau}+\pr^2_{\varrho}+\frac{1}{\varrho}\pr_{\varrho}\right)\varpi=(3\delta-1)^2\varrho^{3\delta-3},$$
and hence
$$\varpi=\varpi_0(\tau,\varrho)+(3\delta-1)^2\int_{R_{\tau, \varrho}}\frac{\sqrt{\la}}{\sqrt{\varrho}}J(\mu)\frac{\la^{3\delta}}{\la^3} d\la d\sigma,$$
where $\varpi_0$ denotes the solution to the homogeneous wave equation with the same initial conditions as $\varpi$. This yields
$$\left|\int_{R_{\tau, \varrho}}\frac{\sqrt{\la}}{\sqrt{\varrho}}J(\mu)\frac{\la^{3\delta}}{\la^3} d\la d\sigma\right|\les \varrho^{3\delta-1}+|\varpi_0(\tau,\varrho)|\les 1,$$
where we used the fact that $\delta>1/3$ and $0\leq\varrho\leq 1$. 

Recall that 
\bee
|v(\tau, \varrho)| &\les& C_0+C_0r^{3\delta-1}+\ep\sup_{0\leq\la\leq\varrho}|v(\tau-\varrho+\la,\la)|+C_0\int_{R_{\tau, \varrho}}\frac{\sqrt{\la}}{\sqrt{\varrho}}J(\mu)\left(\frac{|v(\sigma, \la)|}{\la}+\frac{\la^{3\delta}}{\la^3}\right)d\la d\sigma.
\eee
We infer
\bee
|v(\tau, \varrho)| &\les& C_0+\ep\sup_{0\leq\la\leq\varrho}|v(\tau-\varrho+\la,\la)|+C_0\int_{R_{\tau, \varrho}}\frac{\sqrt{\la}}{\sqrt{\varrho}}J(\mu)\frac{|v(\sigma, \la)|}{\la} d\la d\sigma,
\eee
where we used again the fact that $\delta>1/3$ and $0\leq\varrho\leq 1$. 

We introduce the notation 
$$\b(u)=\sup_{u\leq\ub\leq 0}|v(u,\ub)|.$$
We obtain
\bee
|v(u, \ub)| &\les& C_0+\ep\b(u)+C_0\int_{-2}^u\Phi(u,\ub,u')\b(u')du',
\eee
where $\Phi$ is given by
$$\Phi(u,\ub,u')=\int_{u'}^{\ub}\frac{\sqrt{\la}}{\sqrt{\varrho}}J(\mu)\frac{1}{\la}d\ub'.$$

Next, we compute $\Phi$. We have
\bee
\Phi(u,\ub,u') &=& \int_{-1}^{+\infty}\frac{\sqrt{\la}}{\sqrt{\varrho}}J(\mu)\frac{1}{\la}\frac{1}{\pr_{\ub}\mu}d\mu
\eee
Since
\bee
\pr_{\ub}\mu &=& \frac{\la(-(\tau-\sigma)-\la)-\frac{1}{2}((\tau-\sigma)^2-\varrho^2-\la^2)}{2\varrho\la^2}\\
&=& \frac{\varrho^2-(\tau-\sigma+\la)^2}{4\varrho\la^2}\\
&=& \frac{\varrho^2-(\tau-u')^2}{4\varrho\la^2},
\eee
we infer
\bee
\Phi(u,\ub,u') &=&\frac{4\sqrt{\varrho}}{(\tau-u')^2-\varrho^2}\int_{-1}^{+\infty}J(\mu)\la^{\frac{3}{2}}d\mu.
\eee
Also, we have
$$\sigma=u'+\la$$
and hence
$$\mu=\frac{(\tau-u'-\la)^2-\varrho^2-\la^2}{2\varrho\la}=\frac{(\tau-u')^2-2\la(\tau-u')-\varrho^2}{2\varrho\la},$$
which yields
$$\la=\frac{(\tau-u')^2-\varrho^2}{2(\varrho\mu+\tau-u')}.$$
Hence, we obtain
\bee
\Phi(u,\ub,u') &=& \sqrt{2}\sqrt{\varrho}\sqrt{(\tau-u')^2-\varrho^2}\int_{-1}^{+\infty}\frac{J(\mu)}{(\varrho\mu+\tau-u')^{\frac{3}{2}}}d\mu.
\eee
Using the formula for $J(\mu)$ and Fubini, we infer
\bee
\Phi(u,\ub,u') &=& \sqrt{2}\sqrt{\varrho}\sqrt{(\tau-u')^2-\varrho^2}\int_{-1}^1\frac{dx}{\sqrt{1-x^2}}\int_{-x}^{+\infty}\frac{d\mu}{\sqrt{\mu+x}(\varrho\mu+\tau-u')^{\frac{3}{2}}}\\
&=& \sqrt{2}\sqrt{\varrho}\sqrt{(\tau-u')^2-\varrho^2}\int_{-1}^1\frac{dx}{\sqrt{1-x^2}}\int_0^{+\infty}\frac{d\mu'}{\sqrt{\mu'}(\varrho\mu'-\varrho x+\tau-u')^{\frac{3}{2}}}.
\eee
We have
\bee
\int_0^{+\infty}\frac{d\mu'}{\sqrt{\mu'}(\varrho\mu'-\varrho x+\tau-u')^{\frac{3}{2}}} &=& \frac{1}{\sqrt{\varrho}(-\varrho x+\tau-u')}\int_0^{+\infty}\frac{d\mu''}{\sqrt{\mu''}(\mu''+1)^{\frac{3}{2}}}\\
&\les&  \frac{1}{\sqrt{\varrho}(-\varrho x+\tau-u')}.
\eee
We deduce
\bee
\Phi(u,\ub,u') &\les& \sqrt{(\tau-u')^2-\varrho^2}\int_{-1}^1\frac{dx}{\sqrt{1-x^2}(-\varrho x+\tau-u')}.
\eee

We have for $x\geq 0$
\bee
-\varrho x+\tau-u' &\leq & \varrho x+\tau-u' 
\eee
and hence
\bee
\int_{-1}^1\frac{dx}{\sqrt{1-x^2}(-\varrho x+\tau-u')} &=& \int_{-1}^0\frac{dx}{\sqrt{1-x^2}(-\varrho x+\tau-u')}+\int_0^1\frac{dx}{\sqrt{1-x^2}(-\varrho x+\tau-u')}\\
&=& \int_0^1\frac{dx}{\sqrt{1-x^2}(\varrho x+\tau-u')}+\int_0^1\frac{dx}{\sqrt{1-x^2}(-\varrho x+\tau-u')}\\
&\leq& 2\int_0^1\frac{dx}{\sqrt{1-x^2}(-\varrho x+\tau-u')}.
\eee
This yields
\bee
\Phi(u,\ub,u') &\les& \sqrt{(\tau-u')^2-\varrho^2}\int_0^1\frac{dx}{\sqrt{1-x^2}(-\varrho x+\tau-u')}\\
&\les& \sqrt{(\tau-u')^2-\varrho^2}\int_0^1\frac{dx}{\sqrt{1-x}(-\varrho x+\tau-u')}.
\eee
Changing variables, we obtain
\bee
\Phi(u,\ub,u') &\les& \sqrt{(\tau-u')^2-\varrho^2}\int_0^1\frac{dy}{\sqrt{y}(\varrho y-\varrho+\tau-u')}\\
&\les& \frac{\sqrt{(\tau-u')^2-\varrho^2}}{\sqrt{\varrho}\sqrt{-\varrho+\tau-u'}}\int_0^{\frac{\varrho}{-\varrho+\tau-u'}}\frac{dz}{\sqrt{z}(z+1)}\\
&\les& \sqrt{\frac{-\varrho+\tau-u'}{\varrho}}\int_0^{\frac{\varrho}{-\varrho+\tau-u'}}\frac{dz}{\sqrt{z}}\\
&\les& 1.
\eee

Coming back to $v$, we obtain
\bee
|v(u, \ub)| &\les& C_0+\ep\b(u)+C_0\int_{-2}^u\b(u')du'.
\eee
Taking the supremum in $\ub$ for $u\leq\ub\leq 0$ yields
\bee
\b(u) &\les& C_0+\ep\b(u)+C_0\int_{-2}^u\b(u')du'.
\eee
For $\ep$ small enough, this implies
\bee
\b(u) &\les& C_0+C_0\int_{-2}^u\b(u')du'.
\eee
Using Gronwall's Lemma, we infer
\bee
\b(u) &\les& C_0,
\eee
and hence
$$|v|\les C_0.$$
This concludes the proof of the proposition.
\end{proof}

\subsection{Consequences of Proposition \ref{prop:unifupperboundv}}

\begin{lemma}\lab{lemma:utlimaterefinedbounds}
We have for all $(\tau, \varrho)$ with $\tau\geq -1$ and $\ub\leq 0$
$$|\phi|\leq rC_0\textrm{ and }|\pr_u\phi|+|\pr_{\ub}\phi|\leq C_0$$
where the constant $C_0$ only depends on the values of the solution in $I_0$.
\end{lemma}

\begin{proof}
We first derive a refined upper bound for $\phi$. Recall that
$$v=\left(\pr_{\varrho}+\frac{1}{\varrho}\right)\phi.$$
This yields
$$\pr_{\varrho}(\varrho\phi)=\varrho\pr_{\varrho}\phi+\phi=\varrho v$$
and hence
$$\varrho\phi(\tau, \varrho)=\int_0^{\varrho}\la v(\tau, \la)d\la.$$
We infer
\bee
\varrho|\phi(\tau, \varrho)| &\les& \int_0^{\varrho}\la |v(\tau, \la)|d\la\\
&\les& C_0\int_0^{\varrho}\la d\la\\
&\les& C_0\varrho^2
\eee
and hence 
$$|\phi|\les C_0\varrho.$$

The above upper bound for $\phi$ together with the upper bound for $v$ and the definition of $v$ implies
$$|\pr_{\varrho}\phi|\leq |v|+\frac{1}{\varrho}|\phi|\les C_0.$$

Next, we derive an upper bound for $\pr_u\phi$. Recall that $\Xi=r\pr_u\phi$ satisfies
\bee
\pr_{\ub}\left(\frac{\Xi}{\sqrt{r}}\right)=-\frac{1}{2\sqrt{r}}\pr_ur \pr_{\ub}\phi-\frac{\Omega^2f(\phi)}{4r^{\frac{3}{2}}}.
\eee
We integrate between $(u,\ub)$ and $(u, u)$. We deduce
\bee
\frac{\Xi(u,\ub)}{\sqrt{r(u,\ub)}} &=& \frac{\Xi(u,u)}{\sqrt{r(u,u)}}-\int_{u}^{\ub}\frac{1}{2\sqrt{r}}\pr_ur \pr_{\ub}\phi(u,\sigma)d\sigma -\int_u^{\ub}\frac{\Omega^2f(\phi)}{4r^{\frac{3}{2}}}(u,\sigma)d\sigma\\
&=& \frac{\Xi(u,u)}{\sqrt{r(u,u)}}-\left[\frac{1}{2\sqrt{r}}\pr_ur \phi(u,\sigma)\right]_u^{\ub}+\int_{u}^{\ub}\frac{1}{2\sqrt{r}}\pr_{\ub}\pr_ur \phi(u,\sigma)d\sigma\\
&&-\int_{u}^{\ub}\frac{1}{4r^{\frac{3}{2}}}\pr_{\ub}r \pr_ur \phi(u,\sigma)d\sigma -\int_u^{\ub}\frac{\Omega^2f(\phi)}{4r^{\frac{3}{2}}}(u,\sigma)d\sigma.
\eee
In view of the definition of $\Xi$ and since $(u,u)$ is on the axis of symmetry, we infer
\bee
\sqrt{r(u,\ub)}\pr_u\phi(u,\ub) &=& -\left[\frac{1}{2\sqrt{r}}\pr_ur \phi(u,\sigma)\right]_u^{\ub}+\int_{u}^{\ub}\frac{1}{2\sqrt{r}}\kappa \frac{f(\phi)^2}{r} \phi(u,\sigma)d\sigma\\
&&-\int_{u}^{\ub}\frac{1}{4r^{\frac{3}{2}}}\pr_{\ub}r \pr_ur \phi(u,\sigma)d\sigma -\int_u^{\ub}\frac{\Omega^2f(\phi)}{4r^{\frac{3}{2}}}(u,\sigma)d\sigma.
\eee
Using again the fact that $(u,u)$ is on the axis of symmetry, we deduce
\bee
\sqrt{r(u,\ub)}|\pr_u\phi(u,\ub)| &\les& \frac{|\phi(u,\ub)|}{\sqrt{r}}+\int_{u}^{\ub} \frac{|\phi(u,\sigma)|}{r^{\frac{3}{2}}} d\sigma.
\eee
Using the above upper bound for $\phi$, we infer 
\bee
\sqrt{r(u,\ub)}|\pr_u\phi(u,\ub)| &\les& C_0\sqrt{r(u,\ub)}+C_0\int_{u}^{\ub} \frac{1}{r^{\frac{1}{2}}} d\sigma\\
&\les& C_0\sqrt{r(u,\ub)}.
\eee
Thus, we obtain
\bee
|\pr_u\phi(u,\ub)| &\les& C_0.
\eee
Together with the upper bound for $\pr_{\varrho}\phi$, we also obtain
$$|\pr_{\ub}\phi|\leq |\pr_{\varrho}\phi|+|\pr_u\phi|\les C_0.$$
This concludes the proof of the lemma.
\end{proof}

\begin{lemma}\lab{lemma:utlimaterefinedboundsbis} 
We have for all $(\tau, \varrho)$ with $\tau\geq -1$ and $\ub\leq 0$
$$\frac{|\varrho-r|}{\varrho}+\left|\pr_u r+\frac{1}{2}\right|+\left|\pr_{\ub}r-\frac{1}{2}\right|+|\Omega-1|\leq C_0\varrho^2,$$
where the constant $C_0$ only depends on the values of the solution in $I_0$.
\end{lemma}

\begin{proof}
Integrating 
$$\pr_u\pr_{\ub}r=r\kappa\frac{\Omega^2}{4}\frac{g(\phi)^2}{r^2}$$
from the axis along $u$ or $\ub$ together with the upper bound on $\phi$ of Lemma  \ref{lemma:utlimaterefinedbounds} yields
$$\left|\pr_u r+\frac{1}{2}\right|+\left|\pr_{\ub}r-\frac{1}{2}\right|\leq C_0\varrho^2.$$
Together with 
$$\pr_{\ub}(\Omega^{-2}\pr_{\ub}r)=-\Omega^{-2}r\kappa(\pr_{\ub}\phi)^2$$
which we integrate from the axis  along $u$ yields in view of Lemma  \ref{lemma:utlimaterefinedbounds} 
$$\left|\Omega^{-2}\pr_{\ub}r-\frac{1}{2}\right|\leq C_0\varrho^2$$
and hence
$$|\Omega-1|\leq C_0\varrho^2.$$
Also, we have
$$\pr_u(r-\varrho)=\pr_ur+\frac{1}{2},$$
and integrating from the axis where $r=\varrho$, and using the estimate proved above for $\pr_ur$, we infer
$$|\varrho-r|\leq C_0\varrho^3.$$ 
This concludes the proof of the lemma.
\end{proof}

\begin{lemma}\lab{lemma:utlimaterefinedboundster} 
We have for all $(\tau, \varrho)$ with $\tau\geq -1$ and $\ub\leq 0$
$$|\pr^2_u r|+|\pr^2_{\ub}r|+|\pr_u\pr_{\ub}r|+|\pr_u\Omega|+|\pr_{\ub}\Omega|\leq C_0\varrho,$$
where the constant $C_0$ only depends on the values of the solution in $I_0$.
\end{lemma}

\begin{proof}
First, note that
$$\pr_u\pr_{\ub}r=r\kappa\frac{\Omega^2}{4}\frac{g(\phi)^2}{r^2}$$
together with Lemma \ref{lemma:utlimaterefinedbounds} immediately yields 
$$|\pr_u\pr_{\ub}r|\leq C_0\varrho.$$

Next, recall that $\Omega$ satisfies
$$\Omega^{-2}(\pr_u\Omega\pr_{\ub}\Omega-\Omega\pr_u\pr_{\ub}\Omega)=\frac{1}{8}\Omega^2\kappa\left(\frac{4}{\Omega^2}\pr_u\phi\pr_{\ub}\phi+\frac{g(\phi)^2}{r^2}\right)$$
and hence
$$\pr_u\pr_{\ub}\log(\Omega)=-\frac{1}{2}\pr_u\phi\pr_{\ub}\phi - \frac{\Omega^2}{8}\frac{g(\phi)^2}{r^2}.$$
Integrating from the axis along $u$ or $\ub$ yields in view of Lemma  \ref{lemma:utlimaterefinedbounds} 
$$|\pr_u\Omega |+|\pr_{\ub}\Omega|\leq C_0\varrho,$$
where we also used the fact that $\pr_u\Omega$ and $\pr_{\ub}\Omega$ vanish on the axis as a consequence of Lemma \ref{lemma:utlimaterefinedboundsbis}.

Finally, in view of 
$$\pr_{\ub}(\Omega^{-2}\pr_{\ub}r)=-\Omega^{-2}r\kappa(\pr_{\ub}\phi)^2$$
and 
$$\pr_u(\Omega^{-2}\pr_ur)=-\Omega^{-2}r\kappa(\pr_u\phi)^2,$$
and using  Lemma  \ref{lemma:utlimaterefinedbounds}  and the above bounds for $\pr_u\Omega$ and $\pr_{\ub}\Omega$, we infer
$$|\pr^2_u r|+|\pr^2_{\ub}r|\leq C_0\varrho.$$
This concludes the proof of the lemma.
\end{proof}

\section{Small energy implies global existence} \label{sec:small-energy-global}

In this section, we conclude the proof of Theorem \ref{th:smallenergyglobalex}.

\subsection{A wave equation for $\phi/\varrho$}

We first derive a wave equation for $\phi/\varrho$.
\begin{lemma}\label{lemma:waveequationforw}
We introduce 
$$w=\frac{\phi}{\varrho}.$$ 
Then, we have
\bee
 -4\pr_u\pr_{\ub}w+\frac{3}{\varrho}(\pr_{\ub}w-\pr_uw) &=&  -\frac{\varrho-r}{r\varrho}(\pr_{\ub}w-\pr_uw)+\frac{2\pr_ur+1}{r}\pr_{\ub}w+\frac{2\pr_{\ub}r-1}{r}\pr_uw\\ 
&& +\left(\frac{\Omega^2-1}{r^2}+\frac{\varrho - r}{r^2\varrho}+\frac{\pr_u r+\frac{1}{2}}{r\varrho}-\frac{\pr_{\ub}r-\frac{1}{2}}{r\varrho}\right)w\\
&&+\Omega^2 \frac{\varrho^2}{r^2}w^3\zeta(\varrho w).
\eee
\end{lemma}

\begin{proof}
We have
\bee
\square_\gg w &=& \frac{\square_\gg\phi}{\varrho}-2\frac{\D^\a\phi\D_\a \varrho}{\varrho^2}+\phi\left(-\frac{\square_\gg \varrho}{\varrho^2}+2\frac{\D^\a \varrho\D_\a \varrho}{\varrho^3}\right)\\
&=& \frac{f(\phi)}{r^2\varrho}-2\frac{\D_\a \varrho}{\varrho}\left(\D^\a w+\frac{w}{\varrho}\D^\a \varrho\right)+w\left(-\frac{\square_\gg \varrho}{\varrho}+2\frac{\D^\a \varrho\D_\a \varrho}{\varrho^2}\right)\\
&=& \frac{w}{r^2}+\frac{\varrho^2}{r^2}w^3\zeta(\varrho w)-2\frac{\D_\a \varrho}{\varrho}\left(\D^\a w+\frac{w}{\varrho}\D^\a\varrho\right)+w\left(-\frac{\square_\gg \varrho}{\varrho}+2\frac{\D^\a \varrho\D_\a \varrho}{\varrho^2}\right)\\
&=& -\frac{2}{\varrho}\D_\a \varrho\D^\a w+\left(\frac{1}{r^2}-\frac{\square_\gg \varrho}{\varrho}\right)w+\frac{\varrho^2}{r^2}w^3\zeta(\varrho w)\\
&=& \frac{2}{\varrho\Omega^2}(-\pr_{\ub}w+\pr_uw)+\left(\frac{1}{r^2}+\frac{1}{\Omega^2r\varrho}(\pr_u r-\pr_{\ub}r)\right)w+\frac{\varrho^2}{r^2}w^3\zeta(\varrho w).
\eee
In view of Lemma \ref{lemma:waveopinnullcoord}, we infer
\bee
 -4\pr_u\pr_{\ub}w+\frac{3}{\varrho}(\pr_{\ub}w-\pr_uw) &=&  -\frac{\varrho-r}{r\varrho}(\pr_{\ub}w-\pr_uw)+\frac{2\pr_ur+1}{r}\pr_{\ub}w+\frac{2\pr_{\ub}r-1}{r}\pr_uw\\ 
&& +\left(\frac{\Omega^2-1}{r^2}+\frac{\varrho - r}{r^2\varrho}+\frac{\pr_u r+\frac{1}{2}}{r\varrho}-\frac{\pr_{\ub}r-\frac{1}{2}}{r\varrho}\right)w\\
&&+\Omega^2 \frac{\varrho^2}{r^2}w^3\zeta(\varrho w).
\eee
This concludes the proof of the lemma.
\end{proof}

\begin{remark}\lab{rem:strichnotpossible}
In this paper, we first obtain improved uniform bounds for $\phi$ and then use the wave equation for $w$ to obtain regularity, following the approach of \cite{jal_tah1} and \cite{chris_tah1} for the 2+1 wave map problem. Note that we can not use a more direct approach based on Strichartz estimates for the wave equation for $w$ as in \cite{ShSt93} for the 2+1 wave map problem. Indeed, this approach does not allow to deal with the following terms in the right-hand side of the equation for $w$
$$\frac{\varrho-r}{r\varrho}\pr_{\varrho}w, \frac{2\pr_ur+1}{r}\pr_{\ub}w, \frac{2\pr_{\ub}r-1}{r}\pr_uw, \frac{\Omega^2-1}{r^2}w, \frac{\varrho - r}{r^2\varrho}w, \frac{\pr_u r+\frac{1}{2}}{r\varrho}w, \frac{\pr_{\ub}r-\frac{1}{2}}{r\varrho}w.$$
\end{remark}

\subsection{Embeddings for radial functions on $\R^4$}

Note that we have
\bee
 -4\pr_u\pr_{\ub}w+\frac{3}{\varrho}(\pr_{\ub}w-\pr_uw) &=& -\pr_\tau^2w+\pr_{\varrho}^2w+\frac{3}{\varrho}\pr_{\varrho}w.
\eee
Thus, the left-hand side of the wave equation for $w$ in Lemma \ref{lemma:waveequationforw} is the 4-dimensional radial wave operator. Therefore, we introduce the following spaces on backward null cones centered in the axis $\Gamma$
$$L^p_\ub:=\left\{\psi\,\,/\,\,\int_{-1}^{\ub} \psi(u)^p \varrho^3du\right\},\,\,\, 1\leq p<+\infty,\,\,\, -\frac{1}{2}\leq\ub<0,$$
and
$$H^\ell_\ub:=\left\{\psi\,\,/\,\,\sum_{j=0}^\ell\sum_{|\a|=j}\|\pr^\a_u\psi\|_{L^2_\ub}\right\},\,\,\, \ell\geq 0,\,\,\, -\frac{1}{2}\leq\ub<0.$$
We have the following Hardy estimates
\begin{equation}\label{eq:sec9:hardy}
\int_{-1}^{\ub}\left(\frac{\psi(u)}{\varrho^{\frac{1}{2}}}\right)^2 \varrho^3du+\int_{-1}^{\ub}\left(\frac{\psi(u)}{\varrho}\right)^2 \varrho^3du\les \|\psi\|^2_{H^1_\ub},\,\,\,\, -\frac{1}{2}\leq\ub<0,
\end{equation}
and
\begin{equation}\label{eq:sec9:hardybis}
\int_{-1}^{\ub}\left(\frac{\psi(u)}{\varrho^{\frac{3}{2}}}\right)^2 \varrho^3du\les \|\psi\|^2_{H^2_\ub},\,\,\,\, -\frac{1}{2}\leq\ub<0,
\end{equation}
as well as the following Gagliardo-Nirenberg inequalities  
\begin{equation}\label{eq:sec9:sobolev}
\|\psi\|_{L^p_\ub}\les \|\psi\|_{L^2_\ub}^{\frac{4}{p}-1}\|\psi\|_{H^1_\ub}^{2-\frac{4}{p}},\,\,\, 2\leq p\leq 4,\,\,\,\, -\frac{1}{2}\leq\ub<0,
\end{equation}
and
\begin{equation}\label{eq:sec9:sobolevbis}
\|\psi\|_{L^p_\ub}\les \|\psi\|_{L^2_\ub}^{\frac{2}{p}}\|\psi\|_{H^2_\ub}^{1-\frac{2}{p}},\,\,\, 2\leq p<+\infty,\,\,\,\, -\frac{1}{2}\leq\ub<0.
\end{equation}
Finally, we will also use the following weighted estimates
\begin{equation}\label{eq:sec9:weightedestimate}
\sup_{-1\leq u\leq \ub}\varrho|\psi|\les \|\psi\|_{H^1_\ub},\,\,\,\, -\frac{1}{2}\leq\ub<0,
\end{equation}
\begin{equation}\label{eq:sec9:weightedestimatebis}
\sup_{-1\leq u\leq \ub}\sqrt{\varrho}|\psi|\les \|\psi\|^{\frac{1}{2}}_{H^2_\ub}\|\psi\|^{\frac{1}{2}}_{H^1_\ub},\,\,\,\, -\frac{1}{2}\leq\ub<0,
\end{equation}
as well as the following non sharp Sobolev embedding 
\begin{equation}\label{eq:sec9:sobolevter}
\sup_{-1\leq u\leq \ub}|\psi|\les \|\psi\|_{H^3_\ub},\,\,\,\, -\frac{1}{2}\leq\ub<0
\end{equation}
and the following consequence of \eqref{eq:sec9:hardy} \eqref{eq:sec9:hardybis} \eqref{eq:sec9:sobolev} 
\begin{equation}\label{eq:sec9:hardy-sobolev}
\left\|\frac{\psi}{\varrho^{\frac{1}{2}}}\right\|_{L^4_{\ub}} \les\|\psi\|_{H^2_{\ub}},\,\,\,\, -\frac{1}{2}\leq\ub<0.
\end{equation}

\begin{remark}
Note that the constants in the above estimates \eqref{eq:sec9:hardy}-\eqref{eq:sec9:hardy-sobolev} are uniform in $\ub$ for $-1/2\leq\ub<0$.
\end{remark}

\subsection{Proof of Theorem \ref{th:smallenergyglobalex}}

We are now ready to prove Theorem \ref{th:smallenergyglobalex}. Recall that 
$$w=\frac{\phi}{\varrho}$$
satisfies the following wave equation
\bea\lab{eq:waveeqwsimplied}
 -4\pr_u\pr_{\ub}w+\frac{3}{\varrho}(\pr_{\ub}w-\pr_uw) &=& h,
\eea
where $h$ is given by
\bee
h &=&  -\frac{\varrho-r}{r\varrho}(\pr_{\ub}w-\pr_uw)+\frac{2\pr_ur+1}{r}\pr_{\ub}w+\frac{2\pr_{\ub}r-1}{r}\pr_uw\\ 
&& +\left(\frac{\Omega^2-1}{r^2}+\frac{\varrho - r}{r^2\varrho}+\frac{\pr_u r+\frac{1}{2}}{r\varrho}-\frac{\pr_{\ub}r-\frac{1}{2}}{r\varrho}\right)w+\Omega^2 \frac{\varrho^2}{r^2}w^3\zeta(\varrho w).
\eee
For convenience, we rewrite $h$ as
\bea
\nn h &=& \left( -\frac{\varrho-r}{r\varrho^2} -\frac{\pr_\varrho r-1}{r\varrho}\right)\varrho\pr_{\varrho}w+\frac{\pr_{\tau}r}{r}\pr_{\tau}w\\ 
&& +\left(\frac{\Omega^2-1}{r^2}+\frac{\varrho - r}{r^2\varrho}-\frac{\pr_\varrho r-1}{r\varrho}\right)w+\Omega^2 \frac{\varrho^2}{r^2}w^3\zeta(\varrho w).
\eea

To define our energies, it will be convenient to differentiate with respect to the cartesian frame of $\mathbb{R}\times\mathbb{R}^4$ based on the coordinates $(\tau, \varrho)$, i.e. 
\bee
\left(\tau, x^1, x^2, x^3, x^4\right)\in \mathbb{R}\times\mathbb{R}^4,\,\,\,\, \varrho=\sqrt{(x^1)^2+(x^2)^2+(x^3)^2+(x^4)^2},
\eee
as the partial derivatives with respect to such a frame commute with the wave operator ${}^{(1+4)}\square$ of $\mathbb{R}^{1+4}$ unlike $\pr_\varrho$. We also denote $\pr_{\tau, x}$ a generic partial derivative in this cartesian coordinates system. 

For $\ell\geq 0$, we introduce the following notations
\bee
D_\ell(\ub):= \max_{|\a| = \ell}\sup_{-1\leq u\leq\ub}&& \Bigg(\varrho^{\frac{3}{2}}\left(\left|\pr_{\tau,x}^\a\left(\frac{\varrho - r}{\varrho^3}\right)\right|+\left|\pr_{\tau,x}^\a\left(\frac{\pr_\varrho r-1}{\varrho^2}\right)\right|+\left|\pr_{\tau,x}^\a\left(\frac{\Omega^2-1}{\varrho^2}\right)\right|\right)\\
&&+\varrho^{\frac{1}{2}}\left(\left|\pr_{\tau,x}^\a\left(\frac{\pr_\tau r}{\varrho}\right)\right|\right)+\left|\pr_{\tau,x}^\a\left(\frac{\varrho - r}{\varrho}\right)\right|
+\left|\pr_{\tau,x}^\a\left(\pr_{\ub}r-\frac{1}{2}\right)\right|\\
&&+\left|\pr_{\tau,x}^\a\left(\pr_ur+\frac{1}{2}\right)\right|+\left|\pr_{\tau,x}^\a\left(\Omega^2-1\right)\right|\Bigg)(u,\ub),\,\,\,  -\frac{1}{2}\leq\ub<0,
\eee
$$D_{\leq \ell}(\ub):=\max_{0\leq j\leq\ell}D_j(\ub),$$
and for $\ell\geq 1$
$$E_{\ell}(\ub):= \max_{|\a|=\ell-1}\|\pr_u\pr_{\tau,x}^\alpha w(.,\ub)\|_{L^2_{\ub}},\,\,\,\, E_{\leq \ell}(\ub):=\max_{1\leq j\leq\ell}E_j(\ub).$$
 
\begin{lemma}\label{lemma:estforrighthandisdeprah}
Let $-1/2\leq\ub<0$. Then, we have the following estimates for $h$
\bee
\|h(\c, \ub)\|_{L^2_\ub} &\les& C_0 E_1(\ub)+C_0\|\pr_{\ub} w(\c, \ub)\|_{L^2_{\ub}}+C_0 
\eee
and if $\ell\geq 1$ and $|\a|=\ell$
\bee
&&\|\pr_{\tau,x}^\a h(\c, \ub)\|_{L^2_\ub}\\
 &\les& C_0\|\pr_{\ub}\pr_{\tau,x}^\a w(\c, \ub)\|_{L^2_{\ub}}+C_0\Big(1+E_{\leq \ell}^{\ell-1}(\ub)\Big)\Big(1+D^{\ell-1}_{\leq\ell -1}(\ub)\Big)\\
&&+\left(C_0+E_{\leq \ell}^{\ell-1}(\ub)+D_{\leq\ell -1}^{\ell-1}(\ub)\right)\Big(D_\ell(\ub)+E_{\ell+1}(\ub)\Big)
\eee
where the constant $C_0$ only depends on the values of the solution in $I_0$. 
\end{lemma}

\begin{lemma}\label{lemma:estimateforprubderivativeofw}
Let $-1/2\leq\ub<0$. Then, we have the following estimates for $w$
\bee
\|\pr_{\ub}w(\c, \ub)\|_{L^2_{\ub}} &\les& C_0+C_0E_1(\ub),
\eee
and for $\ell\geq 1$ and $|\a|=\ell$
\bee
\|\pr_{\ub}\pr_{\tau,x}^\a w(\c, \ub)\|_{L^2_{\ub}} &\les& C_0\Big(1+E_{\leq \ell}^{\ell-1}(\ub)\Big)\Big(1+D^{\ell-1}_{\leq\ell -1}(\ub)\Big)\\
&&+\left(C_0+E_{\leq \ell}^{\ell-1}(\ub)+D_{\leq\ell -1}^{\ell-1}(\ub)\right)\Big(D_\ell(\ub)+E_{\ell+1}(\ub)\Big)
\eee
where the constant $C_0$ only depends on the values of the solution in $I_0$. 
\end{lemma}

We postpone the proof of Lemma \ref{lemma:estforrighthandisdeprah} to section \ref{sec:lemma:estforrighthandisdeprah} and the proof of Lemma \ref{lemma:estimateforprubderivativeofw} to section \ref{sec:lemma:estimateforprubderivativeofw}. Next, let 
$$\widetilde{D}_{\ell}(\ub):=\sup_{-\frac{1}{2}\leq\ub'\leq\ub}D_{\ell}(\ub'),\,\, \widetilde{E}_{\ell}(\ub):=\sup_{-\frac{1}{2}\leq\ub'\leq\ub}E_{\ell}(\ub'),$$
and
$$\widetilde{D}_{\leq \ell}(\ub):=\sup_{-\frac{1}{2}\leq\ub'\leq\ub}D_{\leq\ell}(\ub'),\,\, \widetilde{E}_{\leq \ell}(\ub):=\sup_{-\frac{1}{2}\leq\ub'\leq\ub}E_{\leq\ell}(\ub').$$
Lemma \ref{lemma:estforrighthandisdeprah} and Lemma \ref{lemma:estimateforprubderivativeofw} immediately imply the following corollary.
\begin{corollary}\label{corollary:estforrighthandisdeprah}
Let $-\frac{1}{2}\leq\ub<0$. Then, we have the following estimates for $h$
\bee
\sup_{-\frac{1}{2}\leq\ub'\leq\ub}\|h(\c,\ub')\|_{L^2_{\ub'}} &\les& C_0 \widetilde{E}_1(\ub)+C_0, 
\eee
and if $\ell\geq 1$ and $|\a|=\ell$
\bee
\max_{|\a|=\ell}\sup_{-\frac{1}{2}\leq\ub'\leq\ub}\|\pr_{\tau,x}^\a h(\c,\ub')\|_{L^2_{\ub'}} &\les& \Big(C_0+\widetilde{D}^{\ell-1}_{\leq\ell-1}(\ub)+\widetilde{E}^{\ell-1}_{\leq\ell}(\ub)\Big)\Big(\widetilde{D}_\ell(\ub)+\widetilde{E}_{\ell+1}(\ub)\Big)\\
&&+C_0\Big(1+\widetilde{E}^{\ell-1}_{\leq \ell}(\ub)\Big) \Big(1+\widetilde{D}^{\ell-1}_{\leq\ell -1}(\ub)\Big),
\eee
where the constant $C_0$ only depends on the values of the solution in $I_0$. Furthermore, we also have 
\bee
\sup_{-\frac{1}{2}\leq\ub'\leq\ub}\|\pr_{\ub} w(\c, \ub')\|_{L^2_{\ub}} &\les& C_0 \widetilde{E}_1(\ub)+C_0
\eee
and if $\ell\geq 1$ and $|\a|=\ell$
\bee
&&\max_{|\a|=\ell}\sup_{-\frac{1}{2}\leq\ub'\leq\ub}\|\pr_{\ub}\pr_{\tau,x}^\a w(\c, \ub')\|_{L^2_{\ub}} \\
&\les& \Big(C_0+\widetilde{D}_{\leq\ell-1}(\ub)+\widetilde{E}_{\leq\ell}(\ub)\Big)\Big(\widetilde{D}_\ell(\ub)+\widetilde{E}_{\ell+1}(\ub)\Big)+C_0\Big(1+\widetilde{E}^{\ell-1}_{\leq \ell}(\ub)\Big) \Big(1+\widetilde{D}^{\ell-1}_{\leq\ell -1}(\ub)\Big).
\eee
\end{corollary}

Next, we derive an estimate for $\widetilde{D}_{\ell}$.
\begin{lemma}\label{lemma:estimateforDell}
Let $-\frac{1}{2}\leq\ub<0$ and let $\ell\in\N$. We have the following estimate 
\bee
\widetilde{D}_{\ell}(\ub)\les  C_0\Big(1+\widetilde{D}_{\leq\ell-1}(\ub)+\widetilde{E}_{\leq\ell}(\ub)\Big)^\ell\left(1+\widetilde{E}_{\ell+1}(\ub)\right),
\eee
where the constant $C_0$ only depends on the values of the solution in $I_0$. 
\end{lemma}

We postpone the proof of Lemma \ref{lemma:estimateforDell} to section \ref{sec:lemma:estimateforDell}. In view of Lemma \ref{lemma:utlimaterefinedboundsbis}, we have
$$\sup_{-\frac{1}{2}\leq\ub<0}D_0(\ub)\leq C_0.$$
By iteration, we infer from Lemma \ref{lemma:estimateforDell} that for all $\ell\in\N$, we have
\bee
\widetilde{D}_{\ell}(\ub)&\les& C_0\Big(1+\widetilde{E}_{\leq\ell}(\ub)\Big)^{l!}\left(1+\widetilde{E}_{\ell+1}(\ub)\right).
\eee
Together with Corollary \ref{corollary:estforrighthandisdeprah}, we obtain
$$
\max_{|\a|=\ell}\sup_{-\frac{1}{2}\leq\ub'\leq\ub}\|\pr_{\tau,x}^\a h(\c, \ub')\|_{L^2_{\ub'}} \les \Big(1+\widetilde{E}_{\leq\ell}(\ub)\Big)^{(l+1)!}\left(1+ \widetilde{E}_{\ell+1}(\ub)\right).
$$
Now, recall that we have
\bee
 -4\pr_u\pr_{\ub}w+\frac{3}{\varrho}(\pr_{\ub}w-\pr_uw) &=& h.
\eee
We rewrite this as
\bee
{}^{(1+4)}\square w &=& h
\eee
where ${}^{(1+4)}\square$ is the D'Alembertian on the $1+4$ dimensional Minkowski spacetime. Commuting with $\pr_{\tau,x}^\a$, we infer
\bee
{}^{(1+4)}\square\pr_{\tau, x}^\a w &=& \pr_{\tau, x}^\a h.
\eee
The energy estimates for the wave equation \eqref{eq:waveeqwsimplied} yields
\bee
E_{\ell+1}(\ub) &\les& E_{\ell+1}\left(-\frac{1}{2}\right)+\int_{-1}^{\ub}\|\pr_{\tau, x}^\a h(\ub', \cdot)\|_{L^2_{\ub}}d\ub',\,\,\,\,-\frac{1}{2}\leq\ub<0.
\eee
Together with the above estimate for $\pr_{\tau,x}^\a h$, we infer
\bee
E_{\ell+1}(\ub) &\les& E_{\ell+1}\left(-\frac{1}{2}\right)+\int_{-1}^{\ub}\Big(1+\widetilde{E}_{\leq\ell}(\ub')\Big)^{(l+1)!}\left(1+ \widetilde{E}_{\ell+1}(\ub')\right)d\ub'.
\eee
Using Gronwall Lemma together with the fact that the solution is smooth on $\{\ub=-\frac{1}{2}\}$\footnote{Note that the region $\{\ub=-\frac{1}{2}\}$ is compact and does not contain the potential singularity $(0,0)$.}, and together with the above estimate for $\widetilde{D}_{\ell}(\ub)$, we deduce by iteration for all $\ell\in\N$
$$\sup_{-\frac{1}{2}\leq\ub<0}(D_{\ell}(\ub)+E_{\ell+1}(\ub))\leq C_{\ell}<+\infty.$$
We have thus obtained the regularity of $(M, \gg, \phi)$ at the origin $(u, \ub)=(0,0)$. This concludes the proof of Theorem \ref{th:smallenergyglobalex}.

%%%%%%%%%%%%%%%%%%%%%%%%%%%%%%%%%%%%%%%%%%%%%%%%%%%%%%

\subsection{Proof of Lemma \ref{lemma:estforrighthandisdeprah}}\label{sec:lemma:estforrighthandisdeprah}

%%%%%%%%%%%%%%%%%%%%%%%%%%%%%%%%%%%%%%%%%%%%%%%%%%%%%%

Recall that we have
\bee
\nn h &=& \left( -\frac{\varrho-r}{r\varrho^2}  -\frac{\pr_\varrho r-1}{r\varrho}\right)\varrho\pr_{\varrho}w+\frac{\pr_{\tau}r}{r}\pr_{\tau}w\\ 
&& +\left(\frac{\Omega^2-1}{r^2}+\frac{\varrho - r}{r^2\varrho} -\frac{\pr_\varrho r-1}{r\varrho}\right)w+\Omega^2 \frac{\varrho^2}{r^2}w^3\zeta(\varrho w).
\eee
We rewrite it schematically as
\bee
h &=& \left(\frac{\varrho^2}{r^2}\frac{\Omega^2-1}{\varrho^2}, \frac{\varrho}{r}\frac{\varrho - r}{\varrho^3}, \frac{\varrho^2}{r^2}\frac{\varrho - r}{\varrho^3}, \frac{\varrho}{r}\frac{\pr_\varrho r-1}{\varrho^2}\right)\left(\varrho\pr_\varrho w, w\right)+\frac{\varrho}{r}\frac{\pr_{\tau}r}{\varrho}\pr_{\tau}w\\
&&+\frac{\varrho^2}{r^2}\Omega^2w^3\zeta(\varrho w).
\eee
Now, we have
\bea\label{eq:expansionofrhooverr}
\frac{\varrho}{r} = \frac{1}{1-\frac{\varrho-r}{\varrho}}= \sum_{j\geq 0}\left(\frac{\varrho-r}{\varrho}\right)^j,\,\, \frac{\varrho^2}{r^2} = \frac{1}{\left(1-\frac{\varrho-r}{\varrho}\right)^2}= \sum_{j\geq 0}(j+1)\left(\frac{\varrho-r}{\varrho}\right)^j
\eea
where the convergence follows from the estimate $ |r-\varrho|\les \ep\varrho$ proved in Lemma \ref{lemma:basicestimate}. This yields 
\bea\label{eq:expansionofrhooverrbis}
\nn\left|\pr_{\tau,x}^\beta\left( \frac{\varrho}{r}, \frac{\varrho^2}{r^2}\right)\right| &\les& \left(\sum_{j\geq 0}(j+|\beta|+1)\ep^j\right)\prod_{\sum_j\beta_j=\beta}\left|\pr_{\tau,x}^{\beta_j}\left(\frac{\varrho - r}{\varrho}\right)\right|\\
\nn&\les& \prod_{\sum_j\beta_j=\beta}\left|\pr_{\tau,x}^{\beta_j}\left(\frac{\varrho - r}{\varrho}\right)\right|\\
&\les& D_{|\beta|}(\ub)+D_{\leq |\beta|-1}^{|\beta|}(\ub)
\eea
where we used the definition of $D_j(\ub)$ with $j= |\b|-1$ and $j= |\b|$. Choosing $|\alpha|=\ell$ and together with the definition of $D_\ell(\ub)$, Leibniz formula and the commutator identity $[\pr_{x^i}, \varrho\pr_{\varrho}]=\pr_{x^i}$, we immediately deduce
\bee
|\pr_{\tau,x}^\a h| &\les& \left(\frac{|\pr_\tau w|}{\varrho^{\frac{1}{2}}}+\frac{|\varrho\pr_\varrho w|}{\varrho^{\frac{3}{2}}}+\frac{|w|}{\varrho^{\frac{3}{2}}}\right)D_\ell(\ub)\\
&&+\left(1+D_{\leq\ell -1}^\ell(\ub)\right)\left(\sum_{|\b|\leq \ell-1}\frac{|\varrho\pr_\varrho\pr_{\tau,x}^\b w|+\varrho|\pr_\tau\pr_{\tau,x}^\b w|+|\pr_{\tau,x}^\b w|}{\varrho^{\frac{3}{2}}}\right)\\
&&+C_0|\varrho\pr_\varrho\pr_{\tau,x}^\a w|+C_0\varrho|\pr_\tau\pr_{\tau,x}^\a w|+C_0|\pr_{\tau,x}^\a w|\\
&& +\frac{C_0}{\varrho^{\frac{1}{2}}}\left(D_\ell(\ub)+D_{\leq\ell -1}^\ell(\ub)\right)+\left(1+D_{\leq\ell -1}^\ell(\ub)\right)\left(\sum_{|\b|\leq \ell-1}\frac{\left|\pr_{\tau,x}^\b\left(w^3\zeta(\varrho w)\right)\right|}{\varrho^{\frac{1}{2}}}\right)\\
&&+C_0\left|\pr_{\tau,x}^\a\left(w^3\zeta(\varrho w)\right)\right|,
\eee
where we also used the estimates of Lemma \ref{lemma:utlimaterefinedboundsbis} as well as the fact that
 in view of Lemma \ref{lemma:utlimaterefinedbounds}, $w$ satisfies the following a priori bound
$$|w|\leq C_0$$
where the constant $C_0$ only depends on the values of the solution in $I_0$. We infer
\bea\lab{eq:intermediaryestimateforpartialalphah}
\nn\|\pr_{\tau,x}^\a h(\c, \ub)\|_{L^2_{\ub}} &\les& \left(C_0+\left\|\frac{\pr_u w(\c, \ub)}{\varrho^{\frac{1}{2}}}\right\|_{L^2_{\ub}}+\left\|\frac{\pr_{\ub} w(\c, \ub)}{\varrho^{\frac{1}{2}}}\right\|_{L^2_{\ub}} +\left\|\frac{w(\c, \ub)}{\varrho^{\frac{3}{2}}}\right\|_{L^2_{\ub}} \right)D_\ell(\ub)\\
\nn&&+\sum_{|\b|\leq \ell-1}\Bigg(C_0+\left\|\frac{\pr_u\pr_{\tau,x}^\b w(\c, \ub)}{\varrho^{\frac{1}{2}}}\right\|_{L^2_{\ub}} +\left\|\frac{\pr_{\ub}\pr_{\tau,x}^\b w(\c, \ub)}{\varrho^{\frac{1}{2}}}\right\|_{L^2_{\ub}} \\
\nn&&+\left\|\frac{\pr_{\tau,x}^\b w(\c, \ub)}{\varrho^{\frac{3}{2}}}\right\|_{L^2_{\ub}}+\left\|\frac{\pr_{\tau,x}^\b(w^3\zeta(\rho w))}{\varrho^{\frac{1}{2}}}\right\|_{L^2_{\ub}}\Bigg)\left(1+D_{\leq\ell -1}^\ell(\ub)\right)\\
\nn&& +C_0\|\pr_u\pr_{\tau,x}^\a w(\c, \ub)\|_{L^2_{\ub}}+C_0\|\pr_{\ub}\pr_{\tau,x}^\a w(\c, \ub)\|_{L^2_{\ub}} +C_0\left\|\pr_{\tau,x}^\a w(\c, \ub)\right\|_{L^2_{\ub}}\\
&&+C_0\|\pr_{\tau,x}^\a(w^3\zeta(\rho w))(\c, \ub)\|_{L^2_{\ub}}.
\eea

Note from its definition that $\zeta$ is an even function so that there exists a smooth function $\widetilde{\zeta}:\mathbb{R}\to \mathbb{R}$ such that
$$\zeta(s)=\widetilde{\zeta}(s^2).$$
Hence, we have
\bee
w^3\zeta(\rho w) = w^3\widetilde{\zeta}(\rho^2w^2)
\eee
where we may now exploit the fact that $\varrho^2$ - unlike $\varrho$ - is smooth in the cartesian coordinates system $(\tau, x)$. This yields
\bee
 |\pr^\a_{\tau, x}(w^3\zeta(\rho w))| &\les& C_0|\pr^\a_{\tau, x}w|+C_0|\pr_{\tau, x}w||\pr^{\a-1}_{\tau, x}w|1_{\{|\a|\geq 2\}}+C_0|\pr^{\leq \a-2}_{\tau, x}w|^21_{\{|\a|\geq 4\}}\\
&&+C_0\Big(1+|\pr^{\leq \a-3}_{\tau, x}w|^{|\a|-1}\Big)|\pr^{\leq \a-2}_{\tau, x}w|1_{\{|\a|\geq 5\}}
\eee
where we used the fact that $|w|\leq C_0$ and $0\leq \varrho\leq 1$. We infer
\bee
\|\pr^\a_{\tau, x}(w^3\zeta(\rho w))\|_{L^2_{\ub}} &\les& C_0\|\pr^\a_{\tau, x}w\|_{L^2_{\ub}}+C_0\|\pr_{\tau, x}w\|_{L^4_{\ub}}\|\pr^{\a-1}_{\tau, x}w\|_{L^4_{\ub}}1_{\{|\a|\geq 2\}}\\
&&+C_0\|\pr^{\leq \a-2}_{\tau, x}w\|^2_{L^4_{\ub}}1_{\{|\a|\geq 4\}}\\
&&+C_0\Big(1+\|\pr^{\leq \a-3}_{\tau, x}w\|^{|\a|-1}_{L^\infty_{\ub}}\Big)\|\pr^{\leq \a-2}_{\tau, x}w\|_{L^2_{\ub}}1_{\{|\a|\geq 5\}}\\
&\les& C_0\|\pr^\a_{\tau, x}w\|_{L^2_{\ub}}+C_0\|\pr_{\tau, x}w\|_{H^1_{\ub}}\|\pr^{\a-1}_{\tau, x}w\|_{H^1_{\ub}}1_{\{|\a|\geq 2\}}\\
&&+C_0\|\pr^{\leq \a-2}_{\tau, x}w\|^2_{H^1_{\ub}}1_{\{|\a|\geq 4\}}\\
&&+C_0\Big(1+\|\pr^{\leq \a-3}_{\tau, x}w\|^{|\a|-1}_{H^3_{\ub}}\Big)\|\pr^{\leq \a-2}_{\tau, x}w\|_{L^2_{\ub}}1_{\{|\a|\geq 5\}}\\
&\les& C_0E_{\ell+1}(\ub)+C_0\Big(E^2_{\leq \ell}(\ub)+E_{\leq \ell}^{\ell}(\ub)\Big)
\eee
where we used in particular the Sobolev embeddings \eqref{eq:sec9:sobolev} \eqref{eq:sec9:sobolevter} and the fact that $|\a|=\ell$. Similarly, we have
\bee
 |\pr^\b_{\tau, x}(w^3\zeta(\rho w))| &\les& C_0|\pr^\b_{\tau, x}w|+C_0|\pr_{\tau, x}w||\pr^{\b-1}_{\tau, x}w|1_{\{|\b|\geq 2\}}\\
&&+C_0\Big(1+|\pr^{\leq \b-2}_{\tau, x}w|^{|\b|-1}\Big)|\pr^{\leq \b-2}_{\tau, x}w|1_{\{|\b|\geq 4\}}
\eee
and hence
\bee
\left\|\frac{\pr_{\tau,x}^\b(w^3\zeta(\rho w))}{\varrho^{\frac{1}{2}}}\right\|_{L^2_{\ub}} &\les& C_0\left\|\frac{\pr_{\tau,x}^\b w}{\varrho^{\frac{1}{2}}}\right\|_{L^2_{\ub}}+C_0\|\pr_{\tau, x}w\|_{L^4_{\ub}}\left\|\frac{\pr_{\tau,x}^{\b-1}w}{\varrho^{\frac{1}{2}}}\right\|_{L^4_{\ub}}1_{\{|\b|\geq 2\}}\\
&& +C_0\Big(1+\|\pr^{\leq \b-2}_{\tau, x}w\|^{|\b|-1}_{L^\infty_{\ub}}\Big)\left\|\frac{\pr_{\tau,x}^{\leq \b-2} w}{\varrho^{\frac{1}{2}}}\right\|_{L^2_{\ub}}1_{\{|\b|\geq 4\}}\\
&\les&  C_0\left\|\pr_{\tau,x}^\b w\right\|_{H^1_{\ub}}+C_0\|\pr_{\tau, x}w\|_{H^1_{\ub}}\left\|\pr_{\tau,x}^{\b-1}w\right\|_{H^2_{\ub}}1_{\{|\b|\geq 2\}}\\
&& +C_0\Big(1+\|\pr^{\leq \b-2}_{\tau, x}w\|^{|\b|-1}_{H^3_{\ub}}\Big)\left\|\pr_{\tau,x}^{\leq \b-2} w\right\|_{H^1_{\ub}}1_{\{|\b|\geq 4\}}\\
&\les& C_0\Big(1+E_{\leq \ell}^{\ell-1}(\ub)\Big)
\eee
where we used in particular the Sobolev embeddings \eqref{eq:sec9:sobolev} \eqref{eq:sec9:sobolevter}, the Hardy estimates \eqref{eq:sec9:hardy} \eqref{eq:sec9:hardy-sobolev}, and the fact that $|\b|\leq\ell-1$. Together with \eqref{eq:intermediaryestimateforpartialalphah} and the Hardy estimates  \eqref{eq:sec9:hardy}  \eqref{eq:sec9:hardybis}, we infer
\bee
\|\pr_{\tau,x}^\a h(\c, \ub)\|_{L^2_\ub} &\les& C_0\|\pr_{\ub}\pr_{\tau,x}^\a w(\c, \ub)\|_{L^2_{\ub}}+\Big(C_0+E_{\leq 2}(\ub)+\|\pr_{\ub} w(\c, \ub)\|_{H^1_{\ub}}\Big)D_\ell(\ub)\\
&&+C_0\left(E_{\leq \ell+1}(\ub)+E_{\leq \ell}^{\ell-1}(\ub)+1\right)\left(1+D_{\leq\ell -1}^\ell(\ub)\right)\\
&& +C_0E_{\ell+1}(\ub)+C_0\Big(E^2_{\leq \ell}(\ub)+E_{\leq \ell}^{\ell}(\ub)\Big).
\eee
This yields
\bee
&&\|\pr_{\tau,x}^\a h(\c, \ub)\|_{L^2_\ub}\\
 &\les& C_0\|\pr_{\ub}\pr_{\tau,x}^\a w(\c, \ub)\|_{L^2_{\ub}}+C_0\Big(1+E_{\leq \ell}^{\ell-1}(\ub)\Big)\Big(1+D^\ell_{\leq\ell -1}(\ub)\Big)\\
&&+\left(C_0+E_{\leq 2}(\ub)+\|\pr_{\ub} w(\c, \ub)\|_{H^1_{\ub}}+E_{\leq \ell}^{\ell-1}(\ub)+D_{\leq\ell -1}^\ell(\ub)\right)\Big(D_\ell(\ub)+E_{\ell+1}(\ub)\Big)
\eee
which is the desired estimate for $\ell\geq 2$. 

It remains to consider the cases $\ell=0$ and $\ell=1$. We start with the case $\ell=0$. We have in view of Lemma \ref{lemma:utlimaterefinedboundsbis}
\bee
|h| &\les& C_0|\pr_uw|+C_0|\pr_{\ub}w|+C_0\left|\frac{w}{\varrho}\right|+C_0\\
&\les& C_0|\pr_uw|+C_0|\pr_{\ub}w|+\frac{C_0}{\varrho}+C_0,
\eee
where we used the bound $|w|\leq C_0$.Hence, we infer
\bee
\|h(\c, \ub)\|_{L^2_\ub} &\les& C_0 E_1(\ub)+C_0\|\pr_{\ub} w(\c, \ub)\|_{L^2_{\ub}}+C_0 
\eee
which is the desired estimate for $\ell=0$. 

Finally, we consider the case $\ell=1$. We have 
\bee
|\pr_{\tau, x} h| &\les& \left| -\frac{\varrho-r}{r\varrho^2}  -\frac{\pr_\varrho r-1}{r\varrho}\right||\pr_{\tau, x}(\varrho\pr_{\varrho}w)|+ \left|\pr_{\tau, x}\left( -\frac{\varrho-r}{r\varrho^2}  -\frac{\pr_\varrho r-1}{r\varrho}\right)\right||\varrho\pr_\varrho w|\\
&&+\left|\frac{\pr_{\tau}r}{r}\right||\pr_{\tau, x}\pr_{\tau}w|+\left|\pr_{\tau, x}\left(\frac{\pr_{\tau}r}{r}\right)\right||\pr_{\tau}w|  +\left|\frac{\Omega^2-1}{r^2}+\frac{\varrho - r}{r^2\varrho} -\frac{\pr_\varrho r-1}{r\varrho}\right||\pr_{\tau,x}w|\\
&&+\left|\pr_{\tau, x}\left(\frac{\Omega^2-1}{r^2}+\frac{\varrho - r}{r^2\varrho} -\frac{\pr_\varrho r-1}{r\varrho}\right)\right||w|\\
&&+\left|\Omega^2 \frac{\varrho^2}{r^2}\right||\pr_{\tau, x}(w^3\zeta(\varrho w))|+\left|\pr_{\tau, x}\left(\Omega^2 \frac{\varrho^2}{r^2}\right)\right||w^3\zeta(\varrho w)|.
\eee
Using Lemma \ref{lemma:utlimaterefinedbounds}, Lemma \ref{lemma:utlimaterefinedboundsbis} and Lemma \ref{lemma:utlimaterefinedboundster}, we infer
\bee
|\pr_{\tau,x} h| &\les& C_0+\frac{C_0}{\varrho}+C_0|\pr_{\tau, x}w|+C_0|\pr_u\pr_{\tau, x}w|+C_0|\pr_{\ub}\pr_{\tau, x}w|.
\eee
This yields
\bee
\|\pr_{\tau,x} h(\tau,\c)\|_{L^2_{r,\tau}} &\les& C_0E_2(\tau)+C_0\|\pr_{\ub}\pr_{\tau, x}w(\c, \ub)\|_{L^2_{\ub}}+C_0E_1(\tau)+C_0.
\eee
which concludes the proof of Lemma \ref{lemma:estforrighthandisdeprah}.

\subsection{Proof of Lemma \ref{lemma:estimateforprubderivativeofw}}\label{sec:lemma:estimateforprubderivativeofw}

Recall that we have
\bee
 -4\pr_u\pr_{\ub}w+\frac{3}{\varrho}(\pr_{\ub}w-\pr_uw) &=& h.
\eee
We rewrite this as
\bee
{}^{(1+4)}\square w &=& h
\eee
where ${}^{(1+4)}\square$ is the D'Alembertian on the $1+4$ dimensional Minkowski spacetime which is given in the $(u, \ub, \omega)$ coordinates system, with $\omega\in \mathbb{S}^3$, as
\bee
{}^{(1+4)}\square &=&  -4\pr_u\pr_{\ub}+\frac{3}{\varrho}(\pr_{\ub}-\pr_u) +\frac{1}{\varrho^2}\Delta_{\mathbb{S}^3}
\eee 
where $\Delta_{\mathbb{S}^3}$ is the Laplace-Beltrami operator on $\mathbb{S}^3$. We differentiate and obtain
\bee
{}^{(1+4)}\square(\pr_{\tau,x}^\a w) &=&  \pr_{\tau,x}^\a h.
\eee
Also, note that
\bee
\Delta_{\mathbb{S}^3} &=& \sum_{1\leq i<j\leq 4}\Omega_{i,j}^2
\eee
where $\Omega_{i,j}=x^i\pr_{x^j}-x^j\pr_{x^i}$ denote the angular momentum operators of $\mathbb{R}^4$. Since $w$ is radial, we have, we have $\Omega_{i,j}w=0$ and hence
\bee
\Delta_{\mathbb{S}^3}(\pr_{\tau,x}^\a w) &=& \sum_{1\leq i<j\leq 4}[\Omega_{i,j},[\Omega_{i,j},\pr_{\tau,x}^\a]]w. 
\eee
Furthermore, we have
\bee
[\Omega_{i,j}, \pr_\tau]=0,\,\,\,\, [\Omega_{i,j}, \pr_{x^l}]=-\delta_{i,l}\pr_{x^j}+\delta_{j,l}\pr_{x^i},
\eee
and hence
\bee
\Delta_{\mathbb{S}^3}(\pr_{\tau,x}^\a w) &=& \sum_{|\gamma|=|\a|}n_{\gamma, \a}\pr_{\tau,x}^\gamma w
\eee
where $n_{\gamma, \a}$ are integers. This yields
\bee
 -4\pr_u\pr_{\ub}(\pr_{\tau,x}^\a w)+\frac{3}{\varrho}(\pr_{\ub}\pr_{\tau,x}^\a w-\pr_u\pr_{\tau,x}^\a w) + \frac{1}{\varrho^2}\sum_{|\gamma|=|\a|}n_{\gamma, \a}\pr_{\tau,x}^\gamma w &=&  \pr_{\tau,x}^\a h.
\eee
We infer
\bee
\pr_u(\varrho^{\frac{3}{2}}\pr_{\ub}\pr_{\tau,x}^\a w) &=& -\frac{3}{4}\sqrt{\varrho}\pr_u\pr_{\tau,x}^\a w+\frac{1}{4\sqrt{\varrho}}\sum_{|\gamma|=|\a|}n_{\gamma, \a}\pr_{\tau,x}^\gamma w -\frac{1}{4}\varrho^{\frac{3}{2}}\pr_{\tau,x}^\a h.
\eee
Integrating from $u=-1$ where $w$ is controlled by $C_0$, we infer
\begin{equation}\label{eq:intermediateestimateforpartialubpartialalphaw}
\varrho^{\frac{3}{2}}|\pr_{\ub}\pr_{\tau,x}^\a w| \les C_0+\int_{-1}^u\sqrt{\varrho}|\pr_u\pr_{\tau,x}^\a w|du'+\sum_{|\gamma|=|\a|}\int_{-1}^u\frac{1}{\sqrt{\varrho}}|\pr_{\tau,x}^\gamma w|du'+\int_{-1}^u\varrho^{\frac{3}{2}}|\pr_{\tau,x}^\a h|du'.
\end{equation}

Next, recall from the proof of Lemma \ref{lemma:estforrighthandisdeprah} that 
\bee
|h| &\les& C_0|\pr_uw|+C_0|\pr_{\ub}w|+\frac{C_0}{\varrho}+C_0,
\eee
\bee
|\pr_{\tau,x} h| &\les& C_0+\frac{C_0}{\varrho}+C_0|\pr_{\tau, x}w|+C_0|\pr_u\pr_{\tau, x}w|+C_0|\pr_{\ub}\pr_{\tau, x}w|,
\eee
and for $|\a|=\ell\geq 2$
\bee
|\pr_{\tau,x}^\a h| &\les& \left(\frac{|\pr_\tau w|}{\varrho^{\frac{1}{2}}}+\frac{|\varrho\pr_\varrho w|}{\varrho^{\frac{3}{2}}}+\frac{|w|}{\varrho^{\frac{3}{2}}}\right)D_\ell(\ub)\\
&&+\left(1+D_{\leq\ell -1}^{\ell-1}(\ub)\right)\left(\sum_{|\b|\leq \ell-1}\frac{|\varrho\pr_\varrho\pr_{\tau,x}^\b w|+\varrho|\pr_\tau\pr_{\tau,x}^\b w|+|\pr_{\tau,x}^\b w|}{\varrho^{\frac{3}{2}}}\right)\\
&&+C_0|\varrho\pr_\varrho\pr_{\tau,x}^\a w|+C_0\varrho|\pr_\tau\pr_{\tau,x}^\a w|+C_0|\pr_{\tau,x}^\a w|\\
&& +\frac{C_0D_\ell(\ub)}{\varrho^{\frac{1}{2}}}+\left(1+D_{\leq\ell -1}^{\ell-1}(\ub)\right)\left(\sum_{|\b|\leq \ell-1}\frac{\left|\pr_{\tau,x}^\b\left(w^3\zeta(\varrho w)\right)\right|}{\varrho^{\frac{1}{2}}}\right)+C_0\left|\pr_{\tau,x}^\a\left(w^3\zeta(\varrho w)\right)\right|.
\eee
Using Cauchy-Schwarz, we deduce
\bee
\int_{-1}^u\varrho^{\frac{3}{2}}|h|du' &\les& C_0\int_{-1}^u\varrho^{\frac{3}{2}}|\pr_{\ub} w|du'+C_0\|\pr_u w(\c, \ub)\|_{L^2_{\ub}}+C_0,
\eee
\bee
\int_{-1}^u\varrho^{\frac{3}{2}}|\pr_{\tau,x} h|du' &\les& C_0\int_{-1}^u\varrho^{\frac{3}{2}}|\pr_{\ub}\pr_{\tau,x} w|du'+C_0\|\pr_u\pr_{\tau,x} w(\c, \ub)\|_{L^2_{\ub}}+C_0\|\pr_{\tau,x} w(\c, \ub)\|_{L^2_{\ub}}+C_0,
\eee
and for $|\a|=\ell\geq 2$
\bee
\int_{-1}^u\varrho^{\frac{3}{2}}|\pr_{\tau,x}^\a h|du' &\les& \left(C_0+\left\|\frac{\pr_u w(\c, \ub)}{\varrho^{\frac{1}{2}}}\right\|_{L^2_{\ub}}+\left\|\frac{\pr_{\ub} w(\c, \ub)}{\varrho^{\frac{1}{2}}}\right\|_{L^2_{\ub}} +\left\|\frac{w(\c, \ub)}{\varrho^{\frac{3}{2}}}\right\|_{L^2_{\ub}} \right)D_\ell(\ub)\\
&&+\sum_{|\b|\leq \ell-1}\Bigg(\left\|\frac{\pr_u\pr_{\tau,x}^\b w(\c, \ub)}{\varrho^{\frac{1}{2}}}\right\|_{L^2_{\ub}} +\left\|\frac{\pr_{\ub}\pr_{\tau,x}^\b w(\c, \ub)}{\varrho^{\frac{1}{2}}}\right\|_{L^2_{\ub}} \\
\nn&&+\left\|\frac{\pr_{\tau,x}^\b w(\c, \ub)}{\varrho^{\frac{3}{2}}}\right\|_{L^2_{\ub}}+\left\|\frac{\pr_{\tau,x}^\b(w^3\zeta(\rho w))}{\varrho^{\frac{1}{2}}}\right\|_{L^2_{\ub}}\Bigg)\left(1+D_{\leq\ell -1}^{\ell-1}(\ub)\right)\\
&& +C_0\int_{-1}^u\varrho^{\frac{3}{2}}|\pr_{\ub}\pr_{\tau,x}^\a w|du'+C_0\|\pr_u\pr_{\tau,x}^\a w(\c, \ub)\|_{L^2_{\ub}} +C_0\left\|\pr_{\tau,x}^\a w(\c, \ub)\right\|_{L^2_{\ub}}\\&&+C_0\|\pr_{\tau,x}^\a(w^3\zeta(\rho w))(\c, \ub)\|_{L^2_{\ub}}.
\eee
Coming back to \eqref{eq:intermediateestimateforpartialubpartialalphaw} and arguing as in the proof of Lemma \ref{lemma:estforrighthandisdeprah}, we infer 
\bee
\varrho^{\frac{3}{2}}|\pr_{\ub}w| &\les& C_0+C_0\int_{-1}^u\sqrt{\varrho}|\pr_uw|du'+C_0\int_{-1}^u\varrho^{\frac{3}{2}}|\pr_{\ub}w|du'
\eee
and for $|\alpha|\geq 1$
\bee
\varrho^{\frac{3}{2}}|\pr_{\ub}\pr_{\tau,x}^\a w| &\les& C_0+\int_{-1}^u\sqrt{\varrho}|\pr_u\pr_{\tau,x}^\a w|du'+\sum_{|\gamma|=|\a|}\int_{-1}^u\frac{1}{\sqrt{\varrho}}|\pr_{\tau,x}^\gamma w|du'+C_0\int_{-1}^u\varrho^{\frac{3}{2}}|\pr_{\ub}\pr_{\tau,x}^\a w|du'\\
&& +C_0\Big(1+E_{\leq \ell}^{\ell-1}(\ub)\Big)\Big(1+D^{\ell-1}_{\leq\ell -1}(\ub)\Big)\\
&&+\left(C_0+E_{\leq \ell}^{\ell-1}(\ub)+D_{\leq\ell -1}^{\ell-1}(\ub)\right)\Big(D_\ell(\ub)+E_{\ell+1}(\ub)\Big).
\eee
In view of Gronwall Lemma, we deduce
\bee
\varrho^{\frac{3}{2}}|\pr_{\ub}w| &\les& C_0+C_0\int_{-1}^u\sqrt{\varrho}|\pr_uw|du'
\eee
and for $|\alpha|\geq 1$
\bee
\varrho^{\frac{3}{2}}|\pr_{\ub}\pr_{\tau,x}^\a w| &\les& \int_{-1}^u\sqrt{\varrho}|\pr_u\pr_{\tau,x}^\a w|du'+\sum_{|\gamma|=|\a|}\int_{-1}^u\frac{1}{\sqrt{\varrho}}|\pr_{\tau,x}^\gamma w|du' +C_0\Big(1+E_{\leq \ell}^{\ell-1}(\ub)\Big)\Big(1+D^{\ell-1}_{\leq\ell -1}(\ub)\Big)\\
&&+\left(C_0+E_{\leq \ell}^{\ell-1}(\ub)+D_{\leq\ell -1}^{\ell-1}(\ub)\right)\Big(D_\ell(\ub)+E_{\ell+1}(\ub)\Big).
\eee
Now, we square and integrate, and we use
\bee
 \int_{-1}^\ub\left(\int_{-1}^u\sqrt{\varrho}|\pr_u\pr_{\tau,x}^\a w|du'\right)^2du &\les&  \int_{-1}^\ub\left(\int_{-1}^u\frac{du'}{\varrho^{\frac{3}{2}}}\right)\left(\int_{-1}^u\varrho^{\frac{5}{2}}(\pr_u\pr_{\tau,x}^\a w)^2du'\right)du\\
 &\les&  \int_{-1}^\ub \frac{1}{\sqrt{\varrho}}\left(\int_{-1}^u\varrho^{\frac{5}{2}}(\pr_u\pr_{\tau,x}^\a w)^2du'\right)du\\
  &\les&  \int_{-1}^\ub \varrho^{\frac{5}{2}}(\pr_u\pr_{\tau,x}^\a w)^2\left(\int_{u'}^\ub  \frac{1}{\sqrt{\varrho}}du\right)du'\\
 &\les&  \|\pr_u\pr_{\tau,x}^\a w\|_{L^2_{\ub}}^2
\eee
and
\bee
 \int_{-1}^\ub\left(\int_{-1}^u\frac{1}{\sqrt{\varrho}}|\pr_{\tau,x}^\gamma w|du'\right)^2du &\les&  \int_{-1}^\ub\left(\int_{-1}^u\frac{du'}{\varrho^{\frac{3}{2}}}\right)\left(\int_{-1}^u\varrho^{\frac{1}{2}}(\pr_{\tau,x}^\gamma w)^2du'\right)du\\
 &\les&  \int_{-1}^\ub \frac{1}{\sqrt{\varrho}}\left(\int_{-1}^u\varrho^{\frac{1}{2}}(\pr_{\tau,x}^\gamma w)^2du'\right)du\\
  &\les&  \int_{-1}^\ub \varrho^{\frac{1}{2}}(\pr_{\tau,x}^\gamma w)^2\left(\int_{u'}^\ub  \frac{1}{\sqrt{\varrho}}du\right)du'\\
 &\les&  \left\|\frac{\pr_{\tau,x}^\gamma w}{\varrho}\right\|_{L^2_{\ub}}^2\\
  &\les&  \|\pr_{\tau,x}^\gamma w\|_{H^1_{\ub}}^2.
\eee
We obtain
\bee
\|\pr_{\ub}w(\c, \ub)\|_{L^2_{\ub}} &\les&  C_0+C_0E_1(\ub)
\eee
and for $|\alpha|\geq 1$
\bee
\|\pr_{\ub}\pr_{\tau,x}^\a w(\c, \ub)\|_{L^2_{\ub}} &\les&  \left(\int_{-1}^\ub\left(\int_{-1}^u\sqrt{\varrho}|\pr_u\pr_{\tau,x}^\a w|du'\right)^2du\right)^{\frac{1}{2}}+\sum_{|\gamma|=|\a|}\left(\int_{-1}^\ub\left(\int_{-1}^u\frac{1}{\sqrt{\varrho}}|\pr_{\tau,x}^\gamma w|du'\right)^2du\right)^{\frac{1}{2}}\\
&&+C_0\Big(1+E_{\leq \ell}^{\ell-1}(\ub)\Big)\Big(1+D^{\ell-1}_{\leq\ell -1}(\ub)\Big)\\
&&+\left(C_0+E_{\leq \ell}^{\ell-1}(\ub)+D_{\leq\ell -1}^{\ell-1}(\ub)\right)\Big(D_\ell(\ub)+E_{\ell+1}(\ub)\Big)\\
&\les& C_0\Big(1+E_{\leq \ell}^{\ell-1}(\ub)\Big)\Big(1+D^{\ell-1}_{\leq\ell -1}(\ub)\Big)\\
&&+\left(C_0+E_{\leq \ell}^{\ell-1}(\ub)+D_{\leq\ell -1}^{\ell-1}(\ub)\right)\Big(D_\ell(\ub)+E_{\ell+1}(\ub)\Big).
\eee
This concludes the proof of Lemma \ref{lemma:estimateforprubderivativeofw}.

\subsection{Proof of Lemma \ref{lemma:estimateforDell}}\label{sec:lemma:estimateforDell}

First, note that Lemma \ref{lemma:utlimaterefinedboundsbis} and Lemma \ref{lemma:utlimaterefinedboundster} immediately imply
$$\widetilde{D}_{\leq 1}(\ub)\les C_0$$
which implies the desired estimate for $\ell=0$ and $\ell=1$. Thus, from now on, we focus on the case $\ell\geq 2$. 

Recall that we have
\bee
D_\ell(\ub)= \max_{|\a| = \ell}\sup_{-1\leq u\leq\ub}&& \Bigg(\varrho^{\frac{3}{2}}\left(\left|\pr_{\tau,x}^\a\left(\frac{\varrho - r}{\varrho^3}\right)\right|+\left|\pr_{\tau,x}^\a\left(\frac{\pr_\varrho r-1}{\varrho^2}\right)\right|+\left|\pr_{\tau,x}^\a\left(\frac{\Omega^2-1}{\varrho^2}\right)\right|\right)\\
&&+\varrho^{\frac{1}{2}}\left(\left|\pr_{\tau,x}^\a\left(\frac{\pr_\tau r}{\varrho}\right)\right|\right)+\left|\pr_{\tau,x}^\a\left(\frac{\varrho - r}{\varrho}\right)\right|
+\left|\pr_{\tau,x}^\a\left(\pr_{\ub}r-\frac{1}{2}\right)\right|\\
&&+\left|\pr_{\tau,x}^\a\left(\pr_ur+\frac{1}{2}\right)\right|+\left|\pr_{\tau,x}^\a\left(\Omega^2-1\right)\right|\Bigg)(u,\ub),\,\,\,  -\frac{1}{2}\leq\ub<0,
\eee
Here, it will be more convenient to use the $(u, \ub)$ coordinates system, and hence, we will use the following estimate 
\bee
\nn D_\ell(\ub)\les \max_{|\a| = \ell}\sup_{-1\leq u\leq\ub}&&  \Bigg(\varrho^{\frac{3}{2}}\left(\left|\pr_{u,\ub}^\a\left(\frac{\varrho - r}{\varrho^3}\right)\right|+\left|\pr_{u,\ub}^\a\left(\frac{\pr_\varrho r-1}{\varrho^2}\right)\right|+\left|\pr_{u,\ub}^\a\left(\frac{\Omega^2-1}{\varrho^2}\right)\right|\right)\\
&&+\varrho^{\frac{1}{2}}\left(\left|\pr_{u,\ub}^\a\left(\frac{\pr_\tau r}{\varrho}\right)\right|\right)+\left|\pr_{u,\ub}^\a\left(\frac{\varrho - r}{\varrho}\right)\right|
+\left|\pr_{u,\ub}^\a\left(\pr_{\ub}r-\frac{1}{2}\right)\right|\\
&&+\left|\pr_{u,\ub}^\a\left(\pr_ur+\frac{1}{2}\right)\right|+\left|\pr_{u,\ub}^\a\left(\Omega^2-1\right)\right|\Bigg)(u,\ub)+D_{\leq\ell-1}(\ub).
\eee
Also, it will be convient to estimate $\log(\Omega)$ instead of $\Omega^2-1$. Using
\bee
\Omega^2-1 = e^{2\log(\Omega)}-1 = \sum_{j\geq 1}\frac{2^j\log(\Omega)^j}{j!}
\eee
where the convergence follows from the bound $|\log(\Omega)|\les \ep$ which is a consequence of Lemma \ref{lemma:basicestimate}, we deduce
\bea\label{eq:alternatedefintionDellinuubframe}
\nn D_\ell(\ub)\les \max_{|\a| = \ell}\sup_{-1\leq u\leq\ub}&&  \Bigg(\varrho^{\frac{3}{2}}\left(\left|\pr_{u,\ub}^\a\left(\frac{\varrho - r}{\varrho^3}\right)\right|+\left|\pr_{u,\ub}^\a\left(\frac{\pr_\varrho r-1}{\varrho^2}\right)\right|+\left|\pr_{u,\ub}^\a\left(\frac{\log(\Omega)}{\varrho^2}\right)\right|\right)\\
\nn &&+\varrho^{\frac{1}{2}}\left(\left|\pr_{u,\ub}^\a\left(\frac{\pr_\tau r}{\varrho}\right)\right|\right)+\left|\pr_{u,\ub}^\a\left(\frac{\varrho - r}{\varrho}\right)\right|
+\left|\pr_{u,\ub}^\a\left(\pr_{\ub}r-\frac{1}{2}\right)\right|\\
&&+\left|\pr_{u,\ub}^\a\left(\pr_ur+\frac{1}{2}\right)\right|+\left|\pr_{u,\ub}^\a\left(\Omega^2-1\right)\right|\Bigg)(u,\ub)+D_{\leq\ell-1}^\ell(\ub).
\eea
From now on, we focus on estimating the right-hand side of \eqref{eq:alternatedefintionDellinuubframe} for $\ell\geq 2$. 

We start by estimating the following quantities which we will often encounter in the sequel. We have
\bea\label{eq:intermediaryusefulestimates1}
\nn&&\max_{|\b|\leq \ell-1}\sup_{-1\leq u\leq \ub}\sqrt{\varrho}\left(\left|\pr_{u,\ub}^\b\left(\frac{\Omega^2}{4}\frac{\varrho}{r}\frac{g(\varrho w)^2}{\varrho^2}\right)\right|(u,\ub)+\left|\pr_{u,\ub}^\b\left(\frac{\Omega^2}{8}\frac{\varrho^2}{r^2}\frac{g(\varrho w)^2}{\varrho^2}\right)\right|(u,\ub)\right)\\
&\les& C_0\Big(1+\widetilde{D}_{\leq\ell-1}(\ub)+\widetilde{E}_{\leq\ell}(\ub)\Big)^{\ell-1}\widetilde{E}_{\ell+1}(\ub)+C_0\Big(1+\widetilde{D}_{\leq\ell-1}(\ub)+\widetilde{E}_{\leq\ell}(\ub)\Big)^\ell
\eea
where we used the definition of $\widetilde{D}_{\leq\ell-1}(\ub)$, $\widetilde{E}_{\leq\ell}(\ub)$ and $\widetilde{E}_{\ell+1}(\ub)$, the estimate \eqref{eq:expansionofrhooverrbis} for $\varrho/r$ and $\varrho^2/r^2$, and the estimates \eqref{eq:sec9:weightedestimatebis} \eqref{eq:sec9:sobolevter}. Also, we have
\bea\label{eq:intermediaryusefulestimates1bis}
\max_{|\b|\leq \ell-1}\sup_{-1\leq u\leq \ub}\sqrt{\varrho}\left|\pr_{u,\ub}^\b\left(\pr_u(\varrho w)\pr_{\ub}(\varrho w)\right)\right|(u,\ub)\les \widetilde{E}_{\leq\ell}(\ub)\widetilde{E}_{\ell+1}(\ub)+\widetilde{E}_{\leq\ell}^2(\ub)
\eea
where we used the definition of $\widetilde{E}_{\leq\ell}(\ub)$ and $\widetilde{E}_{\ell+1}(\ub)$, and the estimates \eqref{eq:sec9:weightedestimate} \eqref{eq:sec9:weightedestimatebis} \eqref{eq:sec9:sobolevter}. 

Next, we estimate $\pr_{u,\ub}^\a\log(\Omega)$ in sup norm for $\ell\geq 2$. To this end, recall that we have
\bee
\pr_u\pr_{\ub}\log(\Omega)=-\frac{1}{2}\pr_u(\varrho w)\pr_{\ub}(\varrho w) - \frac{\Omega^2}{8}\frac{\varrho^2}{r^2}\frac{g(\varrho w)^2}{\varrho^2}.
\eee
First notice that we have in view of \eqref{eq:intermediaryusefulestimates1} and \eqref{eq:intermediaryusefulestimates1bis} the following estimate 
\bee
&&\max_{|\b|\leq \ell-1}\sup_{-1\leq u\leq \ub}\sqrt{\varrho}\left|\pr_{u,\ub}^\b\left(-\frac{1}{2}\pr_u(\varrho w)\pr_{\ub}(\varrho w) - \frac{\Omega^2}{8}\frac{\varrho^2}{r^2}\frac{g(\varrho w)^2}{\varrho^2}\right)\right|(u,\ub)\\
&\les& C_0\Big(1+\widetilde{D}_{\leq\ell-1}(\ub)+\widetilde{E}_{\leq\ell}(\ub)\Big)^{\ell-1}\widetilde{E}_{\ell+1}(\ub)+C_0\Big(1+\widetilde{D}_{\leq\ell-1}(\ub)+\widetilde{E}_{\leq\ell}(\ub)\Big)^\ell.
\eee
Commuting the equation for $\log(\Omega)$ with $\pr_{u,\ub}^\b$, integrating in $u$ from $u=-1$ where $\pr_{\ub}\pr^\b_{u,\ub}\log(\Omega)$ is controlled by $C_0$, and using the fact that $\sqrt{\varrho}^{-1}$ is integrable in $u$, we infer
\bee
\max_{|\b|\leq\ell-1}\sup_{-1\leq u\leq\ub}|\pr_{\ub}\pr_{u,\ub}^\b\log(\Omega)| &\les&  C_0\Big(1+\widetilde{D}_{\leq\ell-1}(\ub)+\widetilde{E}_{\leq\ell}(\ub)\Big)^{\ell-1}\widetilde{E}_{\ell+1}(\ub)\\
&&+C_0\Big(1+\widetilde{D}_{\leq\ell-1}(\ub)+\widetilde{E}_{\leq\ell}(\ub)\Big)^\ell.
\eee
Hence, the only missing derivative is $\pr_u^\ell\log(\Omega)$ which is contained for exemple in $\pr_u\pr^{\ell-1}_\tau\log(\Omega)$. This yields
\bee
\max_{|\a| =\ell}\sup_{-1\leq u\leq\ub}|\pr_{u,\ub}^\a\log(\Omega)| &\les& \sup_{-1\leq u\leq\ub}|\pr_u\pr^{\ell-1}_\tau\log(\Omega)|+C_0\Big(1+\widetilde{D}_{\leq\ell-1}(\ub)+\widetilde{E}_{\leq\ell}(\ub)\Big)^\ell\\
&& +C_0\Big(1+\widetilde{D}_{\leq\ell-1}(\ub)+\widetilde{E}_{\leq\ell}(\ub)\Big)^{\ell-1}\widetilde{E}_{\ell+1}(\ub).
\eee
Next, commuting the equation for $\log(\Omega)$ with $\pr^{\ell-1}_\tau$, integrating in $\ub$ from $(u,u)$ which is on the axis so that $\pr_u\pr^{\ell-1}_\tau\log(\Omega)$ vanishes, and using the fact that $\sqrt{\varrho}^{-1}$ is integrable in $\ub$, we infer
\bee
 \sup_{-1\leq u\leq\ub}|\pr_u\pr^{\ell-1}_\tau\log(\Omega)| &\les& C_0\Big(1+\widetilde{D}_{\leq\ell-1}(\ub)+\widetilde{E}_{\leq\ell}(\ub)\Big)^{\ell-1}\widetilde{E}_{\ell+1}(\ub)\\
&&  +C_0\Big(1+\widetilde{D}_{\leq\ell-1}(\ub)+\widetilde{E}_{\leq\ell}(\ub)\Big)^\ell.
\eee
Thus we have finally obtained 
\bee
\max_{|\a| =\ell}\sup_{-1\leq u\leq\ub}|\pr_{u,\ub}^\a\log(\Omega)| &\les& C_0\Big(1+\widetilde{D}_{\leq\ell-1}(\ub)+\widetilde{E}_{\leq\ell}(\ub)\Big)^{\ell-1}\widetilde{E}_{\ell+1}(\ub)\\
&&+C_0\Big(1+\widetilde{D}_{\leq\ell-1}(\ub)+\widetilde{E}_{\leq\ell}(\ub)\Big)^\ell.
\eee

Next, relying on 
\bee
\Omega^2-1 = e^{2\log(\Omega)}-1 = \sum_{j\geq 1}\frac{2^j\log(\Omega)^j}{j!}
\eee
where the convergence follows from the bound $|\log(\Omega)|\les \ep$ which is a consequence of Lemma \ref{lemma:basicestimate}, we deduce
\bea\label{eq:marcronpresident0}
\nn\max_{|\a| =\ell}\sup_{-1\leq u\leq\ub}|\pr_{u,\ub}^\a(\Omega^2-1)| &\les& \max_{|\a| =\ell}\sup_{-1\leq u\leq\ub}|\pr_{u,\ub}^\a\log(\Omega)|+\left(\max_{|\a| \leq\ell-1}\sup_{-1\leq u\leq\ub}|\pr_{u,\ub}^\a(\Omega^2-1)|\right)^\ell\\
\nn&\les& C_0\Big(1+\widetilde{D}_{\leq\ell-1}(\ub)+\widetilde{E}_{\leq\ell}(\ub)\Big)^{\ell-1}\widetilde{E}_{\ell+1}(\ub)\\
&&+C_0\Big(1+\widetilde{D}_{\leq\ell-1}(\ub)+\widetilde{E}_{\leq\ell}(\ub)\Big)^\ell.
\eea

Next, we estimate derivatives of $r$ in sup norm. First we notice that we have the following estimate 
\bee
&&\max_{|\a| =\ell}\sup_{-1\leq u\leq\ub}\left(\varrho\left|\pr_{u,\ub}^\a\left( \frac{\varrho}{r}\frac{\Omega^2}{4}\frac{g(\varrho w)^2}{\varrho^2}\right)\right|(u,\ub)+\left|\pr_{u,\ub}^\a\left( \frac{\varrho^2}{r}\frac{\Omega^2}{4}\frac{g(\varrho w)^2}{\varrho^2}\right)\right|(u,\ub)\right)\\
&\les& C_0\max_{|\a| =\ell}\sup_{-1\leq u\leq\ub}\left|\varrho\pr^\a_{u,\ub}\left(\frac{\varrho-r}{\varrho}\right)\right| +C_0\Big(1+\widetilde{D}_{\leq\ell-1}(\ub)+\widetilde{E}_{\leq\ell}(\ub)\Big)^{\ell-1}\widetilde{E}_{\ell+1}(\ub)\\
&&+C_0\Big(1+\widetilde{D}_{\leq\ell-1}(\ub)+\widetilde{E}_{\leq\ell}(\ub)\Big)^\ell
\eee
where we used the definition of $\widetilde{D}_{\leq\ell-1}(\ub)$, $\widetilde{E}_{\leq\ell}(\ub)$ and $\widetilde{E}_{\ell+1}(\ub)$, the expansion \eqref{eq:expansionofrhooverr} for $\varrho/r$, the estimates \eqref{eq:sec9:weightedestimate} \eqref{eq:sec9:sobolevter}, and the estimate \eqref{eq:marcronpresident0}. Since we have on the other hand\footnote{Here, we use in particular
$$\pr_u(r-\varrho) =  \pr_ur +\frac{1}{2},\,\,\,\, \pr_{\ub}(r-\varrho) =  \pr_{\ub}r -\frac{1}{2}.$$} 
\bee
 \max_{|\a| =\ell}\left|\varrho\pr^\a_{u,\ub}\left(\frac{\varrho-r}{\varrho}\right)\right|&\les&  \max_{|\a| =\ell}\left|\pr^\a_{u,\ub}(\varrho-r)\right|+ \max_{|\a| =\ell}\left|[\varrho,\pr^\a_{u,\ub}]\left(\frac{\varrho-r}{\varrho}\right)\right|\\
 &\les& \sum_{|\b|= \ell-1}\left|\pr^\b_{u,\ub}\left(\pr_ur+\frac{1}{2}\right)\right|+ \sum_{|\b|= \ell-1}\left|\pr^\b_{u,\ub}\left(\pr_{\ub}r-\frac{1}{2}\right)\right|\\
 &&+\sum_{|\b|= \ell-1}\left|\pr^\b_{u,\ub}\left(\frac{\varrho-r}{\varrho}\right)\right|\\
 &\les& \widetilde{D}_{\ell-1}(\ub),
\eee
we infer
\bee
\nn&&\max_{|\a| =\ell}\sup_{-1\leq u\leq\ub}\left(\left|\pr_{u,\ub}^\a\left( \frac{\varrho^2}{r}\frac{\Omega^2}{4}\frac{g(\varrho w)^2}{\varrho^2}\right)\right|(u,\ub)+\varrho\left|\pr_{u,\ub}^\a\left( \frac{\varrho}{r}\frac{\Omega^2}{4}\frac{g(\varrho w)^2}{\varrho^2}\right)\right|(u,\ub)\right)\,\,\,\,\,\,\,\,\\
&\les& C_0\Big(1+\widetilde{D}_{\leq\ell-1}(\ub)+\widetilde{E}_{\leq\ell}(\ub)\Big)^{\ell-1}\widetilde{E}_{\ell+1}(\ub)+C_0\Big(1+\widetilde{D}_{\leq\ell-1}(\ub)+\widetilde{E}_{\leq\ell}(\ub)\Big)^\ell.
\eee
Together with \eqref{eq:intermediaryusefulestimates1}, this yields
\bea\label{eq:intermediaryusefulestimates2}
\nn&&\max_{|\a| \leq\ell}\sup_{-1\leq u\leq\ub}\left(\varrho\left|\pr_{u,\ub}^\a\left( \frac{\varrho}{r}\frac{\Omega^2}{4}\frac{g(\varrho w)^2}{\varrho^2}\right)\right|(u,\ub)+\left|\pr_{u,\ub}^\a\left( \frac{\varrho^2}{r}\frac{\Omega^2}{4}\frac{g(\varrho w)^2}{\varrho^2}\right)\right|(u,\ub)\right)\,\,\,\,\,\,\,\,\\
&\les& C_0\Big(1+\widetilde{D}_{\leq\ell-1}(\ub)+\widetilde{E}_{\leq\ell}(\ub)\Big)^{\ell-1}\widetilde{E}_{\ell+1}(\ub)+C_0\Big(1+\widetilde{D}_{\leq\ell-1}(\ub)+\widetilde{E}_{\leq\ell}(\ub)\Big)^\ell.
\eea
Commuting the equation for $r$ with $\pr_{u,\ub}^\a$, and integrating in $u$ from $u=-1$ where $\pr^\a_{u,\ub}(\pr_{\ub}r-1/2)$ is controlled by $C_0$, we infer
\bee
\sup_{-1\leq u\leq\ub}\left|\pr^\a_{u,\ub}\left(\pr_{\ub}r-\frac{1}{2}\right)\right| &\les&  C_0\Big(1+\widetilde{D}_{\leq\ell-1}(\ub)+\widetilde{E}_{\leq\ell}(\ub)\Big)^{\ell-1}\widetilde{E}_{\ell+1}(\ub)\\
&&+C_0\Big(1+\widetilde{D}_{\leq\ell-1}(\ub)+\widetilde{E}_{\leq\ell}(\ub)\Big)^\ell.
\eee
Hence, the only missing derivative of order $\ell+1$ for $r$ is $\pr_u^{\ell+1}r$ which is contained for example in $\pr_u\pr^\ell_\tau r$. This yields
\bee
&&\max_{|\a| =\ell}\sup_{-1\leq u\leq\ub}\left|\pr^\a_{u,\ub}\left(\pr_{\ub}r-\frac{1}{2}\right)\right|  +\max_{|\a| =\ell}\sup_{-1\leq u\leq\ub}\left|\pr^\a_{u,\ub}\left(\pr_ur+\frac{1}{2}\right)\right| \\
&\les& \sup_{-1\leq u\leq\ub}\left|\pr_u\pr^\ell_\tau r\right|+C_0\Big(1+\widetilde{D}_{\leq\ell-1}(\ub)+\widetilde{E}_{\leq\ell}(\ub)\Big)^{\ell-1}\widetilde{E}_{\ell+1}(\ub)\\
&&+C_0\Big(1+\widetilde{D}_{\leq\ell-1}(\ub)+\widetilde{E}_{\leq\ell}(\ub)\Big)^\ell.
\eee
Next, commuting the equation for $r$ with $\pr^\ell_\tau$, and integrating in $\ub$ from $(u,u)$ which is on the axis so that $\pr_u\pr^\ell_\tau r$ vanishes, we infer
\bee
 \sup_{-1\leq u\leq\ub}|\pr_u\pr^\ell_\tau r| &\les& C_0\Big(1+\widetilde{D}_{\leq\ell-1}(\ub)+\widetilde{E}_{\leq\ell}(\ub)\Big)^{\ell-1}\widetilde{E}_{\ell+1}(\ub)+ C_0\Big(1+\widetilde{D}_{\leq\ell-1}(\ub)+\widetilde{E}_{\leq\ell}(\ub)\Big)^\ell.
\eee
Thus we have finally obtained 
\bea\label{eq:macronpresident}
&&\max_{|\a| =\ell} \sup_{-1\leq u\leq\ub}\left|\pr^\a_{u,\ub}\left(\pr_{\ub}r-\frac{1}{2}\right)\right|  +\max_{|\a| =\ell}\sup_{-1\leq u\leq\ub}\left|\pr^\a_{u,\ub}\left(\pr_ur+\frac{1}{2}\right)\right|\\
\nn &\les& C_0\Big(1+\widetilde{D}_{\leq\ell-1}(\ub)+\widetilde{E}_{\leq\ell}(\ub)\Big)^{\ell-1}\widetilde{E}_{\ell+1}(\ub)+ C_0\Big(1+\widetilde{D}_{\leq\ell-1}(\ub)+\widetilde{E}_{\leq\ell}(\ub)\Big)^\ell.
\eea

Next, we estimate the other terms in the RHS of \eqref{eq:alternatedefintionDellinuubframe}. For a regular scalar function $f$, we have the following Taylor expansions  
\bee
f(u, \ub) &=& f(u, u) + 2\varrho\int_0^1\pr_{\ub}f(u, u+2\sigma\varrho)d\sigma,\\
f(u, \ub) &=& f(\ub, \ub) - 2\varrho\int_0^1\pr_uf(\ub-2\sigma\varrho,\ub)d\sigma,\\
f(u,\ub) &=& f(u, u) + 2\varrho\int_0^1\pr_{\ub}f(u+2\sigma\varrho, u+2\sigma\varrho)d\sigma\\
&&  -4\varrho^2\int_0^1\int_0^1\sigma\pr_u\pr_{\ub}f(u+2\sigma\varrho-2s\sigma\varrho, u+2\sigma\varrho)dsd\sigma,
\eee
where we used the fact that $\ub-u=2\varrho$. In particular, since $(u,u)$, $(\ub, \ub)$ and $(u+2\sigma\varrho, u+2\sigma\varrho)$ are on the axis, we deduce
\bee
\frac{f(u, \ub)}{\varrho} &=& 2\int_0^1\pr_{\ub}f(u, u+2\sigma\varrho)d\sigma\textrm{ if }f=0\textrm{ on }\Gamma,\\
\frac{f(u, \ub)}{\varrho} &=&  - 2\int_0^1\pr_uf(\ub-2\sigma\varrho,\ub)d\sigma\textrm{ if }f=0\textrm{ on }\Gamma,\\
\frac{f(u, \ub)}{\varrho^2} &=& -4\int_0^1\int_0^1\sigma\pr_u\pr_{\ub}f(u+2\sigma\varrho-2s\sigma\varrho, u+2\sigma\varrho)dsd\sigma\textrm{ if }f=\pr_{\ub}f=0\textrm{ on }\Gamma.
\eee
In view of\footnote{Recall that we have
$$\pr_u(r-\varrho) =  \pr_ur +\frac{1}{2},\,\,\,\, \pr_{\ub}(r-\varrho) =  \pr_{\ub}r -\frac{1}{2}.$$} 
$$\varrho=r=0,\,\,\, \pr_{\ub}r=\frac{1}{2},\,\,\, \pr_ur=-\frac{1}{2},\,\,\, \Omega=1,\,\,\,r-\varrho=\pr_u(r-\varrho) =  \pr_{\ub}(r-\varrho)=\pr_u\Omega=\pr_{\ub}\Omega=0\textrm{ on }\Gamma,$$
we infer
\bee
\frac{r-\varrho}{\varrho} &=& 2\int_0^1\left(\pr_{\ub}r-\frac{1}{2}\right)(u, u+2\sigma\varrho)d\sigma,\\
\frac{\varrho - r}{\varrho^3} &=& \frac{4}{\varrho}\int_0^1\int_0^1\sigma\pr_u\pr_{\ub}r(u+2\sigma\varrho-2s\sigma\varrho, u+2\sigma\varrho)dsd\sigma,\\
\frac{\pr_\varrho r-1}{\varrho^2} &=& -\frac{\pr_u r+\frac{1}{2}}{\varrho^2} +\frac{\pr_{\ub}r-\frac{1}{2}}{\varrho^2}\\
&=& -\frac{2}{\varrho}\int_0^1\pr_{\ub}\pr_ur(u, u+2\sigma\varrho)d\sigma- \frac{2}{\varrho}\int_0^1\pr_u\pr_{\ub}r(\ub-2\sigma\varrho,\ub)d\sigma,\\
\frac{\pr_\tau r}{\varrho} &=& -4\varrho\int_0^1\int_0^1\sigma\pr_{\tau}\pr_u\pr_{\ub}r(u+2\sigma\varrho-2s\sigma\varrho, u+2\sigma\varrho)dsd\sigma,\\
\frac{\log(\Omega)}{\varrho^2} &=& -4\int_0^1\int_0^1\sigma\pr_u\pr_{\ub}\log(\Omega)(u+2\sigma\varrho-2s\sigma\varrho, u+2\sigma\varrho)dsd\sigma.
\eee

Next, recall that $r$ satisfies
\bee
\pr_{\ub}\pr_ur &=& r\kappa \frac{\Omega^2}{4}\frac{g(\phi)^2}{r^2}\\
&=& \varrho\frac{\varrho}{r}\kappa \frac{\Omega^2}{4}\frac{g(\varrho w)^2}{\varrho^2}.
\eee
Together with the identities 
\bee
\varrho(u+2\sigma\varrho-2s\sigma\varrho, u+2\sigma\varrho) &=& s\sigma\varrho(u, \ub),\\
\varrho(u, u+2\sigma\varrho) &=& \sigma\varrho(u, \ub),\\
\varrho(\ub-2\sigma\varrho,\ub) &=& \sigma\varrho(u,\ub),
\eee
we infer
\bee
\frac{\varrho - r}{\varrho^3} &=& 4\int_0^1\int_0^1s\sigma^2\left[\frac{\varrho}{r}\kappa \frac{\Omega^2}{4}\frac{g(\varrho w)^2}{\varrho^2}\right](u+2\sigma\varrho-2s\sigma\varrho, u+2\sigma\varrho)dsd\sigma,\\
\frac{\pr_\varrho r-1}{\varrho^2} &=& - 2\int_0^1\sigma\left[\frac{\varrho}{r}\kappa \frac{\Omega^2}{4}\frac{g(\varrho w)^2}{\varrho^2}\right](u, u+2\sigma\varrho)d\sigma- 2\int_0^1\sigma\left[ \frac{\varrho}{r}\kappa \frac{\Omega^2}{4}\frac{g(\varrho w)^2}{\varrho^2}\right](\ub-2\sigma\varrho,\ub)d\sigma,\\
\frac{\pr_\tau r}{\varrho} &=& -4\varrho^2\int_0^1\int_0^1s\sigma^2\left[\pr_{\tau}\left(\frac{\varrho}{r}\kappa \frac{\Omega^2}{4}\frac{g(\varrho w)^2}{\varrho^2}\right)\right](u+2\sigma\varrho-2s\sigma\varrho, u+2\sigma\varrho)dsd\sigma.
\eee
Also, recall that $\Omega$ satisfies
$$\Omega^{-2}(\pr_u\Omega\pr_{\ub}\Omega-\Omega\pr_u\pr_{\ub}\Omega)=\frac{1}{8}\Omega^2\kappa\left(\frac{4}{\Omega^2}\pr_u\phi\pr_{\ub}\phi+\frac{g(\phi)^2}{r^2}\right)$$
and hence
$$\pr_u\pr_{\ub}\log(\Omega)=-\frac{1}{2}\pr_u(\varrho w)\pr_{\ub}(\varrho w) - \frac{\Omega^2}{8}\frac{\varrho^2}{r^2}\frac{g(\varrho w)^2}{\varrho^2}.$$
We infer
\bee
\frac{\log(\Omega)}{\varrho^2} &=& -4\int_0^1\int_0^1\sigma\left[-\frac{1}{2}\pr_u(\varrho w)\pr_{\ub}(\varrho w) - \frac{\Omega^2}{8}\frac{\varrho^2}{r^2}\frac{g(\varrho w)^2}{\varrho^2}\right](u+2\sigma\varrho-2s\sigma\varrho, u+2\sigma\varrho)dsd\sigma.
\eee

Next, we commute with $\pr^\a_{u, \ub}$. Note that we have for $\a=(\a_1, \a_2)$
\bee
\pr^\a_{u, \ub}\Big(f(u, u+2\sigma\varrho)\Big) &=& \Big[\sigma^{\a_2}(\pr_u+(1-\sigma)\pr_{\ub})^{\a_1}\pr_{\ub}^{\a_2}f\Big](u, u+2\sigma\varrho),\\
\pr^\a_{u,\ub}\Big(f(\ub-2\sigma\varrho,\ub)\Big) &=& \Big[\sigma^{\a_1}\pr_u^{\a_1}(\pr_{\ub}+(1-\sigma)\pr_u)^{\a_2}f\Big](\ub-2\sigma\varrho,\ub),
\eee
and
\bee
&&\pr^\a_{u,\ub}\Big(f(u+2\sigma\varrho-2s\sigma\varrho, u+2\sigma\varrho)\Big)\\
 &=& \Big[((1-\sigma(1-s))\pr_u+(1-\sigma)\pr_{\ub})^{\a_1}(\sigma\pr_{\ub}+\sigma(1-s)\pr_u)^{\a_2}f\Big](u+2\sigma\varrho-2s\sigma\varrho, u+2\sigma\varrho).
\eee
We deduce
\bee
\pr^\a_{u, \ub}\left(\frac{r-\varrho}{\varrho}\right) &=& 2\int_0^1\left[\sigma^{\a_2}(\pr_u+(1-\sigma)\pr_{\ub})^{\a_1}\pr_{\ub}^{\a_2}\left(\pr_{\ub}r-\frac{1}{2}\right)\right](u, u+2\sigma\varrho)d\sigma,\\
\pr^\a_{u, \ub}\left(\frac{\varrho - r}{\varrho^3}\right) &=& 4\int_0^1\int_0^1s\sigma^2\Bigg[((1-\sigma(1-s))\pr_u+(1-\sigma)\pr_{\ub})^{\a_1}\\
&& (\sigma\pr_{\ub}+\sigma(1-s)\pr_u)^{\a_2}\left(\frac{\varrho}{r}\kappa \frac{\Omega^2}{4}\frac{g(\varrho w)^2}{\varrho^2}\right)\Bigg](u+2\sigma\varrho-2s\sigma\varrho, u+2\sigma\varrho)dsd\sigma,\\
\pr^\a_{u, \ub}\left(\frac{\pr_\varrho r-1}{\varrho^2}\right) &=& - 2\int_0^1\sigma\left[\sigma^{\a_2}(\pr_u+(1-\sigma)\pr_{\ub})^{\a_1}\pr_{\ub}^{\a_2}\left( \frac{\varrho}{r}\kappa \frac{\Omega^2}{4}\frac{g(\varrho w)^2}{\varrho^2}\right)\right](u, u+2\sigma\varrho)d\sigma\\
&&- 2\int_0^1\sigma\left[\sigma^{\a_1}\pr_u^{\a_1}(\pr_{\ub}+(1-\sigma)\pr_u)^{\a_2}\left( \frac{\varrho}{r}\kappa \frac{\Omega^2}{4}\frac{g(\varrho w)^2}{\varrho^2}\right)\right](\ub-2\sigma\varrho,\ub)d\sigma,\\
\pr^\a_{u, \ub}\left(\frac{\pr_\tau r}{\varrho}\right) &=& -4\sum_{|\beta|\leq 2}\pr^\beta_{u,\ub}(\varrho^2)\int_0^1\int_0^1s\sigma^2\Bigg[((1-\sigma(1-s))\pr_u+(1-\sigma)\pr_{\ub})^{\a_1-\b_1}\\
&&(\sigma\pr_{\ub}+\sigma(1-s)\pr_u)^{\a_2-\b_2}\pr_{\tau}\left(\frac{\varrho}{r}\kappa \frac{\Omega^2}{4}\frac{g(\varrho w)^2}{\varrho^2}\right)\Bigg](u+2\sigma\varrho-2s\sigma\varrho, u+2\sigma\varrho)dsd\sigma,\\
\pr^\a_{u, \ub}\left(\frac{\log(\Omega)}{\varrho^2}\right) &=& -4\int_0^1\int_0^1\sigma\Bigg[((1-\sigma(1-s))\pr_u+(1-\sigma)\pr_{\ub})^{\a_1}\\
&&  (\sigma\pr_{\ub}+\sigma(1-s)\pr_u)^{\a_2}\left(-\frac{1}{2}\pr_u(\varrho w)\pr_{\ub}(\varrho w) - \frac{\Omega^2}{8}\frac{\varrho^2}{r^2}\frac{g(\varrho w)^2}{\varrho^2}\right)\Bigg]\\
&&(u+2\sigma\varrho-2s\sigma\varrho, u+2\sigma\varrho)dsd\sigma.
\eee

In view of the first identity, we have
\bee
\left|\pr^\a_{u, \ub}\left(\frac{r-\varrho}{\varrho}\right)\right| &\les& \int_0^1\left|\left[\sigma^{\a_2}(\pr_u+(1-\sigma)\pr_{\ub})^{\a_1}\pr_{\ub}^{\a_2}\left(\pr_{\ub}r-\frac{1}{2}\right)\right]\right|(u, u+2\sigma\varrho)d\sigma\\
&\les& \sup_{-2\leq \ub'\leq \ub}\sup_{-1\leq u\leq\ub}\left|\pr^\a_{u,\ub}\left(\pr_{\ub}r-\frac{1}{2}\right)\right|
\eee
and hence, in view of \eqref{eq:macronpresident}, we deduce
\bea\label{eq:macronpresident1}
\nn\max_{|\a| =\ell} \sup_{-1\leq u\leq\ub}\left|\pr^\a_{u, \ub}\left(\frac{r-\varrho}{\varrho}\right)\right|  &\les& C_0\Big(1+\widetilde{D}_{\leq\ell-1}(\ub)+\widetilde{E}_{\leq\ell}(\ub)\Big)^{\ell-1}\widetilde{E}_{\ell+1}(\ub)\\
&&+C_0\Big(1+\widetilde{D}_{\leq\ell-1}(\ub)+\widetilde{E}_{\leq\ell}(\ub)\Big)^\ell.
\eea

Next, recall that 
\bee
\varrho(u+2\sigma\varrho-2s\sigma\varrho, u+2\sigma\varrho) &=& s\sigma\varrho(u, \ub),\\
\varrho(u, u+2\sigma\varrho) &=& \sigma\varrho(u, \ub),\\
\varrho(\ub-2\sigma\varrho,\ub) &=& \sigma\varrho(u,\ub),
\eee
which allows us to rewrite the second and the third identities above as
\bee
\varrho\pr^\a_{u, \ub}\left(\frac{\varrho - r}{\varrho^3}\right) &=& 4\int_0^1\int_0^1\sigma\Bigg[\varrho((1-\sigma(1-s))\pr_u+(1-\sigma)\pr_{\ub})^{\a_1}\\
&&  (\sigma\pr_{\ub}+\sigma(1-s)\pr_u)^{\a_2}\left(\frac{\varrho}{r}\kappa \frac{\Omega^2}{4}\frac{g(\varrho w)^2}{\varrho^2}\right)\Bigg](u+2\sigma\varrho-2s\sigma\varrho, u+2\sigma\varrho)dsd\sigma,\\
\varrho\pr^\a_{u, \ub}\left(\frac{\pr_\varrho r-1}{\varrho^2}\right) &=& - 2\int_0^1\left[\varrho\sigma^{\a_2}(\pr_u+(1-\sigma)\pr_{\ub})^{\a_1}\pr_{\ub}^{\a_2}\left( \frac{\varrho}{r}\kappa \frac{\Omega^2}{4}\frac{g(\varrho w)^2}{\varrho^2}\right)\right](u, u+2\sigma\varrho)d\sigma\\
&&- 2\int_0^1\left[\varrho\sigma^{\a_1}\pr_u^{\a_1}(\pr_{\ub}+(1-\sigma)\pr_u)^{\a_2}\left( \frac{\varrho}{r}\kappa \frac{\Omega^2}{4}\frac{g(\varrho w)^2}{\varrho^2}\right)\right](\ub-2\sigma\varrho,\ub)d\sigma.
\eee
We infer
\bee
&&\max_{|\a| = \ell}\sup_{-1\leq u\leq\ub} \Bigg(\varrho\Bigg(\left|\pr_{u,\ub}^\a\left(\frac{\varrho - r}{\varrho^3}\right)\right|+\left|\pr_{u,\ub}^\a\left(\frac{\pr_\varrho r-1}{\varrho^2}\right)\right|\Bigg)\Bigg)(u,\ub)\\
&\les& \max_{|\a| = \ell}\sup_{-2\leq \ub'\leq \ub}\sup_{-1\leq u\leq\ub}\varrho\left|\pr_{u,\ub}^\a\left(\frac{\varrho}{r}\Omega^2\frac{g(\varrho w)^2}{\varrho^2}\right)\right|
\eee
which together with \eqref{eq:intermediaryusefulestimates2} yields
\bea\label{eq:macronpresident2}
\nn&&\max_{|\a| = \ell}\sup_{-1\leq u\leq\ub} \Bigg(\varrho\Bigg(\left|\pr_{u,\ub}^\a\left(\frac{\varrho - r}{\varrho^3}\right)\right|+\left|\pr_{u,\ub}^\a\left(\frac{\pr_\varrho r-1}{\varrho^2}\right)\right|\Bigg)\Bigg)(u,\ub)\\
&\les& C_0\Big(1+\widetilde{D}_{\leq\ell-1}(\ub)+\widetilde{E}_{\leq\ell}(\ub)\Big)^\ell(1+\widetilde{E}_{\ell+1}(\ub)).
\eea

Next, recall that we have
\bee
\pr^\a_{u, \ub}\left(\frac{\pr_\tau r}{\varrho}\right) &=& -4\sum_{|\beta|\leq 2}\pr^\beta_{u,\ub}(\varrho^2)\int_0^1\int_0^1s\sigma^2\Bigg[((1-\sigma(1-s))\pr_u+(1-\sigma)\pr_{\ub})^{\a_1-\b_1}\\
&&(\sigma\pr_{\ub}+\sigma(1-s)\pr_u)^{\a_2-\b_2}\pr_{\tau}\left(\frac{\varrho}{r}\kappa \frac{\Omega^2}{4}\frac{g(\varrho w)^2}{\varrho^2}\right)\Bigg](u+2\sigma\varrho-2s\sigma\varrho, u+2\sigma\varrho)dsd\sigma.
\eee
We infer
\bee
\varrho^{\frac{1}{2}}\left|\pr^\a_{u, \ub}\left(\frac{\pr_\tau r}{\varrho}\right)\right| &\les& \varrho^\frac{5}{2}\Bigg|\int_0^1\int_0^1s\sigma^2\Bigg[((1-\sigma(1-s))\pr_u+(1-\sigma)\pr_{\ub})^{\a_1}\\
&&(\sigma\pr_{\ub}+\sigma(1-s)\pr_u)^{\a_2}\pr_{\tau}\left(\frac{\varrho}{r}\kappa \frac{\Omega^2}{4}\frac{g(\varrho w)^2}{\varrho^2}\right)\Bigg](u+2\sigma\varrho-2s\sigma\varrho, u+2\sigma\varrho)dsd\sigma\Bigg|\\
&& +\sum_{|\beta|=1}\varrho^{\frac{3}{2}}\Bigg|\int_0^1\int_0^1s\sigma^2\Bigg[((1-\sigma(1-s))\pr_u+(1-\sigma)\pr_{\ub})^{\a_1-\b_1}\\
&&(\sigma\pr_{\ub}+\sigma(1-s)\pr_u)^{\a_2-\b_2}\pr_{\tau}\left(\frac{\varrho}{r}\kappa \frac{\Omega^2}{4}\frac{g(\varrho w)^2}{\varrho^2}\right)\Bigg](u+2\sigma\varrho-2s\sigma\varrho, u+2\sigma\varrho)dsd\sigma\Bigg|\\
&&+\sum_{|\beta|\leq 2}\varrho^{\frac{1}{2}}\Bigg|\int_0^1\int_0^1s\sigma^2\Bigg[((1-\sigma(1-s))\pr_u+(1-\sigma)\pr_{\ub})^{\a_1-\b_1}\\
&&(\sigma\pr_{\ub}+\sigma(1-s)\pr_u)^{\a_2-\b_2}\pr_{\tau}\left(\frac{\varrho}{r}\kappa \frac{\Omega^2}{4}\frac{g(\varrho w)^2}{\varrho^2}\right)\Bigg](u+2\sigma\varrho-2s\sigma\varrho, u+2\sigma\varrho)dsd\sigma\Bigg|
\eee
and hence
\bee
\varrho^{\frac{1}{2}}\left|\pr^\a_{u, \ub}\left(\frac{\pr_\tau r}{\varrho}\right)\right| &\les& \varrho^\frac{3}{2}\Bigg|\int_0^1\int_0^1\sigma\Bigg[\varrho\pr_\tau((1-\sigma(1-s))\pr_u+(1-\sigma)\pr_{\ub})^{\a_1}\\
&&(\sigma\pr_{\ub}+\sigma(1-s)\pr_u)^{\a_2}\left(\frac{\varrho}{r}\kappa \frac{\Omega^2}{4}\frac{g(\varrho w)^2}{\varrho^2}\right)\Bigg](u+2\sigma\varrho-2s\sigma\varrho, u+2\sigma\varrho)dsd\sigma\Bigg|\\
&& +\sum_{|\beta|=1}\varrho^{\frac{1}{2}}\Bigg|\int_0^1\int_0^1\sigma\Bigg[\varrho((1-\sigma(1-s))\pr_u+(1-\sigma)\pr_{\ub})^{\a_1-\b_1}\\
&&(\sigma\pr_{\ub}+\sigma(1-s)\pr_u)^{\a_2-\b_2}\pr_{\tau}\left(\frac{\varrho}{r}\kappa \frac{\Omega^2}{4}\frac{g(\varrho w)^2}{\varrho^2}\right)\Bigg](u+2\sigma\varrho-2s\sigma\varrho, u+2\sigma\varrho)dsd\sigma\Bigg|\\
&&+\sum_{|\beta|\leq 2}\Bigg|\int_0^1\int_0^1s^{\frac{1}{2}}\sigma^{\frac{3}{2}}\Bigg[\varrho^{\frac{1}{2}}((1-\sigma(1-s))\pr_u+(1-\sigma)\pr_{\ub})^{\a_1-\b_1}\\
&&(\sigma\pr_{\ub}+\sigma(1-s)\pr_u)^{\a_2-\b_2}\pr_{\tau}\left(\frac{\varrho}{r}\kappa \frac{\Omega^2}{4}\frac{g(\varrho w)^2}{\varrho^2}\right)\Bigg](u+2\sigma\varrho-2s\sigma\varrho, u+2\sigma\varrho)dsd\sigma\Bigg|
\eee
which yields
\bee
\varrho^{\frac{1}{2}}\left|\pr^\a_{u, \ub}\left(\frac{\pr_\tau r}{\varrho}\right)\right| &\les& \varrho^\frac{3}{2}\Bigg|\int_0^1\int_0^1\sigma\Bigg[\varrho\pr_\tau((1-\sigma(1-s))\pr_u+(1-\sigma)\pr_{\ub})^{\a_1}\\
&&(\sigma\pr_{\ub}+\sigma(1-s)\pr_u)^{\a_2}\left(\frac{\varrho}{r}\kappa \frac{\Omega^2}{4}\frac{g(\varrho w)^2}{\varrho^2}\right)\Bigg](u+2\sigma\varrho-2s\sigma\varrho, u+2\sigma\varrho)dsd\sigma\Bigg|\\
&&+\max_{|\a| \leq \ell}\sup_{-2\leq \ub'\leq \ub}\sup_{-1\leq u\leq\ub}\varrho\left|\pr_{u,\ub}^\a\left(\frac{\varrho}{r}\Omega^2\frac{g(\varrho w)^2}{\varrho^2}\right)\right|\\
&&+\max_{|\a| \leq \ell-1}\sup_{-2\leq \ub'\leq \ub}\sup_{-1\leq u\leq\ub}\varrho^{\frac{1}{2}}\left|\pr_{u,\ub}^\a\left(\frac{\varrho}{r}\Omega^2\frac{g(\varrho w)^2}{\varrho^2}\right)\right|
\eee
and hence, we infer in view of the estimates \eqref{eq:intermediaryusefulestimates1} \eqref{eq:intermediaryusefulestimates2}
\bee
\varrho^{\frac{1}{2}}\left|\pr^\a_{u, \ub}\left(\frac{\pr_\tau r}{\varrho}\right)\right| &\les& \varrho^\frac{3}{2}\Bigg|\int_0^1\int_0^1\sigma\Bigg[\varrho\pr_\tau((1-\sigma(1-s))\pr_u+(1-\sigma)\pr_{\ub})^{\a_1}\\
&&(\sigma\pr_{\ub}+\sigma(1-s)\pr_u)^{\a_2}\left(\frac{\varrho}{r}\kappa \frac{\Omega^2}{4}\frac{g(\varrho w)^2}{\varrho^2}\right)\Bigg](u+2\sigma\varrho-2s\sigma\varrho, u+2\sigma\varrho)dsd\sigma\Bigg|\\
&&+C_0\Big(1+\widetilde{D}_{\leq\ell-1}(\ub)+\widetilde{E}_{\leq\ell}(\ub)\Big)^\ell(1+\widetilde{E}_{\ell+1}(\ub)).
\eee
Now, relying on the identity
\bee
 &&\Big[\varrho\pr_\tau ((1-\sigma(1-s))\pr_u+(1-\sigma)\pr_{\ub})^{\a_1}(\sigma\pr_{\ub}+\sigma(1-s)\pr_u)^{\a_2}f\Big](u+2\sigma\varrho-2s\sigma\varrho, u+2\sigma\varrho)\\
   &=& \frac{1}{2}(-s^2\pr_s+s\sigma\pr_\sigma)\Big[\Big\{((1-\sigma(1-s))\pr_u+(1-\sigma)\pr_{\ub})^{\a_1}\\
   &&(\sigma\pr_{\ub}+\sigma(1-s)\pr_u)^{\a_2}f\Big\}(u+2\sigma\varrho-2s\sigma\varrho, u+2\sigma\varrho)\Big]\\
   && - \frac{1}{2}\Big[(-s^2\pr_s+s\sigma\pr_\sigma)((1-\sigma(1-s))\pr_u+(1-\sigma)\pr_{\ub})^{\a_1}\\
   &&(\sigma\pr_{\ub}+\sigma(1-s)\pr_u)^{\a_2}f\Big](u+2\sigma\varrho-2s\sigma\varrho, u+2\sigma\varrho)\\
   &=& \frac{1}{2}(-s^2\pr_s+s\sigma\pr_\sigma)\Big[\Big\{((1-\sigma(1-s))\pr_u+(1-\sigma)\pr_{\ub})^{\a_1}\\
   &&(\sigma\pr_{\ub}+\sigma(1-s)\pr_u)^{\a_2}f\Big\}(u+2\sigma\varrho-2s\sigma\varrho, u+2\sigma\varrho)\Big]\\
   && - \frac{s}{2}\Big\{\Big[-s\pr_s+\sigma\pr_\sigma, ((1-\sigma(1-s))\pr_u+(1-\sigma)\pr_{\ub})^{\a_1}\\
   &&(\sigma\pr_{\ub}+\sigma(1-s)\pr_u)^{\a_2}\Big]f\Big\}(u+2\sigma\varrho-2s\sigma\varrho, u+2\sigma\varrho),
\eee
we have
\bee
&& \int_0^1\int_0^1\sigma\Bigg[\varrho\pr_\tau((1-\sigma(1-s))\pr_u+(1-\sigma)\pr_{\ub})^{\a_1}\\
&&(\sigma\pr_{\ub}+\sigma(1-s)\pr_u)^{\a_2}\left(\frac{\varrho}{r}\kappa \frac{\Omega^2}{4}\frac{g(\varrho w)^2}{\varrho^2}\right)\Bigg](u+2\sigma\varrho-2s\sigma\varrho, u+2\sigma\varrho)dsd\sigma\\
&=& \frac{1}{2}\int_0^1\int_0^1\sigma(-s^2\pr_s+s\sigma\pr_\sigma)\Bigg\{\Bigg[((1-\sigma(1-s))\pr_u+(1-\sigma)\pr_{\ub})^{\a_1}\\
&&(\sigma\pr_{\ub}+\sigma(1-s)\pr_u)^{\a_2}\left(\frac{\varrho}{r}\kappa \frac{\Omega^2}{4}\frac{g(\varrho w)^2}{\varrho^2}\right)\Bigg](u+2\sigma\varrho-2s\sigma\varrho, u+2\sigma\varrho)\Bigg\}dsd\sigma\\
&& - \frac{1}{2}\int_0^1\int_0^1\sigma s\Bigg\{\Bigg[-s\pr_s+\sigma\pr_\sigma, ((1-\sigma(1-s))\pr_u+(1-\sigma)\pr_{\ub})^{\a_1}\\
&&(\sigma\pr_{\ub}+\sigma(1-s)\pr_u)^{\a_2}\Bigg]\left(\frac{\varrho}{r}\kappa \frac{\Omega^2}{4}\frac{g(\varrho w)^2}{\varrho^2}\right)\Bigg\}(u+2\sigma\varrho-2s\sigma\varrho, u+2\sigma\varrho)dsd\sigma
\eee
and hence, we integrate by parts the first term
\bee
&& \int_0^1\int_0^1\sigma\Bigg[\varrho\pr_\tau((1-\sigma(1-s))\pr_u+(1-\sigma)\pr_{\ub})^{\a_1}\\
&&(\sigma\pr_{\ub}+\sigma(1-s)\pr_u)^{\a_2}\left(\frac{\varrho}{r}\kappa \frac{\Omega^2}{4}\frac{g(\varrho w)^2}{\varrho^2}\right)\Bigg](u+2\sigma\varrho-2s\sigma\varrho, u+2\sigma\varrho)dsd\sigma\\
&=& -\frac{1}{2}\int_0^1\sigma\Bigg[(\pr_u+(1-\sigma)\pr_{\ub})^{\a_1}(\sigma\pr_{\ub})^{\a_2}\left(\frac{\varrho}{r}\kappa \frac{\Omega^2}{4}\frac{g(\varrho w)^2}{\varrho^2}\right)\Bigg](u, u+2\sigma\varrho)d\sigma\\
&& +\int_0^1\int_0^1s\sigma\Bigg[((1-\sigma(1-s))\pr_u+(1-\sigma)\pr_{\ub})^{\a_1}\\
&&(\sigma\pr_{\ub}+\sigma(1-s)\pr_u)^{\a_2}\left(\frac{\varrho}{r}\kappa \frac{\Omega^2}{4}\frac{g(\varrho w)^2}{\varrho^2}\right)\Bigg](u+2\sigma\varrho-2s\sigma\varrho, u+2\sigma\varrho)dsd\sigma\\
&& +\frac{1}{2}\int_0^1s\Bigg[(s\pr_u)^{\a_1}(\pr_{\ub}+(1-s)\pr_u)^{\a_2}\left(\frac{\varrho}{r}\kappa \frac{\Omega^2}{4}\frac{g(\varrho w)^2}{\varrho^2}\right)\Bigg](\ub -2s\varrho, \ub)ds\\
&& - \int_0^1\int_0^1s\sigma\Bigg[((1-\sigma(1-s))\pr_u+(1-\sigma)\pr_{\ub})^{\a_1}\\
&&(\sigma\pr_{\ub}+\sigma(1-s)\pr_u)^{\a_2}\left(\frac{\varrho}{r}\kappa \frac{\Omega^2}{4}\frac{g(\varrho w)^2}{\varrho^2}\right)\Bigg](u+2\sigma\varrho-2s\sigma\varrho, u+2\sigma\varrho)dsd\sigma\\
&& - \frac{1}{2}\int_0^1\int_0^1\sigma s\Bigg\{\Bigg[-s\pr_s+\sigma\pr_\sigma, ((1-\sigma(1-s))\pr_u+(1-\sigma)\pr_{\ub})^{\a_1}\\
&&(\sigma\pr_{\ub}+\sigma(1-s)\pr_u)^{\a_2}\Bigg]\left(\frac{\varrho}{r}\kappa \frac{\Omega^2}{4}\frac{g(\varrho w)^2}{\varrho^2}\right)\Bigg\}(u+2\sigma\varrho-2s\sigma\varrho, u+2\sigma\varrho)dsd\sigma.
\eee
This yields 
\bee
&& \varrho\int_0^1\int_0^1\sigma\Bigg[\varrho\pr_\tau((1-\sigma(1-s))\pr_u+(1-\sigma)\pr_{\ub})^{\a_1}\\
&&(\sigma\pr_{\ub}+\sigma(1-s)\pr_u)^{\a_2}\left(\frac{\varrho}{r}\kappa \frac{\Omega^2}{4}\frac{g(\varrho w)^2}{\varrho^2}\right)\Bigg](u+2\sigma\varrho-2s\sigma\varrho, u+2\sigma\varrho)dsd\sigma\\
&=& -\frac{1}{2}\int_0^1\Bigg[\varrho(\pr_u+(1-\sigma)\pr_{\ub})^{\a_1}(\sigma\pr_{\ub})^{\a_2}\left(\frac{\varrho}{r}\kappa \frac{\Omega^2}{4}\frac{g(\varrho w)^2}{\varrho^2}\right)\Bigg](u, u+2\sigma\varrho)d\sigma\\
&& +\int_0^1\int_0^1\Bigg[\varrho((1-\sigma(1-s))\pr_u+(1-\sigma)\pr_{\ub})^{\a_1}\\
&&(\sigma\pr_{\ub}+\sigma(1-s)\pr_u)^{\a_2}\left(\frac{\varrho}{r}\kappa \frac{\Omega^2}{4}\frac{g(\varrho w)^2}{\varrho^2}\right)\Bigg](u+2\sigma\varrho-2s\sigma\varrho, u+2\sigma\varrho)dsd\sigma\\
&& +\frac{1}{2}\int_0^1\Bigg[\varrho(s\pr_u)^{\a_1}(\pr_{\ub}+(1-s)\pr_u)^{\a_2}\left(\frac{\varrho}{r}\kappa \frac{\Omega^2}{4}\frac{g(\varrho w)^2}{\varrho^2}\right)\Bigg](\ub -2s\varrho, \ub)ds\\
&& - \int_0^1\int_0^1\Bigg[\varrho((1-\sigma(1-s))\pr_u+(1-\sigma)\pr_{\ub})^{\a_1}\\
&&(\sigma\pr_{\ub}+\sigma(1-s)\pr_u)^{\a_2}\left(\frac{\varrho}{r}\kappa \frac{\Omega^2}{4}\frac{g(\varrho w)^2}{\varrho^2}\right)\Bigg](u+2\sigma\varrho-2s\sigma\varrho, u+2\sigma\varrho)dsd\sigma\\
&& - \frac{1}{2}\int_0^1\int_0^1\Bigg\{\varrho\Bigg[-s\pr_s+\sigma\pr_\sigma, ((1-\sigma(1-s))\pr_u+(1-\sigma)\pr_{\ub})^{\a_1}\\
&&(\sigma\pr_{\ub}+\sigma(1-s)\pr_u)^{\a_2}\Bigg]\left(\frac{\varrho}{r}\kappa \frac{\Omega^2}{4}\frac{g(\varrho w)^2}{\varrho^2}\right)\Bigg\}(u+2\sigma\varrho-2s\sigma\varrho, u+2\sigma\varrho)dsd\sigma.
\eee
and hence
\bee
&&  \varrho\Bigg|\int_0^1\int_0^1\sigma\Bigg[\varrho\pr_\tau((1-\sigma(1-s))\pr_u+(1-\sigma)\pr_{\ub})^{\a_1}\\
&&(\sigma\pr_{\ub}+\sigma(1-s)\pr_u)^{\a_2}\left(\frac{\varrho}{r}\kappa \frac{\Omega^2}{4}\frac{g(\varrho w)^2}{\varrho^2}\right)\Bigg](u+2\sigma\varrho-2s\sigma\varrho, u+2\sigma\varrho)dsd\sigma\Bigg|\\
&\les&  \max_{|\a| \leq \ell}\sup_{-2\leq \ub'\leq \ub}\sup_{-1\leq u\leq\ub}\varrho\left|\pr_{u,\ub}^\a\left(\frac{\varrho}{r}\Omega^2\frac{g(\varrho w)^2}{\varrho^2}\right)\right|\\
&\les& C_0\Big(1+\widetilde{D}_{\leq\ell-1}(\ub)+\widetilde{E}_{\leq\ell}(\ub)\Big)^\ell(1+\widetilde{E}_{\ell+1}(\ub)) 
\eee
where we have used \eqref{eq:intermediaryusefulestimates2} in the last inequality. Since on the other hand we have obtained previously 
\bee
\varrho^{\frac{1}{2}}\left|\pr^\a_{u, \ub}\left(\frac{\pr_\tau r}{\varrho}\right)\right| &\les& \varrho^\frac{3}{2}\Bigg|\int_0^1\int_0^1\sigma\Bigg[\varrho\pr_\tau((1-\sigma(1-s))\pr_u+(1-\sigma)\pr_{\ub})^{\a_1}\\
&&(\sigma\pr_{\ub}+\sigma(1-s)\pr_u)^{\a_2}\left(\frac{\varrho}{r}\kappa \frac{\Omega^2}{4}\frac{g(\varrho w)^2}{\varrho^2}\right)\Bigg](u+2\sigma\varrho-2s\sigma\varrho, u+2\sigma\varrho)dsd\sigma\Bigg|\\
&&+C_0\Big(1+\widetilde{D}_{\leq\ell-1}(\ub)+\widetilde{E}_{\leq\ell}(\ub)\Big)^\ell(1+\widetilde{E}_{\ell+1}(\ub)),
\eee
we deduce
\bea\label{eq:macronpresident3}
\max_{|\a| = \ell}\sup_{-1\leq u\leq\ub}\varrho^{\frac{1}{2}}\left|\pr^\a_{u, \ub}\left(\frac{\pr_\tau r}{\varrho}\right)\right| \les C_0\Big(1+\widetilde{D}_{\leq\ell-1}(\ub)+\widetilde{E}_{\leq\ell}(\ub)\Big)^\ell(1+\widetilde{E}_{\ell+1}(\ub)).
\eea

Next, recall that we have
\bee
\pr^\a_{u, \ub}\left(\frac{\log(\Omega)}{\varrho^2}\right) &=& -4\int_0^1\int_0^1\sigma\Bigg[((1-\sigma(1-s))\pr_u+(1-\sigma)\pr_{\ub})^{\a_1}\\
&& (\sigma\pr_{\ub}+\sigma(1-s)\pr_u)^{\a_2}\left(-\frac{1}{2}\pr_u(\varrho w)\pr_{\ub}(\varrho w) - \frac{\Omega^2}{8}\frac{\varrho^2}{r^2}\frac{g(\varrho w)^2}{\varrho^2}\right)\Bigg]\\
&&(u+2\sigma\varrho-2s\sigma\varrho, u+2\sigma\varrho)dsd\sigma.
\eee
Assume first that $\a_2\geq 1$. Then, relying on the identity
\bee
&& \varrho\Big[(\sigma\pr_{\ub}+\sigma(1-s)\pr_u)f\Big](u+2\sigma\varrho-2s\sigma\varrho, u+2\sigma\varrho)\\
&=& \frac{\sigma}{2}\pr_\sigma\Big[f(u+2\sigma\varrho-2s\sigma\varrho, u+2\sigma\varrho)\Big]\\
\eee
we have
\bee
\varrho\pr^\a_{u, \ub}\left(\frac{\log(\Omega)}{\varrho^2}\right) &=& -2\int_0^1\int_0^1\sigma^2\pr_\sigma\Bigg\{\Bigg[((1-\sigma(1-s))\pr_u+(1-\sigma)\pr_{\ub})^{\a_1}\\
&& (\sigma\pr_{\ub}+\sigma(1-s)\pr_u)^{\a_2-1}\left(-\frac{1}{2}\pr_u(\varrho w)\pr_{\ub}(\varrho w) - \frac{\Omega^2}{8}\frac{\varrho^2}{r^2}\frac{g(\varrho w)^2}{\varrho^2}\right)\Bigg]\\
&&(u+2\sigma\varrho-2s\sigma\varrho, u+2\sigma\varrho)\Bigg\}dsd\sigma\\
&& +2\int_0^1\int_0^1\sigma^2\Bigg\{\Bigg[\pr_\sigma, ((1-\sigma(1-s))\pr_u+(1-\sigma)\pr_{\ub})^{\a_1}\\
&& (\sigma\pr_{\ub}+\sigma(1-s)\pr_u)^{\a_2-1}\Bigg]\left(-\frac{1}{2}\pr_u(\varrho w)\pr_{\ub}(\varrho w) - \frac{\Omega^2}{8}\frac{\varrho^2}{r^2}\frac{g(\varrho w)^2}{\varrho^2}\right)\Bigg\}\\
&&(u+2\sigma\varrho-2s\sigma\varrho, u+2\sigma\varrho)dsd\sigma
\eee
and hence we integrate by parts the first term
\bee
\varrho\pr^\a_{u, \ub}\left(\frac{\log(\Omega)}{\varrho^2}\right) &=& -2\int_0^1\Bigg[(s\pr_u)^{\a_1}(\pr_{\ub}+(1-s)\pr_u)^{\a_2-1}\left(-\frac{1}{2}\pr_u(\varrho w)\pr_{\ub}(\varrho w) - \frac{\Omega^2}{8}\frac{\varrho^2}{r^2}\frac{g(\varrho w)^2}{\varrho^2}\right)\Bigg]\\
&&(\ub-2s\varrho, \ub)ds +4\int_0^1\int_0^1\sigma\Bigg[((1-\sigma(1-s))\pr_u+(1-\sigma)\pr_{\ub})^{\a_1}\\
&& (\sigma\pr_{\ub}+\sigma(1-s)\pr_u)^{\a_2-1}\left(-\frac{1}{2}\pr_u(\varrho w)\pr_{\ub}(\varrho w) - \frac{\Omega^2}{8}\frac{\varrho^2}{r^2}\frac{g(\varrho w)^2}{\varrho^2}\right)\Bigg]\\
&&(u+2\sigma\varrho-2s\sigma\varrho, u+2\sigma\varrho)dsd\sigma\\
&& +2\int_0^1\int_0^1\sigma^2\Bigg\{\Bigg[\pr_\sigma, ((1-\sigma(1-s))\pr_u+(1-\sigma)\pr_{\ub})^{\a_1}\\
&& (\sigma\pr_{\ub}+\sigma(1-s)\pr_u)^{\a_2-1}\Bigg]\left(-\frac{1}{2}\pr_u(\varrho w)\pr_{\ub}(\varrho w) - \frac{\Omega^2}{8}\frac{\varrho^2}{r^2}\frac{g(\varrho w)^2}{\varrho^2}\right)\Bigg\}\\
&&(u+2\sigma\varrho-2s\sigma\varrho, u+2\sigma\varrho)dsd\sigma.
\eee
This yields
\bee
\varrho^{\frac{3}{2}}\pr^\a_{u, \ub}\left(\frac{\log(\Omega)}{\varrho^2}\right) &=& -2\int_0^1\frac{1}{s^{\frac{1}{2}}}\Bigg[\varrho^{\frac{1}{2}}(s\pr_u)^{\a_1}(\pr_{\ub}+(1-s)\pr_u)^{\a_2-1}\Bigg(-\frac{1}{2}\pr_u(\varrho w)\pr_{\ub}(\varrho w)\\
&& - \frac{\Omega^2}{8}\frac{\varrho^2}{r^2}\frac{g(\varrho w)^2}{\varrho^2}\Bigg)\Bigg](\ub-2s\varrho, \ub)ds\\
&& +4\int_0^1\int_0^1\frac{\sigma^{\frac{1}{2}}}{s^{\frac{1}{2}}}\Bigg[\varrho^{\frac{1}{2}}((1-\sigma(1-s))\pr_u+(1-\sigma)\pr_{\ub})^{\a_1}\\
&& (\sigma\pr_{\ub}+\sigma(1-s)\pr_u)^{\a_2-1}\left(-\frac{1}{2}\pr_u(\varrho w)\pr_{\ub}(\varrho w) - \frac{\Omega^2}{8}\frac{\varrho^2}{r^2}\frac{g(\varrho w)^2}{\varrho^2}\right)\Bigg]\\
&&(u+2\sigma\varrho-2s\sigma\varrho, u+2\sigma\varrho)dsd\sigma\\
&& +2\int_0^1\int_0^1\frac{\sigma^{\frac{3}{2}}}{s^{\frac{1}{2}}}\Bigg\{\varrho^{\frac{1}{2}}\Bigg[\pr_\sigma, ((1-\sigma(1-s))\pr_u+(1-\sigma)\pr_{\ub})^{\a_1}\\
&& (\sigma\pr_{\ub}+\sigma(1-s)\pr_u)^{\a_2-1}\Bigg]\left(-\frac{1}{2}\pr_u(\varrho w)\pr_{\ub}(\varrho w) - \frac{\Omega^2}{8}\frac{\varrho^2}{r^2}\frac{g(\varrho w)^2}{\varrho^2}\right)\Bigg\}\\
&&(u+2\sigma\varrho-2s\sigma\varrho, u+2\sigma\varrho)dsd\sigma.
\eee
Hence, we infer
\bee
&&\max_{|\a| \leq \ell}\sup_{-1\leq u\leq\ub}\varrho^{\frac{3}{2}}\left|\pr^\a_{u, \ub}\left(\frac{\log(\Omega)}{\varrho^2}\right)\right|(u,\ub)\\
 &\les& \max_{|\a| \leq \ell-1}\sup_{-2\leq \ub'\leq \ub}\sup_{-1\leq u\leq\ub}\varrho^{\frac{1}{2}}\left|\pr_{u,\ub}^\a\left(\pr_u(\varrho w)\pr_{\ub}(\varrho w)\right)\right|\\
&& + \max_{|\a| \leq \ell-1}\sup_{-2\leq \ub'\leq \ub}\sup_{-1\leq u\leq\ub}\varrho^{\frac{1}{2}}\left|\pr_{u,\ub}^\a\left(\frac{\varrho^2}{r^2}\Omega^2\frac{g(\varrho w)^2}{\varrho^2}\right)\right|
\eee
which in view of \eqref{eq:intermediaryusefulestimates1} and \eqref{eq:intermediaryusefulestimates1bis} implies
\bee
\max_{|\a| \leq \ell}\sup_{-1\leq u\leq\ub}\varrho^{\frac{3}{2}}\left|\pr^\a_{u, \ub}\left(\frac{\log(\Omega)}{\varrho^2}\right)\right|(u,\ub) &\les& C_0\Big(1+\widetilde{D}_{\leq\ell-1}(\ub)+\widetilde{E}_{\leq\ell}(\ub)\Big)^\ell(1+\widetilde{E}_{\ell+1}(\ub)).
\eee
The case where $\a_2=0$ and hence $\a_1\geq 1$ is treated similarly by using the formula 
\bee
\pr^\a_{u, \ub}\left(\frac{\log(\Omega)}{\varrho^2}\right) &=& -4\int_0^1\int_0^1\sigma\Bigg[(\sigma\pr_u+\sigma(1-s)\pr_{\ub})^{\a_1}\\
&&((1-\sigma(1-s))\pr_{\ub}+(1-\sigma)\pr_u)^{\a_2}\left(-\frac{1}{2}\pr_u(\varrho w)\pr_{\ub}(\varrho w) - \frac{\Omega^2}{8}\frac{\varrho^2}{r^2}\frac{g(\varrho w)^2}{\varrho^2}\right)\Bigg]\\
&&(\ub-2\sigma\varrho, \ub-2\sigma\varrho+2s\sigma\varrho)dsd\sigma
\eee
which is obtained by reversing the role of $u$ and $\ub$, and by relying on the following identity to integrate by parts
\bee
&& \varrho\Big[(\sigma\pr_u+\sigma(1-s)\pr_\ub)f\Big](\ub-2\sigma\varrho, \ub-2\sigma\varrho+2s\sigma\varrho)\\
&=& -\frac{\sigma}{2}\pr_\sigma\Big[f(\ub-2\sigma\varrho, \ub-2\sigma\varrho+2s\sigma\varrho)\Big].
\eee
Together with \eqref{eq:marcronpresident0} \eqref{eq:macronpresident} \eqref{eq:macronpresident1} \eqref{eq:macronpresident2} \eqref{eq:macronpresident3}, we infer
\bee
\max_{|\a| \leq \ell}\sup_{-1\leq u\leq\ub} && \Bigg(\varrho^{\frac{3}{2}}\left(\left|\pr_{u,\ub}^\a\left(\frac{\varrho - r}{\varrho^3}\right)\right|+\left|\pr_{u,\ub}^\a\left(\frac{\pr_\varrho r-1}{\varrho^2}\right)\right|+\left|\pr_{u,\ub}^\a\left(\frac{\log(\Omega)}{\varrho^2}\right)\right|\right)\\
\nn &&+\varrho^{\frac{1}{2}}\left(\left|\pr_{u,\ub}^\a\left(\frac{\pr_\tau r}{\varrho}\right)\right|\right)+\left|\pr_{u,\ub}^\a\left(\frac{\varrho - r}{\varrho}\right)\right|
+\left|\pr_{u,\ub}^\a\left(\pr_{\ub}r-\frac{1}{2}\right)\right|\\&&+\left|\pr_{u,\ub}^\a\left(\pr_ur+\frac{1}{2}\right)\right|+\left|\pr_{u,\ub}^\a\left(\Omega^2-1\right)\right|\Bigg)(u,\ub)\\
&\les& C_0\Big(1+\widetilde{D}_{\leq\ell-1}(\ub)+\widetilde{E}_{\leq\ell}(\ub)\Big)^\ell(1+\widetilde{E}_{\ell+1}(\ub)).
\eee
Together with \eqref{eq:alternatedefintionDellinuubframe}, this yields
\bee
\nn D_\ell(\ub) &\les& C_0\Big(1+\widetilde{D}_{\leq\ell-1}(\ub)+\widetilde{E}_{\leq\ell}(\ub)\Big)^\ell(1+\widetilde{E}_{\ell+1}(\ub))
\eee
which concludes the proof of Lemma \ref{lemma:estimateforDell}.

\subsection*{Acknowledgements} We are grateful to an anonymous referee, who has done an outstanding and constructive refereeing job, for pointing out an error in an earlier version and for remarks which helped us substantially improve the exposition.  L.A. and N.G. thank the Erwin Sch\"odinger Institute, Vienna, for hospitality and support during part of this work. The majority of the work of N.G was supported by the International Max Planck Research School graduate scholarship of  Max Planck Society
at Albert Einstein Institute, Golm and the DFG Postdoctoral Fellowship GU1513/1-1 at Yale University. N.G is also grateful to Vincent Moncrief for numerous insightful discussions, in particular the experiences he shared from his earlier work on Gowdy spacetimes were influential in some of the approaches adopted for this problem. Further, the authors thank the Mathematical Sciences Research Institute in Berkeley, California, where part of this work was carried out, for hospitality and support. The work carried out during the semester programme on Mathematical General Relativity at MSRI during the fall of 2013 was supported in part by the National Science Foundation under Grant No. 0932078000. The third author is supported by the project ERC 291214 BLOWDISOL.

\appendix

\bibliography{sgwm}
\bibliographystyle{abbrv}

\end{document}